\theoremstyle{definition}
\newtheorem{thm}{Theorem}
\theoremstyle{definition}
\newtheorem{defn}{Definition}
\theoremstyle{definition}
\newtheorem{rem}{Remark}
\theoremstyle{definition}
\newtheorem{problem}{Problem}
\theoremstyle{definition}
\theoremstyle{definition}
\newtheorem{example}{Example}
\theoremstyle{definition}
\newcommand{\qqed}{\hfill $\square$}
\newcommand{\paralled}{\hfill $\parallel$}
\begin{document}
\thispagestyle{empty}

\begin{center}
{\huge \textbf{New Techniques Based On Odd-Edge Total\\[8pt]
Colorings In Topological Cryptosystem}}\\[14pt]
{\large \textbf{Bing YAO},~ \textbf{Mingjun ZHANG}, ~ \textbf{Sihua YANG},~ \textbf{Guoxing WANG}}\\[8pt]
(\today)
\end{center}

\vskip 1cm

\pagenumbering{roman}
\tableofcontents

\newpage

\setcounter{page}{1}
\pagenumbering{arabic}

\thispagestyle{empty}

\begin{center}
{\huge \textbf{New Techniques Based On Odd-Edge Total\\[8pt]
Colorings In Topological Cryptosystem}}\\[14pt]
{\large \textbf{Bing Yao} $^{1,\dagger}$, \textbf{Mingjun Zhang} $^{2,3,4,\ddagger}$, \textbf{Sihua Yang} $^{3,\ast}$, \textbf{Guoxing Wang} $^{2,3,4,\diamond}$}\\[12pt]
{\small 1. College of Mathematics and Statistics, Northwest Normal University, Lanzhou, 730070 CHINA\\
2. China Northwest Center of Financial Research, Lanzhou University of Finance and Economics, Lanzhou 730020, CHINA\\
3. School of Information Engineering, Lanzhou University of Finance and Economics, Lanzhou 730020, CHINA\\
4. Key Laboratory of E-Business Technology and Application, Gansu Province, Lanzhou 730020, CHINA\\[6pt]
$^{\dagger}$ yybb918@163.com; $^{\ddagger}$ shuxue1998@163.com; $^{\ast}$ 731914872@qq.com; $^{\diamond}$ wanggx@lzufe.edu.cn\\[6pt]
}
\end{center}

\vskip 1cm

\begin{quote}
\textbf{Abstract:} For building up twin-graphic lattices towards topological cryptograph, we define four kinds of new odd-magic-type colorings: odd-edge graceful-difference total coloring, odd-edge edge-difference total coloring, odd-edge edge-magic total coloring, and odd-edge felicitous-difference total coloring in this article. Our RANDOMLY-LEAF-ADDING algorithms are based on adding randomly leaves to graphs for producing continuously graphs admitting our new odd-magic-type colorings. We use complex graphs to make caterpillar-graphic lattices and complementary graphic lattices, such that each graph in these new graphic lattices admits a uniformly $W$-magic total coloring. On the other hands, finding some connections between graphic lattices and integer lattices is an interesting research, also, is important for application in the age of quantum computer. We set up twin-type $W$-magic graphic lattices (as public graphic lattices vs private graphic lattices) and $W$-magic graphic-lattice homomorphism for producing more complex topological number-based strings.\\
\textbf{Mathematics Subject classification}: 05C60, 68M25, 06B30, 22A26, 81Q35\\
\textbf{Keywords:} Odd-magic-type colorings; twin-graphic lattices; integer lattice; algorithm; topological coding.
\end{quote}

\vskip 1cm

\section{Introduction and preliminary}

\subsection{Researching background}

Lattice-based cryptography, as a new cryptosystem, has attracted much attention because of its great potential application value, since it is the use of conjectured hard problems on point lattices in $\mathbb{R}^n$ as the foundation for secure cryptographic systems, including apparent resistance quantum attacks, high asymptotic efficiency and parallelism, security under ``worst-case'' intractability assumptions, and solutions to long-standing open problems in cryptography said by Chris Peikert in \cite{Chris-Peikert-decade}. And moreover, Chris Peikert summarized that lattice-based cryptography possesses \emph{conjectured security} against quantum attacks, \emph{algorithmic simplicity, efficiency, and parallelism}, and \emph{strong security} guarantees from worst-case hardness.

In \cite{Peikert-Lattice-Cryptography-Internet2014} the author pointed: Lattice-based cryptography has been recognized for its many attractive properties, such as strong provable security guarantees and apparent resistance to quantum attacks, flexibility for realizing powerful tools like fully homomorphic encryption, and high asymptotic efficiency. He has given efficient and practical lattice-based protocols for key transport, encryption, and authenticated key exchange that are suitable as ``drop-in'' components for proposed Internet standards and other open protocols. The security of all proposals is provably based on the well-studied ``learning with errors over rings'' problem, and hence on the conjectured worst-case hardness of problems on ideal lattices (against quantum algorithms).

The authors in \cite{Tim-Vadim-Thomas-2012-978-3-642-33027} have presented such an alternative - a signature scheme whose security is derived from the hardness of lattice problems; it is based on recent theoretical advances in lattice-based cryptography and is highly optimized for practicability and use in embedded systems. The public and secret keys are roughly 12000 and 2000 bits long, while the signature size is approximately 9000 bits for a security level of around 100 bits.

In the articles \cite{Wang-Xu-Yao-2016, Wang-Xu-Yao-2017-Twin, Wang-Xu-Yao-2017-Matching} the authors have discussed some topics of topological coding, such as topological graph password and twin odd-graceful graphs for matching topological public-keys and topological private-keys in asymmetric cryptography. The authors, in \cite{Tian-Li-Peng-Yang-2021-102212}, use the topological graph to generate the honeywords, which is the first application of graphic labeling of topological coding in the honeywords generation. They propose a method to protect the \emph{hashed passwords} by using \emph{topological graphic sequences}.

\subsection{Examples from topological coding}

For introducing \emph{topological number-based strings} since they are made by Topcode-gpws (the abbreviation ``graphic passwords in topological coding''), we show an example as:

\begin{example}\label{exa:first-example-11}
A colored graph (also, Topcode-gpw) $B_1$ shown in Fig.\ref{fig:111-example} (a) corresponds a matrix $T_{code}(B_1)$ shown in Eq.(\ref{eqa:Topcode-gpw-B-1}), called a \emph{Topcode-matrix}, and this matrix distributes us $(30)!$ topological number-based strings like $s_1$ and $s_2$ shown in Eq.(\ref{eqa:B-1-number-based-string11}). As each topological number-based string $s_i$ ($i=1,2$) is a public-key, so it has its own private-key $s\,'_i$ shown in Eq.(\ref{eqa:B-1-number-based-string22}), and two topological number-based strings $s\,'_1$ and $s\,'_2$ are induced from the Topcode-matrix $T_{code}(B_{10})$ shown in Eq.(\ref{eqa:Topcode-gpw-B-1-22}), where the colored graph $B_{10}$ is shown in Fig.\ref{fig:111-example} (j).\qqed
\end{example}

\begin{equation}\label{eqa:B-1-number-based-string11}
{
\begin{split}
s_1&=7191136654117611915211241315191515017191020\\
s_2&=0219191701515191513421121591167114566311917\\
\end{split}}
\end{equation}

\begin{figure}[h]
\centering
\includegraphics[width=16.4cm]{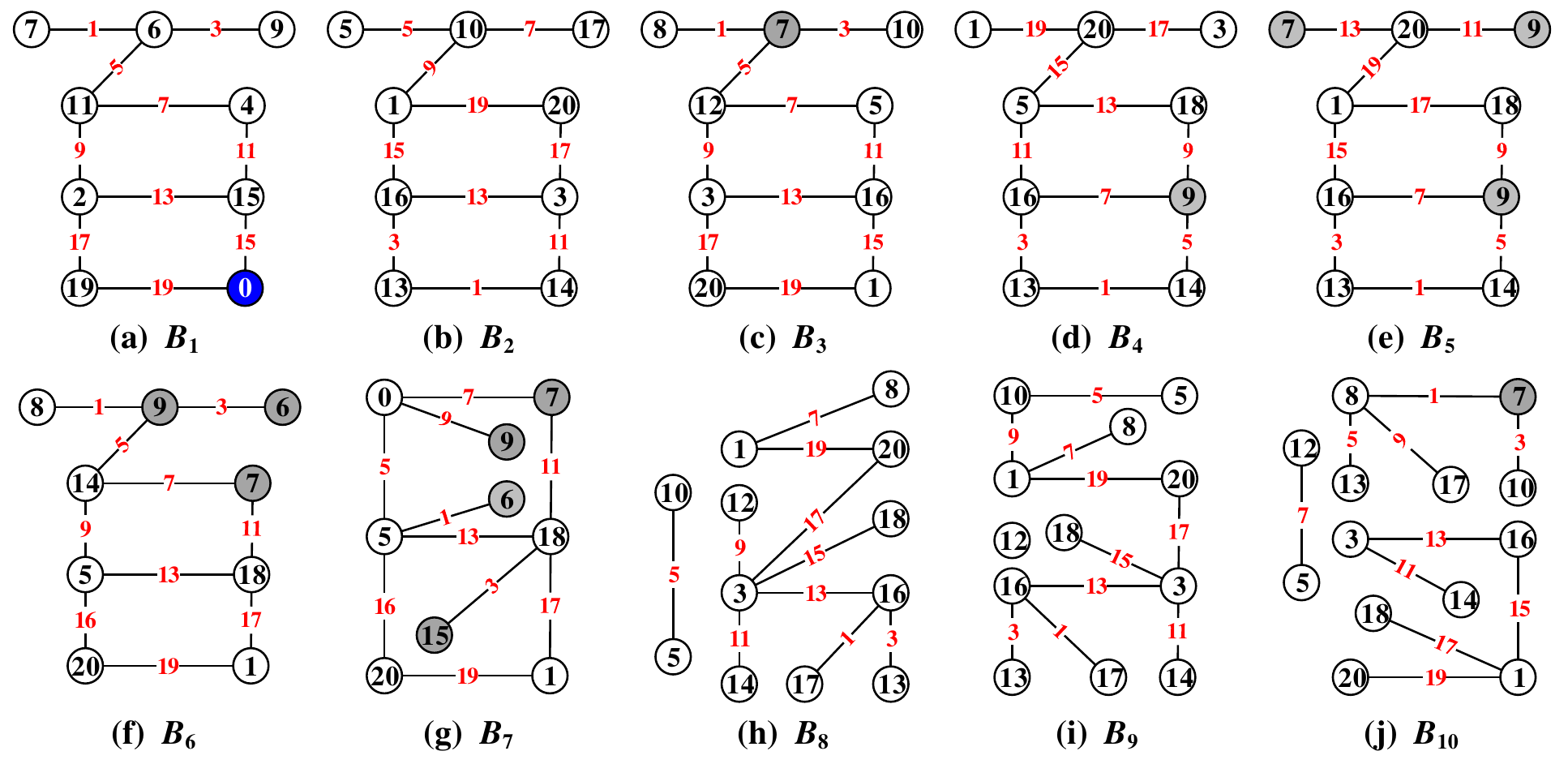}\\
\caption{\label{fig:111-example} {\small Ten Topcode-gpws.}}
\end{figure}

\begin{equation}\label{eqa:Topcode-gpw-B-1}
\centering
{
\begin{split}
T_{code}(B_1)= \left(
\begin{array}{cccccccccc}
6 & 6 & 6 & 11 & 2 & 4 & 15 & 0 & 2 & 0\\
1 & 3 & 5 & 7 & 9 & 11 & 13 & 15 & 17 & 19\\
7 & 9 & 11 & 4 & 11 & 15 & 2 & 15 & 19 & 19
\end{array}
\right)_{3\times 10}
\end{split}}
\end{equation}

\begin{equation}\label{eqa:B-1-number-based-string22}
{
\begin{split}
s\,'_1&=71108287551771312914311831316181516117201911\\
s\,'_2&=11911172018151631316311812914177137558381107\\
\end{split}}
\end{equation}
\begin{equation}\label{eqa:Topcode-gpw-B-1-22}
\centering
{
\begin{split}
T_{code}(B_{10})= \left(
\begin{array}{cccccccccc}
8 & 7 & 13 & 12 & 8 & 3 & 16 & 1 & 1 & 1\\
1 & 3 & 5 & 7 & 9 & 11 & 13 & 15 & 17 & 19\\
7 & 10 & 8 & 5 & 17 & 14 & 3 & 16 & 18 & 20
\end{array}
\right)_{3\times 10}
\end{split}}
\end{equation}

A \emph{topological authentication} consists of a topological structure matching $\langle B_1, B_{10}\rangle$ and a Topcode-matrix matching $\langle T_{code}(B_{1}), T_{code}(B_{10})\rangle$ and a group of topological number-based string matchings $\langle s_i,s\,'_i\rangle$ for $i=1,2,\dots ,n$, where each topological number-based string matching $\langle s_i,s\,'_i\rangle$ is obtained by a fixed rile $a_i$ implementing to two Topcode-matrices $T_{code}(B_{1})$ and $T_{code}(B_{10})$, respectively.
The procure
\begin{equation}\label{eqa:procure11-number-based-string}
B_1\rightarrow T_{code}(B_{1})\rightarrow_{a_i} s_i
\end{equation} is for encrypting a digital file $D_{ocu}$ by a topological number-based string $s_i$, get a encrypted file $s_i(D_{ocu})$. Next, one find another topological number-based string $s\,'_i$ from anther procure
\begin{equation}\label{eqa:procure22-number-based-string}
B_{10}\rightarrow T_{code}(B_{10})\rightarrow_{a_i} s\,'_i
\end{equation} and go to the topological authentication
\begin{equation}\label{eqa:a-topological-authentication}
\textbf{\textrm{T}}_{authen}=(\langle B_1, B_{10}\rangle, \langle T_{code}(B_{1}), T_{code}(B_{10})\rangle, \langle s_i,s\,'_i\rangle)
\end{equation} After finishing the above topological authentication (\ref{eqa:a-topological-authentication}), one can use $s\,'_i$ to decrypt the encrypted file $s_i(D_{ocu})$, such that $s\,'_i(s_i(D_{ocu}))=D_{ocu}$.

In the above Example \ref{exa:first-example-11}, there are the following problems:
\begin{asparaenum}[\textrm{\textbf{Pro}}-1. ]
\item Notice that two topological structures $B_1$ and $B_{10}$ are not isomorphic from each other, that is, $B_1\not \cong B_{10}$; two Topcode-matrices are not equivalent from each other, also $T_{code}(B_{1})\neq T_{code}(B_{10})$, and $s_i\neq s\,'_i$ for $i=1,2,\dots ,n$. The vertices of the colored graph $B_1$ are colored with the numbers of a set $f(B_1)=\{0,2,4,6,7,9,11,15,19\}$ and the vertices of the colored graph $B_{10}$ are colored with the numbers of another set $g(B_{10})=\{1,3,5,8,10,12,13,14,16,17,18,20\}$, such that $f(B_1)\cup g(B_{10})=\{0,1,\dots ,20\}$, which is in the Topcode-matrix matching $\langle T_{code}(B_{1}), T_{code}(B_{10})\rangle$. This case was introduced in \cite{Wang-Xu-Yao-2017-Twin}, two colored graphs $B_1$ and $B_{10}$ admits a so-called \emph{twin odd-graceful labeling}.
\item Each colored graph $B_{k}$ with $2\leq k\leq 9$ shown in Fig.\ref{fig:111-example} (b)-(i) can be used as a private-key corresponding to the public-key $B_1$. Thereby, one public-key corresponds two or more public-keys.
\item Finding the public-key $B_1$ is impossible from the public-key string $s_i$, that is, no way for $s_i\rightarrow T_{code}(B_{1})\rightarrow B_1$. Also, no way for $s\,'_i\rightarrow T_{code}(B_{10})\rightarrow B_{10}$.
\item In the topological structure matching $\langle B_1, B_{10}\rangle$, finding a private-key $B_{10}$ will meet the Graph isomorphic Problem, a NP-problem as known, since there are two or more private-keys like $B_{10}$ to match with a public-key like $B_1$.
\item Finding the coloring for a private-key will be facing thousands of colorings of topological coding. And no algorithm is for finding out all colorings for a graph having huge numbers of vertices and edges.
\end{asparaenum}

The above problems tell us that topological number-based strings will have applications in the age of quantum computers.

\begin{example}\label{exa:8888888888}
Let $S_{tring}(B_1)=\{s_1(43),s_2(43),\dots ,s_M(43)\}$ with $M=(30)!$ be the set of topological number-based strings generated from the Topcode-matrix $T_{code}(B_1)$ shown in Eq.(\ref{eqa:Topcode-gpw-B-1}), where each topological number-based string $s_i$ has $43$ bytes.

Then we have \emph{compound number-based strings} $L_r=s_{r,1}(43)s_{r,2}(43)\cdots s_{r,M}(43)$ for $r\in \{1,2,\dots $, $M!\}$ with longer bytes, such that each string $L_r$ has $43\cdot M=43\cdot (30)!$ bytes, in total. It is a proof for our techniques having broad application potential.\qqed
\end{example}

\subsection{Main works}

Motivated from lattice-based cryptography, the authors in \cite{Yao-Wang-Su-Sun-ITOEC2020, yao-sun-su-wang-matching-groups-zhao-2020, Yao-Wang-Liu-ice-flower-2020arXiv, Bing-Yao-2020arXiv, Bing-Yao-Hongyu-Wang-arXiv-2020-homomorphisms, Yao-Zhang-Sun-Mu-Sun-Wang-Wang-Ma-Su-Yang-Yang-Zhang-2018arXiv, Zhang-Yang-Yao-Frontiers-Computer-2021} have proposed \emph{graphic lattices} and shown many researching results on graphic lattices in topological coding.

In this article, we will make new RLA-algorithms for new colorings: odd-edge graceful-difference total coloring, odd-edge edge-difference total coloring, odd-edge edge-magic total coloring, and odd-edge felicitous-difference total coloring. Moreover, we will design RLA-algorithms for adding randomly leaves to graphs continuously. We will build up caterpillar-graphic lattices and complementary graphic lattices made by uniformly $W$-magic total colorings, and show some connections between graphic lattices and integer lattices, and analyze the complexity of graph lattices introduced here.

\subsection{Basic notations and definitions}

For simplicity and accuracy, we will apply the standard terminology and notation in \cite{Bondy-2008} and \cite{Gallian2021} in this article. All graphs menetioned here are simple, also, they have no loop and multiple-edge. Others are as follows:
\begin{asparaenum}[$\bullet$ ]
\item The notation $[a,b]$ indicates an integer set $\{a, a+1, \dots, b\}$ with integers $a, b$ holding $0\leq a<b$, and $[a, b]^o$ denotes an odd-set $\{m, m+2, \dots, n\}$ with odd integers $m, n$ with respect to $1\leq m< n$.
\item The number of elements of a set $X$ is written as $|X|$.
\item $N(u)$ is the set of vertices adjacent with a vertex $u$, and the number $\deg_G(u)=|N(u)|$ is called the \emph{degree} of the vertex $u$. The maximum degree $\Delta(G)=\max \{\deg_G(u):u\in V(G)\}$, and the minimum degree $\delta(G)=\min \{\deg_G(u):u\in V(G)\}$.
\item A \emph{leaf} is a vertex $x$ having its degree $\textrm{deg}(x)=1=|N(x)|$.
\item A $(p,q)$-graph having $p$ vertices and $q$ edges.
\item The sentence ``\emph{adding a leaf $w$ to a graph $G$}'' is an graph operation defined by adding a new vertex $w$ to $G$, and join $w$ with a vertex $x$ of $G$ by an edge $xw$, the resultant graph is denoted as $G+xw$, called leaf-added graph, such that $w$ is a leaf of $G+xw$.
\item Let $G$ and $H$ be two graphs. If $H=G+xy-uv$ for edge $uv\in E(G)$ and edge $xy\not\in E(G)$, then we call $H$ \emph{$e^-_{+}$-dual} of $G$, also, \emph{added-edge-removed graph}.
\end{asparaenum}

\begin{defn} \label{defn:traditional-lattice00}
A \emph{lattice} $\textrm{\textbf{L}}(\textbf{B})$ defined as
\begin{equation}\label{equ:traditional-lattice00}
\textrm{\textbf{L}}(\textbf{B}) =\{x_1\textbf{\textrm{b}}_1+x_2\textbf{\textrm{b}}_2+\cdots +x_n\textbf{\textrm{b}}_n : x_i \in Z\}
\end{equation} is a set of all integer combinations of $n$ linearly independent vectors of a \emph{base} $\textbf{B}=(\textbf{\textrm{b}}_1,\textbf{\textrm{b}}_2,\dots $, $ \textbf{\textrm{b}}_n)$ in $R^m$ with $n\leq m$, where $Z$ is the integer set, $m$ is the \emph{dimension} and $n$ is the \emph{rank} of the lattice, and $\textbf{B}$ is called \emph{lattice base}. Particularly, if each component $b_{k,j}$ of each vector $\textbf{\textrm{b}}_k=(b_{k,1},b_{k,2},\dots,b_{k,m})$ of the lattice base $\textrm{\textbf{L}}(\textbf{B})$ is an integer, we get an \emph{integer lattice}, denoted as $\textbf{\textrm{L}}(Z\textbf{\textrm{B}})$.\qqed
\end{defn}

In the view of geometry, a lattice is a set of discrete points with periodic structure in $R^m$. For no confusion, we call $\textrm{\textbf{L}}(\textbf{B})$ defined in Definition \ref{defn:traditional-lattice00} \emph{traditional lattice} in the following discussion.

\begin{defn}\label{defn:topcode-matrix-definition}
\cite{Yao-Sun-Zhao-Li-Yan-ITNEC-2017, Yao-Zhao-Zhang-Mu-Sun-Zhang-Yang-Ma-Su-Wang-Wang-Sun-arXiv2019} A \emph{Topcode-matrix} (or \emph{topological coding matrix}) is defined as
\begin{equation}\label{eqa:Topcode-matrix}
\centering
{
\begin{split}
T_{code}= \left(
\begin{array}{ccccc}
x_{1} & x_{2} & \cdots & x_{q}\\
e_{1} & e_{2} & \cdots & e_{q}\\
y_{1} & y_{2} & \cdots & y_{q}
\end{array}
\right)_{3\times q}=
\left(\begin{array}{c}
X\\
E\\
Y
\end{array} \right)=(X,~E,~Y)^{T}
\end{split}}
\end{equation} where \emph{v-vector} $X=(x_1, x_2, \dots, x_q)$, \emph{e-vector} $E=(e_1$, $e_2 $, $ \dots $, $e_q)$, and \emph{v-vector} $Y=(y_1, y_2, \dots, y_q)$ consist of non-negative integers $e_i$, $x_i$ and $y_i$ for $i\in [1,q]$. We say $T_{code}$ to be \emph{evaluated} if there exists a function $\theta$ such that $e_i=\theta(x_i,y_i)$ for $i\in [1,q]$, and call $x_i$ and $y_i$ to be the \emph{ends} of $e_i$, and $q$ the \emph{size} of $T_{code}$.\qqed
\end{defn}

\subsection{Particular trees and complex graphs}

Recall, a tree $T$ has that any pair of two vertices can be connected by a unique path, each vertex of degree one is a called a \emph{leaf} in $T$. If removing all leaves of a tree produces a path $P=u_1u_2\cdots u_n$ of $n$ vertices for $n\geq 1$, we call this tree \emph{caterpillar}, and call the path $P$ \emph{spine path} of the caterpillar. If removing some leaves of a tree produces a caterpillar, then we call this tree \emph{lobster}.

\vskip 0.4cm

\begin{defn} \label{defn:four-caterpillars-relations}
$^*$ There are four caterpillars $H$, $T$, $T^*$ and $G$ in the following paragraphs:

$\bullet$ Let $L_{eaf}(H)$ be the set of all leaves of the caterpillar $H$. The delation of all leaves of $H$ makes a graph, denoted by $H-L_{eaf}(H)$. By the definition of a caterpillar, we have $H-L_{eaf}(H)=P=u_1u_2\cdots u_n$, where $P$ is the spine path of $H$. Let the set of leaves adjacent with a vertex $u_i\in V(P)$ be denoted as $L_{eaf}(u_i)=\{v_{i, j}:j\in [1, a_i]\}$ for $i\in [1,n]$, where integer $a_i\geq 0$. So the leaf set $L_{eaf}(H)=\bigcup ^n_{i=1}L_{eaf}(u_i)$, such that $V(H)=V(P)\cup L_{eaf}(H)$, the caterpillar $H$ has $m$ leaves, where $m=\sum^n_{i=1}|L_{eaf}(u_i)|=\sum^n_{i=1}a_i$.

\vskip 0.2cm

$\bullet$ The caterpillar $T$ has the spine path $P\,'=x_1x_2\cdots x_n$ for $n\geq 1$, and each vertex $x_i$ of the spine path $P\,'$ has the leaf set $L_{eaf}(x_i)=\{y_{i, j}:j\in [1, b_i]\}$ with $i\in [1,n]$. If the leaf sets of two caterpillars $H$ and $T$ hold $|L_{eaf}(u_i)|+|L_{eaf}(x_i)|=M$ for $i\in [1,n]$, we say two caterpillars $H$ and $T$ to be \emph{uniform $M$-leaf complementary trees}.

\vskip 0.2cm

$\bullet$ Let $P^*=x\,'_1x\,'_2\cdots x\,'_n$~$(n\geq 1)$ be the spine path of the caterpillar $T^*$, and each vertex $x\,'_i$ of the spine path $P^*$ has the leaf set $L_{eaf}(x\,'_i)=\{y\,'_{i, j}:j\in [1, b\,'_i]\}$ with $i\in [1,n]$. Suppose $x\,'_{j_1},x\,'_{j_2},\cdots ,x\,'_{j_n}$ is a permutation of vertices $x\,'_1,x\,'_2,\cdots ,x\,'_n$, if the leaf sets of two caterpillars $H$ and $T^*$ hold $|L_{eaf}(u_i)|+|L_{eaf}(x\,'_{j_i})|=M$ for $i\in [1,n]$, we call two caterpillars $H$ and $T^*$ to be \emph{$M$-leaf complementary trees}.

\vskip 0.2cm

$\bullet$ Assume that $P\,''=s_1s_2\cdots s_n$ is the spine path of the caterpillar $G$, and each vertex $s_i$ of the spine path of $G$ has its own leaf set $L_{eaf}(s_i)=\{t_{i, j}:j\in [1, c_i]\}$ for $i\in [1,n]$. If the leaf sets of three caterpillars $H$, $T$ and $G$ satisfy $|L_{eaf}(u_i)|+|L_{eaf}(x_i)|=|L_{eaf}(s_i)|$ for $i\in [1,n]$, then the caterpillar $G$ is called \emph{universal graph} of each of two caterpillars $H$ and $T$, and two caterpillars $H$ and $T$ are $G$-complementary trees about the caterpillar $G$. By the graph operation of view, we coincide two spine paths of two caterpillars $H$ and $T$ into one, and then get the caterpillar $G$.\qqed
\end{defn}

\vskip 0.4cm

Computing the number of caterpillars obtained by adding $m$ leaves will meet the Integer Partition Problem, this is not an easy work.

\begin{defn}\label{defnc:complex-graph-definition}
\cite{Wang-Yao-Su-Wanjia-Zhang-2021-IMCEC} A \emph{complex graph} $G$ has its own vertex set $V(G)=X^{\circ}\cup Y^{\square}$ with $X^{\circ}\cap Y^{\square}=\emptyset$, $X^{\circ}\neq \emptyset$ and $Y^{\square}\neq \emptyset$, such that the \emph{degree} $\textrm{deg}_G(u)\geq 0$ for each vertex $u\in X^{\circ}$, and the \emph{image-degree} $\textrm{deg}_G(v)< 0$ for each vertex $v\in Y^{\square}$. Moreover, a vertex $x$ of the complex graph $G$ is adjacent with $m$ leaves, then we define the \emph{leaf-degree} $l_{eaf}(x)=m$ if $x\in X^{\circ}$, and the \emph{leaf-image-degree} $l_{eaf}(x)=-m=mi^2$ if $x \in Y^{\square}$, where $i^2=-1$.\qqed
\end{defn}

In Fig.\ref{fig:maomao-degree-sequence}, each vertex in the spine path of the caterpillar $A_1$ has its own leaf-degree $l_{eaf}(u_1)=7$, $l_{eaf}(u_2)=3$, $l_{eaf}(u_3)=0$, $l_{eaf}(u_4)=3$, $l_{eaf}(u_5)=0$ and $l_{eaf}(u_6)=6$, so the caterpillar $A_1$ has its own leaf-degree sequence $d_{\textrm{leaf}}(A_1)=(7,3,0,3,0,6)$. Each vertex in the spine path of the caterpillar $A_2$ has its own leaf-degree $l_{eaf}(x_1)=7$, $l_{eaf}(x_2)=-3$, $l_{eaf}(x_3)=0$, $l_{eaf}(x_4)=-3$, $l_{eaf}(x_5)=0$ and $l_{eaf}(x_6)=6$, then the caterpillar $A_2$ has its own leaf-degree sequence $d_{\textrm{leaf}}(A_2)=(7,-3,0,-3,0,6)$.

\begin{figure}[h]
\centering
\includegraphics[width=16.4cm]{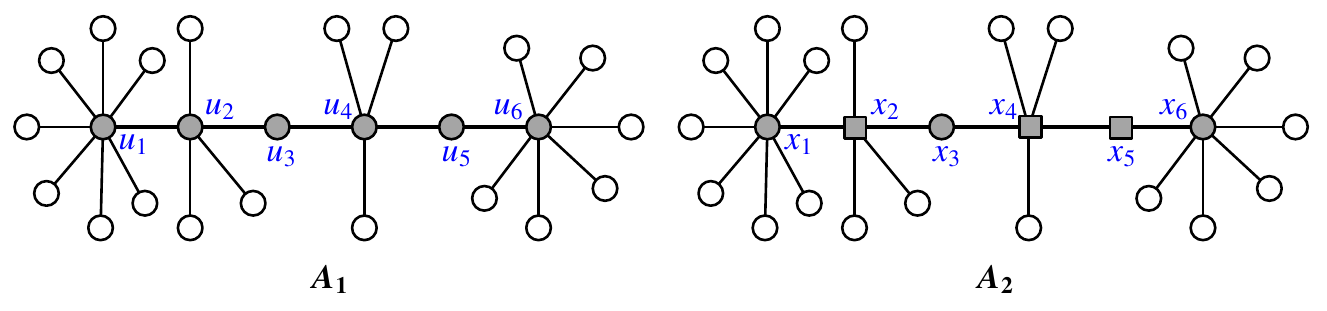}\\
\caption{\label{fig:maomao-degree-sequence} {\small Two caterpillars $A_1$ and $A_2$ have the same topological structure, that is, $A_1\cong A_2$.}}
\end{figure}

In Fig.\ref{fig:12-maomao-degree-sequence}, each vertex in the spine path of the caterpillar $B_1$ has its own leaf-degree $l_{eaf}(v_1)=1$, $l_{eaf}(v_2)=5$, $l_{eaf}(v_3)=8$, $l_{eaf}(v_4)=5$, $l_{eaf}(v_5)=8$ and $l_{eaf}(v_6)=2$, so the caterpillar $B_1$ has its own leaf-degree sequence $d_{\textrm{leaf}}(B_1)=(1,5,8,5,8,2)$. Each vertex in the spine path of the caterpillar $B_2$ has its own leaf-degree $l_{eaf}(y_1)=-2$, $l_{eaf}(y_2)=8$, $l_{eaf}(y_3)=5$, $l_{eaf}(y_4)=8$, $l_{eaf}(y_5)=5$ and $l_{eaf}(y_6)=-1$, then the caterpillar $B_2$ has its own leaf-degree sequence $d_{\textrm{leaf}}(B_2)=(-2,8,5,8,5,-1)$.

It is noticeable, Fig.\ref{fig:maomao-degree-sequence} and Fig.\ref{fig:12-maomao-degree-sequence} show us two groups of isomorphic caterpillars, that is, $A_1\cong A_2$ and $B_1\cong B_2$; the caterpillar $A_1$ and the caterpillar $B_1$ are the uniformly $8$-leaf complement trees; and the caterpillar $A_2$ and the caterpillar $B_2$ are the $8$-leaf complement trees, since there are matchings $y_1\leftrightarrow x_6$, $y_2\leftrightarrow x_3$, $y_3\leftrightarrow x_2$, $y_4\leftrightarrow x_5$, $y_5\leftrightarrow x_4$ and $y_5\leftrightarrow x_1$.

\begin{figure}[h]
\centering
\includegraphics[width=16.4cm]{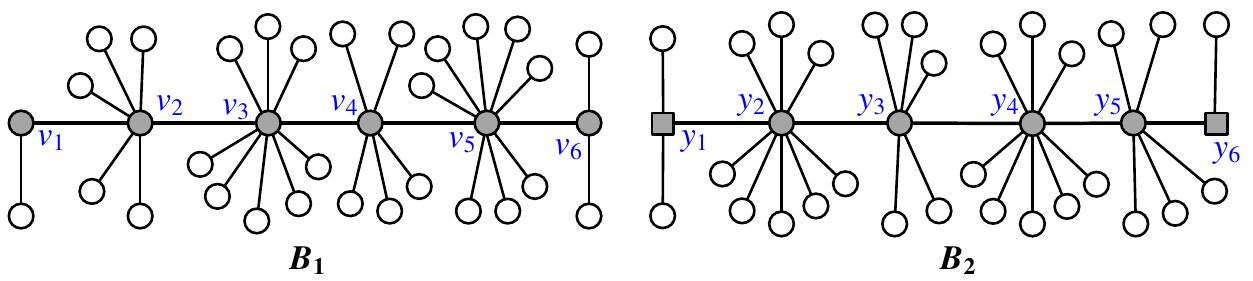}\\
\caption{\label{fig:12-maomao-degree-sequence} {\small Two caterpillars $A_1$ and $B_1$ are the uniformly $8$-leaf complement trees, two caterpillars $A_2$ and $B_2$ are the $8$-leaf complement trees, and $B_1\cong B_2$, where two caterpillars $A_1$ and $A_2$ are shown in Fig.\ref{fig:maomao-degree-sequence}.}}
\end{figure}

\begin{problem}\label{problem:99999}
Suppose that a caterpillar $H$ (as a public-key) and another caterpillar $T$ (as a private-key) have the spine paths of the same length, \textbf{find} the conditions (as a topological authentication) if these two caterpillars are $M$-leaf complement trees defined in Definition \ref{defn:four-caterpillars-relations}.
\end{problem}

\begin{problem}\label{problem:complement-complex-graphic-matching}
Suppose that a complex graph $G$ (as a public-key) and another complex graph $J$ (as a private-key) have the same number of vertices. The complex graph $G$ has its own degree sequence $\textrm{deg}(G)=(\textrm{deg}_G(u_1),\textrm{deg}_G(u_2),\dots ,\textrm{deg}_G(u_n))$, and the complex graph $J$ has its own degree sequence $\textrm{deg}(J)=(\textrm{deg}_J(v_1),\textrm{deg}_J(v_2)$, $\dots $, $\textrm{deg}_J(v_n))$. If there is a constant $M$, such that $\textrm{deg}_G(u_i)+\textrm{deg}_J(v_i)=M$ for $i\in [1,n]$, we call two complex graphs $G$ and $J$ \emph{uniformly $M$-complement complex graphic matching}, denoted as $M_{comp}\langle G,J \rangle$. \textbf{Characterize} each uniformly $M$-complement complex graphic matching $M_{comp}\langle G,J \rangle$.
\end{problem}

\section{New colorings and dual-type colorings}

\subsection{Basic labelings and colorings}

\begin{defn} \label{defn:basic-W-type-labelings}
\cite{Gallian2021, Yao-Sun-Zhang-Mu-Sun-Wang-Su-Zhang-Yang-Yang-2018arXiv, Bing-Yao-Cheng-Yao-Zhao2009, Zhou-Yao-Chen-Tao2012} Suppose that a connected $(p,q)$-graph $G$ admits a mapping $\theta:V(G)\rightarrow \{0,1,2,\dots \}$. For each edge $xy\in E(G)$, the induced edge color is defined as $\theta(xy)=|\theta(x)-\theta(y)|$. Write vertex color set by $\theta(V(G))=\{\theta(u):u\in V(G)\}$, and edge color set by
$\theta(E(G))=\{\theta(xy):xy\in E(G)\}$. There are the following constraint conditions:

\textbf{C-1.} $|\theta(V(G))|=p$;

\textbf{C-2.} $\theta(V(G))\subseteq [0,q]$, $\min \theta(V(G))=0$;

\textbf{C-3.} $\theta(V(G))\subset [0,2q-1]$, $\min \theta(V(G))=0$;

\textbf{C-4.} $\theta(E(G))=\{\theta(xy):xy\in E(G)\}=[1,q]$;

\textbf{C-5.} $\theta(E(G))=\{\theta(xy):xy\in E(G)\}=[1,2q-1]^o$;

\textbf{C-6.} $G$ is a bipartite graph with vertex bipartition $(X,Y)$ such that $\max\{\theta(x):x\in X\}< \min\{\theta(y):y\in Y\}$ ($\max \theta(X)<\min \theta(Y)$ for short);

\textbf{C-7.} $G$ is a tree having a perfect matching $M$ holding $\theta(x)+\theta(y)=q$ for each matching edge $xy\in M$; and

\textbf{C-8.} $G$ is a tree having a perfect matching $M$ holding $\theta(x)+\theta(y)=2q-1$ for each matching edge $xy\in M$.

\noindent \textbf{Then}:
\begin{asparaenum}[\textbf{\textrm{Lab}}-1.]
\item A \emph{graceful labeling} $\theta$ satisfies C-1, C-2 and C-4 at the same time.
\item A \emph{set-ordered graceful labeling} $\theta$ holds C-1, C-2, C-4 and C-6 true.
\item A \emph{strongly graceful labeling} $\theta$ holds C-1, C-2, C-4 and
C-7 true.
\item A \emph{set-ordered strongly graceful labeling} $\theta$ holds C-1, C-2, C-4, C-6 and C-7 true.
\item An \emph{odd-graceful labeling} $\theta$ holds C-1, C-3 and C-5 true.
\item A \emph{set-ordered odd-graceful labeling} $\theta$ abides C-1, C-3, C-5 and C-6.
\item A \emph{strongly odd-graceful labeling} $\theta$ holds C-1, C-3, C-5 and C-8, simultaneously.
\item A \emph{set-ordered strongly odd-graceful labeling} $\theta$ holds C-1, C-3, C-5, C-6 and C-8 true.\qqed
\end{asparaenum}
\end{defn}

\begin{defn} \label{defn:basic-W-type-labelings11}
$^*$ In Definition \ref{defn:basic-W-type-labelings}, if $|\theta(V(G))|<p$ holds true, we get

(i) A \emph{graceful coloring} $\theta$ satisfies C-2 and C-4 defined in Definition \ref{defn:basic-W-type-labelings}.

(ii) A \emph{set-ordered graceful coloring} $\theta$ satisfies C-2, C-4 and C-6 defined in Definition \ref{defn:basic-W-type-labelings}.

(iii) An \emph{odd-graceful coloring} $\theta$ satisfies C-3 and C-5 defined in Definition \ref{defn:basic-W-type-labelings}.

(ii) A \emph{set-ordered odd-graceful coloring} $\theta$ satisfies C-3, C-5 and C-6 defined in Definition \ref{defn:basic-W-type-labelings}.\qqed
\end{defn}

\begin{defn} \label{defn:new-graceful-strongly-colorings}
\cite{Bing-Yao-2020arXiv} Suppose that a connected $(p,q)$-graph $G$ ($\neq K_p$) admits a total coloring $f:V(G)\cup E(G)\rightarrow [1,M]$, and there are $f(x)=f(y)$ for some pairs of vertices $x,y\in V(G)$. Write $f(S)=\{f(w):w\in S\}$ for a non-empty set $S\subseteq V(G)\cup E(G)$ and let $k$ be a fixed positive integer. There are the following constraint conditions:
\begin{asparaenum}[(1$^\ast$)]
\item \label{27TProper01} $|f(V(G))|< p$;
\item \label{27TProper02} $|f(E(G))|=q$;
\item \label{27TGraceful-001} $f(V(G))\subseteq [1,M]$, $\min f(V(G))=1$;
\item \label{27TOdd-graceful-001} $f(V(G))\subset [1,2q+1]$, $\min f(V(G))=1$;
\item \label{27TGraceful-002} $f(E(G))=[1,q]$;
\item \label{27Tmodulo-01} $f(E(G))=[0,q-1]$;
\item \label{27TOdd-graceful-002} $f(E(G))=[1,2q-1]^o$;
\item \label{27Teven-edge-set-11} $f(E(G))=[2, 2q]^e$;
\item \label{27Tsequential-edge-set} $f(E(G))=[c,c+q-1]$;
\item \label{27Tgraceful-002} $f(uv)=|f(u)-f(v)|$ for each edge $uv\in E(G)$;
\item \label{27Tfelicitous-002} $f(uv)=f(u)+f(v)$ for each edge $uv\in E(G)$;
\item \label{27Tedge-labels-even-odd} For each edge $uv\in E(G)$, $f(uv)=f(u)+f(v)$ when $f(u)+f(v)$ is even, and $f(uv)=f(u)+f(v)+1$ when $f(u)+f(v)$ is odd;
\item \label{27Tmodulo-00} $f(uv)=f(u)+f(v)~(\textrm{mod}~q)$ for each edge $uv\in E(G)$;
\item \label{27Tmodulo-11} $f(uv)=f(u)+f(v)~(\textrm{mod}~2q)$ for each edge $uv\in E(G)$;
\item \label{27Tedge-difference} $f(uv)+|f(u)-f(v)|=k$ for each edge $uv\in E(G)$;
\item \label{27Tgraceful-difference} $\big |f(uv)-|f(u)-f(v)|\big |=k$ for each edge $uv\in E(G)$;
\item \label{27Tfelicitous-differences} $|f(u)+f(v)-f(uv)|=k$ for each edge $uv\in E(G)$;
\item \label{27Tedge-magic} $f(u)+f(uv)+f(v)=k$ for each edge $uv\in E(G)$;
\item \label{27Tmodulo-ordered} There exists an integer $k$ so that $\min \{f(u),f(v)\}\leq k <\max\{f(u),f(v)\}$ for each edge $uv\in E(G)$; and
\item \label{27TSet-ordered} $(X,Y)$ is the bipartition of a bipartite graph $G$ such that $\max f(X)< \min f(Y)$.
\end{asparaenum}
\textbf{A \emph{$W$-type} coloring $f$ is one of the following colorings}:
\begin{asparaenum}[\textrm{TCL}-1. ]
\item An \emph{edge-gracefully total coloring} if (\ref{27TProper01}$^\ast$), (\ref{27TProper02}$^\ast$) and (\ref{27TGraceful-002}$^\ast$) hold true.
\item A \emph{set-ordered edge-gracefully total coloring} if (\ref{27TProper01}$^\ast$), (\ref{27TProper02}$^\ast$), (\ref{27TGraceful-002}$^\ast$) and (\ref{27TSet-ordered}$^\ast$) hold true.

\item An \emph{edge-odd-gracefully total coloring} if (\ref{27TProper01}$^\ast$), (\ref{27TOdd-graceful-001}$^\ast$) and (\ref{27TOdd-graceful-002}$^\ast$) hold true.
\item A \emph{set-ordered edge-odd-gracefully total coloring} if (\ref{27TProper01}$^\ast$), (\ref{27TOdd-graceful-001}$^\ast$), (\ref{27TOdd-graceful-002}$^\ast$) and (\ref{27TSet-ordered}$^\ast$) hold true.

\item A \emph{gracefully total coloring} if (\ref{27TProper01}$^\ast$), (\ref{27TGraceful-001}$^\ast$), (\ref{27TGraceful-002}$^\ast$) and (\ref{27Tgraceful-002}$^\ast$) hold true.
\item A \emph{set-ordered gracefully total coloring} if (\ref{27TProper01}$^\ast$), (\ref{27TGraceful-001}$^\ast$), (\ref{27TGraceful-002}$^\ast$), (\ref{27Tgraceful-002}$^\ast$) and (\ref{27TSet-ordered}$^\ast$) hold true.
\item An \emph{odd-gracefully total coloring} if (\ref{27TProper01}$^\ast$), (\ref{27TOdd-graceful-001}$^\ast$), (\ref{27TOdd-graceful-002}$^\ast$) and (\ref{27Tgraceful-002}$^\ast$) hold true.
\item A \emph{set-ordered odd-gracefully total coloring} if (\ref{27TProper01}$^\ast$), (\ref{27TOdd-graceful-001}$^\ast$), (\ref{27TOdd-graceful-002}$^\ast$), (\ref{27Tgraceful-002}$^\ast$) and (\ref{27TSet-ordered}$^\ast$) hold true.
\item A \emph{felicitous total coloring} if (\ref{27TGraceful-001}$^\ast$), (\ref{27Tmodulo-00}$^\ast$) and (\ref{27Tmodulo-01}$^\ast$) hold true.
\item A \emph{set-ordered felicitous total coloring} if (\ref{27TGraceful-001}$^\ast$),(\ref{27Tmodulo-00}$^\ast$), (\ref{27Tmodulo-01}$^\ast$) and (\ref{27TSet-ordered}$^\ast$) hold true.
\item An \emph{odd-elegant total coloring} if (\ref{27TOdd-graceful-001}$^\ast$), (\ref{27Tmodulo-11}$^\ast$) and (\ref{27TOdd-graceful-002}$^\ast$) hold true.
\item A \emph{set-ordered odd-elegant total coloring} if (\ref{27TOdd-graceful-001}$^\ast$), (\ref{27Tmodulo-11}$^\ast$), (\ref{27TOdd-graceful-002}$^\ast$) and (\ref{27TSet-ordered}$^\ast$) hold true.
\item A \emph{harmonious total coloring} if (\ref{27TGraceful-001}$^\ast$), (\ref{27Tmodulo-00}$^\ast$) and (\ref{27Tmodulo-01}$^\ast$) hold true, and when $G$ is a tree, exactly one edge color may be used on two vertices.
\item A \emph{set-ordered harmonious total coloring} if (\ref{27TGraceful-001}$^\ast$), (\ref{27Tmodulo-00}$^\ast$), (\ref{27Tmodulo-01}$^\ast$) and (\ref{27TSet-ordered}$^\ast$) hold true.
\item A \emph{strongly harmonious total coloring} if (\ref{27TGraceful-001}$^\ast$), (\ref{27Tmodulo-00}$^\ast$), (\ref{27Tmodulo-01}$^\ast$) and (\ref{27Tmodulo-ordered}$^\ast$) hold true.
\item A \emph{properly even harmonious total coloring} if (\ref{27TOdd-graceful-001}$^\ast$), (\ref{27Teven-edge-set-11}$^\ast$) and (\ref{27Tmodulo-11}$^\ast$) hold true.
\item A \emph{$c$-harmonious total coloring} if (\ref{27TGraceful-001}$^\ast$), (\ref{27Tfelicitous-002}$^\ast$) and (\ref{27Tsequential-edge-set}$^\ast$) hold true.
\item An \emph{even sequential harmonious total coloring} if (\ref{27TOdd-graceful-001}$^\ast$), (\ref{27Tedge-labels-even-odd}$^\ast$) and (\ref{27Teven-edge-set-11}$^\ast$) hold true.
\item A \emph{pan-harmonious total coloring} if (\ref{27TProper02}$^\ast$) and (\ref{27Tfelicitous-002}$^\ast$) hold true.
\item An \emph{edge-magic total coloring} if (\ref{27Tedge-magic}$^\ast$) holds true.
\item A \emph{set-ordered edge-magic total coloring} if (\ref{27Tedge-magic}$^\ast$) and (\ref{27TSet-ordered}$^\ast$) hold true.
\item A \emph{gracefully edge-magic total coloring} if (\ref{27TGraceful-002}$^\ast$) and (\ref{27Tedge-magic}$^\ast$) hold true.
\item A \emph{set-ordered graceful edge-magic total coloring} if (\ref{27TGraceful-002}$^\ast$), (\ref{27Tedge-magic}$^\ast$) and (\ref{27TSet-ordered}$^\ast$) hold true.
\item An \emph{edge-difference total coloring} if (\ref{27Tedge-difference}$^\ast$) holds true.
\item A \emph{set-ordered edge-difference total coloring} if (\ref{27Tedge-difference}$^\ast$) and (\ref{27TSet-ordered}$^\ast$) hold true.
\item A \emph{graceful edge-difference total coloring} if (\ref{27TGraceful-002}$^\ast$) and (\ref{27Tedge-difference}$^\ast$) hold true.
\item A \emph{set-ordered graceful edge-difference total coloring} if (\ref{27TGraceful-002}$^\ast$), (\ref{27Tedge-difference}$^\ast$) and (\ref{27TSet-ordered}$^\ast$) hold true.
\item A \emph{felicitous-difference total coloring} if (\ref{27TProper01}$^\ast$), (\ref{27TProper02}$^\ast$) and (\ref{27Tfelicitous-differences}$^\ast$) hold true.
\item A \emph{set-ordered felicitous-difference total coloring} if (\ref{27TProper01}$^\ast$), (\ref{27TProper02}$^\ast$), (\ref{27Tfelicitous-differences}$^\ast$) and (\ref{27TSet-ordered}$^\ast$) hold true.
\item An \emph{graceful-difference total coloring} if (\ref{27Tgraceful-difference}$^\ast$) holds true.
\item A \emph{set-ordered graceful-difference total coloring} if (\ref{27Tgraceful-difference}$^\ast$) and (\ref{27TSet-ordered}$^\ast$) hold true.
\item An \emph{edge-graceful graceful-difference total coloring} if (\ref{27TGraceful-002}$^\ast$) and (\ref{27Tgraceful-difference}$^\ast$) hold true.
\item A \emph{set-ordered edge-graceful graceful-difference total coloring} if (\ref{27TGraceful-002}$^\ast$), (\ref{27Tgraceful-difference}$^\ast$) and (\ref{27TSet-ordered}$^\ast$) hold true.\qqed
\end{asparaenum}
\end{defn}

\subsection{New labelings and colorings}

For the convenience of statement, the word ``magic-type'' is as the same as the word ``$W$-magic'' in the following discussion.

\begin{defn}\label{defn:group-total-labelings-definition}
$^*$ Let $G$ be a bipartite $(p,q)$-graph with vertex bipartition $(X,Y)$, then $V(G)=X\cup Y$ with $X\cap Y=\emptyset$. There are four labelings defined as follows:

(i) A \emph{set-ordered odd-edge edge-magic total labeling} is a mapping $f:V(G)\cup E(G)\rightarrow [0,2q-1]$, such that $f(x)\neq f(y)$ for $x,y\in V(G)$, and the set-ordered restriction $\max f(X)<\min f(Y)$ holds true, and the edge color set $f(E(G))=[1,2q-1]^o$, as well as each edge $uv\in E(G)$ holds $f(u)+f(uv)+f(v)=k_1$, where $k_1$ is a positive integer.

(ii) A \emph{set-ordered odd-edge edge-difference total labeling} is a mapping $g:V(G)\cup E(G)\rightarrow [0,2q-1]$, such that $g(x)\neq g(y)$ for $x,y\in V(G)$, the set-ordered restriction $\max g(X)<\min g(Y)$ holds true, and the edge color set $g(E(G))=[1,2q-1]^o$, as well as each edge $uv\in E(G)$ satisfies $g(uv)+\big |g(u)-g(v)\big |=k_2$, where $k_2$ is a positive integer.

(iii) A \emph{set-ordered odd-edge felicitous-difference total labeling} is a mapping $h:V(G)\cup E(G)\rightarrow [0,2q-1]$, such that $h(x)\neq h(y)$ for $x,y\in V(G)$, the set-ordered restriction $\max h(X)<\min h(Y)$ holds true, and the edge color set $h(E(G))=[1,2q-1]^o$, as well as each edge $uv\in E(G)$ satisfies $\big |h(u)+h(v)-h(uv)\big |=k_3$, where $k_3$ is a non-negative integer.

(vi) A \emph{set-ordered odd-edge graceful-difference total labeling} is a mapping $\alpha:V(G)\cup E(G)\rightarrow [0,2q-1]$, such that $\alpha(x)\neq \alpha(y)$ for $x,y\in V(G)$, the set-ordered restriction $\max \alpha(X)<\min \alpha(Y)$ holds true, and the edge color set $\alpha(E(G))=[1,2q-1]^o$, as well as each edge $uv\in E(G)$ satisfies $\big ||\alpha(u)-\alpha(v)|-\alpha(uv)\big |=k_4$, where $k_4$ is a non-negative integer.\qqed
\end{defn}

\begin{defn} \label{defn:group-total-colorings-definition}
$^*$ Let ``$W$-magic'' be one of edge-magic, edge-difference, felicitous-difference, graceful-difference. We will obtain four \emph{odd-edge $W$-magic total labelings} if we remove the restriction ``set-ordered'' from Definition \ref{defn:group-total-labelings-definition}. If we allow that there is at least a pair of vertices colored with the same color in Definition \ref{defn:group-total-labelings-definition}, we will obtain four \emph{odd-edge $W$-magic total colorings} (see Fig.\ref{fig:4magice-total-colorings}). \qqed
\end{defn}

\begin{example}\label{exa:8888888888}
Fig.\ref{fig:4magice-total-colorings} is for illustrating Definition \ref{defn:group-total-labelings-definition} and Definition \ref{defn:group-total-colorings-definition}, we can see:

(a) The graph $M_0$ admits a set-ordered odd-graceful coloring $f_0$, since there are two vertices colored with 25. And $f_0(E(M_0))=\{f_0(xy)=f_0(y)-f_0(x):xy\in E(M_0)\}=[1,31]^o$.

(b) The graph $M_1$ admits a set-ordered odd-edge edge-magic total coloring $f_1$, since there are two vertices colored with 25. Each edge $xy\in E(M_1)$ holds $f_1(x)+f_1(xy)+f_1(y)=42$ true.

(c) The graph $M_2$ admits a set-ordered odd-edge edge-difference total coloring $f_2$, since there are two vertices colored with 6. Each edge $xy\in E(M_2)$ holds $f_2(xy)+|f_2(x)-f_2(y)|=f_2(xy)+f_2(y)-f_2(x)=32$ true.

(d) The graph $M_3$ admits a set-ordered odd-edge felicitous-difference total coloring $f_3$, since there are two vertices colored with 25. Each edge $xy\in E(M_3)$ holds $|f_3(x)+f_3(y)-f_3(xy)|=10$ true.

(e) The graph $M_4$ admits a set-ordered odd-edge graceful-difference total coloring $f_4$, since there are two vertices colored with 17. Each edge $xy\in E(M_4)$ holds $\big ||f_4(x)-f_4(y)|-f_4(xy)\big |=[f_4(y)-f_4(x)]-f_4(xy)=0$ true.\qqed
\end{example}

\begin{figure}[h]
\centering
\includegraphics[width=16.4cm]{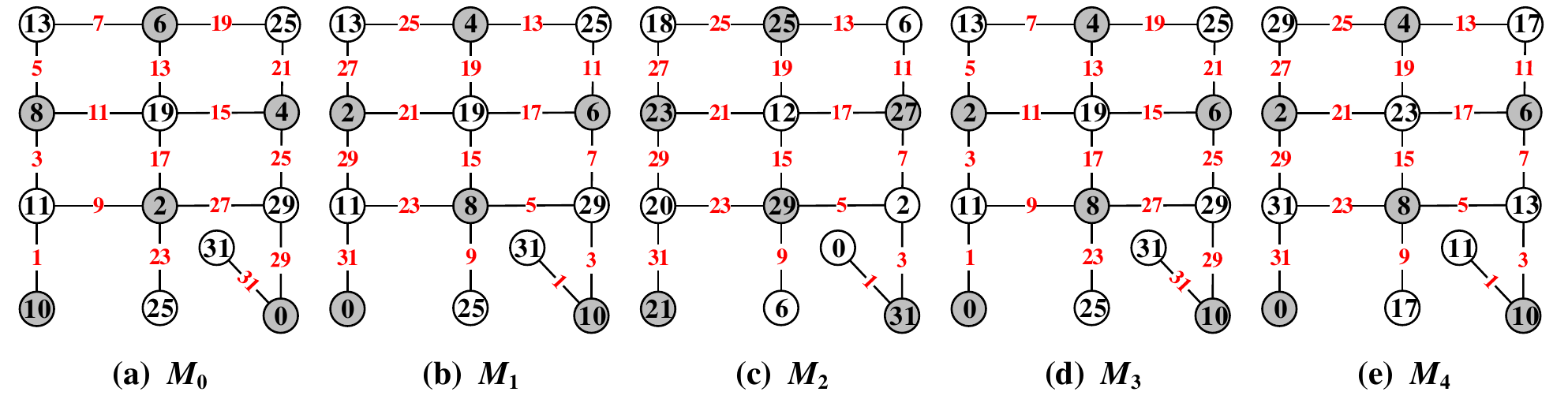}\\
\caption{\label{fig:4magice-total-colorings}{\small A scheme for illustrating Definition \ref{defn:group-total-labelings-definition} and Definition \ref{defn:group-total-colorings-definition}.}}
\end{figure}

\begin{example}\label{exa:8888888888}
Fig.\ref{fig:basic-definition-group} is for illustrating Definition \ref{defn:basic-W-type-labelings}, Definition \ref{defn:group-total-labelings-definition} and Definition \ref{defn:group-total-colorings-definition}, there are:

(a) The graph $R_1$ admits a set-ordered graceful labeling $g_1$, such that the edge color set $g_1(E(R_1))=\{g_1(xy)=g_1(y)-g_1(x):xy\in E(R_1)\}=[1,13]$.

(a-1) The graph $O_1$ admits a set-ordered odd-graceful labeling $g\,'_1$, such that the edge color set $g\,'_1(E(O_1))=\{g\,'_1(xy)=g\,'_1(y)-g\,'_1(x):xy\in E(O_1)\}=[1,25]^o$.

(b) The graph $R_2$ admits a set-ordered felicitous-difference total labeling $g_2$, such that each edge $xy\in E(R_2)$ holds $|g_2(x)+g_2(y)-g_2(xy)|=7$ true, and the edge color set $g_2(E(O_2))=[1,13]$.

(b-2) The graph $O_2$ admits a set-ordered odd-edge felicitous-difference total labeling $g\,'_2$, such that each edge $xy\in E(O_2)$ holds $|g\,'_2(x)+g\,'_2(y)-g\,'_2(xy)|=7$ true, and the edge color set $g\,'_2(E(O_2))=[1,25]^o$.

(c) The graph $R_3$ admits a set-ordered edge-magic total labeling $g_3$, such that each edge $xy\in E(R_3)$ holds $g_3(x)+g_3(xy)+g_3(y)=21$ true, and the edge color set$g_3(E(O_3))=[1,13]$.

(c-1) The graph $O_3$ admits a set-ordered odd-edge edge-magic total labeling $g\,'_3$, such that each edge $xy\in E(O_3)$ holds $g\,'_3(x)+g\,'_3(xy)+g\,'_3(y)=40$ true, and the edge color set $g\,'_3(E(O_3))=[1,25]^o$.

(d) The graph $R_4$ admits a set-ordered odd-edge graceful-difference total labeling $g_4$, such that each edge $xy\in E(R_4)$ holds $\big ||g_4(x)-g_4(y)|-g_4(xy)\big |=[g_4(y)-g_4(x)]-g_4(xy)=0$ true, and the edge color set $g_4(E(O_4))=[1,13]$.

(d-1) The graph $O_4$ admits a set-ordered odd-edge graceful-difference total labeling $g\,'_4$, such that each edge $xy\in E(O_4)$ holds $\big ||g\,'_4(x)-g\,'_4(y)|-g\,'_4(xy)\big |=[g\,'_4(y)-g\,'_4(x)]-g\,'_4(xy)=0$ true, and the edge color set $g\,'_4(E(O_4))=[1,25]^o$.

(e) The graph $R_5$ admits a set-ordered edge-difference total labeling $g_5$, such that each edge $xy\in E(R_5)$ holds $g_5(xy)+|g_5(x)-g_5(y)|=g_5(xy)+g_5(y)-g_5(x)=14$ true, and the edge color set $g_5(E(O_5))=[1,13]$.

(e-1) The graph $O_5$ admits a set-ordered odd-edge edge-difference total labeling $g\,'_5$, such that each edge $xy\in E(O_5)$ holds $g\,'_5(xy)+|g\,'_5(x)-g\,'_5(y)|=g\,'_5(xy)+g\,'_5(y)-g\,'_5(x)=26$ true, and the edge color set $g\,'_5(E(O_5))=[1,25]^o$.\qqed
\end{example}

\begin{figure}[h]
\centering
\includegraphics[width=16.4cm]{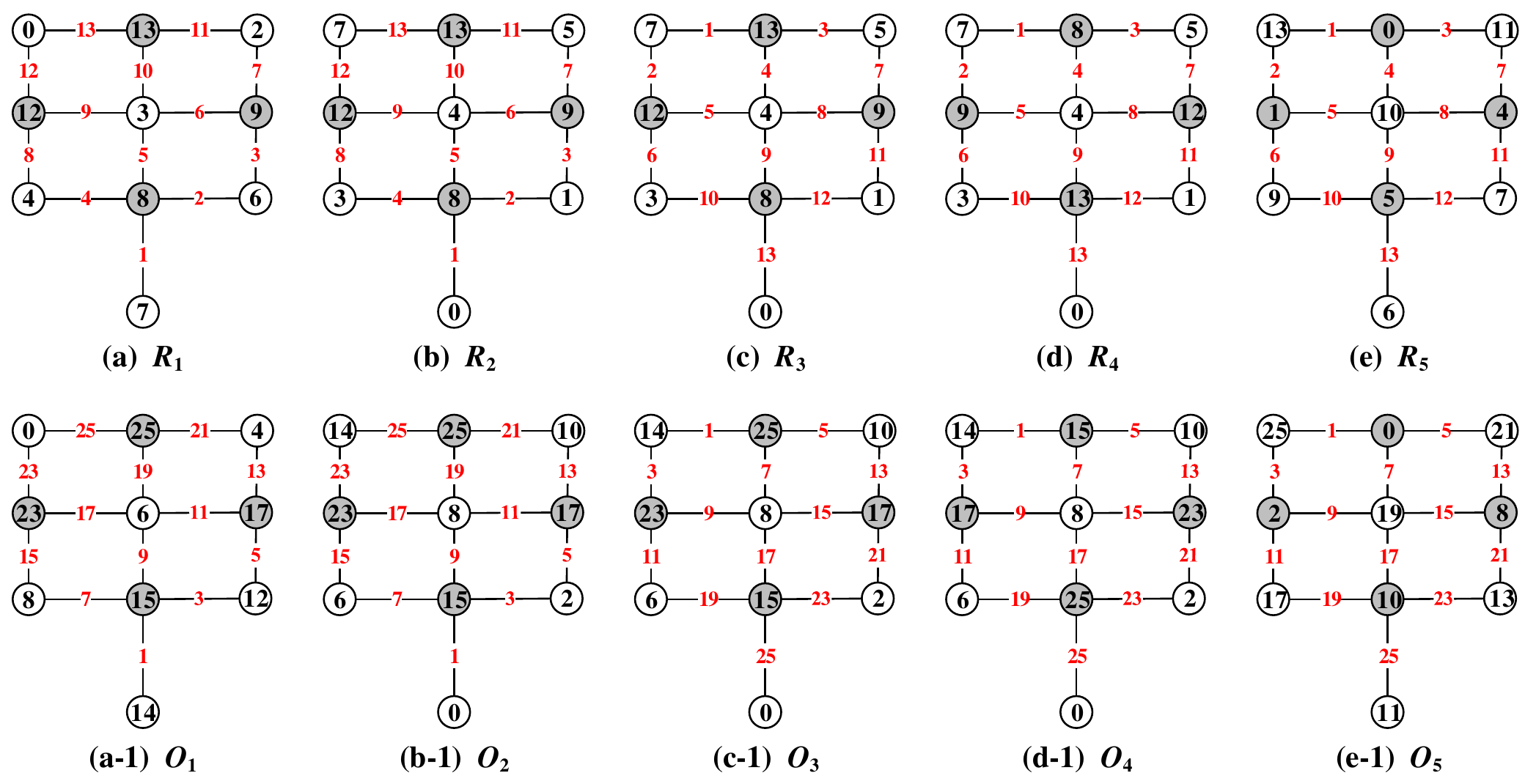}\\
\caption{\label{fig:basic-definition-group}{\small A scheme for illustrating Definition \ref{defn:basic-W-type-labelings} and Definition \ref{defn:basic-W-type-labelings11}.}}
\end{figure}

\begin{example}\label{exa:8888888888}
We present Fig.\ref{fig:basic-definition-perfecr-m} for illustrating Definition \ref{defn:basic-W-type-labelings}, Definition \ref{defn:group-total-labelings-definition} and Definition \ref{defn:group-total-colorings-definition}, there are:

(A) The tree $T_1$ with a perfect matching $M(T_1)$ admits a \emph{set-ordered strongly graceful labeling} $h_1$, such that each matching edge $uv\in M(T_1)$ holds $h_1(u)+h_1(v)=11$ true, and the edge color set $h_1(E(T_1))=\{h_1(xy)=h_1(y)-h_1(x):xy\in E(T_1)\}=[1,11]$.

(A-1) The tree $P_1$ with a perfect matching $M(P_1)$ admits a \emph{set-ordered strongly odd-graceful labeling} $h\,'_1$, such that each matching edge $uv\in M(P_1)$ holds $h\,'_1(u)+h\,'_1(v)=21$ true, and the edge color set $h\,'_1(E(P_1))=\{h\,'_1(xy)=h\,'_1(y)-h\,'_1(x):xy\in E(P_1)\}=[1,21]^o$.

(B) The tree $T_2$ with a perfect matching $M(T_2)$ admits a set-ordered felicitous-difference total labeling $h_2$: (i) each edge $xy\in E(T_2)$ holds $|h_2(x)+h_2(y)-h_2(xy)|=5$ true; (ii) the edge color set $h_2(E(T_2))=[1,11]$; and (iii) the matching edge set $\{h_2(u)+h_2(v):uv\in M(T_2)\}=\{$6, 8, 10, 12, 14, 16$\}$.

(B-1) The tree $P_2$ with a perfect matching $M(P_2)$ admits a set-ordered odd-edge felicitous-difference total labeling $h\,'_2$: (i) each edge $xy\in E(P_2)$ holds $|h\,'_2(x)+h\,'_2(y)-h\,'_2(xy)|=10$ true; (ii) the edge color set $h\,'_2(E(P_2))=[1,21]^o$; and (iii) the matching edge set $\{h\,'_2(u)+h\,'_2(v):uv\in M(P_2)\}=\{11,15,19,23,17,31\}$.

(C) The tree $T_3$ with a perfect matching $M(T_3)$ admits a set-ordered edge-magic total coloring $h_3$: (i) each edge $xy\in E(T_3)$ holds $h_3(x)+h_3(xy)+h_3(y)=17$ true; (ii) the edge color set $h_3(E(T_3))=[1,11]$; and (iii) the matching edge set $\{h_3(u)+h_3(v):uv\in M(T_3)\}=\{$6, 8, 10, 12, 14, 16$\}$.

(C-1) The tree $P_3$ with a perfect matching $M(P_3)$ admits a set-ordered odd-edge edge-magic total coloring $h\,'_3$: (i) each edge $xy\in E(P_3)$ holds $h\,'_3(x)+h\,'_3(xy)+h\,'_3(y)=32$ true; (ii) the edge color set $h\,'_3(E(P_3))=[1,21]^o$; and (iii) the matching edge set $\{h\,'_3(u)+h\,'_3(v):uv\in M(P_3)\}=\{$11, 15, 19, 23, 17, 31$\}$.

(D) The tree $T_4$ with a perfect matching $M(T_4)$ with a perfect matching $M(T_4)$ admits a \emph{set-ordered strongly graceful-difference total labeling} $h_4$: (i) each edge $xy\in E(T_4)$ holds $\big ||h_4(x)-h_4(y)|-h_4(xy)\big |=0$ true; (ii) the edge color set $h_4(E(T_4))=[1,11]$; and (iii) each matching edge $uv\in M(T_4)$ holds $h_4(u)+h_4(v)=11$ true.

(D-1) The tree $P_4$ with a perfect matching $M(P_4)$ with a perfect matching $M(P_4)$ admits a \emph{set-ordered strongly odd-edge graceful-difference total labeling} $h\,'_4$: (i) each edge $xy\in E(P_4)$ holds $\big ||h\,'_4(x)-h\,'_4(y)|-h\,'_4(xy)\big |=0$ true; (ii) the edge color set $h\,'_4(E(P_4))=[1,21]^o$; and (iii) each matching edge $uv\in M(P_4)$ holds $h\,'_4(u)+h\,'_4(v)=21$ true.

(E) The tree $T_5$ with a perfect matching $M(T_5)$ admits a \emph{set-ordered strongly edge-difference total labeling} $h_5$: (i) each edge $xy\in E(T_5)$ holds $h_5(xy)+|h_5(x)-h_5(y)|=12$ true; (ii) the edge color set $h_5(E(T_5))=[1,11]$; and (iii) each matching edge $uv\in M(T_5)$ holds $h_5(u)+h_5(v)=11$ true.

(E-1) The tree $P_5$ with a perfect matching $M(P_5)$ admits a \emph{set-ordered strongly odd-edge edge-difference total labeling} $h\,'_5$: (i) each edge $xy\in E(P_5)$ holds $h\,'_5(xy)+|h\,'_5(x)-h\,'_5(y)|=22$ true; (ii) the edge color set $h\,'_5(E(P_5))=[1,21]^o$; and (iii) each matching edge $uv\in M(P_5)$ holds $h\,'_5(u)+h\,'_5(v)=21$ true.\qqed
\end{example}

\begin{figure}[h]
\centering
\includegraphics[width=16.4cm]{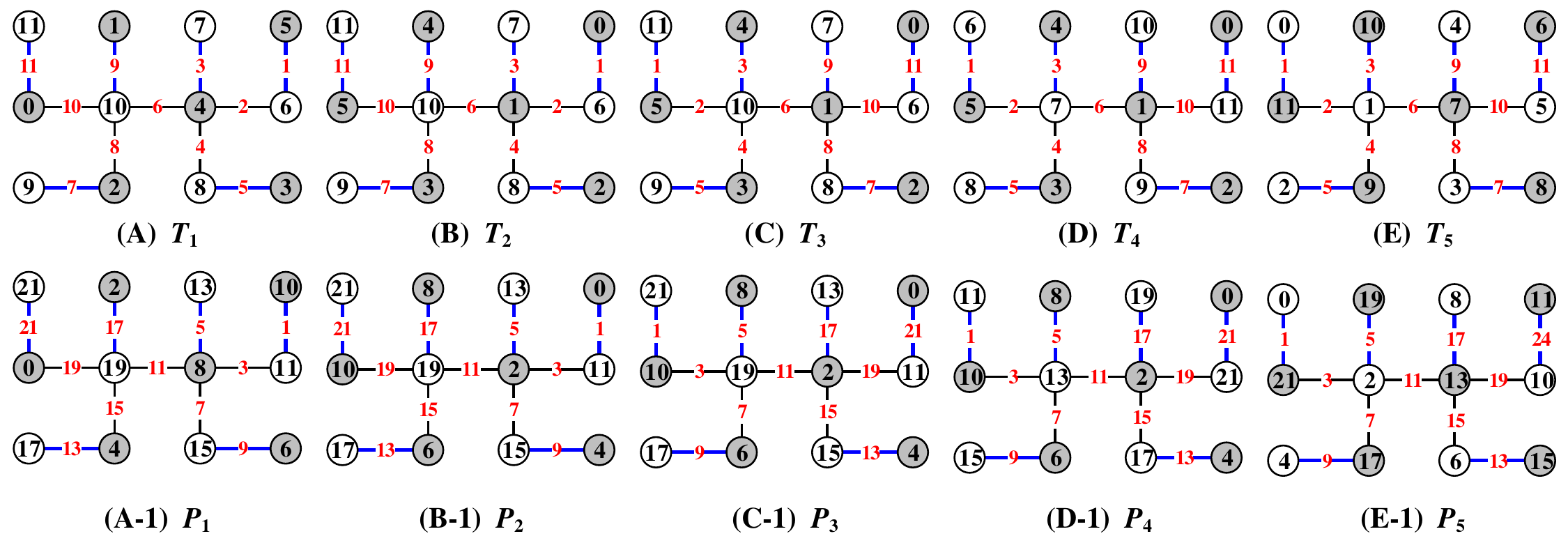}\\
\caption{\label{fig:basic-definition-perfecr-m}{\small For basic-definition-perfect-matching.}}
\end{figure}

\begin{defn}\label{defn:perfect-total-labelings}
$^*$ Let $G$ be a $(p,p-1)$-tree with a perfect matching $M(G)$ and the vertex bipartition $(X,Y)$ holding $V(G)=X\cup Y$ and $X\cap Y=\emptyset$ true. Suppose that $G$ admits a total labeling $f:V(G)\cup E(G)\rightarrow [0,M]$. Let $a,b,c$ be non-negative integers, there are the following restrictions:
\begin{asparaenum}[\textbf{\textrm{Sc}}-1.]
\item \label{perfect:labeling} \textbf{C-1.} $|f(V(G))|=p$;
\item \label{perfect:vertex} $f(V(G))\subseteq [0,p-1]$, $\min f(V(G))=0$;
\item \label{perfect:odd-v} $f(V(G))\subset [0,2p-3]$, $\min f(V(G))=0$;
\item \label{perfect:edge} $f(E(G))=\{f(xy):xy\in E(G)\}=[1,p-1]$;
\item \label{perfect:odd-edge} $f(E(G))=\{f(xy):xy\in E(G)\}=[1,2p-3]^o$;
\item \label{perfect:set-ordered} $\max f(X)<\min f(Y)$;
\item \label{perfect:graceful-difference} $\big ||f(x)-f(y)|-f(xy)\big |=a$ for each edge $xy\in E(G)$;
\item \label{perfect:edge-difference} $f(xy)+|f(x)-f(y)|=b$ for each edge $xy\in E(G)$;
\item \label{perfect:felicitous-difference} $\big |f(x)-f(y)-f(xy)\big |=c$ for each edge $xy\in E(G)$;
\item \label{perfect:edge-magic} $f(x)+f(y)+f(xy)=d$ for each edge $xy\in E(G)$;
\item \label{perfect:strongly} each matching edge $uv\in M(G)$ holds $f(u)+f(v)=e$ true.
\end{asparaenum}
We call $f$:
\begin{asparaenum}[\textbf{\textrm{Sp}}-1.]
\item A \emph{set-ordered strongly graceful-difference total labeling} if Sc-\ref{perfect:labeling}, Sc-\ref{perfect:vertex}, Sc-\ref{perfect:edge}, Sc-\ref{perfect:set-ordered}, Sc-\ref{perfect:graceful-difference} and Sc-\ref{perfect:strongly} hold true.
\item A \emph{set-ordered strongly odd-edge graceful-difference total labeling} if Sc-\ref{perfect:labeling}, Sc-\ref{perfect:odd-v}, Sc-\ref{perfect:odd-edge}, Sc-\ref{perfect:set-ordered}, Sc-\ref{perfect:graceful-difference} and Sc-\ref{perfect:strongly} hold true.
\item A \emph{set-ordered strongly edge-difference total labeling} if Sc-\ref{perfect:labeling}, Sc-\ref{perfect:vertex}, Sc-\ref{perfect:edge}, Sc-\ref{perfect:set-ordered}, Sc-\ref{perfect:edge-difference} and Sc-\ref{perfect:strongly} hold true.
\item A \emph{set-ordered strongly odd-edge edge-difference total labeling} if Sc-\ref{perfect:labeling}, Sc-\ref{perfect:odd-v}, Sc-\ref{perfect:odd-edge}, Sc-\ref{perfect:set-ordered}, Sc-\ref{perfect:edge-difference} and Sc-\ref{perfect:strongly} hold true.
\item A \emph{set-ordered strongly edge-magic total labeling} if Sc-\ref{perfect:labeling}, Sc-\ref{perfect:vertex}, Sc-\ref{perfect:edge}, Sc-\ref{perfect:set-ordered}, Sc-\ref{perfect:edge-magic} and Sc-\ref{perfect:strongly} hold true.
\item A \emph{set-ordered strongly odd-edge edge-magic total labeling} if Sc-\ref{perfect:labeling}, Sc-\ref{perfect:odd-v}, Sc-\ref{perfect:odd-edge}, Sc-\ref{perfect:set-ordered}, Sc-\ref{perfect:edge-magic} and Sc-\ref{perfect:strongly} hold true.

\item A \emph{set-ordered strongly felicitous-difference total labeling} if Sc-\ref{perfect:labeling}, Sc-\ref{perfect:vertex}, Sc-\ref{perfect:edge}, Sc-\ref{perfect:set-ordered}, Sc-\ref{perfect:felicitous-difference} and Sc-\ref{perfect:strongly} hold true.
\item A \emph{set-ordered strongly odd-edge felicitous-difference total labeling} if Sc-\ref{perfect:labeling}, Sc-\ref{perfect:odd-v}, Sc-\ref{perfect:odd-edge}, Sc-\ref{perfect:set-ordered}, Sc-\ref{perfect:felicitous-difference} and Sc-\ref{perfect:strongly} hold true.
  \qqed
\end{asparaenum}
\end{defn}

We present the \emph{twin odd-edge $W$-magic total labelings} as follows:

\begin{defn}\label{defn:group-definition-twin-total-labelingss}
$^*$ Let $G$ be a bipartite $(p,q)$-graph having its own vertex set $V(G)=X_G\cup Y_G$ with $X_G\cap Y_G=\emptyset$, and let $T$ be another bipartite $(p\,',q)$-graph having its own vertex set $V(T)=X_T\cup Y_T$ with $X_T\cap Y_T=\emptyset$. The bipartite $(p,q)$-graph $G$ admits a total labeling $F:V(G)\cup E(G)\rightarrow [0,2q-1]$, and the bipartite $(p\,',q)$-graph $T$ admits a total labeling $F^*:V(T)\cup E(T)\rightarrow [0,2q]$.

(i) \textbf{If}

(i-1) $F$ is a set-ordered odd-edge edge-magic total labeling of $G$;

(i-2) the set-ordered restriction $F^*_{\max}(X_T)<F^*_{\min}(Y_T)$ holds true;

(i-3) the edge color set $F^*(E(T))=[1,2q-1]^o$;

(i-4) there is a positive integer $c_1$, so that each edge $xy\in E(T)$ holds a magic-type restriction $F^*(x)+F^*(xy)+F^*(y)=c_1$; and

(i-5) $F(V(G))\cup F^*(V(T))\subseteq [0,2q]$,\\
then we call $\langle F,F^*\rangle $ a \emph{twin set-ordered odd-edge edge-magic total labeling} of two graphs $G$ and $T$. Especially, we call $\langle F,F^*\rangle $ a \emph{perfect twin set-ordered odd-edge edge-magic total labeling} of $G$ and $T$ if $F(V(G))\cup F^*(V(T))=[0,2q]$.

(ii) \textbf{If}

(ii-1) $F$ is a set-ordered odd-edge edge-difference total labeling of $G$;

(ii-2) the set-ordered restriction $F^*_{\max}(X_T)<F^*_{\min}(Y_T)$ holds true;

(ii-3) the edge color set $F^*(E(T))=[1,2q-1]^o$;

(ii-4) there is a positive integer $c_2$, so that each edge $xy\in E(T)$ holds a magic-type restriction $F^*(xy)+|F^*(y)-F^*(x)|=c_2$; and

(ii-5) $F(V(G))\cup F^*(V(T))\subseteq [0,2q]$,\\
then, $\langle F,F^*\rangle $ is called a \emph{twin set-ordered odd-edge edge-difference total labeling} of two graphs $G$ and $T$. Moreover, we call $\langle F,F^*\rangle $ a \emph{perfect twin set-ordered odd-edge edge-difference total labeling} of $G$ and $T$ if $F(V(G))\cup F^*(V(T))=[0,2q]$.

(iii) \textbf{If}

(iii-1) $F$ is a set-ordered odd-edge felicitous-difference total labeling of $G$;

(iii-2) the set-ordered restriction $F^*_{\max}(X_T)<F^*_{\min}(Y_T)$ holds true;

(iii-3) the edge color set $F^*(E(T))=[1,2q-1]^o$;

(iii-4) there is a non-negative integer $c_3$, so that each edge $xy\in E(T)$ holds a magic-type restriction $|F^*(y)+F^*(x)-F^*(xy)|=c_3$; and

(iii-5) $F(V(G))\cup F^*(V(T))\subseteq [0,2q]$,\\
we call $\langle F,F^*\rangle $ a \emph{twin set-ordered odd-edge felicitous-difference total labeling} of two graphs $G$ and $T$. And we call $\langle F,F^*\rangle $ a \emph{perfect twin set-ordered odd-edge felicitous-difference total labeling} of $G$ and $T$ if $F(V(G))\cup F^*(V(T))=[0,2q]$.

(vi) \textbf{If}

(vi-1) $F$ is a set-ordered odd-edge graceful-difference total labeling;

(vi-2) the set-ordered restriction $F^*_{\max}(X_T)<F^*_{\min}(Y_T)$ holds true;

(vi-3) the edge color set $F^*(E(T))=[1,2q-1]^o$;

(vi-4) there is a non-negative integer $c_4$, so that each edge $xy\in E(T)$ holds a magic-type restriction $\big ||F^*(y)-F^*(x)|-F^*(xy)\big |=c_4$; and

(vi-5) $F(V(G))\cup F^*(V(T))\subseteq [0,2q]$,\\
then, $\langle F,F^*\rangle $ is called a \emph{twin set-ordered odd-edge graceful-difference total labeling} of two graphs $G$ and $T$. Furthermore, we call $\langle F,F^*\rangle $ a \emph{perfect twin set-ordered odd-edge graceful-difference total labeling} of $G$ and $T$ if $F(V(G))\cup F^*(V(T))=[0,2q]$.\qqed
\end{defn}

\begin{example}\label{exa:twin-set-ordered-odd-edge-labelings}
For illustrating Definition \ref{defn:group-definition-twin-total-labelingss}, we see examples shown in Fig.\ref{fig:22-twin-matching} as follows: Notice that $A_0\cong B_0\cong A_i\cong B_i$ for $i\in [1,4]$. Let $q=|E(A_0)|=9$.

(a) A graph $A_0$ admits a set-ordered odd-graceful labeling $f_0$, and $f_0(E(A_0))=[1,17]^o$.

(a-1) A graph $B_0$ admits a set-ordered labeling $g_0$ with $g_0(E(B_0))=[1,17]^o$.

Since $f_0(V(A_0))\cup g_0(V(B_0))=[0,18]=[0,2q]$, $\langle f_0,g_0\rangle $ is a \emph{twin set-ordered odd-graceful labeling}.

(b) A graph $A_1$ admits a set-ordered odd-edge felicitous-difference total labeling $f_1$ holding $|f_1(x)+f_1(y)-f_1(xy)|=8$ for each edge $xy\in E(A_1)$ and $f_1(E(A_1))=[1,17]^o$.

(b-1) A graph $B_1$ admits a set-ordered odd-edge felicitous-difference total labeling $g_1$ with $|g_1(u)+g_1(v)-g_1(uv)|=8$ for each edge $uv\in E(B_1)$ and $g_1(E(B_1))=[1,17]^o$.

Since $f_1(V(A_1))\cup g_1(V(B_1))=[0,18]=[0,2q]$, $\langle f_1,g_1\rangle $ is a \emph{twin set-ordered felicitous-difference total labeling}.

(c) A graph $A_2$ admits a set-ordered odd-edge edge-magic total labeling $f_2$ holding $f_2(x)+f_2(xy)+f_2(y)=26$ for each edge $xy\in E(A_2)$ and $f_2(E(A_2))=[1,17]^o$.

(c-1) A graph $B_2$ admits a set-ordered odd-edge edge-magic total labeling $g_2$ with $g_2(u)+g_2(uv)+g_2(v)=28$ for each edge $uv\in E(B_2)$ and $g_2(E(B_2))=[1,17]^o$.

Since $f_2(V(A_2))\cup g_2(V(B_2))=[0,18]=[0,2q]$, $\langle f_2,g_2\rangle $ is a \emph{twin set-ordered edge-magic total labeling}.

(d) A graph $A_3$ admits a set-ordered odd-edge edge-difference total labeling $f_3$ holding $f_3(xy)+|f_3(y)-f_3(x)|=18$ for each edge $xy\in E(A_3)$ and $f_3(E(A_3))=[1,17]^o$.

(d-1) A graph $B_3$ admits a set-ordered odd-edge edge-difference total labeling $g_3$ with $g_3(uv)+|g_3(u)-g_3(v)|=18$ for each edge $uv\in E(B_3)$ and $g_3(E(B_3))=[1,17]^o$.

Since $f_3(V(A_3))\cup g_3(V(B_3))=[0,18]=[0,2q]$, $\langle f_3,g_3\rangle $ is a \emph{twin set-ordered edge-difference total labeling}.

(e) A graph $A_4$ admits a set-ordered odd-edge graceful-difference total labeling $f_4$ holding $\big ||f_4(x)-f_4(y)|-f_4(xy)\big |=0$ for each edge $xy\in E(A_4)$ and $f_4(E(A_4))=[1,17]^o$.

(e-1) A graph $B_4$ admits a set-ordered odd-edge graceful-difference total labeling $g_4$ with $\big ||g_4(u)-g_4(v)|-f_g(uv)\big |=0$ for each edge $uv\in E(B_4)$ and $g_4(E(B_4))=[1,17]^o$.

Since $f_4(V(A_4))\cup g_4(V(B_4))=[0,18]=[0,2q]$, $\langle f_4,g_4\rangle $ is a \emph{twin set-ordered graceful-difference total labeling}.\qqed
\end{example}

\begin{figure}[h]
\centering
\includegraphics[width=16.4cm]{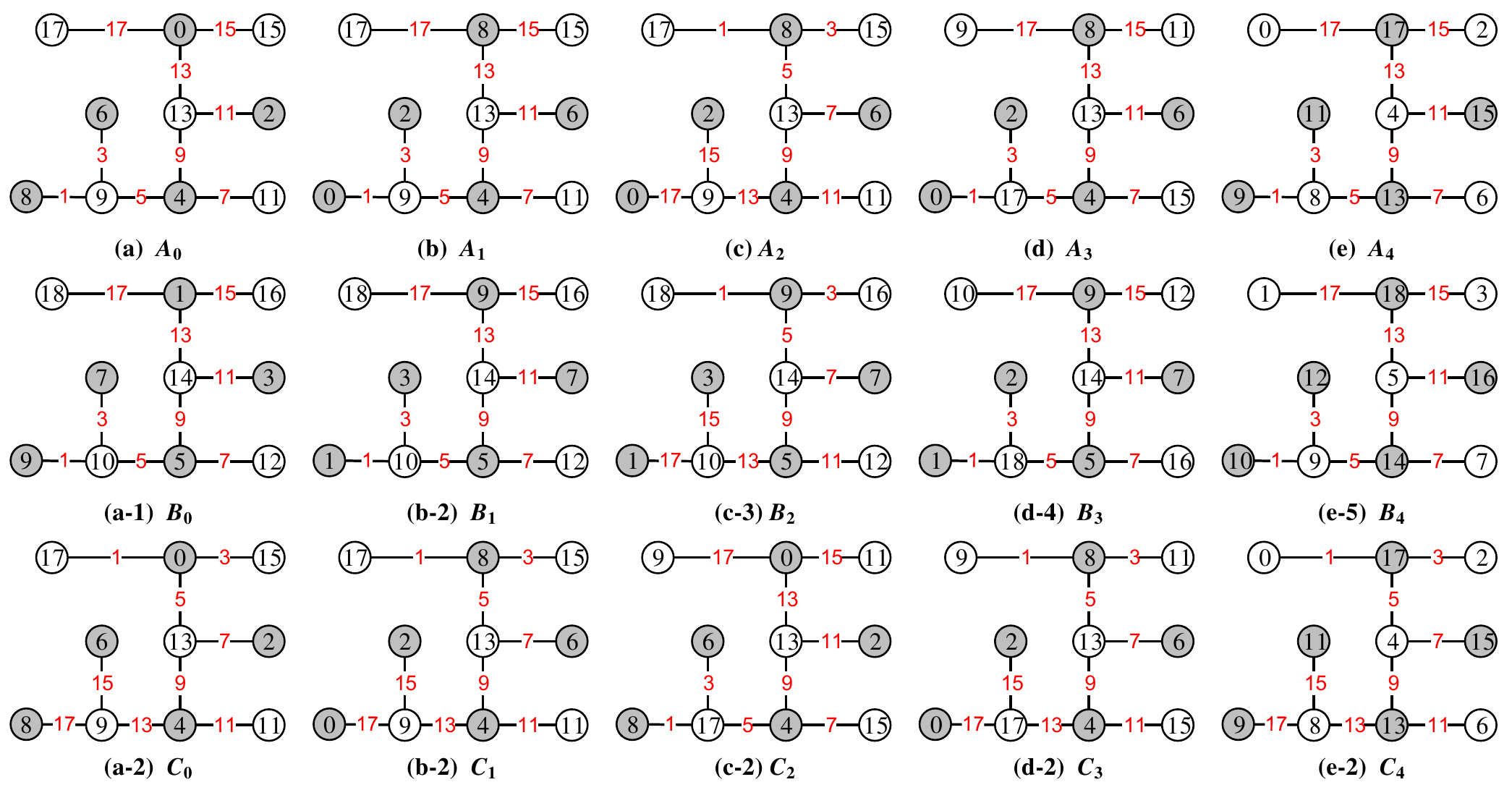}\\
\caption{\label{fig:22-twin-matching}{\small For illustrating Definition \ref{defn:group-definition-twin-total-labelingss}.}}
\end{figure}

\begin{example}\label{exa:33-edge-matching}
In Fig.\ref{fig:22-twin-matching}, each graph $A_i$ admits a labeling $f_i$ and each graph $C_i$ admits a labeling $h_i$, such that $f_i(xy)+h_i(xy)=18$ for each edge $xy\in E(A_i)=E(C_i)$ with $i\in [0,4]$. So, we call $\langle f_i,h_i\rangle $ an \emph{edge-matching $W$-magic total labeling} for $i\in [0,4]$, where ``$W$-magic'' is one of edge-magic, edge-difference, felicitous-difference, graceful-difference.\qqed
\end{example}

\begin{defn} \label{defn:111111}
$^*$ Let ``$W$-magic'' be one of edge-magic, edge-difference, felicitous-difference, graceful-difference. Removing the restriction ``set-ordered'' in Definition \ref{defn:group-definition-twin-total-labelingss} will produce four \emph{twin odd-edge $W$-magic total labelings}. If there is at least a pair of vertices colored with the same color in Definition \ref{defn:group-definition-twin-total-labelingss}, we obtain four \emph{twin odd-edge $W$-magic total colorings}.\qqed
\end{defn}

\begin{defn} \label{defn:111111}
$^*$ \textbf{$m$-tuple (set-ordered) odd-edge $W$-magic total labeling/coloring.} $F:V(G_i)\rightarrow [0,M]$ with $i\in [1,m]$, such that: $F$ is an (a set-ordered) odd-edge $W$-magic total labeling/coloring of $G_1$; $F(E(G_i))=[1,2q-1]^o$; $F(u_{i,s}v_{i,j})\in E(G_i)$ holds one of four magic-type restrictions defined in Definition \ref{defn:group-definition-twin-total-labelingss}; and $\bigcup^m_{i=1}F(V(G_i))\subseteq [0,M]$.\qqed
\end{defn}

\begin{defn} \label{defn:111111}
$^*$ There are \emph{$m$-tuple odd-edge Topcode-matrix team} $T_{code}^i=(X_i$, $E_i$, $Y_i)^T$ with v-vector $X_i=(x_{i,1},x_{i,2},\dots ,x_{i,n})$, e-vector $E_i=(e_{i,1},e_{i,2},\dots ,e_{i,n})$ and v-vector $Y_i=(y_{i,1},y_{i,2},\dots ,y_{i,n})$ for $i\in [1,m]$, such that $\bigcup ^m_{i=1}\bigcup^n_{j=1}(\{x_{i,j}\}\cup\{y_{i,j}\})\subseteq [0,M]$, and each $e_{i,j}$ is odd and holds one $W$-magic restriction of $x_{i,j}+e_{i,j}+y_{i,j}=c_1$, $e_{i,j}+|x_{i,j}-y_{i,j}|=c_2$, $|x_{i,j}+y_{i,j}-e_{i,j}|=c_3$ and $\big ||x_{i,j}-y_{i,j}|-e_{i,j}\big |=c_4$, as well as $\{e_{i,j}:e_{i,j}\in E_i\}=[1,2n-1]^o$.\qqed
\end{defn}

\begin{rem}\label{rem:333333}
A graphic group $\{F_m(G,f);\oplus\}$ admits a \emph{$n$-tuple (set-ordered) odd-edge $W$-magic total labeling/coloring} for $n\leq m$.\paralled
\end{rem}

\subsection{Dual-type labelings and colorings}

Part of the content in this subsection are cited from \cite{Yao-Su-Ma-Wang-Yang-arXiv-2202-03993v1}. Let $G$ be a connected bipartite $(p,q)$-graph admitting a \emph{set-ordered graceful labeling} $f$, and let $(X,Y)$ be the bipartition of vertex set $V(G)$, where $X=\{x_1,x_2,\dots,x_s\}$ and $Y=\{y_1,y_2,\dots,y_t\}$ with $s+t=p$. Without loss of generality, there are inequalities
\begin{equation}\label{equ:set-ordered-graceful-labeling}
0=f(x_1)<f(x_2)<\cdots <f(x_s)<f(y_1)<f(y_2)<\cdots <f(y_t)=q
\end{equation} also, $\max f(X)<\min f(Y)$, and $f(E(G))=[1,q]$. See a connected bipartite $(8,9)$-graph $G_0$ admitting a set-ordered graceful labeling shown in Fig.\ref{fig:hu-set-dual} (a).

We are ready to define the following set-dual type labelings:

\textrm{\textbf{Set-Dual-1.}} The \emph{total set-dual labeling} $f_{dual}$ of $f$ is defined as:
$$f_{dual}(w)=\max f(V(G))+\min f(V(G))-f(w)=q-f(w)
$$ for $w\in V(G)$, and the induced edge color of each edge $x_iy_j$ is
\begin{equation}\label{equ:2222222}
f_{dual}(x_iy_j)=|f_{dual}(x_i)-f_{dual}(y_j)|=|f(x_i)-f(y_j)|=f(y_j)-f(x_i)=f(x_iy_j)
\end{equation}
Then $f_{dual}(E(G))=f(E(G))=[1,q]$ and there are
\begin{equation}\label{equ:2222222}
{
\begin{split}
0=&f_{dual}(y_t)<f_{dual}(y_{t-1})<\cdots <f_{dual}(y_1)<f_{dual}(x_s)\\
&<f_{dual}(x_{s-1})<\cdots <f_{dual}(x_2)<\cdots <f_{dual}(x_1)=q
\end{split}}
\end{equation} also, the dual labeling $f_{dual}$ is a \emph{set-ordered graceful labeling} of $G$ too.

\begin{problem}\label{qeu:444444}
Suppose that a connected bipartite $(p,q)$-graph $G$ admitting a set-ordered graceful labeling $f$, and $f_{dual}$ is the dual labeling of $f$. \textbf{Dose} $f(V(G))\cup f_{dual}(V(G))=[0,q]$?
\end{problem}

Another total dual labeling $f^*_{dual}$ of $f$ is defined as
$$
f^*_{dual}(w)=\max f(V(G))+\min f(V(G))-f(w)=q-f(w)
$$ for $w\in V(G)$, and the induced edge color of each edge $x_iy_j$ is defined by
$$
f^*_{dual}(x_iy_j)=\max f(E(G))+\min f(E(G))-f(x_iy_j)=q+1-f(x_iy_j)
$$ for $x_iy_j\in E(G)$, then $f^*_{dual}(E(G))=f(E(G))=[1,q]$. Because of
$$
f^*_{dual}(x_iy_j)+|f^*_{dual}(y_j)-f^*_{dual}(x_i)|=q+1-f(x_iy_j)+|f(y_j)-f(x_i)|=q+1
$$ so $f^*_{dual}$ is a \emph{set-ordered edge-difference total labeling} of $G$.

\begin{thm}\label{thm:666666}
A connected bipartite graph $G$ admits a set-ordered graceful labeling $f$ if and only if the dual labeling $f_{dual}$ of the labeling $f$ is a set-ordered graceful labeling and another dual labeling $f^*_{dual}$ of the labeling $f$ is a set-ordered edge-difference total labeling.
\end{thm}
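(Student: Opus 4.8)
The plan is to read the theorem as a convenient packaging of the two computations carried out in the paragraphs just before it (the defining properties of $f_{dual}$ and of $f^*_{dual}$), together with the single extra observation that, on vertices, the dual operation is an involution. Throughout I fix the bipartition $(X,Y)$ of $V(G)$ with $X=\{x_1,\dots,x_s\}$, $Y=\{y_1,\dots,y_t\}$, and I use the ordering chain displayed in \eqref{equ:set-ordered-graceful-labeling}; the standing normalization $\min f(V(G))=0$ and $\max f(V(G))=q$ (which is exactly what lets $\max f(V(G))+\min f(V(G))-f(w)$ collapse to $q-f(w)$) is guaranteed by C-2 together with $f(E(G))=[1,q]$, since the latter forces $q\in f(V(G))$.

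For the forward implication I would assume $f$ is a set-ordered graceful labeling and verify the defining clauses of the two dual objects in turn. For $f_{dual}$: injectivity on $V(G)$ is immediate because $w\mapsto q-f(w)$ is a bijection of $[0,q]$; the edge computation $f_{dual}(x_iy_j)=|f(x_i)-f(y_j)|=f(x_iy_j)$ gives $f_{dual}(E(G))=[1,q]$; and reversing the chain \eqref{equ:set-ordered-graceful-labeling} yields $0=f_{dual}(y_t)<\cdots<f_{dual}(y_1)<f_{dual}(x_s)<\cdots<f_{dual}(x_1)=q$, which is precisely the set-ordered condition read off the bipartition $(Y,X)$; hence $f_{dual}$ is a set-ordered graceful labeling. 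For $f^*_{dual}$: the vertex values again satisfy the reversed chain and lie in $[0,q]$, the edge set is $\{q+1-f(x_iy_j):x_iy_j\in E(G)\}=[1,q]$, and the identity $f^*_{dual}(x_iy_j)+|f^*_{dual}(x_i)-f^*_{dual}(y_j)|=q+1$ is the required magic-type restriction, so $f^*_{dual}$ is a set-ordered edge-difference total labeling. Since all of this is literally the content of the preceding paragraphs, I would cite it rather than redo it.

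For the converse the key remark is that the vertex part of the dual operation is an involution: $q-(q-f(w))=f(w)$, and moreover $f$ is itself the total set-dual labeling of $f_{dual}$, since the induced edge colours agree, $|(q-f(x_i))-(q-f(y_j))|=|f(x_i)-f(y_j)|$. Therefore, assuming the right-hand side of the biconditional, in particular that $f_{dual}$ is a set-ordered graceful labeling of $G$, I apply the already-established forward implication to $f_{dual}$ in the role of $f$: the dual of a set-ordered graceful labeling is again a set-ordered graceful labeling, and that dual is exactly $f$. Hence $f$ is a set-ordered graceful labeling, which is what the converse asserts. (The second right-hand clause is then automatic by the forward implication; alternatively one can run the converse directly from it, using $f^*_{dual}(x_iy_j)=q+1-|f(x_i)-f(y_j)|$, which sweeps out $[1,q]$ exactly when $|f(x_i)-f(y_j)|$ does, while $f(w)=q-f^*_{dual}(w)$ restores the vertex chain.)

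I expect the only delicate point to be the bookkeeping around the bipartition: dualizing interchanges the order of the two colour classes, so ``set-ordered'' must be understood as ``set-ordered for a suitable ordering of the bipartition $\{X,Y\}$'', and one should note explicitly that connectedness and bipartiteness of $G$ are structural and untouched by any relabelling of vertex values. Beyond that, the proof is just the assembly of the four one-line verifications above plus the involution remark, so there is no genuine obstacle — the statement is essentially a restatement of the surrounding discussion.
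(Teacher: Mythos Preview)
Your proposal is correct and follows essentially the same approach as the paper: the forward direction is exactly the content of the preceding \textbf{Set-Dual-1} computations, which the paper simply packages into the theorem statement without a separate proof environment. Your treatment of the converse via the involution $w\mapsto q-(q-f(w))=f(w)$ is a small but genuine addition, since the paper's surrounding discussion only establishes the forward implication and leaves the ``only if'' direction implicit; your observation that $f=(f_{dual})_{dual}$ together with the already-proved forward step closes this cleanly, and your remark about the bipartition swap $(X,Y)\leftrightarrow(Y,X)$ under dualizing is a point the paper glosses over.
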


\textrm{\textbf{Set-Dual-2.}} The \emph{$XY$-set-dual labeling} $g_{setXY}$ of $f$ is defined as:
$$
g_{setXY}(x_i)=\max f(X)+\min f(X)-f(x_i),~x_i\in X
$$ and
$$
g_{setXY}(y_j)=\max f(Y)+\min f(Y)-f(y_j),~y_j\in Y
$$ and the induced edge color of each edge $x_iy_j$ is defined by
\begin{equation}\label{equ:2222222}
{
\begin{split}
g_{setXY}(x_iy_j)&=|g_{setXY}(x_i)-g_{setXY}(y_j)|\\
&=\big |[\max f(X)+\min f(X)-f(x_i)]-[\max f(Y)+\min f(Y)-f(y_j)]\big |\\
&=[\max f(Y)+\min f(Y)]-[\max f(X)+\min f(X)]-f(x_iy_j)\\
&=q+\min f(Y)-\max f(X)-f(x_iy_j)
\end{split}}
\end{equation} and the edge color set
\begin{equation}\label{equ:2222222}
{
\begin{split}
g_{setXY}(E(G))&=[q+\min f(Y)-\max f(X)-q,~q+\min f(Y)-\max f(X)-1]\\
&=[\min f(Y)-\max f(X),~\min f(Y)-\max f(X)+(q-1)]
\end{split}}
\end{equation} then the $XY$-set-dual labeling $g_{setXY}$, when as $\min f(Y)-\max f(X)=1$, is a \emph{set-ordered graceful labeling} of $G$.

And another $XY$-set-dual labeling $g^*_{setXY}$ is defined as $g^*_{setXY}(w)=g_{setXY}(w)$ for $w\in V(G)$, and each edge $x_iy_j\in E(G)$ is colored with
$$
g^*_{setXY}(x_iy_j)=\max f(E(G))+\min f(E(G))-f(x_iy_j)=q+1-f(x_iy_j)
$$ which induces edge color set $g^*_{setXY}(E(G))=f(E(G))=[1,q]$. Since $g^*_{setXY}(u)\neq g^*_{setXY}(v)$ for distinct vertices $u,v\in V(G)$, and
$$\label{eqa:555555}
{
\begin{split}
&\quad \big | |g^*_{setXY}(y_j)-g^*_{setXY}(x_i)|-g^*_{setXY}(x_iy_j)\big |\\
&=\big | [q+\min f(Y)-\max f(X)-f(x_iy_j)]-[q+1-f(x_iy_j)]\big |\\
&=\min f(Y)-\max f(X)-1
\end{split}}
$$ for each edge $x_iy_j\in E(G)$, so $g^*_{setXY}$ is a \emph{set-ordered graceful-difference total labeling} of $G$.

Here, $g^*_{setXY}$ has its own dual labeling $\alpha _{set}$ defined by
$$
\alpha _{set}(w)=\max g^*_{setXY}(V(G))+\min g^*_{setXY}(V(G))- g^*_{setXY}(w)
$$ for $w\in V(G)$, and the edge color of each edge $x_iy_j$ is $\alpha _{set}(x_iy_j)= g^*_{setXY}(x_iy_j)$, so it is not hard to show that $\alpha _{set}$ is a \emph{set-ordered graceful labeling} of $G$.

\vskip 0.4cm

See Fig.\ref{fig:hu-set-dual} for understanding the labelings introduced in \textbf{Set-Dual-1} and \textbf{Set-Dual-2}.

\begin{figure}[h]
\centering
\includegraphics[width=16.4cm]{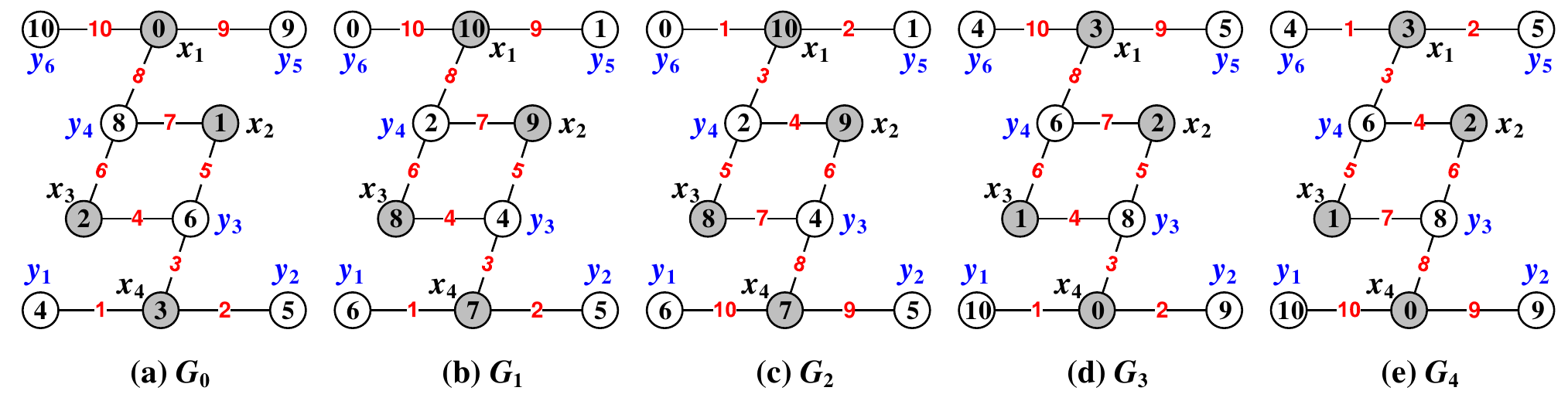}\\
\caption{\label{fig:hu-set-dual} {\small (a) $G_0$ admits a set-ordered graceful labeling $f$; (b) $G_1$ admits a set-ordered graceful labeling $f_{dual}$, which is the total set-dual labeling of $f$; (c) $G_2$ admits a set-ordered edge-difference total labeling $f^*_{dual}$; (d) $G_3$ admits a $XY$-set-dual labeling $g_{setXY}$ of $f$; (e) $G_4$ admits a set-ordered graceful-difference total labeling $g^*_{setXY}$.}}
\end{figure}

\begin{thm}\label{thm:666666}
A connected bipartite graph $G$ admits a set-ordered graceful labeling $f$ if and only if the set-dual labeling $g_{setXY}$ of the labeling $f$ is a set-ordered graceful labeling, and $g^*_{setXY}$ is a set-ordered graceful-difference total labeling of $G$.
\end{thm}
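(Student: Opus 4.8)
The plan is to prove the two implications separately: the forward direction is essentially a careful reading of the formulas of \textbf{Set-Dual-2} together with one small arithmetic observation, while the converse follows from the fact that the $XY$-set-dual operation is an involution on vertex colors.

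\emph{Necessity.} Suppose $f$ is a set-ordered graceful labeling of the connected bipartite $(p,q)$-graph $G$ with bipartition $(X,Y)$, $X=\{x_1,\dots,x_s\}$, $Y=\{y_1,\dots,y_t\}$. By Definition \ref{defn:basic-W-type-labelings} we may take $f(V(G))\subseteq[0,q]$, $\min f(V(G))=0$, $f(E(G))=[1,q]$ and $\max f(X)<\min f(Y)$, so every edge $x_iy_j$ satisfies $f(x_iy_j)=f(y_j)-f(x_i)\ge \min f(Y)-\max f(X)$. Since the smallest edge color is $1$ while $\min f(Y)-\max f(X)\ge 1$ (both integers, strict inequality), this forces the \emph{key identity} $\min f(Y)-\max f(X)=1$. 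Feeding it into the edge-color-set computation already displayed before the theorem gives $g_{setXY}(E(G))=[1,q]$; the monotonicity chain for $g_{setXY}$ (it reverses the order inside $f(X)$ and inside $f(Y)$, keeps the $X$-block strictly below the $Y$-block, attains overall minimum $0$ at $x_s$, and keeps the two blocks disjoint) shows $g_{setXY}$ is injective on $V(G)$ and satisfies $\max g_{setXY}(X)<\min g_{setXY}(Y)$, so $g_{setXY}$ is a set-ordered graceful labeling. For $g^*_{setXY}$, the displayed identity $\big||g^*_{setXY}(y_j)-g^*_{setXY}(x_i)|-g^*_{setXY}(x_iy_j)\big|=\min f(Y)-\max f(X)-1$ collapses to the constant $0$ by the key identity; combined with $g^*_{setXY}=g_{setXY}$ on $V(G)$ (hence injective) and $g^*_{setXY}(E(G))=[1,q]$, this makes $g^*_{setXY}$ a set-ordered graceful-difference total labeling of $G$.

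\emph{Sufficiency.} The point is that the $XY$-set-dual operation is an involution on vertex colors: since $g_{setXY}(x_i)=\max f(X)+\min f(X)-f(x_i)$ runs over $f(X)$ as $x_i$ does, one gets $\max g_{setXY}(X)=\max f(X)$ and $\min g_{setXY}(X)=\min f(X)$, and likewise on $Y$; therefore applying the $XY$-set-dual operation to $g_{setXY}$ returns $f$ on every vertex, and the induced edge colors agree as well. Consequently $f$ is exactly the $XY$-set-dual of $g_{setXY}$. Now if $g_{setXY}$ is assumed to be a set-ordered graceful labeling, the necessity direction applied to $g_{setXY}$ (in the role of $f$) shows that its $XY$-set-dual, which is $f$ itself, is a set-ordered graceful labeling of $G$. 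The extra hypothesis that $g^*_{setXY}$ be a set-ordered graceful-difference total labeling is consistent with this: its vertex part is again $g_{setXY}$, and the dual labeling $\alpha_{set}$ of $g^*_{setXY}$ independently recovers a set-ordered graceful labeling of $G$.

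\emph{Where the difficulty sits.} The only genuinely delicate step is the key identity $\min f(Y)-\max f(X)=1$: it is exactly this equality that turns the formula of \textbf{Set-Dual-2} for $g_{setXY}$ into a bona fide graceful labeling and annihilates the graceful-difference constant of $g^*_{setXY}$, so the theorem hinges on its being automatic for every set-ordered graceful $f$. Everything else is bookkeeping — checking that each transform maps $[0,q]$ and $[1,q]$ bijectively onto themselves, that the reversed orderings keep vertex colors pairwise distinct within $X$ and within $Y$ (and the two blocks disjoint), and that the set-ordered inequality survives — all immediate from the monotonicity chains already exhibited ahead of the statement.
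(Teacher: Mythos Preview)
Your proof is correct and follows the paper's computations in \textbf{Set-Dual-2}, but you supply two ingredients the paper leaves implicit. First, the paper only asserts that $g_{setXY}$ is set-ordered graceful ``when $\min f(Y)-\max f(X)=1$''; your \emph{key identity} argument---that the edge of color $1$ forces $1=f(y_j)-f(x_i)\ge \min f(Y)-\max f(X)\ge 1$---shows this condition is automatic for every set-ordered graceful $f$, which is precisely what the theorem needs. Second, for the converse the paper points to the labeling $\alpha_{set}$ (the total dual of $g^*_{setXY}$) and says it is set-ordered graceful, whereas you instead observe that the $XY$-set-dual is an involution on vertex colors and hence directly recover $f$ itself from $g_{setXY}$. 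Your route is tighter: $\alpha_{set}$ is in general \emph{not} equal to $f$ (e.g.\ $\alpha_{set}(x_i)=q-\max f(X)+f(x_i)$), so the paper's hint only shows $G$ admits \emph{some} set-ordered graceful labeling, while your involution argument establishes that the given $f$ is set-ordered graceful, which is what the ``if and only if'' actually demands.
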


\textrm{\textbf{Set-Dual-3.}} The \emph{$X$-set-dual labeling} $h_{setX}$ of $f$ is defined as:
$$h_{setX}(x_i)=\max f(X)+\min f(X)-f(x_i)=\max f(X)-f(x_i),~x_i\in X
$$ and $h_{setX}(y_j)=f(y_j)$ for $y_j\in Y$, and the edge color of each edge $x_iy_j$ is $h_{setX}(x_iy_j)=f(x_iy_j)$ for $x_iy_j\in E(G)$, so $h_{setX}(E(G))=f(E(G))=[1,q]$. Furthermore, we have
$${
\begin{split}
h_{setX}(x_i)+h_{setX}(y_j)-h_{setX}(x_iy_j)&=\max f(X)+\min f(X)-f(x_i)+f(y_j)-f(x_iy_j)\\
&=\max f(X)
\end{split}}
$$ so $h_{setX}$ is a \emph{set-ordered felicitous-difference total labeling} of $G$.

Moreover, we define another $X$-set-dual labeling $h^*_{setX}$ by $h^*_{setX}(w)=h_{setX}(w)$ for $w\in V(G)$, and
$$
h^*_{setX}(x_iy_j)=\max f(E(G))+\min f(E(G))-f(x_iy_j)=q+1-f(x_iy_j)
$$ for each edge $x_iy_j\in E(G)$, then $h^*_{setX}(E(G))=f(E(G))=[1,q]$. Since
\begin{equation}\label{eqa:555555}
{
\begin{split}
h^*_{setX}(x_i)+h^*_{setX}(x_iy_j)+h^*_{setX}(y_j)&=h_{setX}(x_i)+q+1-f(x_iy_j)+h_{setX}(y_j)\\
&=\max f(X)+h_{setX}(x_iy_j)+q+1-f(x_iy_j)\\
&=\max f(X)+f(x_iy_j)+q+1-f(x_iy_j)\\
&=q+1+\max f(X)
\end{split}}
\end{equation} which shows that $h^*_{setX}$ is a \emph{set-ordered edge-magic total labeling} of $G$.

\begin{thm}\label{thm:666666}
A connected bipartite graph $G$ admits a set-ordered graceful labeling $f$ if and only if the set-dual labeling $h_{setX}$ of the labeling $f$ is a set-ordered felicitous-difference labeling, and $h^*_{setX}$ is a set-ordered edge-magic total labeling of $G$.
\end{thm}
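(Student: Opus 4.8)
I would prove the two implications separately. The forward implication is almost entirely contained in the computations already displayed under \textbf{Set-Dual-3}: given $f$ a set-ordered graceful labeling of $G$ with bipartition $(X,Y)$ as in Eq.(\ref{equ:set-ordered-graceful-labeling}) and $q=|E(G)|$, define $h_{setX}$ and $h^{*}_{setX}$ by the \textbf{Set-Dual-3} formulas. The displayed identity $h_{setX}(x_i)+h_{setX}(y_j)-h_{setX}(x_iy_j)=\max f(X)$ and the subsequent one $h^{*}_{setX}(x_i)+h^{*}_{setX}(x_iy_j)+h^{*}_{setX}(y_j)=q+1+\max f(X)$ are exactly the constant-sum constraints, with parameters $\max f(X)$ and $q+1+\max f(X)$. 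It then remains only to record three routine facts: $h_{setX}(E(G))=h^{*}_{setX}(E(G))=[1,q]$ since $h_{setX}(xy)=f(xy)$ and $h^{*}_{setX}(xy)=q+1-f(xy)$; the vertex colors are pairwise distinct, because on $X$ the map $x_i\mapsto \max f(X)-f(x_i)$ is a reflection of the injective $f|_{X}$ landing in $[0,\max f(X)]$, on $Y$ it coincides with $f|_{Y}$ landing in $[\min f(Y),q]$, and $\min f(Y)>\max f(X)$; and the set-ordered inequality $\max h_{setX}(X)=\max f(X)<\min f(Y)=\min h_{setX}(Y)$ holds (the same vertex colors serving for $h^{*}_{setX}$). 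Hence $h_{setX}$ is a set-ordered felicitous-difference total labeling and $h^{*}_{setX}$ a set-ordered edge-magic total labeling.

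\emph{Sufficiency.} For the converse, suppose the labelings $h_{setX}$ and $h^{*}_{setX}$ built from $f$ have, respectively, the felicitous-difference and edge-magic properties. From the set-ordered property of $h_{setX}$ I recover the bipartition $(X,Y)$ and the number $\max f(X)=\max h_{setX}(X)$, and I define a vertex labeling $f^{\flat}$ of $G$ by $f^{\flat}(y_j)=h_{setX}(y_j)$ for $y_j\in Y$, $f^{\flat}(x_i)=\max h_{setX}(X)-h_{setX}(x_i)$ for $x_i\in X$, with induced edge colors $f^{\flat}(x_iy_j)=|f^{\flat}(x_i)-f^{\flat}(y_j)|$. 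The reflection-plus-identity shape of $f^{\flat}$, together with the gap $\max h_{setX}(X)<\min h_{setX}(Y)$, makes $f^{\flat}$ injective on $V(G)$ with $\min f^{\flat}(V(G))=0$, $\max f^{\flat}(X)<\min f^{\flat}(Y)$ and $f^{\flat}(V(G))\subseteq[0,q]$; so C-1, C-2 and C-6 hold and only the edge-color set C-4 is at issue. Here both hypotheses enter: since $h^{*}_{setX}$ carries the same vertex colors as $h_{setX}$ and $h^{*}_{setX}(xy)=q+1-h_{setX}(xy)$, the edge-magic identity forces $h_{setX}(x_i)+h_{setX}(y_j)-h_{setX}(x_iy_j)$ to equal one and the same constant $c$ on every edge (not merely up to sign, as the felicitous-difference identity alone would give). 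Substituting $h_{setX}(x_i)=\max h_{setX}(X)-f^{\flat}(x_i)$ and $h_{setX}(y_j)=f^{\flat}(y_j)$ gives $h_{setX}(x_iy_j)=\big(f^{\flat}(y_j)-f^{\flat}(x_i)\big)+\delta$ on every edge, with $\delta=\max h_{setX}(X)-c$. Because the values $h_{setX}(x_iy_j)$ exhaust $[1,q]$ while each difference $f^{\flat}(y_j)-f^{\flat}(x_i)$ is $\ge \min f^{\flat}(Y)-\max f^{\flat}(X)\ge 1$ and $\le \max f^{\flat}(Y)\le q$, the shift $\delta$ must vanish. Thus $f^{\flat}(x_iy_j)=f^{\flat}(y_j)-f^{\flat}(x_i)=|f^{\flat}(x_i)-f^{\flat}(y_j)|$ and $f^{\flat}(E(G))=[1,q]$, so $f^{\flat}$ is a set-ordered graceful labeling of $G$.

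\emph{Main obstacle.} The only non-mechanical step is this last part of the sufficiency argument: showing the reconstructed labeling $f^{\flat}$ really has its edge colors induced as absolute differences of endpoint colors. Neither hypothesis achieves this alone — the edge-magic condition on $h^{*}_{setX}$ is what removes the absolute value from the felicitous-difference constant of $h_{setX}$, and the prescription $h_{setX}(E(G))=[1,q]$ is what pins the residual shift $\delta$ to zero; everything else is bookkeeping on the ranges of the two reflected color classes $f^{\flat}(X)$ and $f^{\flat}(Y)$.
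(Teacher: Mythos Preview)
Your forward direction is exactly the paper's argument: the computations displayed under \textbf{Set-Dual-3} establish the felicitous-difference and edge-magic identities, and you have correctly supplied the missing routine checks (injectivity on $V(G)$, the edge color set, and the set-ordered inequality) that the paper leaves implicit.

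For the converse, the paper does not give a separate argument at this point; its intended justification is the one made explicit later in the proof of Theorem~\ref{thmc:5-equivalent-therems}, namely the single sentence ``all transformations\dots\ are linear transformations, thereby we have completed the necessity and sufficient proof.'' In other words, the paper simply relies on the invertibility of the map $f\mapsto h_{setX}$ (a reflection on $X$, the identity on $Y$, the identity on edges) and of $f\mapsto h^{*}_{setX}$ to transport the defining constraints back to $f$. Your route is a genuine spelling-out of what that invertibility buys: you reconstruct $f^{\flat}$ explicitly, observe that the edge-magic hypothesis on $h^{*}_{setX}$ upgrades the felicitous-difference constant of $h_{setX}$ from $|\,\cdot\,|=c$ to a signed equality, and then use the edge-color range $[1,q]$ to force the residual shift $\delta$ to vanish. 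This is more careful than the paper, and it makes visible why \emph{both} hypotheses on $h_{setX}$ and $h^{*}_{setX}$ are used in the converse (the paper's one-line invertibility remark obscures this). One small point worth making explicit in your write-up: since the reflection on $X$ is an involution and the identity on $Y$ and on edges is trivially invertible, your reconstructed $f^{\flat}$ coincides with the original $f$, so proving $f^{\flat}$ set-ordered graceful is the same as proving $f$ set-ordered graceful; you use this implicitly but never state it.
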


\textrm{\textbf{Set-Dual-4.}} The \emph{$Y$-set-dual labeling} $h_{setY}$ of $f$ is defined as: $h_{setY}(x_i)=f(x_i)$ for $x_i\in X$,
$$
h_{setY}(y_j)=\max f(Y)+\min f(Y)-f(y_j)=q+\min f(Y)-f(y_j),~y_j\in Y
$$ and the edge color of each edge $x_iy_j$ is $h_{setY}(x_iy_j)=f(x_iy_j)$ for $x_iy_j\in E(G)$, immediately, $h_{setY}(E(G))=f(E(G))=[1,q]$. Moreover, we confirm that $h_{setY}$ is an \emph{edge-magic total labeling} of $G$, since
\begin{equation}\label{eqa:555555}
{
\begin{split}
h_{setY}(x_i)+h_{setY}(x_iy_j)+h_{setY}(y_j)&=f(x_i)+f(x_iy_j)+q+\min f(Y)-f(y_j)\\
&=q+\min f(Y)
\end{split}}
\end{equation} for each edge $x_iy_j\in E(G)$.

And another case, we define another $Y$-set-dual labeling $h^*_{setY}$ by $h^*_{setY}(w)=h_{setY}(w)$ for $w\in V(G)$, and
$$
h^*_{setY}(x_iy_j)=\max f(E(G))+\min f(E(G))-f(x_iy_j)=q+1-f(x_iy_j)
$$ for $x_iy_j\in E(G)$, then $h^*_{setY}(E(G))=f(E(G))=[1,q]$. We omit the proof for $h^*_{setY}$ being a \emph{set-ordered felicitous-difference total labeling} of $G$.

\begin{thm}\label{thm:666666}
A connected bipartite graph $G$ admits a set-ordered graceful labeling $f$ if and only if the set-dual labeling $h_{setY}$ of the labeling $f$ is a set-ordered edge-magic labeling, and $h^*_{setY}$ is a set-ordered felicitous-difference total labeling of $G$.
\end{thm}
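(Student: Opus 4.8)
The statement is an ``if and only if'' built around the explicit construction of \textbf{Set-Dual-4}, so the real work is the reverse direction; the forward direction is essentially a direct computation that mirrors the analysis already carried out in the body of \textbf{Set-Dual-4}. First I would settle the forward direction: assuming $G$ is a connected bipartite graph with a set-ordered graceful labeling $f$ and bipartition $(X,Y)$ satisfying \eqref{equ:set-ordered-graceful-labeling}, define $h_{setY}$ exactly as in \textbf{Set-Dual-4}, namely $h_{setY}(x_i)=f(x_i)$, $h_{setY}(y_j)=q+\min f(Y)-f(y_j)$, and $h_{setY}(x_iy_j)=f(x_iy_j)$. I would then check the four things an ``edge-magic total labeling'' requires here: (1) the vertex colors are still all distinct, which follows because $f$ is injective on $X$, injective on $Y$, and the affine reflection on $Y$ is injective, while the ranges on $X$ and on $Y$ stay disjoint in the appropriate order; (2) the edge-color set is unchanged, $h_{setY}(E(G))=f(E(G))=[1,q]$; (3) the magic constant computation $h_{setY}(x_i)+h_{setY}(x_iy_j)+h_{setY}(y_j)=q+\min f(Y)$, which is precisely the displayed calculation in \textbf{Set-Dual-4}; and (4) the set-ordered condition, which I would need to re-examine, since reflecting $Y$ reverses its internal order — here one uses that $h_{setY}(y_j)$ ranges over exactly the same set of values $f(Y)$ reflected within $[\min f(Y), q]$, so $\max h_{setY}(X)=\max f(X)<\min f(Y)=\min h_{setY}(Y)$ still holds. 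Then, exactly as in the body, define $h^*_{setY}$ by keeping vertex colors and replacing each edge color by $q+1-f(x_iy_j)$, and verify $|h^*_{setY}(x_i)+h^*_{setY}(y_j)-h^*_{setY}(x_iy_j)|$ is a constant, giving the set-ordered felicitous-difference total labeling; this is the computation the paper says it omits, but it is the same pattern as \eqref{eqa:555555} in \textbf{Set-Dual-3}, yielding constant $c=|{-1}+\min f(Y)|=\min f(Y)-1$ (or its absolute value).

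For the converse, I would invert the reflection. Suppose $G$ is connected bipartite with bipartition $(X,Y)$ and it admits a set-ordered edge-magic total labeling $\phi$ together with the companion $\phi^*$ which is a set-ordered felicitous-difference total labeling, arising as $h_{setY}$ and $h^*_{setY}$ of some labeling. The point is that the map $f\mapsto h_{setY}$ is an involution (up to identifying the constants): applying the same $Y$-reflection to $h_{setY}$ recovers $f$, because reflecting $Y$ twice about the midpoint of $[\min f(Y),\max f(Y)]$ is the identity, and the edge colors are untouched throughout. So from $\phi=h_{setY}$ one sets $f(x_i)=\phi(x_i)$, $f(y_j)=q+\min\phi(Y)-\phi(y_j)$, $f(x_iy_j)=\phi(x_iy_j)$, and then one must check $f$ is a set-ordered graceful labeling: injectivity of $f$ on $V(G)$ follows from injectivity of $\phi$; the edge colors form $[1,q]$ because they agree with $\phi$'s; the graceful difference condition $f(x_iy_j)=|f(x_i)-f(y_j)|$ must be re-derived from the edge-magic constant of $\phi$ — this is where one uses that $\phi(x_i)+\phi(x_iy_j)+\phi(y_j)$ is constant across all edges, forcing $\phi(x_iy_j)$ to equal an affine function of $\phi(x_i)+\phi(y_j)$, which after the reflection becomes exactly $|f(x_i)-f(y_j)|$; and the set-ordered condition $\max f(X)<\min f(Y)$ transfers back. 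One should also note that the hypothesis only needs one of $\{h_{setY},h^*_{setY}\}$ — having the set-ordered edge-magic labeling $h_{setY}$ already yields $f$, and then $h^*_{setY}$ is automatically the felicitous-difference companion by the forward direction — so the biconditional is really ``graceful $\Leftrightarrow$ edge-magic of the reflected form''.

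The step I expect to be the main obstacle is pinning down the converse rigorously: an arbitrary set-ordered edge-magic total labeling of $G$ need not a priori have arisen from a graceful labeling by $Y$-reflection, so I have to argue that the edge-magic constraint, combined with $\phi(E(G))=[1,q]$ being the \emph{full} interval and the bipartite/set-ordered structure, pins down $\phi$ on $Y$ to be an affine-decreasing function of a graceful labeling. Concretely: for each edge $x_iy_j$ the value $\phi(x_i)+\phi(y_j)=d-\phi(x_iy_j)$ takes, as $x_iy_j$ ranges over $E(G)$, exactly the $q$ values $d-1,d-2,\dots,d-q$; defining $f(y_j):=d-\phi(x_iy_j)-\phi(x_i)-(\text{shift})$ consistently requires checking independence of the choice of representative edge at $y_j$, which is where connectivity of $G$ enters — a spanning-tree / path argument propagates the definition of $f$ consistently across the whole graph. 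Once that consistency is established, verifying the remaining graceful-labeling axioms is routine. I would also double-check the edge cases where $|X|=1$ or $|Y|=1$, and confirm the claimed magic/difference constants match the ones displayed in \textbf{Set-Dual-4} so the theorem statement is internally consistent with the construction it references.
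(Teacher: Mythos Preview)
Your forward direction is exactly what the paper does in \textbf{Set-Dual-4}: compute the magic constant $q+\min f(Y)$ for $h_{setY}$ and (implicitly) the felicitous-difference constant $\min f(Y)-1$ for $h^*_{setY}$, together with the range/injectivity/set-ordered checks. For the converse, your involution observation --- that the $Y$-reflection $y_j\mapsto \max f(Y)+\min f(Y)-f(y_j)$ is its own inverse, so re-applying it to $h_{setY}$ recovers $f$ --- is precisely the paper's argument, which it states only as ``all transformations \dots\ are linear transformations'' (see the closing line of the proof of Theorem~\ref{thmc:5-equivalent-therems}). So the core of your plan matches the paper.

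Where you diverge is the final paragraph: you worry that an arbitrary set-ordered edge-magic total labeling $\phi$ might not come from some graceful $f$, and sketch a spanning-tree consistency argument to reconstruct $f$. This is a misreading of the statement. The theorem does not assert that \emph{every} set-ordered edge-magic total labeling arises from a graceful one; it says that $h_{setY}$, which is by definition the $Y$-set-dual of the given $f$, is edge-magic iff $f$ is graceful. Both sides of the biconditional are statements about the same $f$, and $h_{setY}$ is tied to $f$ by the explicit formula. So once you note the involution, the converse is immediate --- no connectivity or path-propagation argument is needed. Your remark that only one of $h_{setY},h^*_{setY}$ is actually needed for the equivalence is correct and sharper than what the paper writes; the paper simply packages the two companion labelings together in each Set-Dual theorem without commenting on the redundancy.
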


\vskip 0.4cm

See Fig.\ref{fig:hu-set-dual-11} for understanding the labelings introduced in \textbf{Set-Dual-3} and \textbf{Set-Dual-4}, although the examples admits set-dual colorings (refer to Definition \ref{defn:group-total-colorings-definition}).
\begin{figure}[h]
\centering
\includegraphics[width=16.4cm]{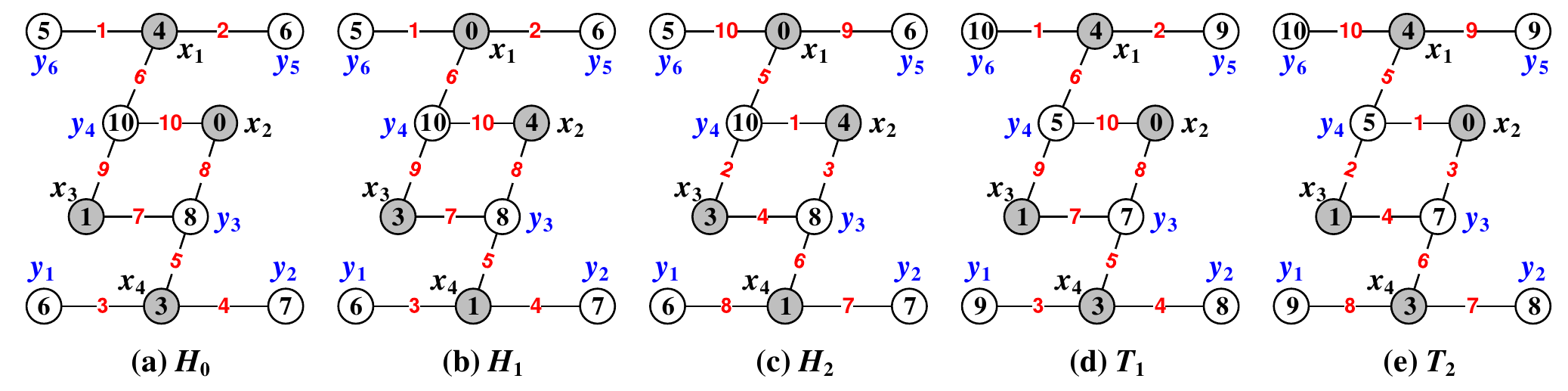}\\
\caption{\label{fig:hu-set-dual-11} {\small (a) $H_0$ admits a set-ordered graceful total coloring $h$, there are two vertices colored with the same color; (b) $H_1$ admits a set-ordered felicitous-difference total coloring $h_{setX}$, which is a $X$-set-dual coloring of the set-ordered graceful coloring $h$; (c) $H_2$ admits a set-ordered edge-magic total coloring $h^*_{setX}$; (d) $T_1$ admits a set-ordered edge-magic total coloring $h_{setY}$; (e) $T_2$ admits a set-ordered felicitous-difference total coloring $h^*_{setY}$.}}
\end{figure}

\vskip 0.4cm

The above set-dual type labelings from \textbf{Set-Dual-1} to \textbf{Set-Dual-4} produce the following coloring matchings:

\begin{asparaenum}[\textrm{\textbf{Matching}}-1. ]
\item The set-ordered graceful matching $\langle f, f^*_{dual}\rangle $ holds $f(w)+f^*_{dual}(w)=q$ for $w\in V(G)$ and $f(x_iy_j)+f^*_{dual}(x_iy_j)=q+1$ for $x_iy_j\in E(G)$.

\item $\langle g_{setXY}, g^*_{setXY}\rangle $ is a \emph{matching of a set-ordered graceful labeling and a graceful-difference total labeling}, such that $|g_{setXY}(x_iy_j)-g^*_{setXY}(x_iy_j)|$ is equal to a constant for $x_iy_j\in E(G)$.

\item $\langle h_{setX}, h^*_{setY}\rangle $ is a \emph{matching of two set-ordered edge-magic total labelings}, such that $h_{setY}(x_iy_j)+h^*_{setY}(x_iy_j)=q+1$ for $x_iy_j\in E(G)$.

\item $\langle h^*_{setX}, h_{setY}\rangle $ is a \emph{matching of two set-ordered felicitous-difference total labelings}, such that $h_{setY}(x_iy_j)+h^*_{setY}(x_iy_j)=q+1$ for $x_iy_j\in E(G)$.
\end{asparaenum}

\begin{defn} \label{defn:111111}
$^*$ If there is at least a pair of vertices colored with the same color in Set-Dual-$k$ for $k\in [1,4]$ above, we get: the dual total coloring, the $XY$-set-dual total coloring, the $X$-set-dual total coloring and the $Y$-set-dual total coloring, as well as four (set-ordered) $W$-magic total colorings, where ``$W$-magic'' is one of edge-magic, edge-difference, felicitous-difference, graceful-difference (see examples shown in Fig.\ref{fig:hu-set-dual-11}).\qqed
\end{defn}

\section{Algorithms of adding leaves randomly}

In this subsection, the sentence ``RANDOMLY-LEAF-adding algorithm'' is abbreviated as ``RLA-algorithm''. For constructing multiple-operation graphic lattices, we introduce the following RLA-algorithms.

\subsection{RLA-algorithm-A of the odd-edge graceful-difference total coloring}

\noindent \textbf{RLA-algorithm-A of the odd-edge graceful-difference total coloring.}

\textbf{Input:} A connected bipartite $(p,q)$-graph $G$ admitting a set-ordered odd-edge graceful-difference total labeling $f_{grd}$.

\textbf{Output:} A connected bipartite $(p+m,q+m)$-graph $G_A$ admitting an odd-edge graceful-difference coloring $f_{grd}^*$, where $G_A$, called \emph{leaf-added graph}, is the result of adding randomly $m$ leaves to $G$.

\textbf{Initialization.} A connected bipartite $(p,q)$-graph $G$ has its own vertex set $V(G)=X\cup Y$ with $X\cap Y=\emptyset$, where $X=\{x_1,x_2,\dots ,x_s\}$ and $Y=\{y_1,y_2,\dots ,y_t\}$ with $s+t=p=|V(G)|$. By the definition of a set-ordered odd-edge graceful-difference total labeling, so we have the set-ordered restriction $\max f_{grd}(X)<\min f_{grd}(Y)$, without loss of generality,
$$
0=f_{grd}(x_1)<f_{grd}(x_2)<\cdots <f_{grd}(x_s)<f_{grd}(y_1)<f_{grd}(y_2)<\cdots <f_{grd}(y_t)=2q-1
$$
so each color $f_{grd}(x_i)$ for $i\in[1,s]$ is even, and each color $f_{grd}(y_j)$ for $j\in[1,t]$ is odd, and each edge $x_iy_j\in E(G)$ satisfies
\begin{equation}\label{eqa:odd-edge-graceful-difference-colorings}
\big | |f_{grd}(x_i)-f_{grd}(y_j)|-f_{grd}(x_iy_j)\big |=N_A\geq 0
\end{equation} as well as edge color set $f_{grd}(E(G))=\{f_{grd}(x_iy_j):x_iy_j\in E(G)\}=[1,2q-1]^o$.

Adding randomly $a_i$ new leaves $u_{i,k}\in L(x_i)=\{u_{i,k}:k\in [1,a_i]\}$ to each vertex $x_i\in X\subset V(G)$ by joining $u_{i,k}$ with $x_i$ together by new edges $x_iu_{i,k}$ for $k\in [1,a_i]$ and $i\in[1,s]$, and adding randomly $b_j$ new leaves $v_{j,r}\in L(y_j)=\{v_{j,r}:r\in [1,b_j]\}$ to each vertex $y_j\in Y\subset V(G)$ by joining $v_{j,r}$ with $y_j$ together by new edges $y_jv_{j,r}$ for $r\in [1,b_j]$ and $j\in[1,t]$, it may happen some $a_i=0$ or some $b_j=0$ here. The resultant graph is denoted as $G_A$.

Let $m=M_X+M_Y$, where $M_X=\sum ^{s}_{c=1}a_{c}$ and $M_Y=\sum ^{t}_{c=1}b_{c}$. Suppose that $f_{grd}(y_j)-f_{grd}(x_i)\geq f_{grd}(x_iy_j)\geq 1$ in Eq.(\ref{eqa:odd-edge-graceful-difference-colorings}), we define a coloring $f_{grd}^*$ of the leaf-added graph $G_A$ in the following steps.

\textbf{Step A-1.} Color edges $y_{j}v_{j,r}$ for leaves $v_{j,r}\in L(y_j)$ with $r\in [1,b_j]$ and $j\in[1,t]$ as:

(A-11) $f_{grd}^*(y_tv_{t,r})=2r-1$ for $r\in [1,b_t]$, $f_{grd}^*(y_tv_{t,b_t})=2b_t-1$;

(A-12) $f_{grd}^*(y_{t-1}v_{t-1,r})=2b_t+2r-1$ for $r\in [1,b_{t-1}]$, $f_{grd}^*(y_{t-1}v_{t-1,b_{t-1}})=2b_t+2b_{t-1}-1$;

(A-13) $f_{grd}^*(y_{t-j}v_{t-j,r})=2r-1+2\sum ^{t}_{c=t-j+1}b_{c}$ for $r\in [1,b_{t-j}]$ and $j\in[1,t-1]$, $f_{grd}^*(y_{t-j}v_{t-j,b_{t-j}})=2b_{t-j}-1+2\sum ^{t}_{c=t-j+1}b_{c}$;

(A-14) $f_{grd}^*(y_1v_{1,r})=2r-1+2\sum ^{t}_{c=2}b_{c}$ for $r\in [1,b_{1}]$, the last edge $y_1v_{1,b_{1}}$ is colored as
$$
f_{grd}^*(y_1v_{1,b_{1}})=2b_{1}-1+2\sum ^{t}_{c=2}b_{c}=-1+2\sum ^{t}_{c=1}b_{c }=2M_Y-1
$$

\textbf{Step A-2.} Color edges $x_iu_{i,k}$ for leaves $u_{i,k}\in L(x_i)$ with $k\in [1,a_i]$ and $i\in[1,s]$ as:

(A-21) $f_{grd}^*(x_su_{s,k})=2M_Y+2k-1$ for $k\in [1,a_s]$, $f_{grd}^*(x_su_{s,a_s})=2M_Y+2a_s-1$;

(A-22) $f_{grd}^*(x_{s-1}u_{s-1,k})=2M_Y+2a_s+2k-1$ for $k\in [1,a_{s-1}]$, $f_{grd}^*(x_{s-1}u_{s-1,a_{s-1}})=2M_Y+2a_s+2a_{s-1}-1$;

(A-23) $f_{grd}^*(x_{s-i}u_{s-i,k})=2k-1+2M_Y+2\sum ^{s}_{c=s-i+1}a_{c}$ for $k\in [1,a_i]$ and $i\in[1,s-1]$, $f_{grd}^*(x_{s-i}u_{s-i,a_i})=2a_i-1+2M_Y+2\sum ^{s}_{c=s-i+1}a_{c}$;

(A-24) $f_{grd}^*(x_1u_{1,k})=2k-1+2M_Y+2\sum ^{s}_{c=2}a_{c}$ for $k\in [1,a_{1}]$, the last edge $x_1u_{1,a_1}$ is colored with
$$
f_{grd}^*(x_1u_{1,a_1})=2a_1-1+2M_Y+2\sum ^{s}_{c=2}a_{c}=2M_Y-1+2\sum ^{s}_{c=1}a_{c}=2(M_Y+M_X)-1
$$

\textbf{Step A-3.} Recolor each element of $V(G)\cup E(G)$ by $f_{grd}^*(w)=f_{grd}(w)+2(M_Y+M_X)$ for $w\in E(G)$, and $f_{grd}^*(z)=f_{grd}(z)$ for $z\in V(G)$. So
\begin{equation}\label{eqa:graceful-difference-total-aa}
{
\begin{split}
\big | |f_{grd}^*(x_i)-f_{grd}^*(y_j)|-f_{grd}^*(x_iy_j)\big |&=\big | |f_{grd}(x_i)-f_{grd}(y_j)|-f_{grd}(x_iy_j)-2(M_Y+M_X)\big |\\
&=\big |N_A-2(M_Y+M_X)\big |.
\end{split}}
\end{equation} Let $N_A^*=\big |N_A-2(M_Y+M_X)\big |=|N_A-2m|$. Thereby, the edge color set $f_{grd}^*(E(G_A))$ of the leaf-added graph $G_A$ is as
\begin{equation}\label{eqa:555555}
f_{grd}^*(E(G_A))=\big [1,2(M_Y+M_X)+\max f_{grd}(E(G))\big ]^o=\big [1,2(m+q)-1\big ]^o
\end{equation}

\textbf{Step A-4.} Color the added leaves of $L(y_j)$ and $L(x_i)$ with $j\in[1,t]$ and $i\in[1,s]$.

\textbf{Step A-4.1.} Each leaf $v_{j,r}\in L(y_j)$ with $r\in [1,b_j]$ and $j\in[1,t]$ is colored by
\begin{equation}\label{eqa:graceful-difference-total-bb}
f_{grd}^*(v_{j,r})=N_A^*+f_{grd}^*(y_j)+f_{grd}^*(y_jv_{j,r})=N_A^*+f_{grd}(y_j)+f_{grd}^*(y_jv_{j,r})
\end{equation} which induces $\big | |f_{grd}^*(y_{j})-f_{grd}^*(v_{j,r})|-f_{grd}^*(y_{j}v_{j,r})\big |=N_A^*$ for $r\in [1,b_{j}]$ and $j\in[1,t]$.

\textbf{Step A-4.2.} Each leaf $u_{i,k}\in L(x_i)$ with $k\in [1,a_i]$ and $i\in[1,s]$ is colored by
\begin{equation}\label{eqa:graceful-difference-total-cc}
f_{grd}^*(u_{i,k})=N_A^*+f_{grd}^*(x_i)+f_{grd}^*(x_iu_{i,k})=N_A^*+f_{grd}(x_i)+f_{grd}^*(x_iu_{i,k})
\end{equation} which induces $\big | |f_{grd}^*(x_{i})-f_{grd}^*(u_{i,k})|-f_{grd}^*(x_{i}u_{i,k})\big |=N_A^*$ for $k\in [1,a_i]$ and $i\in[1,s]$.

\textbf{Step A-5.} Return the odd-edge graceful-difference total coloring $f_{grd}^*$ of the leaf-added graph $G_A$, and $f_{grd}^*(u_0)=0$ for some $u_0\in V(G_A)$.

\vskip 0.4cm

\begin{example}\label{exa:8888888888}
The examples shown in Fig.\ref{fig:A-graceful-difference} are for understanding the RLA-algorithm-A of the odd-edge graceful-difference total coloring:

(a) the graph $G_{\textrm{XYR-twin}}$ admits a twin set-ordered odd-edge graceful-difference total labeling $\alpha_{grd}$ holding $f_{grd}(V(G_{\textrm{XYR}}))\cup \alpha_{grd}(V(G_{\textrm{XYR-twin}}))\subseteq [0,20]$;

(b) the graph $G_{\textrm{XYR}}$ admits a set-ordered odd-edge graceful-difference total coloring $f_{grd}$ holding $\big | |f_{grd}(x)-f_{grd}(y)|-f_{grd}(xy)\big |=0$;

(c) the graph $G_{\textrm{XYR-leaf}}$ admits a set-ordered odd-edge graceful-difference total coloring $f_{grd}^*$ holding $\big | |f_{grd}^*(x)-f_{grd}^*(y)|-f_{grd}^*(xy)\big |=26$;

(d) the graph $H_{\textrm{XYR-leaf}}$ admits a set-ordered odd-edge graceful-difference total labeling $h_{grd}^*$ holding $\big | |h_{grd}^*(x)-h_{grd}^*(y)|-h_{grd}^*(xy)\big |=26$. There is a graph homomorphism $G_{\textrm{XYR-leaf}}\rightarrow H_{\textrm{XYR-leaf}}$.\qqed
\end{example}

\begin{figure}[h]
\centering
\includegraphics[width=16.4cm]{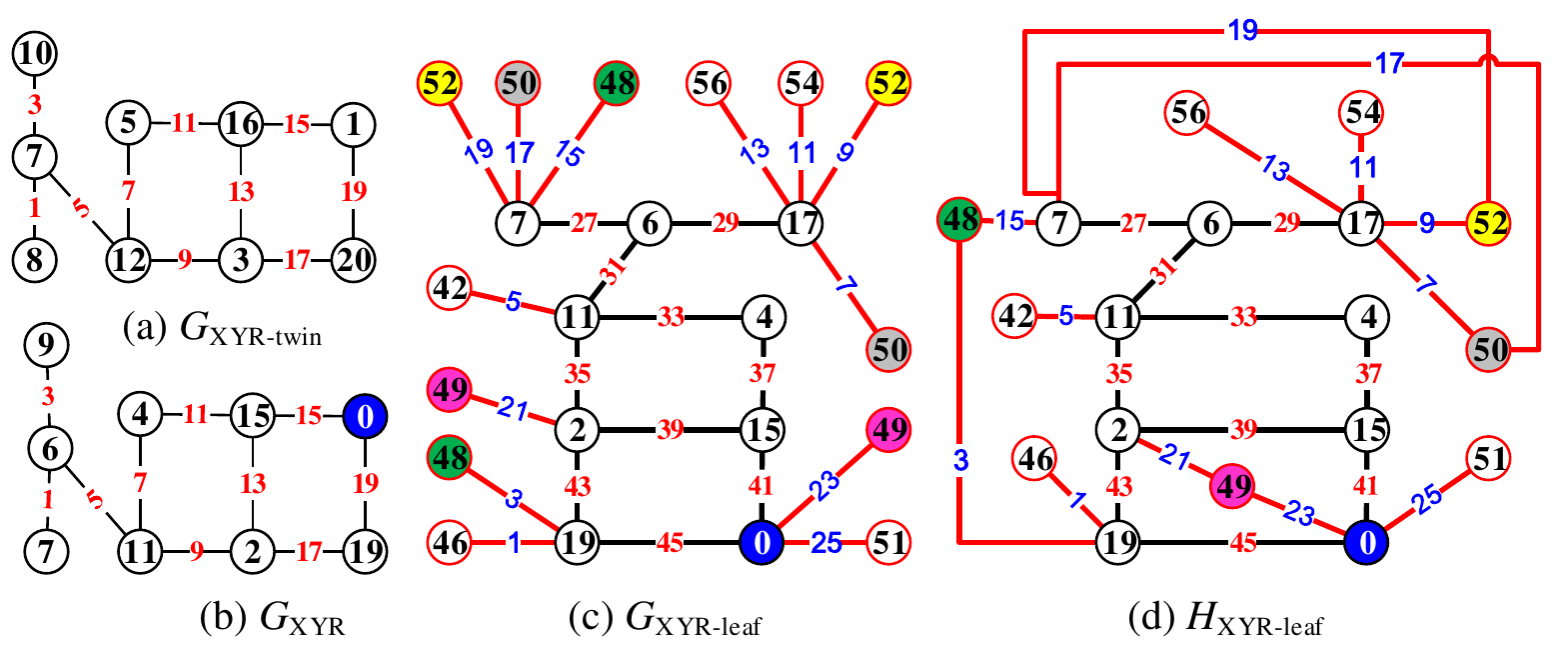}\\
\caption{\label{fig:A-graceful-difference} {\small A scheme for understanding RLA-algorithm-A of the odd-edge graceful-difference total coloring.}}
\end{figure}

\begin{problem}\label{problem:odd-edge-graceful-difference}
In the RLA-algorithm-A of the odd-edge graceful-difference total coloring, there are the following problems:

(i) \textbf{Integer Partition Problem.} We can select $k$ vertices from a $(p,q)$-graph $G$ for adding $m$ leaves to them, then we have $A^k_p=p(p-1)\cdots (p-k+1)$ selections, rather than ${p \choose k}=\frac{p!}{k!(p-k)!}$. Next, we decompose $m$ into a group of $k$ parts $m_1,m_2,\cdots ,m_k$ holding $m=m_1+m_2+\cdots +m_k$ with each $m_i\neq 0$. Suppose there is $n(m,k)$ groups of such $k$ parts. For a group of $k$ parts $m_1,m_2,\cdots ,m_k$, let $m_{i_1},m_{i_2},\cdots ,m_{i_k}$ be a permutation of $m_1,m_2,\cdots ,m_k$, so we have the number of such permutations is a factorial $k!$. Since the $(p,q)$-graph $G$ is colored well by the $W$-magic coloring/labeling $f$, then we have the number $A_{leaf}(G,m)$ of all possible adding $m$ leaves as follows
\begin{equation}\label{eqa:c3xxxxx}
A_{leaf}(G,m)=\sum^m_{k=1}A^k_p\cdot n(m,k)\cdot k!=\sum^m_{k=1} n(m,k)\cdot p!
\end{equation} where $n(m,k)=\sum ^k_{r=1}n(m-k,r)$. Here, computing $n(m,k)$ can be transformed into finding the number $A(m,k)$ of solutions of \emph{Diophantine equation} $m=\sum ^k_{i=1}ix_i$. There is a recursive formula
\begin{equation}\label{eqa:c3xxxxx}
A(m,k)=A(m,k-1)+A(m-k,k)
\end{equation}
with $0 \leq k\leq m$. It is not easy to compute the exact value of $A(m,k)$, for example, the authors in \cite{Shuhong-Wu-Accurate-2007} and \cite{WU-Qi-qi-2001} computed exactly
$${
\begin{split}
A(m,6)=&\biggr\lfloor \frac{1}{1036800}(12m^5 +270m^4+1520m^3-1350m^2-19190m-9081)+\\
&\frac{(-1)^m(m^2+9m+7)}{768}+\frac{1}{81}\left[(m+5)\cos \frac{2m\pi}{3}\right ]\biggr\rfloor
\end{split}}
$$

For any odd integer $m\geq 7$ it was conjectured $m=p_1+p_2+p_3$ with three primes $p_1,p_2,p_3$ from the famous Goldbach's conjecture: ``Every even integer, greater than 2, can be expressed as the sum of two primes.'' In other word, determining $A(m,3)$ is difficult, also, it is difficult to express an odd integer $m=\sum^{3n}_{k=1} p\,'_k$ with each $p\,'_k$ is a prime.

(ii) \textbf{Estimate} the extremum number
\begin{equation}\label{eqa:555555}
\min\{\max f_{grd}^*(V(G_A)):~f_{grd}^*\textrm{ is an odd-edge graceful-difference total coloring of}~G_A\}
\end{equation} over all odd-edge graceful-difference total colorings of the leaf-added graph $G_A$.

(iii) Notice that there are $m!$ permutations $w_{i_1},w_{i_2},\dots ,w_{i_m}$ from the added leaves of the leaf-added set $L^*(G_A)=\big (\bigcup^s_{i=1}L(x_i)\big )\bigcup \big (\bigcup^t_{j=1}L(y_j)\big )$ of the leaf-added $(p+m,q+m)$-graph $G_A$, notice that the leaf permutation in the RLA-algorithm-A of the odd-edge graceful-difference total coloring is one of these $m!$ permutations. We define a new coloring $F_{grd}$ for the leaf-added graph $G_A$ as: Color leaf-edge $w_{i_j}z_{i_j}$ with $F_{grd}(w_{i_j}z_{i_j})=2j-1$ for $j\in [1,m]$, where $z_{i_j}\in X\cup Y=V(G)$, and each element $e\in V(G)\cup E(G)=[V(G_A)\cup E(G_A)]\setminus L^*(G_A)$ is colored as $F_{grd}(e)=f_{ed}^*(e)$, as well as color each added leaf $w_{i_j}\in L^*(G_A)$ by
$$
F_{grd}(w_{i_j})=N_A^*+F_{grd}^*(z_{i_j})+F_{grd}(w_{i_j}z_{i_j}),~N_A^*=|N_A-2m|
$$ By Eq.(\ref{eqa:graceful-difference-total-aa}), Eq.(\ref{eqa:graceful-difference-total-bb}) and Eq.(\ref{eqa:graceful-difference-total-cc}), the coloring $F_{grd}$ is an odd-edge graceful-difference total coloring based on a permutation $w_{i_1},w_{i_2},\dots ,w_{i_m}$.

(vi) In Fig.\ref{fig:A-graceful-difference}, the Topcode-matrix $T_{code}(G_{\textrm{XYR-twin}})$ can be directly obtained from the Topcode-matrix $T_{code}(G_{\textrm{XYR}})$, and these two Topcode-matrices induce two graph sets $S(T_{code}(G_{\textrm{XYR-twin}}))$ and $S(T_{code}(G_{\textrm{XYR}}))$, we call theses two graph sets as \emph{twin odd-edge graceful-difference graph sets}. Thereby, a \emph{public-key graph} $G\in S(T_{code}(G_{\textrm{XYR}}))$ may correspond many \emph{private-key graphs} $S(T_{code}(G_{\textrm{XYR-twin}}))$. The Topcode-matrix $T_{code}(G_{\textrm{XYR-leaf}})$ can be made by adding leaves to the Topcode-matrix $T_{code}(G_{\textrm{XYR}})$.
\end{problem}

\begin{defn} \label{defn:derived-odd-edge-edge-difference-total}
$^*$ Suppose that a connected $(p,q)$-graph $G$ admits an odd-edge graceful-difference total coloring $f$, so $\big ||f(u)-f(v)|-f(uv)\big |=c$ for each edge $uv\in E(G)$, where $c$ is a non-negative integer.

(1) Let $w_{i_1},w_{i_2},\dots ,w_{i_p}$ be a permutation of vertices of $V(G)$, and $e_{j_1},e_{j_2},\dots ,e_{j_q}$ be a permutation of edges of $E(G)$, and $c_{i_1},c_{i_2},\dots ,c_{i_p}$ be a permutation of vertex colors of $f(V(G))$ and $c_{j_1},c_{j_2},\dots ,c_{j_q}$ be a permutation of edge colors of $f(E(G))$. We define a new total coloring $\theta$ for $G$ as: $\theta(w_{i_k})=c_{i_k}$ for $k\in [1,p]$, and $\theta(e_{j_s})=c_{j_s}$ for $s\in [1,q]$, such that

(i) each edge $uv\in E(G)$ corresponds two different vertices $x,z\in V(G)$ holding $\big ||\theta(x)-\theta(z)|-\theta(uv)\big |=c$; and

(ii) each vertex $y\in V(G)$ corresponds another vertex $t\in V(G)$ and an edge $ab\in E(G)$ holding $\big ||\theta(y)-\theta(t)|-\theta(ab)\big |=c$. We call $\theta$ a \emph{ve-separably derived odd-edge graceful-difference total coloring} of the total coloring $f$.

(2) Let $z_{i_1},z_{i_2},\dots ,z_{i_{p+q}}$ be a permutation of vertices and edges of $V(G)\cup E(G)$, and let $b_{j_1}$, $b_{j_2}$, $\dots ,b_{j_{p+q}}$ be a permutation of elements of $f(V(G))\cup f(E(G))$. We define a new total coloring $\varphi$ for $G$ as: $\varphi(z_{i_s})=b_{j_s}$ for $s\in [1,p+q]$, such that each element $z\in V(G)\cup E(G)$ corresponds two elements $x,y\in V(G)\cup E(G)$ holding one of $\big ||\varphi(z)-\varphi(x)|-\varphi(y)\big |=c$ and $\big ||\varphi(y)-\varphi(x)|-\varphi(z)\big |=c$ true, we call $\varphi$ a \emph{derived odd-edge graceful-difference total coloring} of the total coloring $f$.\qqed
\end{defn}

\begin{rem}\label{rem:333333}
About Definition \ref{defn:derived-odd-edge-edge-difference-total}, we have the following facts:

(1) Similarly with Definition \ref{defn:derived-odd-edge-edge-difference-total}, there are six derived-type magic-type total colorings: (ve-separably) derived odd-edge edge-difference total coloring, (ve-separably) derived odd-edge felicitous-difference total coloring, and (ve-separably) derived odd-edge edge-magic total coloring.

(2) Suppose that the connected $(p,q)$-graph $G$ admits $M$ odd-edge graceful-difference total colorings. For each odd-edge graceful-difference total coloring $f$ of these $M$ colorings, we have $(p!)^2(q!)^2$ ve-separably derived odd-edge graceful-difference total colorings of the total coloring $f$, we put them into a set $S^{gr}_{ve}\langle G,f\rangle $, then we get $M\cdot (p!)^2(q!)^2$ ve-separably derived odd-edge graceful-difference total colorings in total.

(3) Two Topcode-matrices $T_{code}(G,\theta)\neq T_{code}(G,\varphi)$ for two different odd-edge graceful-difference total colorings $\theta,\varphi\in S^{gr}_{ve}\langle G,f\rangle $, in general. And moreover, $T_{code}(G,\theta)\neq T_{code}(G,\varphi)$ for $\theta\in S^{gr}_{ve}\langle G,f\rangle $ and $\varphi\in S^{gr}_{ve}\langle G,g\rangle $, where $g$ is another odd-edge graceful-difference total coloring of $G$.

(4) These number-based strings $c_{i_1}c_{i_2}\cdots c_{i_p}c_{j_1}c_{j_2}\cdots c_{j_q}$ and $b_{j_1}b_{j_2}\cdots b_{j_{p+q}}$ defined in Definition \ref{defn:derived-odd-edge-edge-difference-total} differ from number-based strings $d_{i_1}d_{i_2}\cdots d_{i_{3q}}$ made by Topcode-matrices like $T_{code}(G,\theta)$ and $T_{code}(G,\varphi)$.\paralled
\end{rem}

\subsection{RLA-algorithm-B of the odd-edge edge-difference total coloring}

\noindent \textbf{RLA-algorithm-B of the odd-edge edge-difference total coloring.}

\textbf{Input:} A connected bipartite $(p,q)$-graph $G$ admitting a set-ordered odd-edge edge-difference total labeling $f_{edd}$.

\textbf{Output:} A connected bipartite $(p+m,q+m)$-graph $G_B$ admitting an odd-edge edge-difference total coloring $f_{edd}^*$, where $G_B$, called \emph{leaf-added graph}, is the result of adding randomly $m$ leaves to $G$.

\textbf{Initialization.} A connected bipartite $(p,q)$-graph $G$ has its own vertex set $V(G)=X\cup Y$ with $X\cap Y=\emptyset$, where $X=\{x_1,x_2,\dots ,x_s\}$ and $Y=\{y_1,y_2,\dots ,y_t\}$ with $s+t=p=|V(G)|$. By the definition of a set-ordered odd-edge edge-difference total labeling, so the set-ordered restriction $\max f_{edd}(X)<\min f_{edd}(Y)$ holds true, without loss of generality, there are inequalities
$$
0=f_{edd}(x_1)<f_{edd}(x_2)<\cdots <f_{edd}(x_s)<f_{edd}(y_1)<f_{edd}(y_2)<\cdots <f_{edd}(y_t)=2q-1
$$
so each color $f_{edd}(x_i)$ for $i\in[1,s]$ is even, and each color $f_{edd}(y_j)$ for $j\in[1,t]$ is odd, and each edge $x_iy_j\in E(G)$ holds
\begin{equation}\label{eqa:555555}
f_{edd}(x_iy_j)+\big |f_{edd}(x_i)-f_{edd}(y_j)\big |=N_B>0
\end{equation}
true, as well as $f_{edd}(E(G))=\{f_{edd}(x_iy_j):x_iy_j\in E(G)\}=[1,2q-1]^o$.

Adding randomly $a_i$ new leaves $u_{i,k}\in L(x_i)=\{u_{i,k}:k\in [1,a_i]\}$ to each vertex $x_i\in X\subset V(G)$ by joining $u_{i,k}$ with $x_i$ together by new edges $x_iu_{i,k}$ for $k\in [1,a_i]$ and $i\in[1,s]$, and adding randomly $b_j$ new leaves $v_{j,r}\in L(y_j)=\{v_{j,r}:r\in [1,b_j]\}$ to each vertex $y_j\in Y\subset V(G)$ by joining $v_{j,r}$ with $y_j$ together by new edges $y_jv_{j,r}$ for $r\in [1,b_j]$ and $j\in[1,t]$, it may happen some $a_i=0$ or some $b_j=0$. The resultant graph is denoted as $G_B$.

Let $M_X=\sum ^{s}_{c=1}a_{c}$ and $M_Y=\sum ^{t}_{c=1}b_{c}$, so $m=M_X+M_Y$. Define a coloring $f_{edd}^*$ of the leaf-added graph $G_B$ in the following steps.

\textbf{Step B-1.} Color edges $x_iu_{i,k}$ for leaves $u_{i,k}\in L(x_i)$ with $k\in [1,a_i]$ and $i\in[1,s]$ as follows:

(B-11) $f_{edd}^*(x_1u_{1,k})=2k-1$ for $k\in [1,a_1]$, $f_{edd}^*(x_1u_{1,a_1})=2a_s-1$;

(B-12) $f_{edd}^*(x_{i}u_{i,k})=2k-1+2\sum ^{i-1}_{c=1}a_{c}$ for $k\in [1,a_i]$ and $i\in[2,s]$, $f_{edd}^*(x_{i}u_{i,a_i})=2a_i-1+2\sum ^{i-1}_{c=1}a_{c}=-1+2\sum ^{i}_{c=1}a_{c}$; and

(B-13) The last edge $x_{s}u_{s,a_s}$ is colored by $f_{edd}^*(x_{s}u_{s,a_s})=2a_s-1+2\sum ^{s-1}_{c=1}a_{c}=-1+2\sum ^{s}_{c=1}a_{c}=2M_X-1$.

\textbf{Step B-2.} Color edges $y_{j}v_{j,r}$ for leaves $v_{j,r}\in L(y_j)$ with $r\in [1,b_j]$ and $j\in[1,t]$ as follows:

(B-21) $f_{edd}^*(y_{1}v_{1,r})=2M_X+2r-1$ for $k\in [1,b_1]$, $f_{edd}^*(y_{1}v_{1,b_1})=2M_X+2b_1-1$;

(B-22) $f_{edd}^*(y_{j}v_{j,r})=2M_X+2r-1+2\sum ^{j-1}_{c=1}b_{c}$ for $r\in [1,b_j]$ and $j\in[1,t]$, and the last edge $x_{s}u_{s,a_s}$ is colored by
$$
f_{edd}^*(y_{t}v_{t,b_t})=2M_X+2b_t-1+2\sum ^{t-1}_{c=1}b_{c}=2(M_X+M_Y)-1=2m-1
$$ The edge color set is
\begin{equation}\label{eqa:edge-difference-total00}
{
\begin{split}
f^*_{edd}(E(G_B))=&\{f_{edd}^*(x_iu_{i,k}):k\in [1,a_i],i\in[1,s]\}\cup \\
&\cup \{f_{edd}^*(y_{j}v_{j,r}):r\in [1,b_j],j\in[1,t]\}\cup f^*_{edd}(E(G))\\
=&[1,2(m+q)-1]^o,
\end{split}}
\end{equation}

\textbf{Step B-3.} Recolor each element of $V(G)\cup E(G)$ as: $f_{edd}^*(w)=f_{edd}(w)+2m$ for $w\in E(G)$, and $f_{edd}^*(z)=f_{edd}(z)$ for $z\in V(G)$, which induces
\begin{equation}\label{eqa:555555}
f_{edd}^*(x_iy_j)+|f_{edd}^*(x_i)-f_{edd}^*(y_j)|=f_{edd}(x_iy_j)+2m+|f_{edd}(x_i)-f_{edd}(y_j)|=2m+N_B
\end{equation} for each edge $x_iy_j\in E(G)$.

\textbf{Step B-4.} Let $N_B^*=2m+N_B$. Color the added leaves of $L(y_j)$ and $L(x_i)$ with $j\in[1,t]$ and $i\in[1,s]$.

\textbf{Step B-4.1.} Each leaf $v_{j,r}\in L(y_j)$ with $r\in [1,b_j]$ and $j\in[1,t]$ is colored by
\begin{equation}\label{eqa:edge-difference-total11}
f_{edd}^*(v_{j,r})=N_B^*+f_{edd}^*(y_j)-f_{edd}^*(y_jv_{j,r})
\end{equation} so $f_{edd}^*(y_{j}v_{j,r})+|f_{edd}^*(y_{j})-f_{edd}^*(v_{j,r})|=N_B^*$ for $r\in [1,b_{j}]$ and $j\in[1,t]$.

\textbf{Step B-4.2.} Each leaf $u_{i,k}\in L(x_i)$ with $k\in [1,a_i]$ and $i\in[1,s]$ is colored by
\begin{equation}\label{eqa:edge-difference-total22}
f_{edd}^*(u_{i,k})=N_B^*+f_{edd}^*(x_i)-f_{edd}^*(x_iu_{i,k})
\end{equation} immediately, $f_{edd}^*(x_{i}u_{i,k})+|f_{edd}^*(x_{i})-f_{edd}^*(u_{i,k})|=N_B^*$ for $k\in [1,a_i]$ and $i\in[1,s]$.

\textbf{Step B-5.} Return the odd-edge edge-difference total coloring $f_{edd}^*$ of the leaf-added graph $G_B$.

\vskip 0.4cm

\begin{example}\label{exa:8888888888}
In Fig.\ref{fig:B-edge-difference}, for illustrating the RLA-algorithm-B of the odd-edge edge-difference total coloring, we can see the following examples:

(a) the graph $G_{\textrm{XY-twin}}$ admits a twin set-ordered odd-edge edge-difference total labeling $\beta_{edd}$ holding $f_{edd}(V(G_{\textrm{XY}}))\cup \beta_{edd}(V(G_{\textrm{XY-twin}}))\subseteq [0,20]$;

(b) the graph $G_{\textrm{XY}}$ admits a set-ordered odd-edge edge-difference total coloring $f_{edd}$ holding $f_{edd}(xy)+|f_{edd}(x)-f_{edd}(y)|=20$;

(c) the graph $G_{\textrm{XY-leaf}}$ admits a set-ordered odd-edge edge-difference total coloring $f_{edd}^*$ holding $f_{edd}^*(xy)+|f_{edd}^*(x)-f_{edd}^*(y)|=46$;

(d) the graph $H_{\textrm{XY-leaf}}$ admits a set-ordered odd-edge edge-difference total labeling $h_{edd}^*$ holding $h_{edd}^*(xy)+|h_{edd}^*(x)-h_{edd}^*(y)|=46$. Notice that $G_{\textrm{XY-leaf}}\rightarrow H_{\textrm{XY-leaf}}$.\qqed
\end{example}

\begin{figure}[h]
\centering
\includegraphics[width=16.4cm]{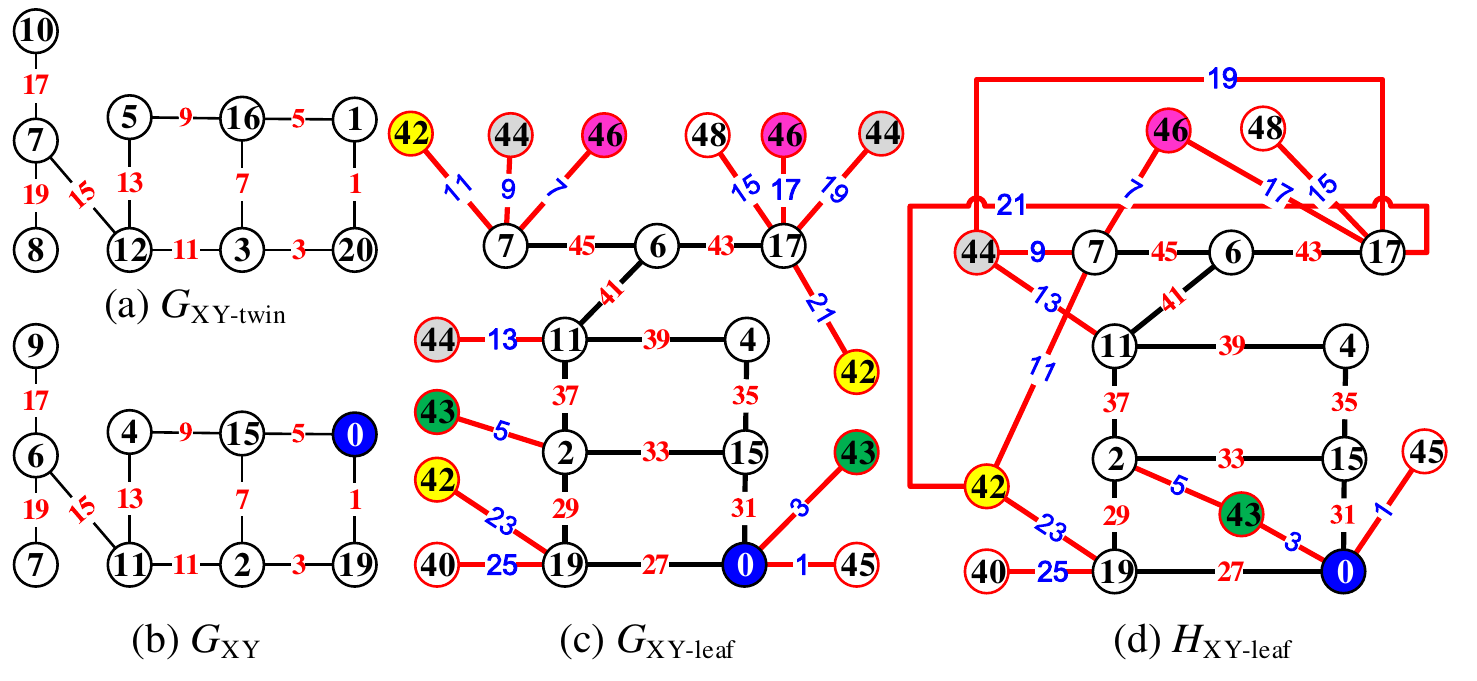}\\
\caption{\label{fig:B-edge-difference} {\small Examples for knowing RLA-algorithm-B of the odd-edge edge-difference total coloring.}}
\end{figure}

\begin{problem}\label{problem:555555}
In the RLA-algorithm-B of the odd-edge edge-difference total coloring, there are the following problems:

(i) \textbf{Estimate} the extremum number
\begin{equation}\label{eqa:555555}
\min\{\max f_{edd}^*(V(G_A)):~f_{edd}^*\textrm{ is an odd-edge edge-difference total coloring of}~G_B\}.
\end{equation} over all odd-edge edge-difference total colorings of the leaf-added graph $G_B$.

(ii) Since $m=M_X+M_Y$, there are $m!$ permutations $w_{i_1},w_{i_2},\dots ,w_{i_m}$ from the added leaves of the leaf-added set $L^*(G_B)=\big (\bigcup^s_{i=1}L(x_i)\big )\bigcup \big (\bigcup^t_{j=1}L(y_j)\big )$ of the leaf-added $(p+m,q+m)$-graph $G_B$, notice that the leaf permutation in the RLA-algorithm-B of the odd-edge edge-difference total coloring is one of these $m!$ permutations. We define a new coloring $F_{edd}$ for the leaf-added graph $G_B$ as: Color leaf-edge $w_{i_j}z_{i_j}$ with $F_{edd}(w_{i_j}z_{i_j})=2j-1$ for $j\in [1,m]$, where $z_{i_j}\in X\cup Y=V(G)$, and each element $e\in V(G)\cup E(G)=[V(G_B)\cup E(G_B)]\setminus L^*(G_B)$ is colored as $F_{edd}(e)=f_{edd}^*(e)$, as well as color each added leaf $w_{i_j}\in L^*(G_B)$ by
$$F_{edd}(w_{i_j})=N_B^*+F_{edd}^*(z_{i_j})-F_{edd}(w_{i_j}z_{i_j}),~N_B^*=2m+N_B
$$ By Eq.(\ref{eqa:edge-difference-total00}), Eq.(\ref{eqa:edge-difference-total11}) and Eq.(\ref{eqa:edge-difference-total22}), the coloring $F_{edd}$ is an odd-edge edge-difference total coloring based on a permutation $w_{i_1},w_{i_2},\dots ,w_{i_m}$.
\end{problem}

\subsection{RLA-algorithm-C of the odd-edge edge-magic total coloring}

\noindent \textbf{RLA-algorithm-C of the odd-edge edge-magic total coloring.}

\textbf{Input:} A connected bipartite $(p,q)$-graph $G$ admitting a set-ordered odd-edge edge-magic total labeling $f_{ema}$.

\textbf{Output:} A connected bipartite $(p+m,q+m)$-graph $G_C$ admitting an odd-edge edge-magic total coloring $f_{ema}^*$, where $G_C$, called \emph{leaf-added graph}, is the result of adding randomly $m$ leaves to $G$.

\textbf{Initialization.} A connected bipartite $(p,q)$-graph $G$ has its own vertex set $V(G)=X\cup Y$ with $X\cap Y=\emptyset$, where $X=\{x_1,x_2,\dots ,x_s\}$ and $Y=\{y_1,y_2,\dots ,y_t\}$ with $s+t=p=|V(G)|$. By the definition of a set-ordered odd-edge edge-magic total labeling, so we have the set-ordered restriction $\max f_{ema}(X)<\min f_{ema}(Y)$, without loss of generality, we have
$$
0=f_{ema}(x_1)<f_{ema}(x_2)<\cdots <f_{ema}(x_s)<f_{ema}(y_1)<f_{ema}(y_2)<\cdots <f_{ema}(y_t)=2q-1
$$
so each color $f_{ema}(x_i)$ for $i\in[1,s]$ is even, and each color $f_{ema}(y_j)$ for $j\in[1,t]$ is odd, and each edge $x_iy_j\in E(G)$ satisfies
\begin{equation}\label{eqa:555555}
f_{ema}(x_i)+f_{ema}(x_iy_j)+f_{ema}(y_j)=N_C> 0
\end{equation}
as well as $f_{ema}(E(G))=\{f_{ema}(x_iy_j):x_iy_j\in E(G)\}=[1,2q-1]^o$.

Adding randomly $a_i$ new leaves $u_{i,k}\in L(x_i)=\{u_{i,k}:k\in [1,a_i]\}$ to each vertex $x_i\in X\subset V(G)$ by joining $u_{i,k}$ with $x_i$ together by new edges $x_iu_{i,k}$ for $k\in [1,a_i]$ and $i\in[1,s]$, and adding randomly $b_j$ new leaves $v_{j,r}\in L(y_j)=\{v_{j,r}:r\in [1,b_j]\}$ to each vertex $y_j\in Y\subset V(G)$ by joining $v_{j,r}$ with $y_j$ together by new edges $y_jv_{j,r}$ for $r\in [1,b_j]$ and $j\in[1,t]$, it may happen some $a_i=0$ or some $b_j=0$. The resultant graph is denoted as $G_C$.

Let $M_X=\sum ^{s}_{c=1}a_{c}$ and $M_Y=\sum ^{t}_{c=1}b_{c}$, so $m=M_X+M_Y$. We define a coloring $f_{ema}^*$ of the leaf-added graph $G_C$ in the following steps:

\textbf{Step C-1.} Color edges $y_{j}v_{j,r}$ for leaves $v_{j,r}\in L(y_j)$ with $r\in [1,b_j]$ and $j\in[1,t]$ as follows:

(C-11) $f_{ema}^*(y_tv_{t,r})=2r-1$ for $r\in [1,b_t]$, $f_{ema}^*(y_tv_{t,b_t})=2b_t-1$;

(C-12) $f_{ema}^*(y_{t-1}v_{t-1,r})=2b_t+2r-1$ for $r\in [1,b_{t-1}]$, $f_{ema}^*(y_{t-1}v_{t-1,b_{t-1}})=2b_t+2b_{t-1}-1$;

(C-13) $f_{ema}^*(y_{t-j}v_{t-j,r})=2r-1+2\sum ^{t}_{c=t-j+1}b_{c}$ for $r\in [1,b_{t-j}]$ and $j\in[1,t-1]$, $f_{ema}^*(y_{t-j}v_{t-j,b_{t-j}})=2b_{t-j}-1+2\sum ^{t}_{c=t-j+1}b_{c}$;

(C-14) $f_{ema}^*(y_1v_{1,r})=2r-1+2\sum ^{t}_{c=2}b_{c}$ for $r\in [1,b_{1}]$, the last edge $y_1v_{1,b_{1}}$ is colored by
$$
f_{ema}^*(y_1v_{1,b_{1}})=2b_{1}-1+2\sum ^{t}_{c=2}b_{c}=-1+2\sum ^{t}_{c=1}b_{c }=2M_Y-1
$$

\textbf{Step C-2.} Color edges $x_iu_{i,k}$ for leaves $u_{i,k}\in L(x_i)$ with $k\in [1,a_i]$ and $i\in[1,s]$ as follows:

(C-21) $f_{ema}^*(x_su_{s,k})=2M_Y+2k-1$ for $k\in [1,a_s]$, $f_{ema}^*(x_su_{s,a_s})=2M_Y+2a_s-1$;

(C-22) $f_{ema}^*(x_{s-1}u_{s-1,k})=2M_Y+2a_s+2k-1$ for $k\in [1,a_{s-1}]$, $f_{ema}^*(x_{s-1}u_{s-1,a_{s-1}})=2M_Y+2a_s+2a_{s-1}-1$;

(C-23) $f_{ema}^*(x_{s-i}u_{s-i,k})=2k-1+2M_Y+2\sum ^{s}_{c=s-i+1}a_{c}$ for $k\in [1,a_i]$ and $i\in[1,s-1]$, $f_{ema}^*(x_{s-i}u_{s-i,a_i})=2a_i-1+2M_Y+2\sum ^{s}_{c=s-i+1}a_{c}$;

(C-24) $f_{ema}^*(x_1u_{1,k})=2k-1+2M_Y+2\sum ^{s}_{c=2}a_{c}$ for $k\in [1,a_{1}]$, and the last edge $x_1u_{1,a_1}$ is colored with
\begin{equation}\label{eqa:odd-edge-interval}
f_{ema}^*(x_1u_{1,a_1})=2a_1-1+2M_Y+2\sum ^{s}_{c=2}a_{c}=2M_Y-1+2\sum ^{s}_{c=1}a_{c}=2(M_Y+M_X)-1=2m-1.
\end{equation}

\textbf{Step C-3.} Recolor each element of $V(G)\cup E(G)$ in the following way: $f_{ema}^*(w)=f_{ema}(w)+2m$ for $w\in E(G)$, and $f_{ema}^*(z)=f_{ema}(z)$ for $z\in V(G)$. So, each edge $x_iy_j\in E(G)$ holds
\begin{equation}\label{eqa:odd-edge-interval-11}
f_{ema}^*(x_i)+f_{ema}^*(x_iy_j)+f_{ema}^*(y_j)=f_{ema}(x_i)+f_{ema}(x_iy_j)+2m+f_{ema}(y_j)=2m+N_C
\end{equation}
By Eq.(\ref{eqa:odd-edge-interval}) and Eq.(\ref{eqa:odd-edge-interval-11}), we have the edge color set $f_{ema}^*(E(G_C))$ of the graph $G_C$ as follows:
\begin{equation}\label{eqa:555555}
f_{ema}^*(E(G_C))=[1,2m-1]^o\cup [1+2m,2q-1+2m]^o=[1,2(q+m)-1]^o
\end{equation}

\textbf{Step C-4.} Let $N_C^*=2m+N_C$. Color the added leaves of $L(y_j)$ and $L(x_i)$ with $j\in[1,t]$ and $i\in[1,s]$.

\textbf{Step C-4.1.} Each leaf $v_{j,r}\in L(y_j)$ with $r\in [1,b_j]$ and $j\in[1,t]$ is colored by
\begin{equation}\label{eqa:odd-edge-interval-22}
f_{ema}^*(v_{j,r})=N_C^*-f_{ema}^*(y_j)-f_{ema}^*(y_jv_{j,r})
\end{equation} so $f_{ema}^*(y_{j})+f_{ema}^*(y_{j}v_{j,r})+f_{ema}^*(v_{j,r})=N_C^*$ for $r\in [1,b_{j}]$ and $j\in[1,t]$.

\textbf{Step C-4.2.} Each leaf $u_{i,k}\in L(x_i)$ with $k\in [1,a_i]$ and $i\in[1,s]$ is colored by
\begin{equation}\label{eqa:odd-edge-interval-33}
f_{ema}^*(u_{i,k})=N_C^*-f_{ema}^*(x_i)-f_{ema}^*(x_iu_{i,k})
\end{equation} immediately, $f_{ema}^*(x_{i})+f_{ema}^*(x_{i}u_{i,k})+f_{ema}^*(u_{i,k})=N_C^*$ for $k\in [1,a_i]$ and $i\in[1,s]$.

\textbf{Step C-5.} Return the odd-edge edge-magic total coloring $f_{ema}^*$ of the leaf-added graph $G_C$.

\vskip 0.4cm

\begin{example}\label{exa:8888888888}
For understanding the RLA-algorithm-C of the odd-edge edge-magic total coloring, see Fig.\ref{fig:C-edge-magic}, we have

(a) the graph $G_{\textrm{Y-twin}}$ admits a twin set-ordered odd-edge edge-magic total labeling $\gamma_{ema}$ holding $f_{ema}(V(G_{\textrm{Y}}))\cup \gamma_{ema}(V(G_{\textrm{Y-twin}}))\subseteq [0,20]$;

(b) the graph $G_{\textrm{Y}}$ admits a set-ordered odd-edge edge-magic total coloring $f_{ema}$ holding $f_{ema}(x)+f_{ema}(xy)+f_{ema}(y)=26$;

(c) the graph $G_{\textrm{Y-leaf}}$ admits a set-ordered odd-edge edge-magic total coloring $f_{ema}^*$ holding $f_{ema}^*(x)+f_{ema}^*(xy)+f_{ema}^*(y)=52$;

(d) the graph $H_{\textrm{Y-leaf}}$ admits a set-ordered odd-edge edge-magic total labeling $h_{ema}^*$ holding $h_{ema}^*(x)+h_{ema}^*(xy)+h_{ema}^*(y)=52$. There is a graph homomorphism $G_{\textrm{Y-leaf}}\rightarrow H_{\textrm{Y-leaf}}$.\qqed
\end{example}

\begin{figure}[h]
\centering
\includegraphics[width=16.4cm]{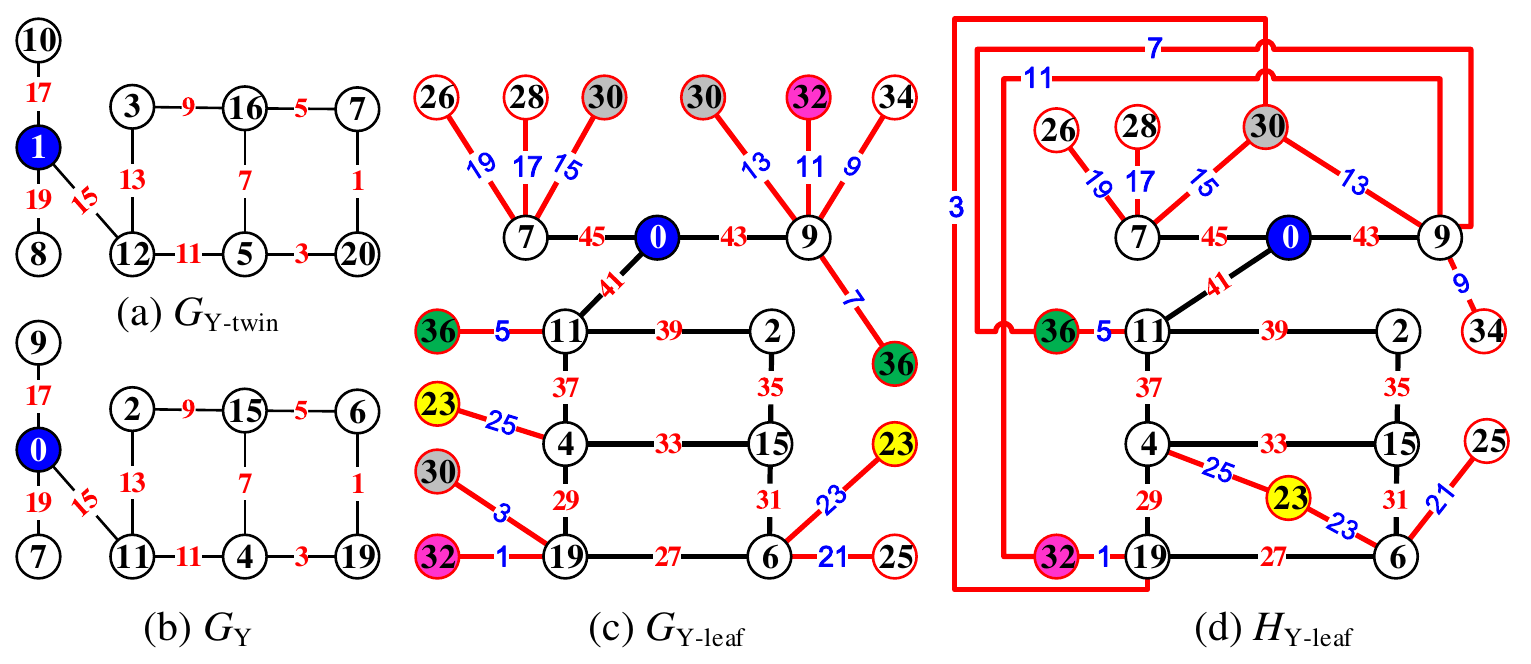}\\
\caption{\label{fig:C-edge-magic} {\small A scheme for understanding RLA-algorithm-C of the odd-edge edge-magic total coloring.}}
\end{figure}

\begin{problem}\label{problem:555555}
In the RLA-algorithm-C of the odd-edge edge-magic total coloring, there are the following problems:

(i) \textbf{Estimate} the extremum number
\begin{equation}\label{eqa:555555}
\min\{\max f_{ema}^*(V(G_A)):~f_{ema}^*\textrm{ is an odd-edge edge-magic total coloring of}~G_C\}
\end{equation} over all odd-edge edge-magic total colorings of the leaf-added graph $G_C$.

(ii) Notice that there are $m!$ permutations $w_{i_1},w_{i_2},\dots ,w_{i_m}$ from the added leaves of the leaf-added set $L^*(G_C)=\big (\bigcup^s_{i=1}L(x_i)\big )\bigcup \big (\bigcup^t_{j=1}L(y_j)\big )$ of the leaf-added $(p+m,q+m)$-graph $G_C$, notice that the leaf permutation in the RLA-algorithm-C of the odd-edge edge-magic total coloring is one of these $m!$ permutations. We define a new coloring $F_{ema}$ for the leaf-added graph $G_C$ as: Color leaf-edge $w_{i_j}z_{i_j}$ with $F_{ema}(w_{i_j}z_{i_j})=2j-1$ for $j\in [1,m]$, where $z_{i_j}\in X\cup Y=V(G)$, and each element $e\in V(G)\cup E(G)=[V(G_C)\cup E(G_C)]\setminus L^*(G_C)$ is colored as $F_{ema}(e)=f_{edd}^*(e)$, as well as color each added leaf $w_{i_j}\in L^*(G_C)$ by
$$F_{ema}(w_{i_j})=N_C^*-F_{ema}^*(z_{i_j})-F_{ema}(w_{i_j}z_{i_j}),~N_C^*=2m+N_C
$$ By Eq.(\ref{eqa:odd-edge-interval-11}), Eq.(\ref{eqa:odd-edge-interval-22}) and Eq.(\ref{eqa:odd-edge-interval-33}), the coloring $F_{ema}$ is an odd-edge edge-magic total coloring based on a permutation $w_{i_1},w_{i_2},\dots ,w_{i_m}$.
\end{problem}

\subsection{RLA-algorithm-D of the odd-edge felicitous-difference total coloring.}

\noindent \textbf{RLA-algorithm-D of the odd-edge felicitous-difference total coloring.}

\textbf{Input:} A connected bipartite $(p,q)$-graph $G$ admitting a set-ordered odd-edge felicitous-difference total labeling $h_{fed}$.

\textbf{Output:} A connected bipartite $(p+m,q+m)$-graph $G_D$ admitting an odd-edge felicitous-difference total coloring $h_{fed}^*$, where $G_D$, called \emph{leaf-added graph}, is the result of adding randomly $m$ leaves to $G$.

\textbf{Initialization.} A connected bipartite $(p,q)$-graph $G$ has its own vertex set $V(G)=X\cup Y$ with $X\cap Y=\emptyset$, where $X=\{x_1,x_2,\dots ,x_s\}$ and $Y=\{y_1,y_2,\dots ,y_t\}$ with $s+t=p=|V(G)|$. By the definition of a set-ordered odd-edge felicitous-difference total labeling, so we have the set-ordered restriction $\max h_{fed}(X)<\min h_{fed}(Y)$, without loss of generality,
$$
0=h_{fed}(x_1)<h_{fed}(x_2)<\cdots <h_{fed}(x_s)<h_{fed}(y_1)<h_{fed}(y_2)<\cdots <h_{fed}(y_t)=2q-1
$$
so each color $h_{fed}(x_i)$ for $i\in[1,s]$ is even, and each color $h_{fed}(y_j)$ for $j\in[1,t]$ is odd, and
\begin{equation}\label{eqa:555555}
\big |h_{fed}(x_i)+h_{fed}(y_j)-h_{fed}(x_iy_j)\big |=N_D\geq 0
\end{equation}
for each edge $x_iy_j\in E(G)$, as well as $h_{fed}(E(G))=\{h_{fed}(x_iy_j):x_iy_j\in E(G)\}=[1,2q-1]^o$.

Adding randomly $a_i$ new leaves $u_{i,k}\in L(x_i)=\{u_{i,k}:k\in [1,a_i]\}$ to each vertex $x_i\in X\subset V(G)$ by joining $u_{i,k}$ with $x_i$ together by new edges $x_iu_{i,k}$ for $k\in [1,a_i]$ and $i\in[1,s]$, and adding randomly $b_j$ new leaves $v_{j,r}\in L(y_j)=\{v_{j,r}:r\in [1,b_j]\}$ to each vertex $y_j\in Y\subset V(G)$ by joining $v_{j,r}$ with $y_j$ together by new edges $y_jv_{j,r}$ for $r\in [1,b_j]$ and $j\in[1,t]$, it may happen some $a_i=0$ or some $b_j=0$. The resultant graph is denoted as $G_D$. Let $A_X=\sum ^{s}_{l=1}a_l$ and $B_X=\sum ^{t}_{l=1}b_l$, so $m=A_X+B_X$.

Define a coloring $h_{fed}^*$ for $G_D$ in the following steps:

\textbf{Step D-1.} Color edges $x_iu_{i,k}$ by setting $h_{fed}^*(x_1u_{1,k})=2k-1$ for $k\in [1,a_1]$, and
\begin{equation}\label{eqa:555555}
h_{fed}^*(x_{i+1}u_{i+1,r})=2r-1+2\sum ^{i}_{l=1}a_l,~r\in [1,a_{i+1}],~i\in [1,s-1]
\end{equation}
and $h_{fed}^*(x_{s}u_{s,r})=2r-1+\sum ^{s-1}_{l=1}2a_l$ for $s\in [1,a_s]$, so the last edge $x_{s}u_{s,a_s}$ is colored with $h_{fed}^*(x_{s}u_{s,a_s})=-1+\sum ^{s}_{l=1}2a_l$.

\textbf{Step D-2.} Color edges $y_tv_{t,k}$ with $h_{fed}^*(y_tv_{t,k})=2A_X+2k-1$ for $k\in [1,b_t]$, and
$$h_{fed}^*(y_tv_{t,b_t})=2A_X-1+2b_t,~h_{fed}^*(y_{t-1}v_{t-1,k})=2A_X-1+2b_t+2k,~k\in [1,b_{t-1}]
$$ and $h_{fed}^*(y_{t-1}v_{t-1,b_{t-1}})=2A_X-1+2b_t+2b_{t-1}$, and
\begin{equation}\label{eqa:555555}
h_{fed}^*(y_{t-j}v_{t-j,k})=2k+2A_X-1+2\sum ^{t}_{l=t-j+1}b_l,~k\in [1,b_{t-j}],~j\in [1,t-2]
\end{equation} the last edge $y_{1}v_{1,b_1}$ is colored with $h_{fed}^*(y_{1}v_{1,b_1})=2A_X+2B_X-1=2m-1$. Thereby, the edge color set of the leaf-added graph $G_D$ is as
\begin{equation}\label{eqa:555555}
h_{fed}^*(E(G_D))=\big [1,2(A_X+B_X)+\max h_{fed}(E(G))\big ]^o=\big [1,2(q+m)-1\big ]^o.
\end{equation}

\textbf{Step D-3.} Recolor each element $w\in V(G)\cup E(G)$ with $h_{fed}^*(w)=h_{fed}(w)+2m$. So,
\begin{equation}\label{eqa:felicitous-difference-total-00}
\big |h_{fed}^*(x_i)+h_{fed}^*(y_j)-h_{fed}^*(x_iy_j)\big |=2m+\big |h_{fed}(x_i)+h_{fed}(y_j)-h_{fed}(x_iy_j)\big |=N_D+2m
\end{equation}

\textbf{Step D-4.} We color added-leaves $u_{i,k}\in L(x_i)$ with $h_{fed}^*(u_{i,k})=N_D+h_{fed}^*(x_iu_{i,k})-h_{fed}(x_i)$ for $k\in [1,a_i]$ and $i\in [1,s]$, since
\begin{equation}\label{eqa:felicitous-difference-total-11}
N_D+2m=\big |h_{fed}^*(u_{i,k})+h_{fed}^*(x_i)-h_{fed}^*(x_iu_{i,k})\big |=h_{fed}^*(u_{i,k})+h_{fed}(x_i)+2m-h_{fed}^*(x_iu_{i,k})
\end{equation}

Again, we color leaves $v_{j,r}\in L(y_j)$ with $h_{fed}^*(v_{j,r})=N_D+h_{fed}^*(v_{j,r}y_{j})-h_{fed}(y_j)$ for $r\in [1,b_{j}]$ and $j\in [1,t]$, because of
\begin{equation}\label{eqa:felicitous-difference-total-22}
N_D+2m=\big |h_{fed}^*(v_{j,r})+h_{fed}^*(y_j)-h_{fed}^*(v_{j,r}y_j)\big |=h_{fed}^*(v_{j,r})+h_{fed}(y_j)+2m-h_{fed}^*(v_{j,r}y_j)
\end{equation}
Thereby, $h_{fed}^*$ is an odd-edge felicitous-difference total coloring of the leaf-added graph $G_D$.

\textbf{Step D-5.} Return the odd-edge felicitous-difference total coloring $h_{fed}^*$ of the leaf-added graph $G_D$.

\vskip 0.4cm

\begin{example}\label{exa:8888888888}
For understanding the RLA-algorithm-D of the odd-edge felicitous-difference total coloring, Fig.\ref{fig:D-felicitous-difference} shows us the following facts:

(a) the graph $G_{\textrm{YR-twin}}$ admits a twin set-ordered odd-edge felicitous-difference total labeling $\theta_{fed}$ holding $h_{fed}(V(G_{\textrm{YR}}))\cup \theta_{fed}(V(G_{\textrm{YR-twin}}))\subseteq [0,20]$;

(b) the graph $G_{\textrm{YR}}$ admits a set-ordered odd-edge felicitous-difference total coloring $h_{fed}$ holding $\big |h_{fed}(x)+h_{fed}(y)-h_{fed}(xy)\big |=6$;

(c) the graph $G_{\textrm{YR-leaf}}$ admits a set-ordered odd-edge felicitous-difference total coloring $h_{fed}^*$ holding $\big |h_{fed}^*(x)+h_{fed}^*(y)-h_{fed}^*(xy)\big |=32$;

(d) the graph $H_{\textrm{YR-leaf}}$ admits a set-ordered odd-edge felicitous-difference total labeling $g_{fed}^*$ holding $\big |g_{fed}^*(x)+g_{fed}^*(y)-g_{fed}^*(xy)\big |=32$, as well as $G_{\textrm{YR-leaf}}\rightarrow H_{\textrm{YR-leaf}}$.\qqed
\end{example}

\begin{figure}[h]
\centering
\includegraphics[width=16.4cm]{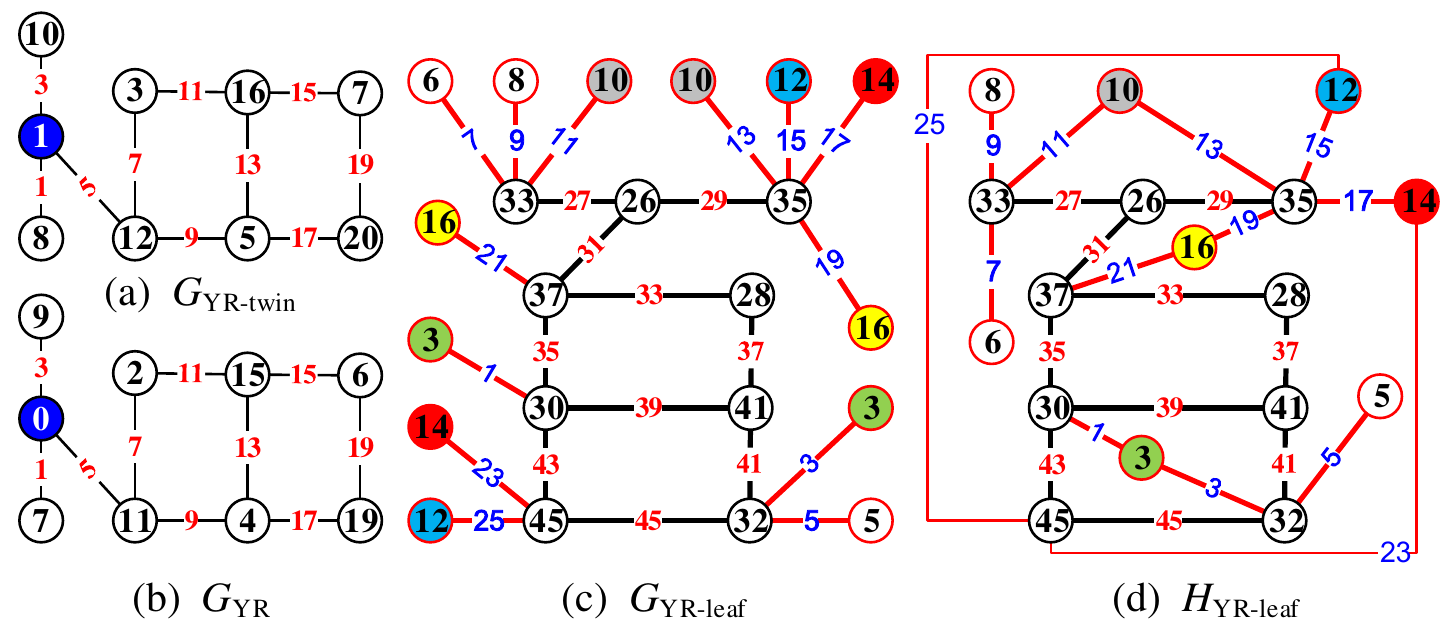}\\
\caption{\label{fig:D-felicitous-difference} {\small Examples for illustrating RLA-algorithm-D of the odd-edge felicitous-difference total coloring.}}
\end{figure}

\begin{problem}\label{problem:odd-edge-felicitous-difference}
In the RLA-algorithm-D of the odd-edge felicitous-difference total coloring, there are the following problems:

(i) \textbf{Estimate} the extremum number
\begin{equation}\label{eqa:555555}
\min\{\max h_{fed}^*(V(G_A)):~h_{fed}^*\textrm{ is an odd-edge felicitous-difference total coloring of}~G_D\}.
\end{equation} over all odd-edge felicitous-difference total colorings of the leaf-added graph $G_D$.

(ii) \textbf{Find} other ways for constructing graphs admitting odd-edge felicitous-difference total colorings/labelings. Two examples $G_{\textrm{X-add}}$ and $G_{\textrm{YR-add}}$ show in Fig.\ref{fig:E-add-vertex-edges} are obtained by adding directly vertices and edges to two graphs $G_{\textrm{X}}$ and $G_{\textrm{YR}}$, and they admit two set-ordered odd-edge felicitous-difference total labelings.

(iii) We have $m!$ permutations $w_{i_1},w_{i_2},\dots ,w_{i_m}$ from the added leaves of the leaf-added set $L^*(G_D)=\big (\bigcup^s_{i=1}L(x_i)\big )\bigcup \big (\bigcup^t_{j=1}L(y_j)\big )$ of the leaf-added $(p+m,q+m)$-graph $G_D$, so the leaf permutation in the RLA-algorithm-D of the odd-edge felicitous-difference total coloring is one of these $m!$ permutations. We define a new coloring $F_{fed}$ for the leaf-added graph $G_D$ as: Color leaf-edge $w_{i_j}z_{i_j}$ with $F_{fed}(w_{i_j}z_{i_j})=2j-1$ for $j\in [1,m]$, where $z_{i_j}\in X\cup Y=V(G)$, and each element $e\in V(G)\cup E(G)=[V(G_D)\cup E(G_D)]\setminus L^*(G_D)$ is colored as $F_{fed}(e)=h_{fed}^*(e)$, as well as color each added leaf $w_{i_j}\in L^*(G_D)$ by
$$F_{fed}(w_{i_j})=N_D^*+F_{fed}(w_{i_j}z_{i_j})-F_{fed}^*(z_{i_j}),~N_D^*=2m+N_D
$$ By Eq.(\ref{eqa:felicitous-difference-total-00}), Eq.(\ref{eqa:felicitous-difference-total-11}) and Eq.(\ref{eqa:felicitous-difference-total-22}), the coloring $F_{fed}$ is an odd-edge felicitous-difference total coloring based on a permutation $w_{i_1},w_{i_2},\dots ,w_{i_m}$.
\end{problem}

\begin{example}\label{exa:8888888888}
In Fig.\ref{fig:E-add-vertex-edges}, we can see:

(a) The graph $G_{\textrm{X}}$ admits a set-ordered odd-edge felicitous-difference total labeling $f_{fed}$ holding $\big |f_{fed}(x)+f_{fed}(y)-f_{fed}(xy)\big |=6$;

(b) the graph $G_{\textrm{X-add}}$ obtained from $G_{\textrm{X}}$ by adding vertices and edges admits a set-ordered odd-edge felicitous-difference total labeling $f_{fed}^*$ holding $\big |f_{fed}^*(x)+f_{fed}^*(y)-f_{fed}^*(xy)\big |=6$;

(c) the graph $G_{\textrm{YR}}$ admits a set-ordered odd-edge felicitous-difference total labeling $i_{fed}$ holding $\big |i_{fed}(x)+i_{fed}(y)-i_{fed}(xy)\big |=6$;

(d) the graph $G_{\textrm{YR-add}}$ admits a set-ordered odd-edge felicitous-difference total labeling $j_{fed}$ holding $\big |j_{fed}(x)+j_{fed}(y)-j_{fed}(xy)\big |=6$.\qqed
\end{example}

\begin{figure}[h]
\centering
\includegraphics[width=16.4cm]{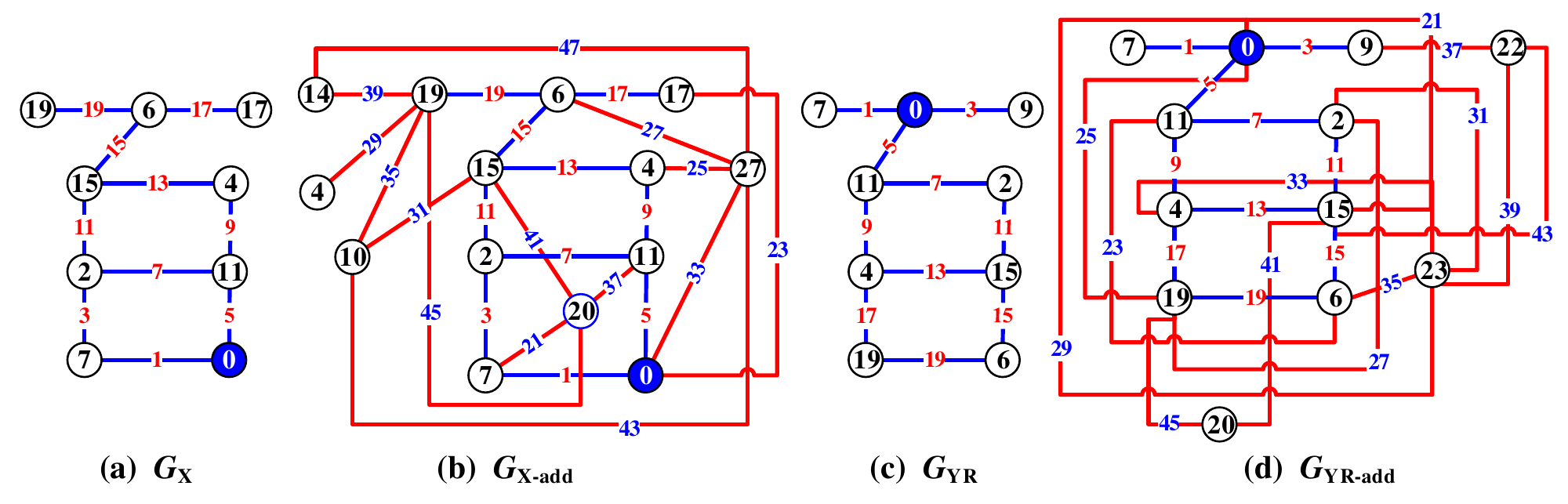}\\
\caption{\label{fig:E-add-vertex-edges} {\small The examples for adding no leaf to graphs admitting $W$-magic total colorings.}}
\end{figure}

\subsection{RLA-algorithm-E for adding leaves continuously}

\noindent \textsc{RLA-algorithm-E for adding leaves continuously.}

\textbf{Input:} A connected bipartite $(p,q)$-graph $G$ admitting an odd-edge graceful-difference total coloring $h^{gr}_0$.

\textbf{Output:} A connected bipartite $(p+m,q+m)$-graph $H$ admitting an odd-edge graceful-difference total coloring $h^{gr}_1$, where the $(p+m,q+m)$-graph $H$, called \emph{leaf-added graph}, is the result of adding randomly $m$ leaves to $G$.

\textbf{Initialization.} Let $G$ be a bipartite $(p,q)$-graph admitting an odd-edge graceful-difference total coloring $h^{gr}_0$, and let $V(G)=\{u_1,u_2,\dots, u_p\}$. By the definition of an odd-edge edge-magic total labeling, we have
\begin{equation}\label{eqa:555555}
0\leq h^{gr}_0(u_1)<h^{gr}_0(u_2)<\cdots <h^{gr}_0(u_p)\leq 2q-1
\end{equation}
so that each edge $u_iv_j\in E(G)$ satisfies the following equation
\begin{equation}\label{eqa:odd-edge-graceful-difference-colorings}
\big | |h^{gr}_0(u_i)-h^{gr}_0(v_j)|-h^{gr}_0(u_iv_j)\big |=\lambda_0
\end{equation} where integer $\lambda_0\geq 0$, as well as $h^{gr}_0(E(G))=\{h^{gr}_0(u_iv_j):u_iv_j\in E(G)\}=[1,2q-1]^o$.

\textbf{Step-E-1.} Adding new leaves $w_{i,1},w_{i,2},\dots ,w_{i,n_i}$ to each vertex $u_i$ of $G$ produces a leaf set $L(u_i)=\{w_{i,1},w_{i,2},\dots ,w_{i,n_i}\}$ with $i\in [1,p]$, here, it is allowed $n_j=0$ for some $j\in [1,p]$. The resultant graph is denoted as $H=G+E^*(G)$, where the leaf edge set
$$
E^*(G)=\{w_{i,j}u_i: ~w_{i,j}\in L(u_i),~ j\in [1,n_i],~i\in [1,p]\}
$$ and $m=|E^*(G)|$.

\textbf{Step-E-2.} Define a coloring $h^{gr}_1$ for the leaf-added graph $H$ as:

\textbf{Step-E-2.1.} \textbf{The ascending-order sub-algorithm.}

(1-1) Color $h^{gr}_1(w_{1,j}u_1)=2j-1$ with $j\in [1,n_1]$;

(1-2) Color $h^{gr}_1(w_{i,j}u_i)=2j-1+2\sum ^{i-1}_{k=1}n_k$ with $j\in [1,n_i]$ and $i\in [2,p]$.

(1-3) Color leaves $w_{s,t}\in \bigcup ^p_{i=1}L(u_i)$ with $h^{gr}_1(w_{s,t})$ holding
\begin{equation}\label{eqa:adding-leaves-continuously}
\big | |h^{gr}_1(u_s)-h^{gr}_1(w_{s,t})|-h^{gr}_1(w_{s,t}u_s)\big |=\lambda_0,~t\in [1,n_s],~s\in [1,p]
\end{equation}

(1-4) Color each element $z\in V(G)\cup E(G)$ with $h^{gr}_1(z)=h^{gr}_0(z)$.

Thereby, $h^{gr}_1(u_1)=0$, and
\begin{equation}\label{eqa:continus-adding-leaves}
\big | |h^{gr}_1(x)-h^{gr}_1(y)|-h^{gr}_1(xy)\big |=\lambda_0,~xy\in E(H),~h^{gr}_1(E(H))=\left [1,~|E(G)|+\sum ^{p}_{k=1}n_k\right ]^o
\end{equation}

\textbf{Step-E-2.2.} \textbf{The descending-order sub-algorithm.}

(2-1) Color $h^{gr}_1(w_{p,j}u_1)=2j-1$ with $j\in [1,n_p]$;

(2-2) Color $h^{gr}_1(w_{i,j}u_i)=2j-1+2\sum ^{p-i}_{k=1}n_{p-k+1}$ with $j\in [1,n_i]$ and $i\in [1,p-1]$.

(2-3) Color leaves $w_{s,t}\in \bigcup ^p_{i=1}L(u_i)$ holding Eq.(\ref{eqa:adding-leaves-continuously}) true.

(2-4) Color each element $z\in V(G)\cup E(G)$ with $h^{gr}_1(z)=h^{gr}_0(z)$.

We get $h^{gr}_1(u_1)=0$ and Eq.(\ref{eqa:continus-adding-leaves}).

\textbf{Step-E-2.3.} \textbf{The random-order sub-algorithm.}

(3-1) Color $h^{gr}_1(e_j)=2j-1$ with $j\in [1,A]$, where edges $e_1,e_2,\dots ,e_A$ is a permutation of leaf edges $w_{i,j}u_i$ with $j\in [1,n_i]$ and $i\in [1,p]$, and $A=\sum ^p_{i=1}|L(u_i)|$.

(3-2) Color leaves $w_{s,t}\in \bigcup ^p_{i=1}L(u_i)$ with $h^{gr}_1(w_{s,t})$ with $h^{gr}_1(w_{s,t})$ holding Eq.(\ref{eqa:adding-leaves-continuously}) true.

(3-3) Color each element $z\in V(G)\cup E(G)$ with $h^{gr}_1(z)=h^{gr}_0(z)$.

Thereby, $h^{gr}_1(u_1)=0$ and Eq.(\ref{eqa:continus-adding-leaves}) holds true.

\textbf{Step-E-3.} Return an odd-edge graceful-difference total coloring $h^{gr}_1$ of the leaf-added graph $H$.

\vskip 0.4cm

\begin{example}\label{exa:8888888888}
Fig.\ref{fig:add3times-graceful-difference-1} and Fig.\ref{fig:add3times-graceful-difference-2} show us examples for illustrating the RLA-algorithm for adding leaves continuously under the odd-edge graceful-difference total coloring:

In Fig.\ref{fig:add3times-graceful-difference-1}: (a) A graph $H_{\textrm{1-leaf}}$ admits an odd-edge graceful-difference total coloring $f^{gr}_1$ holding $\big | |f^{gr}_1(x)-f^{gr}_1(y)|-f^{gr}_1(xy)\big |=26$ for $xy\in E(H_{\textrm{1-leaf}})$; (b) a graph $AH_{\textrm{2-leaf}}$ obtained by adding leaves to the graph $H_{\textrm{1-leaf}}$; (c) a graph $H_{\textrm{2-leaf}}$ based on the graph $AH_{\textrm{1-leaf}}$ admits an odd-edge graceful-difference total coloring $f^{gr}_2$ holding $\big | |f^{gr}_2(x)-f^{gr}_2(y)|-f^{gr}_2(xy)\big |=26$ for $xy\in E(H_{\textrm{2-leaf}})$.

In Fig.\ref{fig:add3times-graceful-difference-2}: (a) A graph $AH_{\textrm{2-leaf}}$ is obtained by adding leaves to the graph $H_{\textrm{2-leaf}}$ shown in Fig.\ref{fig:add3times-graceful-difference-1} (c); (b) a graph $H_{\textrm{3-leaf}}$ based on the graph $AH_{\textrm{2-leaf}}$ admits an odd-edge graceful-difference total coloring $f^{gr}_3$ holding $\big | |f^{gr}_3(x)-f^{gr}_3(y)|-f^{gr}_3(xy)\big |=26$ for $xy\in E(H_{\textrm{3-leaf}})$.\qqed
\end{example}

\begin{figure}[h]
\centering
\includegraphics[width=16.4cm]{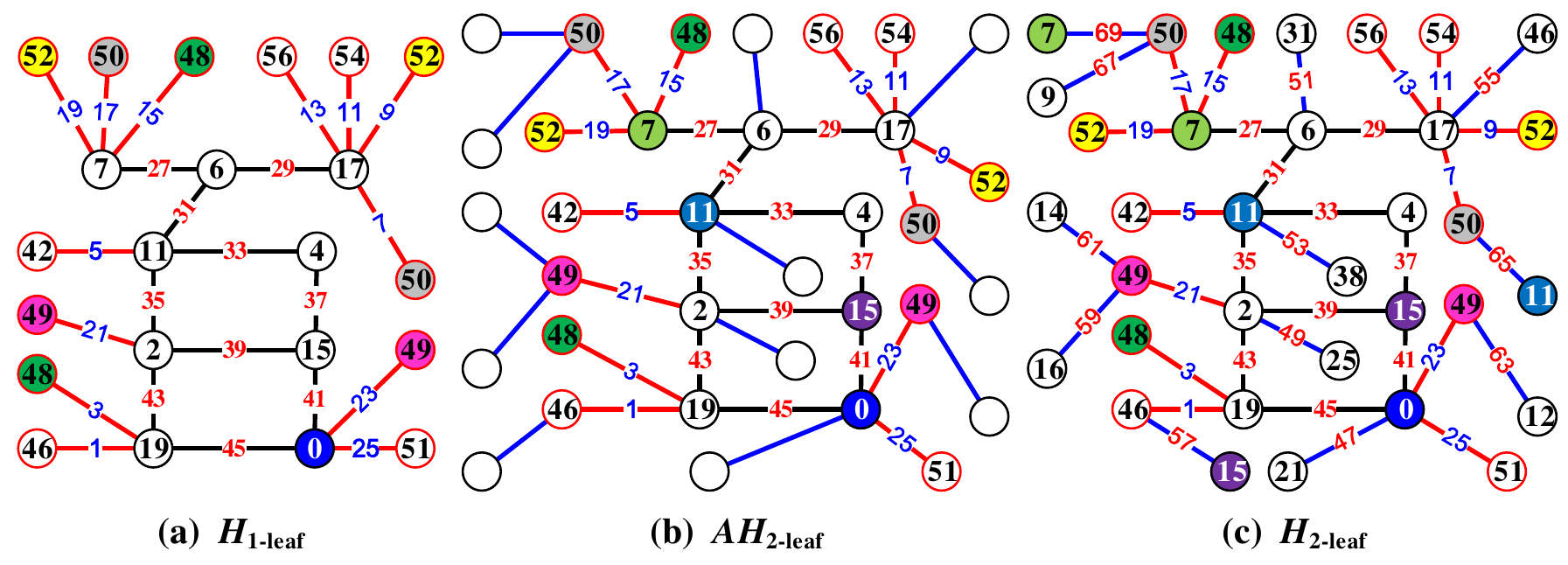}\\
\caption{\label{fig:add3times-graceful-difference-1} {\small Examples for the RLA-algorithm-E for adding leaves continuously under the odd-edge graceful-difference total coloring.}}
\end{figure}

\begin{figure}[h]
\centering
\includegraphics[width=16.4cm]{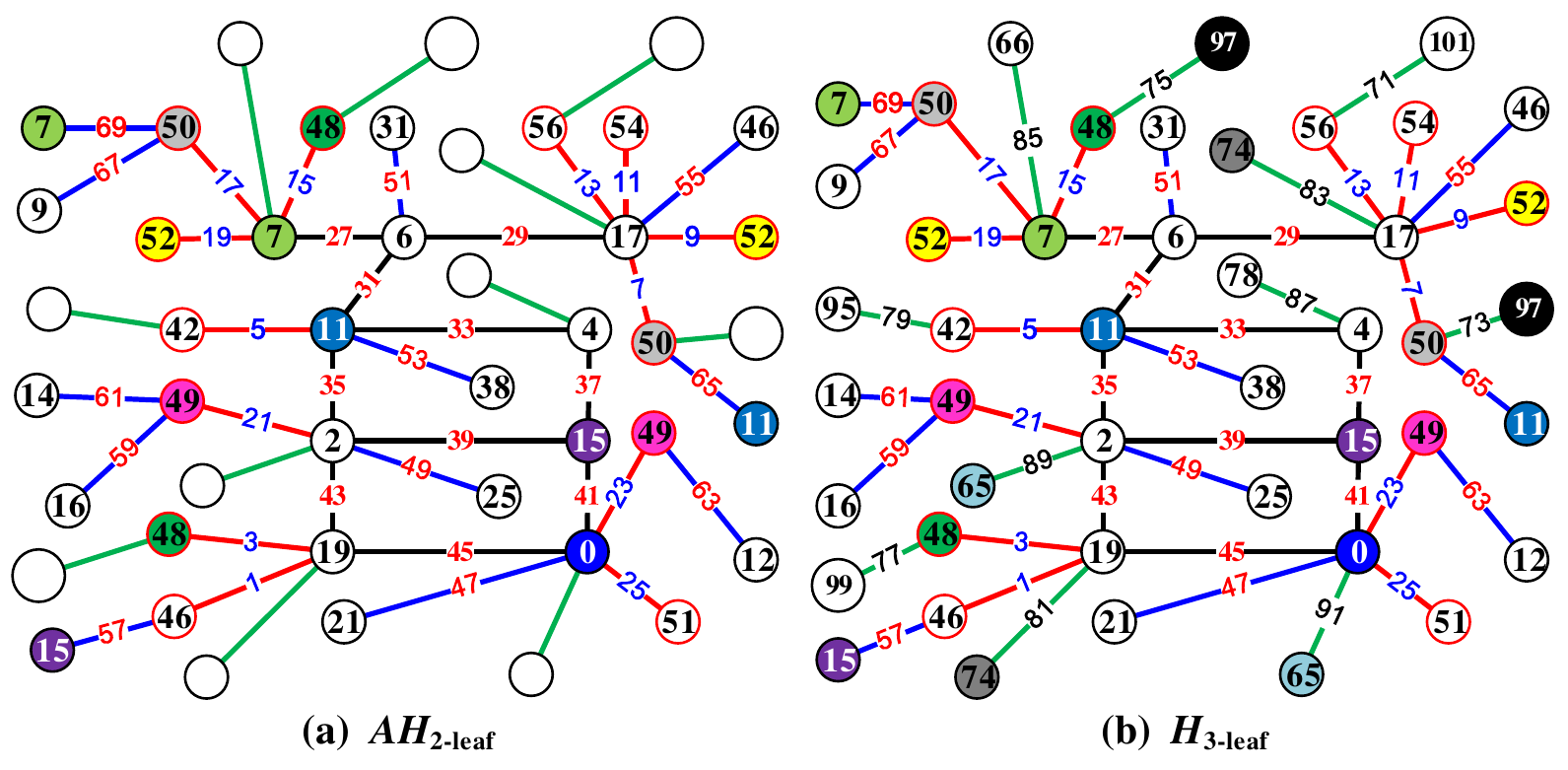}\\
\caption{\label{fig:add3times-graceful-difference-2} {\small An example for the RLA-algorithm-E for adding leaves continuously under the odd-edge graceful-difference total coloring.}}
\end{figure}

By the RLA-algorithm-E for adding leaves continuously, we have

\begin{thm}\label{thm:adding-leaves-continuously}
Suppose that a connected bipartite $(p,q)$-graph $G_0$ admits an odd-edge graceful-difference total coloring $h^{gr}_0$, then there are connected bipartite graph sequence $\{G_k\}^n_{k=0}$ such that each connected bipartite graph $G_k\in \{G_k\}^n_{k=0}$ is obtained by adding randomly $a_k~(\geq 1)$ leaves to $G_{k-1}$ and admits an odd-edge graceful-difference total coloring $h^{gr}_k$ with $k\geq 1$, and moreover $h^{gr}_i(V(G_i))\cap h^{gr}_j(V(G_j))\neq \emptyset$ for any pair integers $i,j\in [0,n]$.
\end{thm}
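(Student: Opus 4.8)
The plan is to iterate RLA-algorithm-E and argue by induction on $n$. For $n=0$ there is nothing to prove: $(G_0,h^{gr}_0)$ is the whole sequence. For the inductive step, suppose $G_0,G_1,\dots,G_{k-1}$ have already been constructed, so that $G_{k-1}$ is a connected bipartite graph carrying an odd-edge graceful-difference total coloring $h^{gr}_{k-1}$. Choose any integer $a_k\geq 1$, distribute $a_k$ new pendant vertices arbitrarily over $V(G_{k-1})$ (at least one vertex receiving a leaf), and feed $(G_{k-1},h^{gr}_{k-1})$ into RLA-algorithm-E; any one of the three sub-algorithms Step-E-2.1, Step-E-2.2, Step-E-2.3 may be used. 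Its output is the leaf-added graph $G_k$ together with a total coloring $h^{gr}_k$ for which, by the identities recorded in the algorithm, the quantity $\big||h^{gr}_k(x)-h^{gr}_k(y)|-h^{gr}_k(xy)\big|$ is a fixed constant over all $xy\in E(G_k)$ and the edge-colour set is the full odd interval of the right size, $h^{gr}_k(E(G_k))=[1,2(q+a_k)-1]^o$ with $q=|E(G_{k-1})|$ (cf. Eq.(\ref{eqa:continus-adding-leaves})); this is exactly the statement that $h^{gr}_k$ is an odd-edge graceful-difference total coloring of $G_k$. Finally, $G_k$ is connected, being obtained from the connected graph $G_{k-1}$ by attaching pendant edges, and $G_k$ is bipartite, since each new leaf can be placed in the colour class opposite its unique neighbour. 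This closes the induction and produces the sequence $\{G_k\}^n_{k=0}$.

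There remains the overlap condition $h^{gr}_i(V(G_i))\cap h^{gr}_j(V(G_j))\neq\emptyset$ for all $i,j\in[0,n]$. Here one uses that every pass of RLA-algorithm-E leaves the colours of the old vertices unchanged: step (1-4) (respectively (2-4), (3-3)) assigns $h^{gr}_k(z)=h^{gr}_{k-1}(z)$ to each $z$ of the old graph. Hence $h^{gr}_{k-1}(V(G_{k-1}))\subseteq h^{gr}_k(V(G_k))$ for every $k\geq 1$, and by transitivity $h^{gr}_i(V(G_i))\subseteq h^{gr}_j(V(G_j))$ whenever $0\leq i\leq j\leq n$. Therefore $h^{gr}_i(V(G_i))\cap h^{gr}_j(V(G_j))=h^{gr}_{\min\{i,j\}}(V(G_{\min\{i,j\}}))$, which is non-empty since every $G_\ell$ has at least one vertex; moreover RLA-algorithm-E always returns a graph having a vertex coloured $0$, so $0$ is even a common colour of $h^{gr}_i(V(G_i))$ and $h^{gr}_j(V(G_j))$ once $i,j\geq 1$. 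Thus the colour sets are in fact nested, which is slightly stronger than the asserted overlap.

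If the correctness of a single pass of RLA-algorithm-E is taken as given, the argument above is complete; the part I expect to need the most care is precisely that single-pass correctness, should one wish to reprove it here. The two points to check are: (i) that the new leaf edges receive exactly the odd values $1,3,\dots,2a_k-1$ while the old edges are recoloured into the range $2a_k+1,\dots,2(q+a_k)-1$, so that together the edge colours run through $[1,2(q+a_k)-1]^o$ without repetition; and (ii) that the colour forced on each new leaf $w_{s,t}$ by Eq.(\ref{eqa:adding-leaves-continuously}) is a well-defined non-negative integer — indeed one may take $h^{gr}_k(w_{s,t})=\lambda_0+h^{gr}_k(u_s)+h^{gr}_k(w_{s,t}u_s)$, which is $\geq 0$ and satisfies $\big||h^{gr}_k(u_s)-h^{gr}_k(w_{s,t})|-h^{gr}_k(w_{s,t}u_s)\big|=\lambda_0$. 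Both are the verbatim analogues of the verification carried out for RLA-algorithm-A in Eq.(\ref{eqa:graceful-difference-total-aa})--Eq.(\ref{eqa:graceful-difference-total-cc}). One further trivial remark: since a coloring (unlike a labeling) may repeat vertex colours, one should enumerate $V(G_{k-1})$ in non-decreasing colour order, with ties broken arbitrarily, before running the block assignments of Steps E-2.1--E-2.3; this affects none of the above.
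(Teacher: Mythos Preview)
Your approach is the paper's: the theorem is stated immediately after RLA-algorithm-E with the one-line justification ``by the RLA-algorithm-E for adding leaves continuously,'' and you have simply unpacked that into an explicit induction together with the observation that step (1-4) preserves old vertex colours, yielding the nesting $h^{gr}_i(V(G_i))\subseteq h^{gr}_j(V(G_j))$ for $i\leq j$, which is indeed slightly stronger than the asserted overlap.

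One small internal inconsistency in your write-up should be cleaned up. You correctly quote step (1-4) as setting $h^{gr}_k(z)=h^{gr}_{k-1}(z)$ for every $z\in V(G_{k-1})\cup E(G_{k-1})$, but then in your point (i) you say the old edges are \emph{recoloured} into $[2a_k+1,2(q+a_k)-1]$. These two statements contradict each other. It is the first that is meant in algorithm~E: the old edge colours are kept, which is exactly why the magic constant $\lambda_0$ survives every pass unchanged (cf.\ Fig.~\ref{fig:add3times-graceful-difference-1}--\ref{fig:add3times-graceful-difference-2}, where the constant stays $26$ throughout). Consequently it is the \emph{new} leaf edges that must occupy the top of the odd interval, namely $2q+1,2q+3,\dots,2(q+a_k)-1$, exactly as in the parallel step (1-1) of RLA-algorithm-H where the leaf-edge colour is written $(2q-1)+2j$. (The formulas in steps (1-1)--(1-2) of algorithm~E as printed assign $2j-1$ to the leaf edges, which would collide with the retained old edge colours and falsify Eq.~(\ref{eqa:continus-adding-leaves}); this appears to be a slip in the paper, and your description inherited the confusion in a different form.) With that correction your single-pass checks (i) and (ii) go through verbatim --- the choice $h^{gr}_k(w_{s,t})=\lambda_0+h^{gr}_k(u_s)+h^{gr}_k(w_{s,t}u_s)$ is still valid --- and the rest of your argument stands.
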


\begin{thm}\label{thm:tree-adding-leaves-continuously}
Each tree admits an odd-edge graceful-difference total coloring.
\end{thm}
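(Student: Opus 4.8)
The plan is to argue by induction on the number of vertices $p=|V(T)|$, with the leaf-adding construction of the previous subsection (the RLA-algorithm-E, equivalently Theorem \ref{thm:adding-leaves-continuously}) doing the real work. The base cases are immediate: if $p=1$ then $T$ has no edge and every requirement in the definition of an odd-edge graceful-difference total coloring (Definition \ref{defn:group-total-colorings-definition}) is vacuously met; if $p=2$ then $T=K_2=uv$, and taking $\alpha(u)=0$, $\alpha(v)=1$, $\alpha(uv)=1$ gives $\alpha(E(K_2))=\{1\}=[1,2\cdot 1-1]^o$ together with $\big|\,|\alpha(u)-\alpha(v)|-\alpha(uv)\,\big|=|1-1|=0$, so $\alpha$ is an odd-edge graceful-difference total coloring of $K_2$ with magic-type constant $0$ (and it is even set-ordered, with $0$ on the ``even'' side and $1$ on the ``odd'' side, matching the format used by the RLA-algorithms).

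For the inductive step, let $p\ge 3$. A tree on at least two vertices has a leaf, so choose a leaf $w$ of $T$, let $x$ be its unique neighbour, and put $T'=T-w$. Then $T'$ is again a tree, hence a connected bipartite graph on $p-1$ vertices, and by the induction hypothesis $T'$ admits an odd-edge graceful-difference total coloring. Since $T=T'+xw$ is obtained from $T'$ by attaching a single leaf at $x$, feeding this into the RLA-algorithm-E (with $m=1$) produces a coloring $h$ of $T$ whose edge-colour set is $[1,2(p-1)-1]^o$ (as $T$ has $p-1$ edges) and which satisfies $\big|\,|h(z_1)-h(z_2)|-h(z_1z_2)\,\big|=\lambda$ on every edge $z_1z_2$ of $T$ for some fixed integer $\lambda\ge 0$; that is, $T$ admits an odd-edge graceful-difference total coloring, closing the induction. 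One may equally state the argument without induction: any tree on $p\ge 2$ vertices arises from $K_2$ by $p-2$ successive single-leaf additions (reverse any leaf-stripping order of $T$), and $p-2$ applications of Theorem \ref{thm:adding-leaves-continuously}, starting from the coloured $K_2$ above, give the conclusion.

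The only load-bearing ingredient is thus the single leaf re-addition, and this is exactly what the RLA-algorithm-E delivers, so on top of it nothing is needed beyond the trivial $K_2$ base case and the elementary fact that every tree is built up by repeatedly attaching leaves. If one prefers an argument that does not quote the algorithm, the step can be redone by hand, paralleling Steps A-1 through A-5: given $G$ with an odd-edge graceful-difference total coloring $f$ satisfying $f(E(G))=[1,2q-1]^o$ and $\big|\,|f(u)-f(v)|-f(uv)\,\big|=N$ for all edges, attach $w$ at $x$, shift every old edge colour up by $2$ (so $f(E(G))$ moves onto $[3,2q+1]^o$), colour the new pendant edge $xw$ with $1$, keep all old vertex colours, and give $w$ the admissible colour making $\big|\,|f(x)-f(w)|-1\,\big|=|N-2|$; one then checks that $f(E(G+xw))=[1,2(q+1)-1]^o$ and that the magic-type constant has simply become $|N-2|\ge 0$. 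I expect the only slightly delicate points in such a self-contained version to be (a) the bookkeeping needed to keep $f(w)$ and the shifted colours inside the prescribed codomain, and (b) making sure the object produced at each stage can itself be fed back into the construction; both are routine, and (b) is precisely what Theorem \ref{thm:adding-leaves-continuously} already packages for us, so citing that theorem is the cleanest route.
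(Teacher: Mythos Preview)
Your proof is correct and follows essentially the same route as the paper: strip leaves, handle a trivial base case, and reapply them via the RLA-algorithm-E (Theorem \ref{thm:adding-leaves-continuously}). The only cosmetic difference is granularity---the paper removes \emph{all} leaves at each stage and bottoms out at a star, whereas you peel off one leaf at a time and bottom out at $K_2$; both reductions are valid and rely on the same leaf-adding machinery.
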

\begin{proof} Let $T$ be a tree of $q~(\geq 1)$ edges. If $T$ is a star, that is, $T$ has its own vertex set $V(T)=\{u,x_i:i\in [1,q]\}$ and edge set $E(T)=\{ux_i:i\in [1,q]\}$. It is not hard to present an odd-edge graceful-difference total coloring (or labeling) for the star $T$, see six stars shown in Fig.\ref{fig:star-odd-edge-graceful-difference}.

So, the leaf-removed graph $T_1=T-L(T)$ is a tree still. If the leaf-removed graph $T_1$ is not a star, then we get the leaf-removed tree $T_2=T_1-L(T_1)$, go on in this way, we have leaf-removed trees $T_k=T_{k-1}-L(T_{k-1})$ with $k\leq \frac{D(T)}{2}$, where $D(T)$ is the diameter of $T$. Suppose that $T_m=T_{m-1}-L(T_{m-1})$ is a star, and $T_m$ admits an odd-edge graceful-difference total coloring (or labeling). Adding the leaves of $L(T_{k-1})$ to each leaf-removed tree $T_k$ for getting the tree $T_{k-1}$, and the tree $T_{k-1}$ admits an odd-edge graceful-difference total coloring (or labeling) $f^{gr}_{k-1}$, since $T_k$ admits an odd-edge graceful-difference total coloring (or labeling) $f^{gr}_{k}$ by the RLA-algorithm for adding leaves continuously, such that $f^{gr}_i(V(T_i))\cap f^{gr}_j(V(T_j))\neq \emptyset$.

The proof is complete.
\end{proof}

\begin{figure}[h]
\centering
\includegraphics[width=16.4cm]{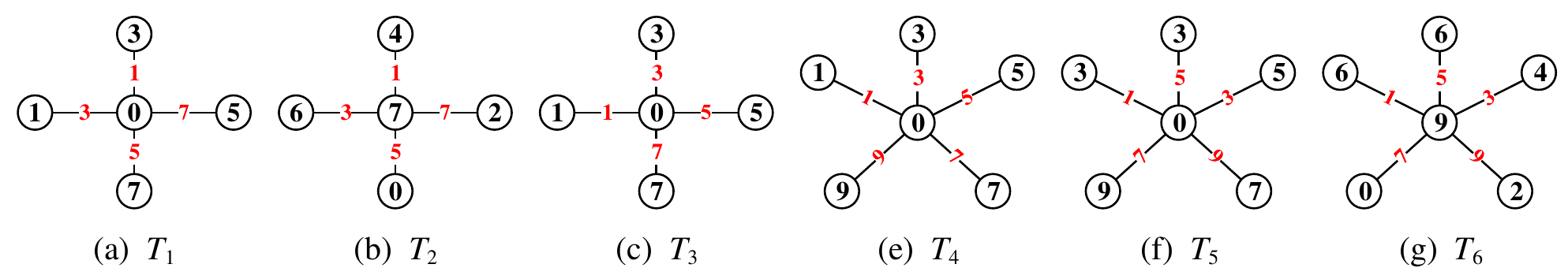}\\
\caption{\label{fig:star-odd-edge-graceful-difference} {\small Each tree $T_i$ admits an odd-edge graceful-difference total coloring (or labeling) $h^{gr}_i$ holding $\big | |h^{gr}_i(x)-h^{gr}_i(y)|-h^{gr}_i(xy)\big |=k_i$ for $xy\in E(T_i)$ and $i\in [1,6]$, where $k_1=k_2=k_5=k_6=2$, $k_3=k_4=0$.}}
\end{figure}

\subsection{RLA-algorithm-F for adding leaves continuously}

\noindent \textsc{RLA-algorithm-F for adding leaves continuously.}

\textbf{Input:} A connected bipartite $(p,q)$-graph $G$ admitting an odd-edge edge-magic total coloring $h^{ma}_0$.

\textbf{Output:} A connected bipartite $(p+m,q+m)$-graph $H$ admitting an odd-edge edge-magic total coloring $h^{ma}_1$, where the $(p+m,q+m)$-graph $H$, called \emph{leaf-added graph}, is the result of adding randomly $m$ leaves to $G$.

\textbf{Initialization.} Let $G$ be a bipartite $(p,q)$-graph admitting an odd-edge edge-magic total coloring $h^{ma}_0$, and let $V(G)=\{u_1,u_2,\dots, u_p\}$. By the definition of an odd-edge edge-magic total labeling, we have
\begin{equation}\label{eqa:edge-magic}
0\leq h^{ma}_0(u_1)<h^{ma}_0(u_2)<\cdots <h^{ma}_0(u_p)\leq 2q-1
\end{equation}
so that each edge $u_iv_j\in E(G)$ satisfies the following equation
\begin{equation}\label{eqa:odd-edge-edge-magic-colorings}
h^{ma}_0(u_i)+h^{ma}_0(v_j)+h^{ma}_0(u_iv_j)=\mu_0
\end{equation} where integer $\mu_0\geq 0$, as well as $h^{ma}_0(E(G))=\{h^{ma}_0(u_iv_j):u_iv_j\in E(G)\}=[1,2q-1]^o$.

\textbf{Step-F-1.} Adding new leaves $w_{i,1},w_{i,2},\dots ,w_{i,n_i}$ to each vertex $u_i$ of $G$ produces a leaf set $L(u_i)=\{w_{i,1},w_{i,2},\dots ,w_{i,n_i}\}$ with $i\in [1,p]$, here, it is allowed $n_j=0$ for some $j\in [1,p]$. The resultant graph is denoted as $H=G+E^*(G)$, where the leaf edge set
\begin{equation}\label{eqa:555555}
E^*(G)=\{w_{i,j}u_i: ~w_{i,j}\in L(u_i), ~j\in [1,n_i],~i\in [1,p]\}
\end{equation} and $m=|E^*(G)|$.

\textbf{Step-F-2.} Define a coloring $h^{ma}_1$ for the leaf-added graph $H$ in the following:

\textbf{Step-F-2.1.} \textbf{The ascending-order sub-algorithm.}

(1-1) Color $h^{ma}_1(w_{1,j}u_1)=2j-1$ with $j\in [1,n_1]$;

(1-2) Color $h^{ma}_1(w_{i,j}u_i)=2j-1+2\sum ^{i-1}_{k=1}n_k$ with $j\in [1,n_i]$ and $i\in [2,p]$.

(1-3) Color leaves $w_{s,t}\in \bigcup ^p_{i=1}L(u_i)$ with $h^{ma}_1(w_{s,t})$ holding
\begin{equation}\label{eqa:edge-magic-adding-leaves-continuously}
h^{ma}_1(u_s)+h^{ma}_1(w_{s,t})+h^{ma}_1(w_{s,t}u_s)=\mu^*_0,~t\in [1,n_s],~s\in [1,p]
\end{equation} where $\mu^*_0=\mu_0+2m$.

(1-4) Color each edge $uv\in E(G)$ with $h^{ma}_1(uv)=h^{ma}_0(uv)+2m$, and color each vertex $w\in V(G)$ with $h^{ma}_1(w)=h^{ma}_0(w)$.

Thereby, $h^{ma}_1(x)+h^{ma}_1(y)+h^{ma}_1(xy)=\mu^*_0$ for each edge $xy\in E(H)$, and $h^{ma}_1(u_1)=0$, as well as the edge color set
\begin{equation}\label{eqa:edge-magic-continus-adding-leaves}
h^{ma}_1(E(H))=\left [1,\quad |E(G)|+\sum ^{p}_{k=1}n_k\right ]^o
\end{equation}

\textbf{Step-F-2.2.} \textbf{The descending-order sub-algorithm.}

(2-1) Color $h^{ma}_1(w_{p,j}u_1)=2j-1$ with $j\in [1,n_p]$;

(2-2) Color $h^{ma}_1(w_{i,j}u_i)=2j-1+2\sum ^{p-i}_{k=1}n_{p-k+1}$ with $j\in [1,n_i]$ and $i\in [1,p-1]$.

(2-3) Color leaves $w_{s,t}\in \bigcup ^p_{i=1}L(u_i)$ holding Eq.(\ref{eqa:edge-magic-adding-leaves-continuously}) true.

(2-4) Color each edge $uv\in E(G)$ with $h^{ma}_1(uv)=h^{ma}_0(uv)+2m$, and color each vertex $w\in V(G)$ with $h^{ma}_1(w)=h^{ma}_0(w)$.

We get $h^{ma}_1(u_1)=0$ and Eq.(\ref{eqa:edge-magic-continus-adding-leaves}).

\textbf{Step-F-2.3.} \textbf{The random-order sub-algorithm.}

(3-1) Color $h^{ma}_1(e_j)=2j-1$ with $j\in [1,A]$, where edges $e_1,e_2,\dots ,e_A$ is a permutation of leaf edges $w_{i,j}u_i$ with $j\in [1,n_i]$ and $i\in [1,p]$, and $A=\sum ^p_{i=1}|L(u_i)|$.

(3-2) Color leaves $w_{s,t}\in \bigcup ^p_{i=1}L(u_i)$ with $h^{ma}_1(w_{s,t})$ with $h^{ma}_1(w_{s,t})$ holding Eq.(\ref{eqa:edge-magic-adding-leaves-continuously}) true.

(3-3) Color each edge $uv\in E(G)$ with $h^{ma}_1(uv)=h^{ma}_0(uv)+2m$, and color each vertex $w\in V(G)$ with $h^{ma}_1(w)=h^{ma}_0(w)$.

Thereby, $h^{ma}_1(u_1)=0$ and Eq.(\ref{eqa:edge-magic-continus-adding-leaves}) holds true.

\textbf{Step-F-3.} Return an odd-edge edge-magic total coloring $h^{ma}_1$ of the leaf-added graph $H$.

\vskip 0.4cm
\begin{example}\label{exa:8888888888}
Fig.\ref{fig:add3times-edge-magic-1} and Fig.\ref{fig:add3times-edge-magic-2} show us examples for illustrating the RLA-algorithm-F for adding leaves continuously under the odd-edge edge-magic total coloring:

In Fig.\ref{fig:add3times-edge-magic-1}: (a) A graph $T_{\textrm{1-leaf}}$ admits an odd-edge edge-magic total coloring $h^{ma}_1$ holding $h^{ma}_1(x)+h^{ma}_1(y)+h^{ma}_1(xy)=52$ for $xy\in E(T_{\textrm{1-leaf}})$; (b) a graph $AT_{\textrm{1-leaf}}$ obtained by adding leaves to the graph $T_{\textrm{1-leaf}}$; (c) a graph $T_{\textrm{2-leaf}}$ based on the graph $AT_{\textrm{1-leaf}}$ admits an odd-edge edge-magic total coloring $h^{ma}_2$ holding $h^{ma}_2(x)+h^{ma}_2(y)+h^{ma}_2(xy)=78$ for $xy\in E(T_{\textrm{2-leaf}})$.

In Fig.\ref{fig:add3times-edge-magic-2}: (a) A graph $AT_{\textrm{2-leaf}}$ obtained by adding leaves to the graph $T_{\textrm{2-leaf}}$ shown in Fig.\ref{fig:add3times-edge-magic-1} (c); (b) a graph $T_{\textrm{3-leaf}}$ based on the graph $AT_{\textrm{2-leaf}}$ admits an odd-edge edge-magic total coloring $h^{ma}_3$ holding $h^{ma}_3(x)+h^{ma}_3(y)+h^{ma}_3(xy)=102$ for $xy\in E(T_{\textrm{3-leaf}})$.\qqed
\end{example}

\begin{figure}[h]
\centering
\includegraphics[width=16.4cm]{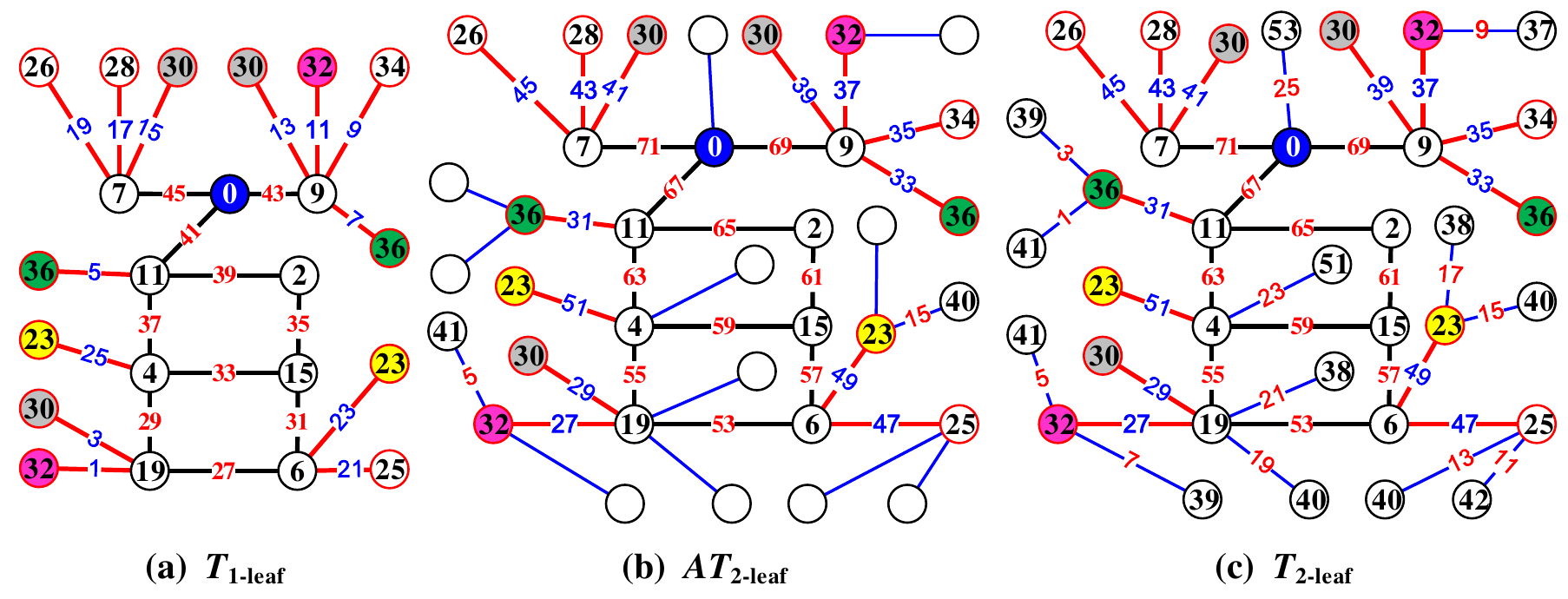}\\
\caption{\label{fig:add3times-edge-magic-1} {\small Examples for the RLA-algorithm-F for adding leaves continuously under the odd-edge edge-magic total coloring.}}
\end{figure}

\begin{figure}[h]
\centering
\includegraphics[width=16.4cm]{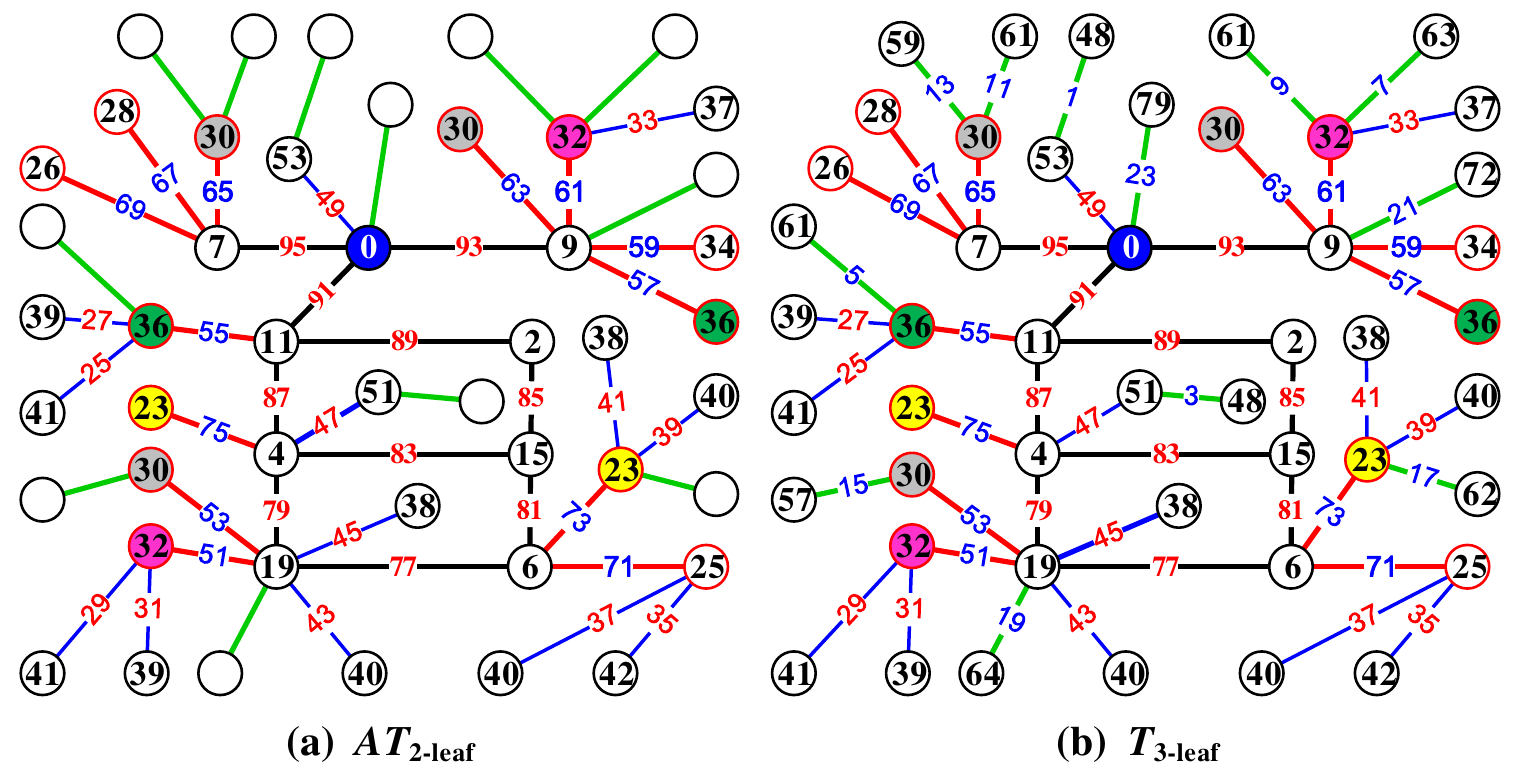}\\
\caption{\label{fig:add3times-edge-magic-2} {\small An example for the RLA-algorithm-F for adding leaves continuously under the odd-edge edge-magic total coloring.}}
\end{figure}

\begin{thm}\label{thm:edge-magic-adding-leaves-continuously}
Suppose that a connected bipartite $(p,q)$-graph $G_0$ admits an odd-edge edge-magic total coloring $h^{gr}_0$, then there are connected bipartite graph sequence $\{G_k\}^n_{k=0}$ such that each connected bipartite graph $G_k\in \{G_k\}^n_{k=0}$ is obtained by adding randomly $a_k~(\geq 1)$ leaves to $G_{k-1}$ and admits an odd-edge edge-magic total coloring $h^{gr}_k$ with $k\geq 1$, and moreover $h^{gr}_i(V(G_i))\cap h^{gr}_j(V(G_j))\neq \emptyset$ for any pair integers $i,j\in [0,n]$.
\end{thm}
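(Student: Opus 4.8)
The plan is to argue by induction on $n$, using the \textrm{RLA-algorithm-F} for adding leaves continuously as the inductive engine, exactly as Theorem \ref{thm:adding-leaves-continuously} uses the \textrm{RLA-algorithm-E}. For the base case take $G_0$ together with its odd-edge edge-magic total coloring $h^{gr}_0$, which exists by hypothesis. For the inductive step, suppose the connected bipartite graph $G_{k-1}$ has already been constructed and admits an odd-edge edge-magic total coloring $h^{gr}_{k-1}$ with edge color set $[1,2q_{k-1}-1]^o$, where $q_{k-1}=|E(G_{k-1})|$. Since $G_{k-1}$ is connected and bipartite --- recall $G_0$ is bipartite and attaching leaves preserves both bipartiteness (a leaf on a vertex of $X$ joins $Y$ and vice versa) and connectivity --- the preconditions of the \textrm{RLA-algorithm-F} are met with input $(G_{k-1},h^{gr}_{k-1})$. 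Choosing any leaf-distribution with total count $a_k\ge 1$, the algorithm outputs the leaf-added graph $G_k$ (which has $q_{k-1}+a_k$ edges) and an odd-edge edge-magic total coloring $h^{gr}_k$ of $G_k$. This produces the desired sequence $\{G_k\}^n_{k=0}$.

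For the overlap condition, the key observation is that in every branch of \textbf{Step-F-2} (the ascending-order, descending-order and random-order sub-algorithms) the recoloring rule sets $h^{gr}_k(w)=h^{gr}_{k-1}(w)$ for each vertex $w\in V(G_{k-1})$, while only the old edges are shifted, by $2a_k$. Hence $h^{gr}_{k-1}(V(G_{k-1}))\subseteq h^{gr}_k(V(G_k))$ for every $k\ge 1$, and iterating gives $h^{gr}_0(V(G_0))\subseteq h^{gr}_k(V(G_k))$ for all $k\in[0,n]$. Since $V(G_0)\neq\emptyset$, the set $h^{gr}_0(V(G_0))$ is nonempty, so $h^{gr}_i(V(G_i))\cap h^{gr}_j(V(G_j))\supseteq h^{gr}_0(V(G_0))\neq\emptyset$ for every pair $i,j\in[0,n]$, which is exactly the asserted intersection property.

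The one point that needs care is re-establishing the algorithm's preconditions after each round so that the induction can proceed: connectivity and bipartiteness are immediate, but one must check that the leaf colors produced in \textbf{Step-F-2}, namely $h^{gr}_k(w_{s,t})=\mu^*_0-h^{gr}_k(u_s)-h^{gr}_k(w_{s,t}u_s)$ with $\mu^*_0=\mu_0+2a_k$, again lie in $[0,2(q_{k-1}+a_k)-1]$, and that the edge color set becomes exactly $[1,2(q_{k-1}+a_k)-1]^o$. This is a direct computation from the bounds on vertex and edge colors already derived inside the algorithm: the new leaf-edge colors exhaust $[1,2a_k-1]^o$ and the shifted old edge colors exhaust $[1+2a_k,2q_{k-1}-1+2a_k]^o$, whose union is $[1,2(q_{k-1}+a_k)-1]^o$. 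A minor technicality is that $h^{gr}_k$ is only a coloring, not necessarily a labeling, so the chain $h^{gr}_k(u_1)\le h^{gr}_k(u_2)\le\cdots$ used in the initialization may fail to be strict; this is harmless, since none of the steps uses strictness. I expect this bookkeeping --- confirming that the prescribed color ranges are reproduced exactly at each level --- to be the only real obstacle; the rest is a routine induction.
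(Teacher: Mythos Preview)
Your approach is correct and matches the paper's: the theorem is stated immediately after the \textrm{RLA-algorithm-F} without an explicit proof, so the intended argument is precisely the induction you outline, with the intersection property coming from the fact that in every sub-algorithm of \textbf{Step-F-2} the old vertex colors are left unchanged. One small remark: your flagged ``real obstacle'' of checking that leaf colors land back in $[0,2(q_{k-1}+a_k)-1]$ is not actually needed here---the paper's algorithm and examples (e.g.\ Fig.~\ref{fig:add3times-edge-magic-1}, \ref{fig:add3times-edge-magic-2}) make no such range claim for the output coloring, so you can drop that verification entirely.
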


\begin{thm}\label{thm:edge-magic-tree-adding-leaves-continuously}
Each tree admits an odd-edge edge-magic total coloring.
\end{thm}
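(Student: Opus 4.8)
The plan is to imitate the proof of Theorem~\ref{thm:tree-adding-leaves-continuously}, replacing the graceful-difference constraint by the edge-magic one and RLA-algorithm-E by RLA-algorithm-F. Let $T$ be a tree with $q~(\ge 1)$ edges.

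First I would settle the base case: every star admits an odd-edge edge-magic total coloring. Put the center color $0$, color the $q$ edges bijectively with $1,3,\dots,2q-1$, and give the leaf incident with the edge colored $2i-1$ the color $(2q-1)-(2i-1)=2q-2i$. Then every leaf color lies in $[0,2q-2]$, the edge color set is $[1,2q-1]^o$, and each edge $xy$ satisfies $h(x)+h(xy)+h(y)=0+(2i-1)+(2q-2i)=2q-1$, a constant; hence $h$ is an odd-edge edge-magic total coloring, where repeated leaf colors are permitted by Definition~\ref{defn:group-total-colorings-definition}. The degenerate case $q=1$ (a single edge) is covered here as well.

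Next I would run the leaf-peeling recursion exactly as in the proof of Theorem~\ref{thm:tree-adding-leaves-continuously}: set $T_1=T-L(T)$, and in general $T_{k}=T_{k-1}-L(T_{k-1})$; since the diameter drops by $2$ at each step, after at most $\lfloor D(T)/2\rfloor$ steps one reaches a tree $T_m$ that is a star (or a single edge), which admits an odd-edge edge-magic total coloring by the base case. Now climb back up: each $T_{k-1}$ is obtained from $T_k$ by adding the leaves of $L(T_{k-1})$, so applying RLA-algorithm-F for adding leaves continuously (equivalently, invoking Theorem~\ref{thm:edge-magic-adding-leaves-continuously}) to the odd-edge edge-magic total coloring of $T_k$ produces an odd-edge edge-magic total coloring of $T_{k-1}$, with the magic constant shifted by twice the number of added leaves and the edge color set rescaled to $[1,2|E(T_{k-1})|-1]^o$. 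Iterating down to $k=1$ yields an odd-edge edge-magic total coloring of $T=T_0$, which completes the proof.

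The step I expect to be the main obstacle is checking that the hypotheses of RLA-algorithm-F are genuinely in force at every re-building step: trees are bipartite, so that part is free, but the algorithm's initialization is phrased for a vertex coloring that can be arranged in (weakly) increasing order with all edge colors odd and forming $[1,2q-1]^o$, and one must verify that the leaf colors produced by Eq.~(\ref{eqa:edge-magic-adding-leaves-continuously}), namely $h^{ma}_1(w_{s,t})=\mu^*_0-h^{ma}_1(u_s)-h^{ma}_1(w_{s,t}u_s)$, are non-negative integers and that no edge color is repeated --- both follow from $\mu^*_0=\mu_0+2m$ being large enough and from the fresh leaf edges receiving the colors $1,3,\dots,2m-1$ while the old edges are pushed up by $2m$. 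A second, more cosmetic point is the bookkeeping of the constants $\mu_0$ across the recursion, which is routine once the base case fixes the star's magic constant at $2q-1$.
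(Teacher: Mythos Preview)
Your proposal is correct and follows essentially the same approach as the paper: the paper does not write out a separate proof for this theorem but clearly intends it to be obtained by replacing, in the proof of Theorem~\ref{thm:tree-adding-leaves-continuously}, the graceful-difference coloring by the edge-magic one and RLA-algorithm-E by RLA-algorithm-F (via Theorem~\ref{thm:edge-magic-adding-leaves-continuously}). Your explicit star base case and the non-negativity check for the leaf colors (using that every vertex of the connected $T_k$ is incident to an edge, so $\mu_0\ge h^{ma}_0(u_s)+1$) are exactly the details one needs to fill in.
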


\begin{figure}[h]
\centering
\includegraphics[width=16.4cm]{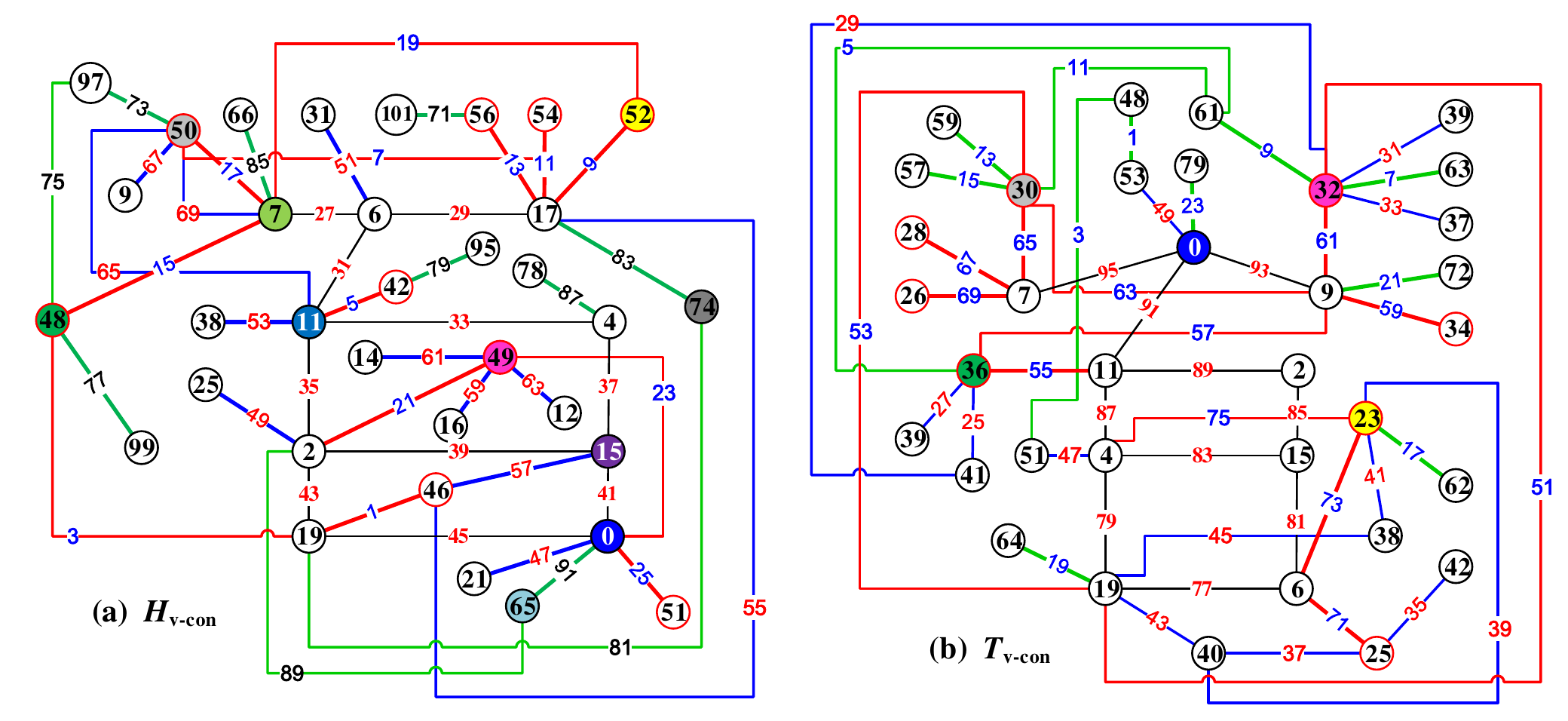}\\
\caption{\label{fig:vertex-coincide} {\small (a) The graph $H_{\textrm{v-con}}$ is obtained by vertex-coinciding vertices colored with the same colors in the graph $H_{\textrm{3-leaf}}$ shown in Fig.\ref{fig:add3times-graceful-difference-2}(c); (b) the graph $T_{\textrm{v-con}}$ is obtained by vertex-coinciding vertices colored with the same colors in the graph $T_{\textrm{3-leaf}}$ shown in Fig.\ref{fig:add3times-edge-magic-2}(c).}}
\end{figure}

\subsection{RLA-algorithm-G for adding leaves continuously}

\noindent \textsc{RLA-algorithm-G for adding leaves continuously.}

\textbf{Input:} A connected bipartite $(p,q)$-graph $G$ admitting an odd-edge edge-difference total coloring $h^{ed}_0$.

\textbf{Output:} A connected bipartite $(p+m,q+m)$-graph $H$ admitting an odd-edge edge-difference total coloring $h^{ed}_1$, where the $(p+m,q+m)$-graph $H$, called \emph{leaf-added graph}, is the result of adding randomly $m$ leaves to $G$.

\textbf{Initialization.} Let $G$ be a bipartite $(p,q)$-graph admitting an odd-edge edge-difference total coloring $h^{ed}_0$, and let $V(G)=\{u_1,u_2,\dots, u_p\}$. By the definition of an odd-edge edge-difference total labeling, we have
\begin{equation}\label{eqa:edge-difference}
0\leq h^{ed}_0(u_1)<h^{ed}_0(u_2)<\cdots <h^{ed}_0(u_p)\leq 2q-1
\end{equation}
so that each edge $u_iv_j\in E(G)$ satisfies the following equation
\begin{equation}\label{eqa:odd-edge-edge-difference-colorings}
h^{ed}_0(u_iv_j)+|h^{ed}_0(u_i)-h^{ed}_0(v_j)|=\rho_0
\end{equation} where integer $\rho_0\geq 0$, as well as $h^{ed}_0(E(G))=\{h^{ed}_0(u_iv_j):u_iv_j\in E(G)\}=[1,2q-1]^o$.

\textbf{Step-G-1.} Adding new leaves $w_{i,1},w_{i,2},\dots ,w_{i,n_i}$ to each vertex $u_i$ of $G$ produces a leaf set $L(u_i)=\{w_{i,1},w_{i,2},\dots ,w_{i,n_i}\}$ with $i\in [1,p]$, here, it is allowed $n_j=0$ for some $j\in [1,p]$. The resultant graph is denoted as $H=G+E^*(G)$, where the leaf edge set
\begin{equation}\label{eqa:555555}
E^*(G)=\{w_{i,j}u_i: ~w_{i,j}\in L(u_i), ~j\in [1,n_i],~i\in [1,p]\}
\end{equation} having $m=|E^*(G)|$ edges.

\textbf{Step-G-2.} Define a coloring $h^{ed}_1$ for the leaf-added graph $H$ in the following steps:

\textbf{Step-G-2.1.} \textbf{The ascending-order sub-algorithm.}

(1-1) Color $h^{ed}_1(w_{1,j}u_1)=2j-1$ with $j\in [1,n_1]$;

(1-2) Color $h^{ed}_1(w_{i,j}u_i)=2j-1+2\sum ^{i-1}_{k=1}n_k$ with $j\in [1,n_i]$ and $i\in [2,p]$.

(1-3) Color leaves $w_{s,t}\in \bigcup ^p_{i=1}L(u_i)$ with $h^{ed}_1(w_{s,t})$ holding
\begin{equation}\label{eqa:edge-difference-adding-leaves-continuously}
h^{ed}_1(w_{s,t}u_s)+|h^{ed}_1(u_s)-h^{ed}_1(w_{s,t})|=\rho^*_0,~t\in [1,n_s],~s\in [1,p]
\end{equation} with $\rho^*_0=\rho_0+2m$.

(1-4) Color each edge $uv\in E(G)$ with $h^{ed}_1(uv)=h^{ed}_0(uv)+2m$, and color each vertex $w\in V(G)$ with $h^{ed}_1(w)=h^{ed}_0(w)$.

Thereby, $h^{ed}_1(xy)+|h^{ed}_1(x)-h^{ed}_1(y)|=\rho^*_0$ for each edge $xy\in E(H)$, $h^{ed}_1(u_1)=0$, and
\begin{equation}\label{eqa:edge-difference-continus-adding-leaves}
h^{ed}_1(E(H))=\left [1,\quad |E(G)|+\sum ^{p}_{k=1}n_k\right ]^o
\end{equation}

\textbf{Step-G-2.2.} \textbf{The descending-order sub-algorithm.}

(2-1) Color $h^{ed}_1(w_{p,j}u_1)=2j-1$ with $j\in [1,n_p]$;

(2-2) Color $h^{ed}_1(w_{i,j}u_i)=2j-1+2\sum ^{p-i}_{k=1}n_{p-k+1}$ with $j\in [1,n_i]$ and $i\in [1,p-1]$.

(2-3) Color leaves $w_{s,t}\in \bigcup ^p_{i=1}L(u_i)$ holding Eq.(\ref{eqa:edge-difference-adding-leaves-continuously}) true.

(2-4) Color each edge $uv\in E(G)$ with $h^{ed}_1(uv)=h^{ed}_0(uv)+2m$, and color each vertex $w\in V(G)$ with $h^{ed}_1(w)=h^{ed}_0(w)$.

We get $h^{ed}_1(u_1)=0$ and Eq.(\ref{eqa:edge-difference-continus-adding-leaves}).

\textbf{Step-G-2.3.} \textbf{The random-order sub-algorithm.}

(3-1) Color $h^{ed}_1(e_j)=2j-1$ with $j\in [1,A]$, where edges $e_1,e_2,\dots ,e_A$ is a permutation of leaf edges $w_{i,j}u_i$ with $j\in [1,n_i]$ and $i\in [1,p]$, and $A=\sum ^p_{i=1}|L(u_i)|$.

(3-2) Color leaves $w_{s,t}\in \bigcup ^p_{i=1}L(u_i)$ with $h^{ed}_1(w_{s,t})$ with $h^{ed}_1(w_{s,t})$ holding Eq.(\ref{eqa:edge-difference-adding-leaves-continuously}) true.

(3-3) Color each edge $uv\in E(G)$ with $h^{ed}_1(uv)=h^{ed}_0(uv)+2m$, and color each vertex $w\in V(G)$ with $h^{ed}_1(w)=h^{ed}_0(w)$.

Thereby, $h^{ed}_1(u_1)=0$ and Eq.(\ref{eqa:edge-difference-continus-adding-leaves}) holds true.

\textbf{Step-G-3.} Return an odd-edge edge-difference total coloring $h^{ed}_1$ of the leaf-added graph $H$.

\vskip 0.4cm

\begin{example}\label{exa:8888888888}
Fig.\ref{fig:add3times-edge-difference-1} and Fig.\ref{fig:add3times-edge-difference-2} show us examples for illustrating the RLA-algorithm-G for adding leaves continuously under odd-edge edge-difference total coloring:

In Fig.\ref{fig:add3times-edge-difference-1}: (a) a graph $G_{\textrm{1-leaf}}$ admits an odd-edge edge-difference total coloring $h^{ed}_1$ holding $h^{ed}_1(xy)+|h^{ed}_1(x)-h^{ed}_1(y)|=46$ for $xy\in E(G_{\textrm{1-leaf}})$; (b) a graph $AG_{\textrm{1-leaf}}$ is obtained by adding leaves to the graph $G_{\textrm{1-leaf}}$; (c) a graph $G_{\textrm{2-leaf}}$ based on the graph $AG_{\textrm{1-leaf}}$ admits an odd-edge edge-difference total coloring $h^{ed}_2$ holding $h^{ed}_2(xy)+|h^{ed}_2(x)-h^{ed}_2(y)|=70$ for $xy\in E(G_{\textrm{2-leaf}})$.

In Fig.\ref{fig:add3times-edge-difference-2}, (a) a graph $AG_{\textrm{2-leaf}}$ is obtained by adding leaves to the graph $G_{\textrm{2-leaf}}$ shown in Fig.\ref{fig:add3times-edge-difference-1} (c); (b) a graph $G_{\textrm{3-leaf}}$ based on the graph $AG_{\textrm{2-leaf}}$ admits an odd-edge edge-difference total coloring $h^{ed}_3$ holding $h^{ed}_3(xy)+|h^{ed}_3(x)-h^{ed}_3(y)|=92$ for $xy\in E(G_{\textrm{3-leaf}})$. \qqed
\end{example}

\begin{figure}[h]
\centering
\includegraphics[width=16.4cm]{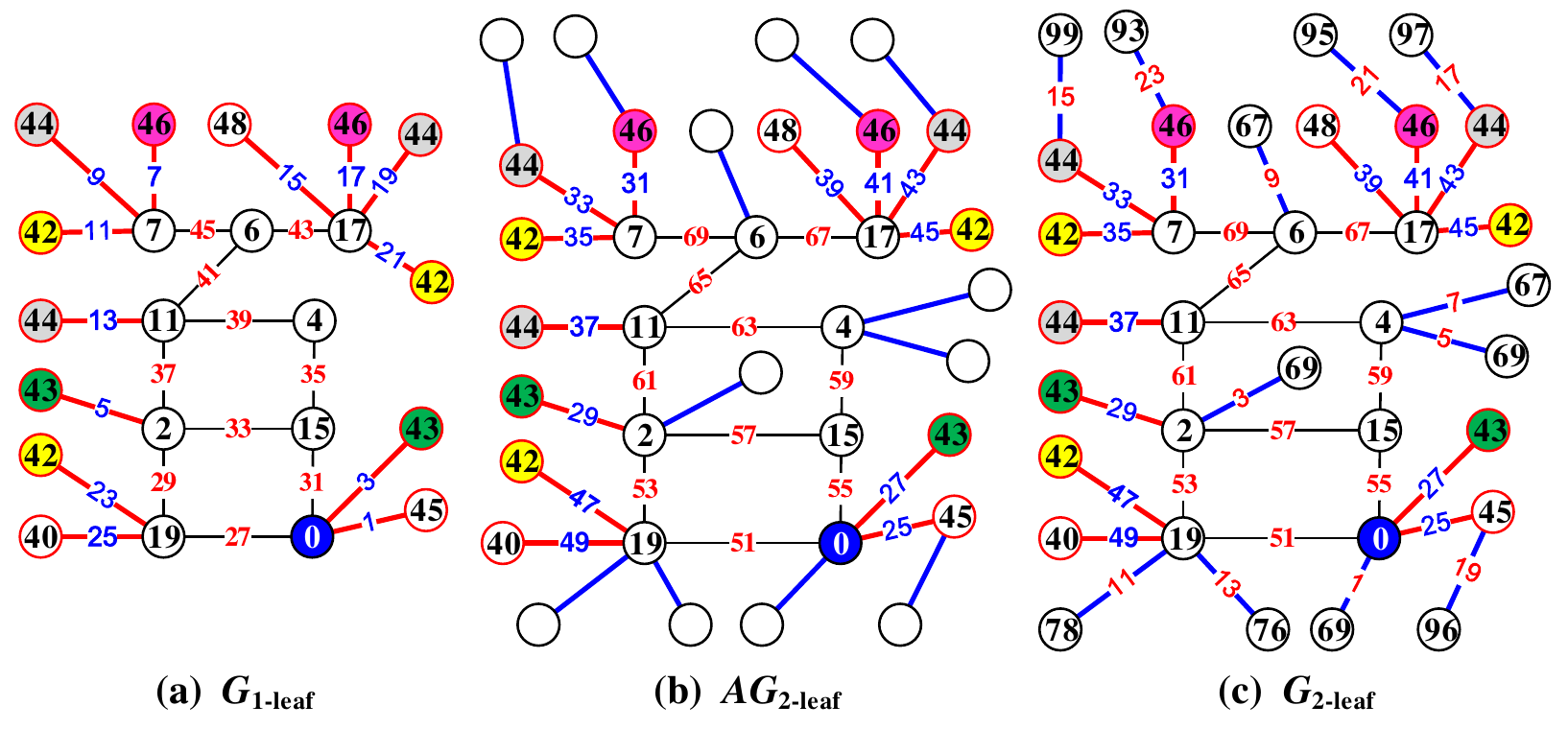}\\
\caption{\label{fig:add3times-edge-difference-1} {\small Examples for the RLA-algorithm-G for adding leaves continuously under odd-edge edge-difference total coloring.}}
\end{figure}

\begin{figure}[h]
\centering
\includegraphics[width=16.4cm]{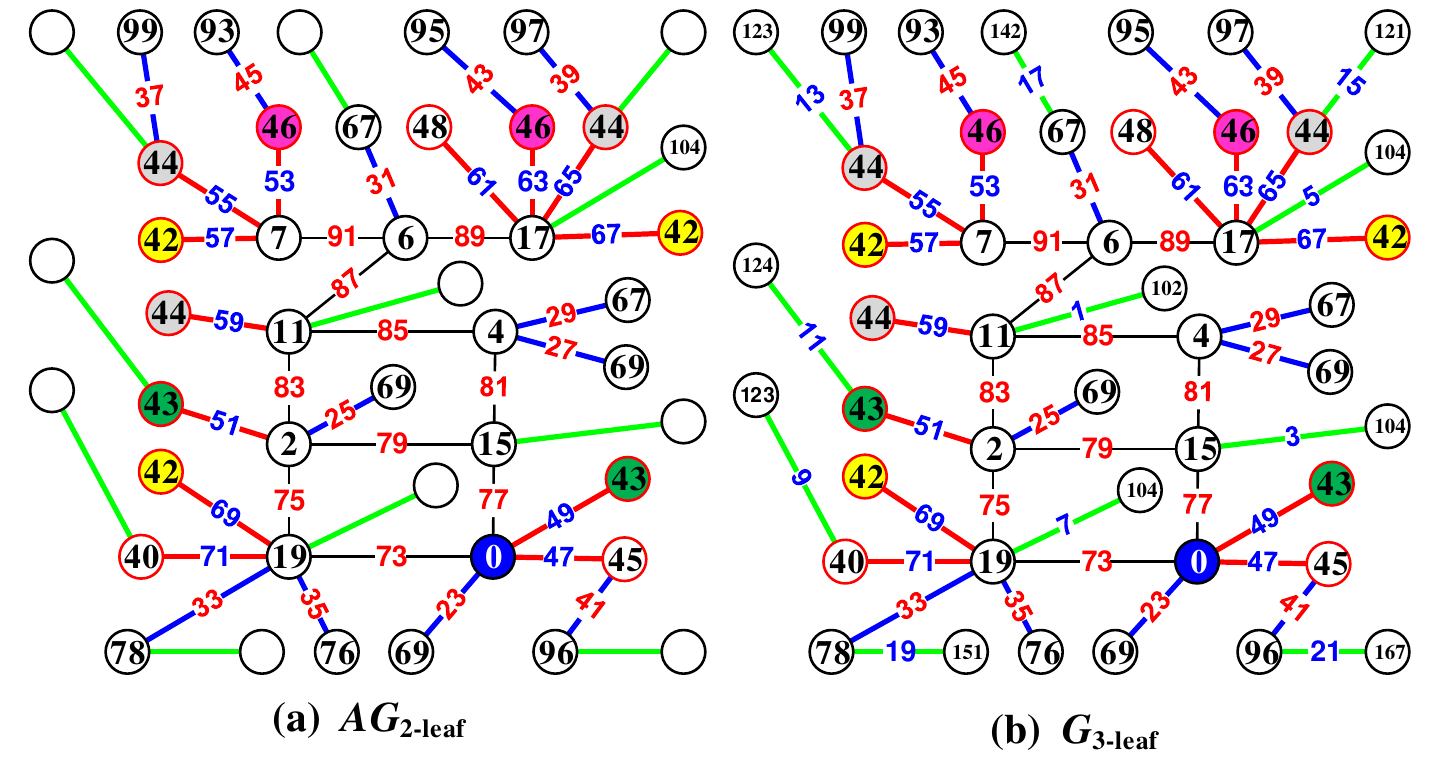}\\
\caption{\label{fig:add3times-edge-difference-2} {\small An example for the RLA-algorithm-G for adding leaves continuously under odd-edge edge-difference total coloring.}}
\end{figure}

\begin{thm}\label{thm:edge-difference-adding-leaves-continuously}
Suppose that a connected bipartite $(p,q)$-graph $G_0$ admits an odd-edge edge-difference total coloring $h^{gr}_0$, then there are connected bipartite graph sequence $\{G_k\}^n_{k=0}$ such that each connected bipartite graph $G_k\in \{G_k\}^n_{k=0}$ is obtained by adding randomly $a_k~(\geq 1)$ leaves to $G_{k-1}$ and admits an odd-edge edge-difference total coloring $h^{gr}_k$ with $k\geq 1$, and moreover $h^{gr}_i(V(G_i))\cap h^{gr}_j(V(G_j))\neq \emptyset$ for any pair integers $i,j\in [0,n]$.
\end{thm}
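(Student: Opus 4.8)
The plan is to prove the statement by induction on $k$, with the RLA-algorithm-G for adding leaves continuously serving as the engine of the inductive step. The base case is the hypothesis: $G_0$ is a connected bipartite $(p,q)$-graph and $h^{gr}_0$ is an odd-edge edge-difference total coloring of it. The whole theorem is essentially a repackaging of the fact that the RLA-algorithm-G, when fed a graph already carrying such a coloring, outputs a larger graph carrying one again, together with the observation that the old vertex colors are never disturbed.

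For the inductive step I would argue as follows. Assume $G_{k-1}$ is connected bipartite and admits an odd-edge edge-difference total coloring $h^{gr}_{k-1}$; sort $V(G_{k-1})$ by color (breaking ties arbitrarily, since $h^{gr}_{k-1}$ may repeat colors), pick any positive integer $a_k$ and any distribution of $a_k$ new leaves among the vertices, and run the RLA-algorithm-G on this input. By Step-G-1 the output $G_k$ is exactly $G_{k-1}$ with these $a_k$ leaves attached, so $|E(G_k)|=|E(G_{k-1})|+a_k$; and $G_k$ is again connected (a leaf cannot disconnect a graph) and again bipartite (each new leaf is placed in the part opposite its unique neighbour). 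By Step-G-2 and the identities it verifies — the shift $h^{gr}_k(uv)=h^{gr}_{k-1}(uv)+2a_k$ on old edges, the leaf-edge values filling $[1,2a_k-1]^o$, and the leaf colors chosen to satisfy Eq.(\ref{eqa:edge-difference-adding-leaves-continuously}) — the coloring $h^{gr}_k$ satisfies $h^{gr}_k(xy)+|h^{gr}_k(x)-h^{gr}_k(y)|=\rho_0+2(a_1+\cdots+a_k)$ for every $xy\in E(G_k)$ and has edge color set $h^{gr}_k(E(G_k))=[1,2(q+a_1+\cdots+a_k)-1]^o$, as in Eq.(\ref{eqa:edge-difference-continus-adding-leaves}). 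Hence $h^{gr}_k$ is an odd-edge edge-difference total coloring of $G_k$, and the induction runs to any desired length $n$.

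It then remains to verify the overlap condition $h^{gr}_i(V(G_i))\cap h^{gr}_j(V(G_j))\neq\emptyset$ for all $i,j\in[0,n]$. This I would get for free from the recoloring rule in Step-G-2: each application of the algorithm keeps $h^{gr}_k(w)=h^{gr}_{k-1}(w)$ for every $w\in V(G_{k-1})$, so the vertex-color sets form a nested chain $h^{gr}_0(V(G_0))\subseteq h^{gr}_1(V(G_1))\subseteq\cdots\subseteq h^{gr}_n(V(G_n))$. For $i\le j$ this gives $h^{gr}_i(V(G_i))\cap h^{gr}_j(V(G_j))=h^{gr}_i(V(G_i))$, which is nonempty since $G_0$ has at least one vertex; the case $i=j$ is trivial. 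This settles the ``moreover'' clause.

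The only genuinely substantive point — the one I would write out with care rather than merely cite — is the internal correctness of the RLA-algorithm-G: that Step-G-2.1(1-3) can always assign each new leaf $w_{s,t}$ a color lying in the admissible range $[0,2(q+a_1+\cdots+a_k)-1]$ while keeping the edge-difference constant equal to $\rho_0+2(a_1+\cdots+a_k)$, and that the leaf-edges consume precisely the odd colors $[1,2a_k-1]^o$ so that, after the shift of the old edge colors by $2a_k$, the combined edge color set is exactly $[1,2(q+a_1+\cdots+a_k)-1]^o$. This bookkeeping is displayed in the algorithm's description around Eqs.(\ref{eqa:odd-edge-edge-difference-colorings})--(\ref{eqa:edge-difference-continus-adding-leaves}), so in the proof I would invoke it; if pressed I would reproduce the three sub-algorithm color counts. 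Beyond that, nothing in the statement presents a difficulty, and the argument is parallel to the one behind Theorem \ref{thm:adding-leaves-continuously} and Theorem \ref{thm:edge-magic-adding-leaves-continuously}.
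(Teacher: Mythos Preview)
Your proposal is correct and follows essentially the same approach as the paper: the paper does not write out a separate proof for this theorem but presents it as an immediate consequence of the RLA-algorithm-G described just before it, exactly parallel to Theorems \ref{thm:adding-leaves-continuously} and \ref{thm:edge-magic-adding-leaves-continuously}. Your inductive reading of the algorithm, and in particular your observation that Step-G-2 keeps $h^{gr}_k(w)=h^{gr}_{k-1}(w)$ on old vertices so that the vertex-color sets nest, is precisely the content the paper leaves implicit.
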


\begin{thm}\label{thm:edge-difference-tree-adding-leaves-continuously}
Each tree admits an odd-edge edge-difference total coloring.
\end{thm}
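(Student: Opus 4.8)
The plan is to follow, almost verbatim, the scheme used in the proof of Theorem~\ref{thm:tree-adding-leaves-continuously}, only replacing the graceful-difference restriction by the edge-difference restriction and the continuous leaf-adding algorithm RLA-algorithm-E by RLA-algorithm-G. Let $T$ be a tree with $q\ge 1$ edges. First I would dispose of the base case in which $T$ is a star $K_{1,q}$: writing $V(T)=\{u\}\cup\{x_i:i\in[1,q]\}$ and $E(T)=\{ux_i:i\in[1,q]\}$, set $f(u)=0$, $f(ux_i)=2i-1$, and $f(x_i)=2q-2i+1$ for $i\in[1,q]$. Then $f$ maps into $[0,2q-1]$, we have $f(E(T))=[1,2q-1]^o$, and every edge satisfies $f(ux_i)+|f(u)-f(x_i)|=(2i-1)+(2q-2i+1)=2q$, so $f$ is an odd-edge edge-difference total coloring (in fact a labeling) of the star with common value $k=2q$. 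This is the step that plays the role of the figure of stars in the proof of Theorem~\ref{thm:tree-adding-leaves-continuously}, and it covers the small cases $K_2$ and $K_{1,2}$ as well.

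Next, if $T$ is not a star I would peel leaves: put $T_0=T$ and $T_k=T_{k-1}-L(T_{k-1})$. Each $T_k$ is again a tree, and since deleting all leaves drops the diameter by at least $2$, after $m\le\lceil D(T)/2\rceil$ steps we arrive at a star $T_m$ with at least one edge, since we stop as soon as the current tree is a star. By the base case, $T_m$ admits an odd-edge edge-difference total coloring. Now I would rebuild $T$ by downward induction on $k$ from $m$ to $0$. The key observation is that $T_{k-1}$ is exactly $T_k$ together with the leaf-edges joining the vertices of $L(T_{k-1})$ to $T_k$; thus $T_{k-1}$ is a leaf-added graph over the bipartite tree $T_k$. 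Applying RLA-algorithm-G (equivalently, invoking Theorem~\ref{thm:edge-difference-adding-leaves-continuously}) to $T_k$ with its odd-edge edge-difference total coloring produces an odd-edge edge-difference total coloring of $T_{k-1}$, with edge color set $[1,2|E(T_{k-1})|-1]^o$ and common value shifted by twice the number of added leaves. Iterating down to $k=0$ gives an odd-edge edge-difference total coloring of $T_0=T$, completing the proof.

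Since the architecture is identical to that of Theorem~\ref{thm:tree-adding-leaves-continuously}, I do not expect any serious obstruction. The only points that require genuine care are: (i) verifying that the explicit star coloring above really has edge color set exactly $[1,2q-1]^o$, keeps all colors inside $[0,2q-1]$, and realizes a single edge-difference value, which is the short computation already displayed; and (ii) confirming that at every stage of the rebuild the hypotheses of RLA-algorithm-G are met, namely that $T_k$ is bipartite (automatic, as every tree is bipartite) and that it already carries an odd-edge edge-difference total coloring (exactly the inductive hypothesis). Once these are in place the theorem follows.
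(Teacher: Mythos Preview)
Your proposal is correct and follows essentially the same approach as the paper: the paper omits the proof of this theorem, but it is clearly intended to be the edge-difference analogue of the proof of Theorem~\ref{thm:tree-adding-leaves-continuously}, namely handling the star base case directly and then peeling and re-adding leaves via the appropriate RLA-algorithm (here RLA-algorithm-G in place of RLA-algorithm-E). Your explicit star coloring with $f(u)=0$, $f(ux_i)=2i-1$, $f(x_i)=2q-2i+1$ and common value $2q$ is a clean realization of the base case that the paper only gestures at with a figure.
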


\subsection{RLA-algorithm-H for adding leaves continuously}

\noindent \textsc{RLA-algorithm-H for adding leaves continuously.}

\textbf{Input:} A connected bipartite $(p,q)$-graph $G$ admitting an odd-edge felicitous-difference total coloring $h^{fe}_0$.

\textbf{Output:} A connected bipartite $(p+m,q+m)$-graph $H$ admitting an odd-edge felicitous-difference total coloring $h^{fe}_1$, where the $(p+m,q+m)$-graph $H$, called \emph{leaf-added graph}, is the result of adding randomly $m$ leaves to $G$.

\textbf{Initialization.} Let $G$ be a bipartite $(p,q)$-graph admitting an odd-edge felicitous-difference total coloring $h^{fe}_0$, and let $V(G)=\{u_1,u_2,\dots, u_p\}$. By the definition of an odd-edge felicitous-difference total labeling, we have
\begin{equation}\label{eqa:felicitous-difference}
0\leq h^{fe}_0(u_1)<h^{fe}_0(u_2)<\cdots <h^{fe}_0(u_p)\leq 2q-1
\end{equation}
so that each edge $u_iv_j\in E(G)$ satisfies the following equation
\begin{equation}\label{eqa:odd-edge-felicitous-difference-colorings}
\big |h^{fe}_0(u_i)+h^{fe}_0(v_j)-h^{fe}_0(u_iv_j)\big |=\xi_0
\end{equation} where integer $\xi_0\geq 0$, as well as $h^{fe}_0(E(G))=\{h^{fe}_0(u_iv_j):u_iv_j\in E(G)\}=[1,2q-1]^o$.

\textbf{Step-H-1.} Adding new leaves $w_{i,1},w_{i,2},\dots ,w_{i,n_i}$ to each vertex $u_i$ of $G$ produces a leaf set $L(u_i)=\{w_{i,1},w_{i,2},\dots ,w_{i,n_i}\}$ with $i\in [1,p]$, here, it is allowed $n_j=0$ for some $j\in [1,p]$. The resultant graph is denoted as $H=G+E^*(G)$, where the leaf edge set
\begin{equation}\label{eqa:555555}
E^*(G)=\{w_{i,j}u_i: ~w_{i,j}\in L(u_i), ~j\in [1,n_i],~i\in [1,p]\}
\end{equation} having $m=|E^*(G)|$ edges.

\textbf{Step-H-2.} Define a coloring $h^{fe}_1$ for the leaf-added graph $H$ in the following steps:

\textbf{Step-H-2.1.} \textbf{The ascending-order sub-algorithm.}

(1-1) Color $h^{fe}_1(w_{1,j}u_1)=(2q-1)+2j$ with $j\in [1,n_1]$;

(1-2) Color $h^{fe}_1(w_{i,j}u_i)=(2q-1)+2j+2\sum ^{i-1}_{k=1}n_k$ with $j\in [1,n_i]$ and $i\in [2,p]$.

(1-3) Color leaves $w_{s,t}\in \bigcup ^p_{i=1}L(u_i)$ with $h^{fe}_1(w_{s,t})$ holding
\begin{equation}\label{eqa:felicitous-difference-adding-leaves-continuously}
\big |h^{fe}_1(u_s)+h^{fe}_1(w_{s,t})-h^{fe}_1(w_{s,t}u_s)\big |=\xi_0,~t\in [1,n_s],~s\in [1,p]
\end{equation}

(1-4) Color each edge $w\in V(G)\cup E(G)$ with $h^{fe}_1(w)=h^{fe}_0(w)$.

Thereby, we get $h^{fe}_1(u_1)=0$ and
\begin{equation}\label{eqa:felicitous-difference-continus-adding-leaves}
\big |h^{fe}_1(x)+h^{fe}_1(y)-h^{fe}_1(xy)\big |=\xi_0,~xy\in E(H);~h^{fe}_1(E(H))=\left [1,|E(G)|+\sum ^{p}_{k=1}n_k\right ]^o
\end{equation}

\textbf{Step-H-2.2.} \textbf{The descending-order sub-algorithm.}

(2-1) Color $h^{fe}_1(w_{p,j}u_1)=(2q-1)+2j$ with $j\in [1,n_p]$;

(2-2) Color $h^{fe}_1(w_{i,j}u_i)=(2q-1)+2j+2\sum ^{p-i}_{k=1}n_{p-k+1}$ with $j\in [1,n_i]$ and $i\in [1,p-1]$.

(2-3) Color leaves $w_{s,t}\in \bigcup ^p_{i=1}L(u_i)$ holding Eq.(\ref{eqa:felicitous-difference-adding-leaves-continuously}) true.

(2-4) Color each edge $w\in V(G)\cup E(G)$ with $h^{fe}_1(w)=h^{fe}_0(w)$.

We get $h^{fe}_1(u_1)=0$ and Eq.(\ref{eqa:felicitous-difference-continus-adding-leaves}).

\textbf{Step-H-2.3.} \textbf{The random-order sub-algorithm.}

(3-1) Color $h^{fe}_1(e_j)=(2q-1)+2j$ with $j\in [1,A]$, where edges $e_1,e_2,\dots ,e_A$ is a permutation of leaf edges $w_{i,j}u_i$ with $j\in [1,n_i]$ and $i\in [1,p]$, and $A=\sum ^p_{i=1}|L(u_i)|$.

(3-2) Color leaves $w_{s,t}\in \bigcup ^p_{i=1}L(u_i)$ with $h^{fe}_1(w_{s,t})$ with $h^{fe}_1(w_{s,t})$ holding Eq.(\ref{eqa:felicitous-difference-adding-leaves-continuously}) true.

(3-3) Color each edge $w\in V(G)\cup E(G)$ with $h^{fe}_1(w)=h^{fe}_0(w)$.

Thereby, $h^{fe}_1(u_1)=0$ and Eq.(\ref{eqa:felicitous-difference-continus-adding-leaves}) holds true.

\textbf{Step-H-3.} Return an odd-edge felicitous-difference total coloring $h^{fe}_1$ of the leaf-added graph $H$.

\vskip 0.4cm
\begin{example}\label{exa:8888888888}
Fig.\ref{fig:add3times-felicitous-difference-1} and Fig.\ref{fig:add3times-felicitous-difference-2} show us examples for illustrating the RLA-algorithm-H for adding leaves continuously under the odd-edge felicitous-difference total coloring:

In Fig.\ref{fig:add3times-felicitous-difference-1}: (a) A graph $J_{\textrm{1-leaf}}$ admits an odd-edge felicitous-difference total coloring $h^{fe}_1$ holding $\big |h^{fe}_1(x)+h^{fe}_1(y)-h^{fe}_1(xy)\big |=20$ for $xy\in E(J_{\textrm{1-leaf}})$; (b) a graph $AJ_{\textrm{1-leaf}}$ obtained by adding leaves to the graph $J_{\textrm{1-leaf}}$; (c) a graph $J_{\textrm{2-leaf}}$ based on the graph $AJ_{\textrm{1-leaf}}$ admits an odd-edge felicitous-difference total coloring $h^{fe}_2$ holding $\big |h^{fe}_2(x)+h^{fe}_2(y)-h^{fe}_2(xy)\big |=20$ for $xy\in E(J_{\textrm{2-leaf}})$.

In Fig.\ref{fig:add3times-felicitous-difference-1}: (a) a graph $AJ_{\textrm{2-leaf}}$ obtained by adding leaves to the graph $J_{\textrm{2-leaf}}$ shown in Fig.\ref{fig:add3times-felicitous-difference-1} (c); (b) a graph $J_{\textrm{3-leaf}}$ based on the graph $AJ_{\textrm{2-leaf}}$ admits an odd-edge felicitous-difference total coloring $h^{fe}_3$ holding $\big |h^{fe}_3(x)+h^{fe}_3(y)-h^{fe}_3(xy)\big |=20$ for $xy\in E(J_{\textrm{3-leaf}})$.\qqed
\end{example}

\begin{figure}[h]
\centering
\includegraphics[width=16.4cm]{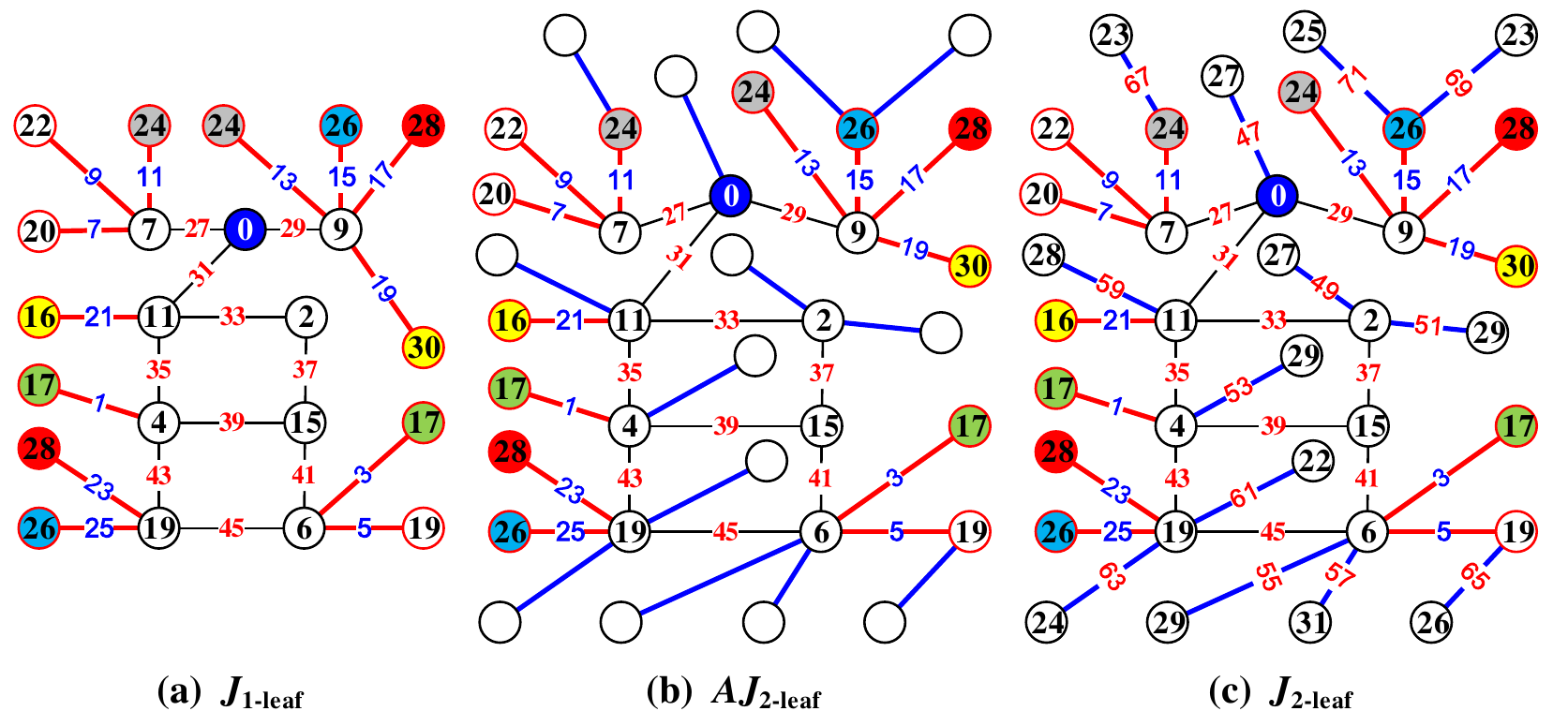}\\
\caption{\label{fig:add3times-felicitous-difference-1} {\small Examples for the RLA-algorithm-H for adding leaves continuously under the odd-edge felicitous-difference total coloring.}}
\end{figure}

\begin{figure}[h]
\centering
\includegraphics[width=16.4cm]{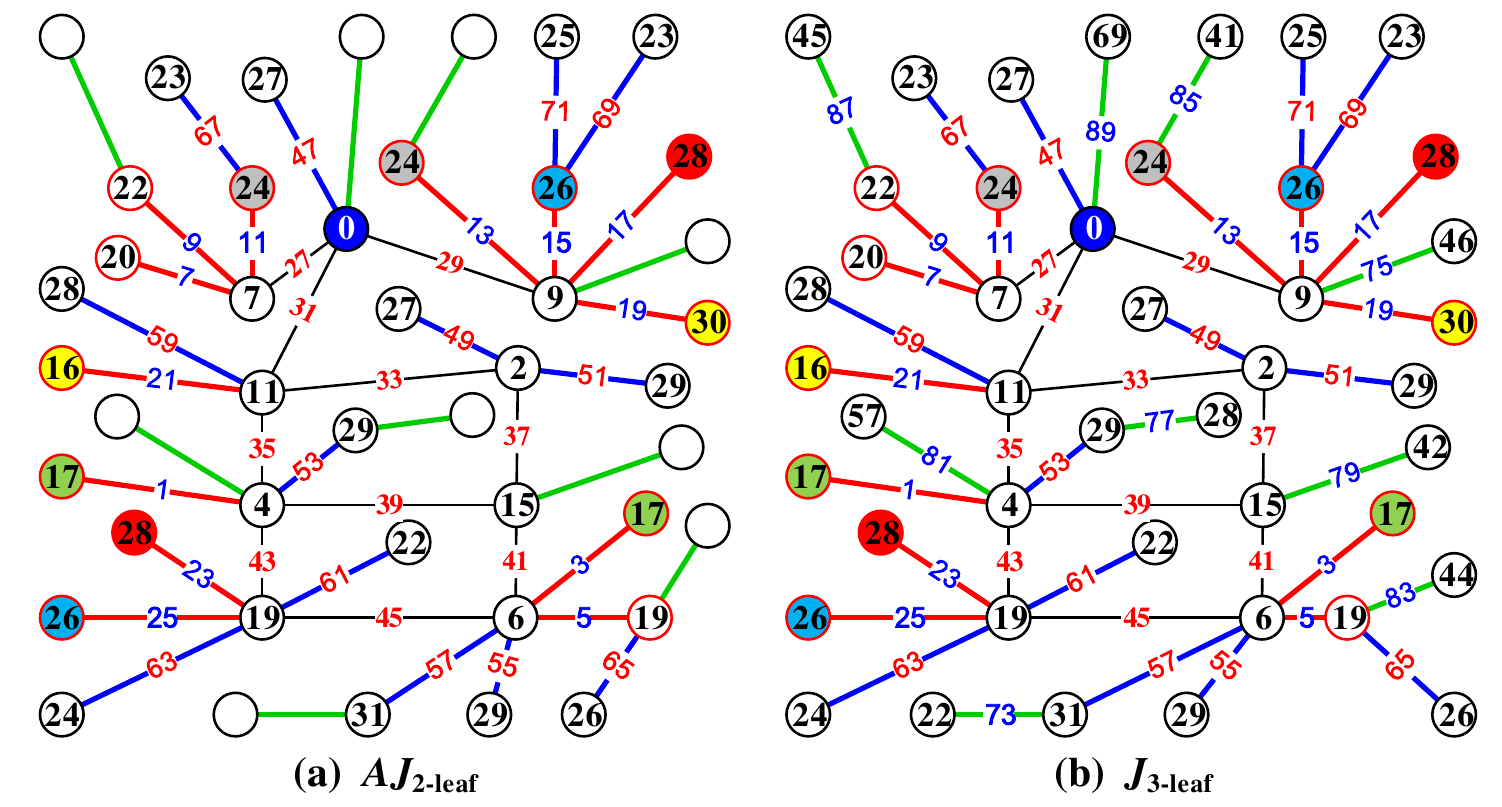}\\
\caption{\label{fig:add3times-felicitous-difference-2} {\small An examples for the RLA-algorithm-H for adding leaves continuously under the odd-edge felicitous-difference total coloring.}}
\end{figure}

\begin{thm}\label{thm:felicitous-difference-adding-leaves-continuously}
Suppose that a connected bipartite $(p,q)$-graph $G_0$ admits an odd-edge felicitous-difference total coloring $h^{gr}_0$, then there are connected bipartite graph sequence $\{G_k\}^n_{k=0}$ such that each connected bipartite graph $G_k\in \{G_k\}^n_{k=0}$ is obtained by adding randomly $a_k~(\geq 1)$ leaves to $G_{k-1}$ and admits an odd-edge felicitous-difference total coloring $h^{gr}_k$ with $k\geq 1$, and moreover $h^{gr}_i(V(G_i))\cap h^{gr}_j(V(G_j))\neq \emptyset$ for any pair integers $i,j\in [0,n]$.
\end{thm}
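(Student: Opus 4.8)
The plan is to use \textrm{RLA-algorithm-H} as the inductive engine and argue by induction on $k$. The point is that the input required by that algorithm is merely a connected bipartite graph carrying an odd-edge felicitous-difference total coloring — no ``set-ordered'' hypothesis is needed in the continuous version — and its output is again a connected leaf-added graph carrying an odd-edge felicitous-difference total coloring, so the output can be fed straight back in. Thus I would take $G_0$ with $h^{gr}_0$ as the base case, and for the inductive step assume $G_{k-1}$ is a connected bipartite $(p_{k-1},q_{k-1})$-graph admitting an odd-edge felicitous-difference total coloring $h^{gr}_{k-1}$ whose edge color set is $[1,2q_{k-1}-1]^o$ and whose magic constant is $\xi$ (this $\xi$ equals $\xi_0$ for every $k$, since \textrm{RLA-algorithm-H} never recolors the original vertices or edges).

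For the inductive step I would fix any assignment of $a_k\geq 1$ new leaves to the vertices of $G_{k-1}$, apply \textrm{RLA-algorithm-H}: order $V(G_{k-1})$ by color, color the $a_k$ new leaf-edges with the consecutive odd numbers $2q_{k-1}+1,2q_{k-1}+3,\dots ,2q_{k-1}+2a_k-1$, set $h^{gr}_k(w)=h^{gr}_k(uw)-h^{gr}_k(u)+\xi$ on each new leaf $w$ attached at $u$ (a nonnegative integer, since $h^{gr}_k(uw)\geq 2q_{k-1}+1>h^{gr}_k(u)$), and keep $h^{gr}_k=h^{gr}_{k-1}$ on $V(G_{k-1})\cup E(G_{k-1})$. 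Then three routine checks remain: (i) $G_k$ is connected (attaching leaves preserves connectedness) and bipartite (a pendant vertex can always be placed in the opposite part of its neighbour); (ii) every edge $xy$ of $G_k$ satisfies $\big|h^{gr}_k(x)+h^{gr}_k(y)-h^{gr}_k(xy)\big|=\xi$, which is inherited on old edges and holds by construction on the new leaf-edges; (iii) the edge color set becomes $[1,2q_{k-1}-1]^o\cup\{2q_{k-1}+1,\dots,2q_{k-1}+2a_k-1\}=[1,2q_k-1]^o$ with $q_k=q_{k-1}+a_k$, which is exactly Eq.(\ref{eqa:felicitous-difference-continus-adding-leaves}). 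Hence $h^{gr}_k$ is an odd-edge felicitous-difference total coloring of $G_k$, and the induction runs for $k=1,\dots,n$. For the intersection claim, note that \textrm{RLA-algorithm-H} fixes $h^{gr}_k(w)=h^{gr}_{k-1}(w)$ for every $w\in V(G_{k-1})$ and $V(G_{k-1})\subseteq V(G_k)$, so one gets the nested chain
\[
h^{gr}_0(V(G_0))\subseteq h^{gr}_1(V(G_1))\subseteq\cdots\subseteq h^{gr}_n(V(G_n)),
\]
whence any two of these color sets intersect, since all of them contain the non-empty set $h^{gr}_0(V(G_0))$.

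I do not expect a real obstacle: the mathematical content is entirely contained in the correctness of \textrm{RLA-algorithm-H}, already laid out in the paper, so the proof is bookkeeping. The only places that need a word of care are (a) verifying the leaf-color equation $\big|h^{gr}_k(u)+h^{gr}_k(w)-h^{gr}_k(uw)\big|=\xi$ has a nonnegative solution (immediate, as shown above, because the new leaf-edge colors dominate the old vertex colors) and (b) confirming that the odd numbers in $[2q_{k-1}+1,\,2q_{k-1}+2a_k-1]$ are hit exactly once, which is the size count $a_k=|E^*(G_{k-1})|$. It is worth remarking that this same template proves Theorem \ref{thm:edge-difference-tree-adding-leaves-continuously}-style statements and Theorem \ref{thm:adding-leaves-continuously} verbatim, the only difference being that for the edge-magic and edge-difference cases the magic constant increases by $2a_k$ at each step, whereas in the felicitous-difference case it stays fixed at $\xi_0$ because the original elements are never recolored.
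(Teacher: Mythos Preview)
Your proposal is correct and is precisely the argument the paper intends: Theorem~\ref{thm:felicitous-difference-adding-leaves-continuously} is stated in the paper without proof as an immediate corollary of iterating \textrm{RLA-algorithm-H}, and you have written out that induction explicitly, including the key observation that the constant $\xi_0$ is preserved at every step because Step-H-2 never recolors $V(G)\cup E(G)$, and the nesting $h^{gr}_0(V(G_0))\subseteq\cdots\subseteq h^{gr}_n(V(G_n))$ that gives the intersection claim. One small imprecision worth tightening: your nonnegativity justification ``$h^{gr}_k(uw)\geq 2q_{k-1}+1>h^{gr}_k(u)$'' is only guaranteed at $k=1$, since a leaf $w$ created at an earlier step $j<k$ may carry a color as large as $2q_j-1+\xi_0$, which can exceed $2q_{k-1}+1$ when $\xi_0\geq 2$; however the desired inequality $h^{gr}_k(uw)-h^{gr}_k(u)+\xi_0\geq 0$ still holds, because an easy induction gives $h^{gr}_{k-1}(u)\leq 2q_{k-1}-1+\xi_0$ for every $u\in V(G_{k-1})$, and then $h^{gr}_k(uw)-h^{gr}_k(u)+\xi_0\geq (2q_{k-1}+1)-(2q_{k-1}-1+\xi_0)+\xi_0=2>0$. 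The paper's own treatment is equally informal on this point.
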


\begin{thm}\label{thm:felicitous-difference-tree-adding-leaves-continuously}
Each tree admits an odd-edge felicitous-difference total coloring.
\end{thm}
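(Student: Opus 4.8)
The plan is to follow the proof of Theorem~\ref{thm:tree-adding-leaves-continuously} essentially verbatim, with the graceful-difference ingredients replaced by their felicitous-difference analogues, namely RLA-algorithm-H together with Theorem~\ref{thm:felicitous-difference-adding-leaves-continuously}. Concretely, I would argue by induction on the number of ``leaf-peeling'' steps needed to reduce $T$ to a star: establish the claim for stars directly, peel leaves off $T$ layer by layer until a star is reached, and then rebuild $T$ one layer at a time using RLA-algorithm-H, which preserves the odd-edge felicitous-difference property at every step.

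For the base case, I would exhibit an odd-edge felicitous-difference total labeling (hence coloring) of every star $K_{1,q}$: writing $V(K_{1,q})=\{u\}\cup\{x_i:i\in[1,q]\}$ with edges $ux_i$, set $h(u)=0$, $h(ux_i)=2i-1$ and $h(x_i)=2i-1$ for $i\in[1,q]$. Then the vertex colors are distinct and lie in $[0,2q-1]$, the edge color set is $h(E(K_{1,q}))=[1,2q-1]^o$, and $\big|h(u)+h(x_i)-h(ux_i)\big|=\big|0+(2i-1)-(2i-1)\big|=0$ for every edge, so $\xi=0$. (The six stars drawn in Fig.~\ref{fig:star-odd-edge-graceful-difference} illustrate the analogous situation for the graceful-difference coloring.)

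For the inductive step, given a tree $T$ with $q\ge1$ edges that is not a star, I would set $T_1=T-L(T)$ and iterate $T_k=T_{k-1}-L(T_{k-1})$. Each $T_k$ is again a tree, in particular bipartite, and a short connectivity argument shows the process first reaches a star $T_m$: if $T_{k-1}-L(T_{k-1})$ were empty or a single vertex while $T_{k-1}$ were not already a star, connectedness of $T_{k-1}$ would force every vertex other than the unique non-leaf to be a leaf adjacent to it, i.e. $T_{k-1}$ would be a star --- a contradiction; and since the edge count strictly drops at every non-terminal step, termination at a star is guaranteed. By the base case $T_m$ admits an odd-edge felicitous-difference total coloring. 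Now $T_{k-1}$ is exactly the graph obtained from $T_k$ by attaching $|L(u)|$ leaves to each vertex $u$ of $T_k$ (zeros allowed), so RLA-algorithm-H turns an odd-edge felicitous-difference total coloring of $T_k$ into one of $T_{k-1}$ with the same constant $\xi=0$ and edge color set $[1,2|E(T_{k-1})|-1]^o$; iterating from $T_m$ through $T_{m-1},\dots,T_1$ up to $T$ --- this is precisely Theorem~\ref{thm:felicitous-difference-adding-leaves-continuously} applied along that sequence --- produces the desired coloring of $T$, with the byproduct that $0$ lies in every $h^{fe}_i(V(T_i))$.

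The main obstacle is a small discrepancy in the hypotheses: RLA-algorithm-H is stated for an input satisfying $0\le h^{fe}_0(u_1)<\cdots<h^{fe}_0(u_p)\le 2q-1$, i.e. for a \emph{labeling}, whereas after the first reconstruction layer the new leaf colors $h^{fe}_1(w_{s,t})=h^{fe}_1(w_{s,t}u_s)-h^{fe}_1(u_s)\pm\xi_0$ may repeat colors already present, so the intermediate objects are genuinely colorings. I would resolve this by noting that the algorithm never uses the strict inequalities for anything beyond (i) fixing some linear order on $V(G)$ and (ii) reserving the fresh odd block $\{2q+1,2q+3,\dots,2(q+m)-1\}$ for the new leaf-edges, which is disjoint from the old edge colors $[1,2q-1]^o$, so their union is $[1,2(q+m)-1]^o$, while the constant $\xi_0$ is inherited by the unchanged old edges and imposed on the new ones by construction; hence RLA-algorithm-H, and with it Theorem~\ref{thm:felicitous-difference-adding-leaves-continuously}, goes through verbatim with a mere coloring as input, which is what the induction needs. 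Everything else is a routine check.
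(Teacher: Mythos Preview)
Your proposal is correct and follows essentially the same approach as the paper: the paper omits the proof of this theorem because it is the felicitous-difference analogue of Theorem~\ref{thm:tree-adding-leaves-continuously}, whose proof proceeds exactly by the star base case plus leaf-peeling/rebuilding via the corresponding RLA-algorithm, and you have faithfully carried this over using RLA-algorithm-H and Theorem~\ref{thm:felicitous-difference-adding-leaves-continuously}. Your explicit star labeling with $\xi=0$ and your care in checking that the algorithm still goes through when the input is merely a coloring (a point the paper leaves implicit) are both fine.
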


\subsection{Theorems based on the $W$-magic total colorings}

If a graph admits a set-ordered graceful total coloring, then it admits a set-ordered odd-edge $W$-magic total coloring, where $W$-magic $\in \{$edge-magic, edge-difference, felicitous-difference, graceful-difference$\}$.

\begin{thm}\label{thmc:5-equivalent-therems}
If a connected graph admits a set-ordered odd-graceful total labeling, then it admits the following total colorings:

(1) a set-ordered odd-edge edge-magic total labeling;

(2) a set-ordered odd-edge edge-difference total labeling;

(3) a set-ordered odd-edge felicitous-difference total labeling;

(4) a set-ordered odd-edge graceful-difference total labeling,\\
which are equivalent to each other.
\end{thm}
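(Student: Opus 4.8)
The plan is to start from a connected graph $G$ with a set-ordered odd-graceful total labeling $f$, reading off the structural data it provides: since $f$ is set-ordered, the vertex set splits as $V(G)=X\cup Y$ with $\max f(X)<\min f(Y)$, the edge colors are $f(xy)=f(y)-f(x)$ for $x\in X$, $y\in Y$, and $f(E(G))=[1,2q-1]^o$. Write $\beta=\max f(X)$ and $\gamma=\min f(Y)$, so $\gamma-\beta=1$ after a harmless normalization (or we simply carry $\gamma-\beta$ as a constant). The core idea is that each of the four target colorings is obtained from $f$ by a \emph{fixed affine modification of the edge colors only}, possibly combined with a dual (reflection) of the vertex colors on one of the two parts — exactly the kind of transformation worked out in the Set-Dual constructions (\textbf{Set-Dual-1}--\textbf{Set-Dual-4}) earlier in the paper, but now in the odd-edge setting. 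Concretely: for the edge-magic version, keep $f$ on vertices and replace the edge color of $xy$ by $(2q+1)-f(xy) = (2q+1)-[f(y)-f(x)]$; then $f(x)+[(2q+1)-f(y)+f(x)]+f(y) = 2q+1 + 2f(x)$... — rather, one reflects on $X$ as in \textbf{Set-Dual-3}, setting $h(x)=\beta-f(x)$ for $x\in X$, $h(y)=f(y)$, $h(xy)=(2q+1)-f(xy)$, which yields a constant edge-magic sum; for the edge-difference version use \textbf{Set-Dual-1}'s recipe $f^\ast_{dual}(w)=(2q-1)-f(w)$ on vertices with $f^\ast_{dual}(xy)=(2q+1)-f(xy)$; for felicitous-difference and graceful-difference use the analogues of \textbf{Set-Dual-4} and \textbf{Set-Dual-2}. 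In every case the new edge color set is $[1,2q-1]^o$ again because $c\mapsto (2q+1)-c$ is an involution of the odd interval $[1,2q-1]^o$, and the set-ordered vertex inequality is preserved (or reversed, which is equally acceptable) since each affine map is monotone on the relevant part.

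First I would fix notation and record the four affine templates explicitly in a short lemma-free paragraph, verifying for each that (i) the induced edge-color set is $[1,2q-1]^o$, (ii) the set-ordered condition $\max(\text{image of }X)<\min(\text{image of }Y)$ still holds, and (iii) the relevant magic-type identity holds with a constant $k$ that I would compute once in terms of $q$, $\beta$, $\gamma$. This gives the four implications ``set-ordered odd-graceful $\Rightarrow$ set-ordered odd-edge $W$-magic'' for each of the four $W$. Then, to get the equivalences \emph{among} the four $W$-magic colorings, I would show each of the four affine templates is invertible: from a set-ordered odd-edge edge-magic total labeling one recovers a set-ordered odd-graceful total labeling by applying the inverse affine map (again an involution on the odd edge-interval, composed with the reflection on $X$ or $Y$), and similarly for the other three. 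Composing ``$W$-magic $\Rightarrow$ odd-graceful $\Rightarrow$ $W'$-magic'' then closes the cycle and delivers the stated mutual equivalence.

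The one point that needs genuine care — and which I expect to be the main obstacle — is the bookkeeping of \emph{which} vertices get reflected and on \emph{which} part, so that the set-ordered inequality survives simultaneously with the magic identity: reflecting on $X$ (as in \textbf{Set-Dual-3}) turns graceful into felicitous-difference/edge-magic, while reflecting on all of $V(G)$ or on $Y$ produces edge-difference/graceful-difference, and one must check these are mutually compatible so that the composite maps between any two $W$-magic labelings are well-defined bijections rather than merely surjections. A secondary subtlety is the parity constraint: in the set-ordered odd setting every color in $f(X)$ is even and every color in $f(Y)$ is odd (as the algorithm descriptions in Section 3 note), and I must confirm each affine template respects this even/odd split — e.g. $x\mapsto \beta - f(x)$ sends evens to evens iff $\beta$ is even, which it is since $\beta=\max f(X)\in f(X)$. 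Once these parity and side-assignment checks are in place, the rest is the routine algebra of substituting the affine formulas into the four defining equations, which I would present compactly as in the \textbf{Set-Dual} computations rather than in full.
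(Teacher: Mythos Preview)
Your proposal is correct and follows essentially the same approach as the paper: both construct each of the four target labelings via explicit affine transformations (reflect on $X$, on $Y$, on all of $V(G)$, and/or on the edge-color set) lifted directly from the \textbf{Set-Dual-1}--\textbf{Set-Dual-4} templates to the odd-edge setting, and then invoke invertibility of these affine maps to close the equivalences --- the paper phrases this last step simply as ``all transformations in the above four algorithms are linear transformations'', which is exactly your inversion argument.

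One small arithmetic slip to fix before you write it out: the involution on the odd interval $[1,2q-1]^o$ is $c\mapsto 2q-c$, not $c\mapsto (2q+1)-c$; the latter sends odd colors to even ones and would leave the edge-color set. The paper's formulas use $2q-f(x_iy_j)$ throughout, and with that correction your four templates become exactly the paper's Algorithms~1--4.
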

\begin{proof} Suppose that a connected bipartite $(p,q)$-graph $G$ admits a set-ordered odd-graceful total labeling $f$. Let $(X,Y)$ be the bipartition of $V(G)$, where $X=\{x_1,x_2,\dots,x_s\}$ and $Y=\{y_1,y_2,\dots,y_t\}$ ($s+t=p$). Since $f$ is a set-ordered odd-graceful total labeling, without loss of generality, the vertex colors can be arranged into
\begin{equation}\label{equ:set-ordered-graceful-labeling}
0=f(x_1)<f(x_2)<\cdots <f(x_s)<f(y_1)<f(y_2)<\cdots <f(y_t)=2q-1
\end{equation} also $\max f(X)<\min f(Y)$, and each edge $x_iy_j\in E(G)$ holds
$$f(x_iy_j)=|f(y_j)-f(x_i)|=f(y_j)-f(x_i)\in [1,2q-1]^o$$
and the edge color set $f(E(G))=[1,2q-1]^o$.

\textbf{Algorithm-1.} We define a dual total labeling $f^*_{oed}$ by means of the set-ordered odd-graceful total labeling $f$ of the graph $G$ as follows: Each vertex $w\in V(G)$ is colored as
\begin{equation}\label{equ:edge-difference-total-labeling11}
f^*_{oed}(w)=\max f(V(G))+\min f(V(G))-f(w)=2q-1-f(w)
\end{equation} and each edge $x_iy_j\in E(G)$ is colored as
\begin{equation}\label{equ:edge-difference-total-labeling22}
f^*_{oed}(x_iy_j)=\max f(E(G))+\min f(E(G))-f(x_iy_j)=2q-f(x_iy_j)
\end{equation} Then the edge color set $f^*_{oed}(E(G))=f(E(G))=[1,2q-1]^o$. Since
\begin{equation}\label{equ:edge-difference-total-labeling33}
f^*_{oed}(x_iy_j)+|f^*_{oed}(y_j)-f^*_{oed}(x_i)|=2q-f(x_iy_j)+|f(y_j)-f(x_i)|=2q
\end{equation} and
\begin{equation}\label{equ:123456}
0=f^*_{oed}(y_t)<f^*_{oed}(y_{t-1})<\cdots <f^*_{oed}(y_1)<f^*_{oed}(x_{s})<f^*_{oed}(x_{s-1})<\cdots <f^*_{oed}(x_1)=2q-1
\end{equation}
By Definition \ref{defn:group-total-labelings-definition}, we claim that the dual total labeling $f^*_{oed}$ is really a a set-ordered odd-edge edge-difference total labeling of $G$.

\textbf{Algorithm-2.} We use the set-ordered odd-graceful total labeling $f$ of the graph $G$ to define a total labeling $g^*_{ogd}$ as : $g^*_{ogd}(x_i)=\max f(X)+\min f(X)-f(x_i)=\max f(X)-f(x_i)$ for $x_i\in X$ and
$$
g^*_{ogd}(y_j)=\max f(Y)+\min f(Y)-f(y_j)=2q-1+\min f(Y)-f(y_j),~y_j\in Y
$$ and each edge $x_iy_j\in E(G)$ is recolored as
$$
g^*_{ogd}(x_iy_j)=\max f(E(G))+\min f(E(G))-f(x_iy_j)=2q-f(x_iy_j),
$$ so the edge color set $g^*_{ogd}(E(G))=f(E(G))=[1,2q-1]^o$. Because of
\begin{equation}\label{equ:2222222}
{
\begin{split}
& \quad |g^*_{ogd}(x_i)-g^*_{ogd}(y_j)|=g^*_{ogd}(y_j)-g_{ogd}(x_i)\\
&=[\max f(Y)+\min f(Y)-f(y_j)]-[\max f(X)+\min f(X)-f(x_i)]\\
&=[\max f(Y)+\min f(Y)]-[\max f(X)+\min f(X)]-f(x_iy_j)\\
&=2q-1+\min f(Y)-\max f(X)-f(x_iy_j)
\end{split}}
\end{equation}
and $g^*_{ogd}(u)\neq g^*_{ogd}(v)$ for any pair of two vertices $u,v\in V(G)$, we can compute
\begin{equation}\label{eqa:555555}
{
\begin{split}
\big | |g^*_{ogd}(y_j)-g^*_{ogd}(x_i)|-g^*_{ogd}(x_iy_j)\big |&=\big | f(x_iy_j)-[2q-f(x_iy_j)]\big |\\
&=\min f(Y)-\max f(X)-1,
\end{split}}
\end{equation} for each edge $x_iy_j\in E(G)$, notice that $\min f(Y)-\max f(X)-1$ is a constant, and $\max g^*_{ogd}(X)<\min g^*_{ogd}(Y)$, thereby we say that the total labeling $g^*_{ogd}$ is a set-ordered odd-edge graceful-difference total labeling of $G$ according to Definition \ref{defn:group-total-labelings-definition}.

\textbf{Algorithm-3.} The set-ordered odd-graceful total labeling $f$ of the graph $G$ can induce a total labeling $h_{ofd}$ as: $h_{ofd}(x_i)=\max f(X)+\min f(X)-f(x_i)=\max f(X)-f(x_i)$ for $x_i\in X$, $h_{ofd}(y_j)=f(y_j)$ for $y_j\in Y$, and each edge $x_iy_j$ is recolored by $h_{ofd}(x_iy_j)=f(x_iy_j)$ ($x_iy_j\in E(G)$), clearly, $h_{ofd}(E(G))=f(E(G))=[1,2q-1]^o$. Since
\begin{equation}\label{eqa:555555}
{
\begin{split}
h_{ofd}(x_i)+h_{ofd}(y_j)-h_{ofd}(x_iy_j)&=\max f(X)+\min f(X)-f(x_i)+f(y_j)-f(x_iy_j)\\
&=\max f(X)
\end{split}}
\end{equation} Definition \ref{defn:group-total-labelings-definition} shows that the total labeling $h_{ofd}$ is a set-ordered odd-edge felicitous-difference total labeling of $G$.

Again we define a total labeling $h^*_{ofd}$ in the following way: $h^*_{ofd}(w)=h_{ofd}(w)$ for $w\in V(G)$, and each edge $x_iy_j\in E(G)$ is recolored as
$$
h^*_{ofd}(x_iy_j)=\max f(E(G))+\min f(E(G))-f(x_iy_j)=2q-f(x_iy_j)
$$ so the edge color set $h^*_{ofd}(E(G))=f(E(G))=[1,2q-1]^o$. Notice that
\begin{equation}\label{eqa:555555}
{
\begin{split}
h^*_{ofd}(x_i)+h^*_{ofd}(x_iy_j)+h^*_{ofd}(y_j)&=h_{ofd}(x_i)+2q-f(x_iy_j)+h_{ofd}(y_j)\\
&=\max f(X)+h_{ofd}(x_iy_j)+2q-f(x_iy_j)\\
&=\max f(X)+f(x_iy_j)+2q-f(x_iy_j)\\
&=2q+\max f(X)
\end{split}}
\end{equation} we claim that the total labeling $h^*_{ofd}$ is a set-ordered odd-edge edge-magic total labeling of $G$ from Definition \ref{defn:group-total-labelings-definition}.

\textbf{Algorithm-4.} Using the set-ordered odd-graceful total labeling $f$, a total labeling $h^*_{setY}$ of $G$ can be defined as: $h^*_{setY}(x_i)=f(x_i)$ for $x_i\in X$, $h^*_{setY}(y_j)=\max f(Y)+\min f(Y)-f(y_j)=2q-1+\min f(Y)-f(y_j)$ for $y_j\in Y$, as well as each edge $x_iy_j\in E(G)$ is colored by
$$
h^*_{setY}(x_iy_j)=\max f(E(G))+\min f(E(G))-f(x_iy_j)=2q-f(x_iy_j)
$$ we get the edge color set $h^*_{setY}(E(G))=f(E(G))=[1,2q-1]^o$. Moreover, we have
\begin{equation}\label{eqa:555555}
{
\begin{split}
h^*_{setY}(x_i)+h^*_{setY}(y_j)-h^*_{setY}(x_iy_j)=&f(x_i)+2q-1+\min f(Y)-f(y_j)-[2q-f(x_iy_j)]\\
=&\min f(Y)-1.
\end{split}}
\end{equation} Definition \ref{defn:group-total-labelings-definition} enables us to claim that the total labeling $h^*_{setY}$ is a set-ordered odd-edge felicitous-difference total labeling of $G$.

Notice that all transformations in the above four algorithms are linear transformations, thereby, we have completed the necessity and sufficient proof of the theorem.
\end{proof}

\begin{thm}\label{thmc:more-total-colorings}
If a graph admits a set-ordered graceful total coloring, then it admits a set-ordered odd-edge $W$-magic total coloring, where $W$-magic is one of edge-magic, edge-difference, felicitous-difference and graceful-difference.
\end{thm}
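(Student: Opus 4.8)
The plan is to reduce Theorem \ref{thmc:more-total-colorings} to the coloring version of Theorem \ref{thmc:5-equivalent-therems} by means of the classical ``doubling'' transformation that turns a graceful scheme into an odd-graceful one.

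First I would set up notation exactly as in the proof of Theorem \ref{thmc:5-equivalent-therems}: let $G$ be a connected bipartite $(p,q)$-graph with bipartition $(X,Y)$, $X=\{x_1,\dots,x_s\}$, $Y=\{y_1,\dots,y_t\}$, admitting a set-ordered graceful total coloring $f$. Subtracting the constant $\min f(V(G))$ from every vertex color changes neither the edge colors nor the rule $f(uv)=|f(u)-f(v)|$, so I may assume $0=\min f(X)\le \max f(X)<\min f(Y)\le \max f(Y)=q$, with $f(E(G))=[1,q]$ and $|f(E(G))|=q$ (the value $q$ forces $\{0,q\}\subseteq f(V(G))$, and set-orderedness puts $0$ in $X$ and $q$ in $Y$).

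Second, I would define a new total coloring $g$ on $V(G)\cup E(G)$ by $g(x)=2f(x)$ for $x\in X$, $g(y)=2f(y)-1$ for $y\in Y$, and $g(xy)=|g(x)-g(y)|$ for $xy\in E(G)$. For each edge $x_iy_j$ one has $g(x_iy_j)=g(y_j)-g(x_i)=2\big(f(y_j)-f(x_i)\big)-1=2f(x_iy_j)-1$, and since $f(x_iy_j)$ runs bijectively over $[1,q]$, the values $g(x_iy_j)$ run bijectively over $\{1,3,\dots,2q-1\}=[1,2q-1]^o$. Also $g(x_1)=0$, $g(y_t)=2q-1$, and $2\max f(X)<2\min f(Y)-1$ because $\max f(X)<\min f(Y)$ are integers; hence $\max g(X)<\min g(Y)$ and $g(V(G))\subset[0,2q-1]$. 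Finally, two vertices receive the same $g$-color iff they receive the same $f$-color (equally colored vertices lie on the same side, as $\max f(X)<\min f(Y)$), so $|g(V(G))|=|f(V(G))|<p$ and $g$ is a genuine set-ordered odd-graceful total \emph{coloring} of $G$, not a labeling.

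Third, I would apply the four dual-type transformations Algorithm-1 through Algorithm-4 of the proof of Theorem \ref{thmc:5-equivalent-therems} to $g$. Each of those is a linear re-coloring of the shape ``replace a color $c$ by $\max+\min-c$ on one of $V(G)$, $E(G)$, $X$, $Y$'', and each verification of the required magic-type identity and of $f(E)=[1,2q-1]^o$ uses only arithmetic, never injectivity of the vertex coloring; moreover each transformation preserves the property that at least one pair of vertices shares a color. Hence they carry over verbatim to $g$ and yield, respectively, a set-ordered odd-edge edge-difference, graceful-difference, edge-magic, and felicitous-difference total coloring of $G$ in the sense of Definition \ref{defn:group-total-colorings-definition}, which is the claim. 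The only real (and still minor) obstacle is the bookkeeping in the second paragraph: checking that the doubling lands the vertex colors in $[0,2q-1]$ with the strict set-ordered inequality intact and the edge-color set exactly $[1,2q-1]^o$, and reconciling the ``$\min=0$'' normalization used in the labeling proofs with the ``$\min=1$'' convention of Definition \ref{defn:new-graceful-strongly-colorings} (absorbed by the initial translation). Everything after that is an immediate appeal to Theorem \ref{thmc:5-equivalent-therems}, read for colorings rather than labelings.
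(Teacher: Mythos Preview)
Your proposal is correct, and it supplies what the paper omits: the paper states Theorem~\ref{thmc:more-total-colorings} immediately after the detailed proof of Theorem~\ref{thmc:5-equivalent-therems} without any argument, evidently treating it as a direct consequence. Your two-step route---first convert the set-ordered graceful total coloring $f$ into a set-ordered odd-graceful total coloring via the standard doubling map $g(x)=2f(x)$ on $X$, $g(y)=2f(y)-1$ on $Y$, then feed $g$ through Algorithms~1--4 of the proof of Theorem~\ref{thmc:5-equivalent-therems}---is exactly the intended bridge and is carried out correctly.

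The one point worth highlighting is your observation that Algorithms~1--4 never use injectivity of the vertex map, only the set-ordered inequality $\max f(X)<\min f(Y)$ and the edge-color set $[1,2q-1]^o$; this is what lets the labeling proof be read verbatim as a coloring proof. You also correctly note that repeated vertex colors under $f$ necessarily lie within the same bipartition class (since $\max f(X)<\min f(Y)$), so each of the part-wise linear transformations preserves the condition $|g(V(G))|<p$ required by Definition~\ref{defn:group-total-colorings-definition}. The paper does not spell out any of this, so your write-up is in fact more complete than the paper's treatment.
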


\section{Graph lattices based on uniformly-$k^*$ $W$-magic total colorings}

An odd-edge $W$-magic total labeling (or coloring) is one of odd-edge felicitous-difference total labeling (or coloring), odd-edge edge-difference total labeling (or coloring), odd-edge graceful-difference total labeling (or coloring) and odd-edge edge-magic total labeling (or coloring) in this section.

Graph lattices include: \emph{linear-graphic lattices} and \emph{non-linear-graphic lattices}. Simply, a linear-graph lattice is a set of trees obtained from a tree-base $\textbf{\textrm{T}}^c=\{T_1^c,T_2^c,\dots ,T_m^c\}$ and graph operations, where each $T_i$ is a tree; and a non-linear-graphic lattice is the set of graphs obtained from a non-tree base $\textbf{\textrm{H}}^c=\{H_1^c,H_2^c,\dots ,H_m^c\}$ and graph operations, where there is at least one colored graph
 $H_i^c$ to be a non-tree graph admitting an odd-edge $W$-magic total labeling.

\subsection{Uniformly-$k^*$ $W$-magic graphic lattices}

\subsubsection{Uniformly-$k^*$ graceful-difference graphic lattices}
Let $\textbf{\textrm{G}}^c_{k^*\textrm{-magic}}=\{G_1^c,G_2^c,\dots ,G_m^c\}$ be a \emph{uniformly-$k^*$ graceful-difference base} with each $G_i^c$ is a connected bipartite graph and admits an odd-edge graceful-difference total labeling (or coloring) $h^{gr}_i$ holding
\begin{equation}\label{eqa:555555}
\big | |h^{gr}_i(x)-h^{gr}_i(y)|-h^{gr}_i(xy)\big |=k^*\geq 0,~xy\in E(G_i^c),~i\in [1,m]
\end{equation} as well as $h^{gr}_i(u_i)=0$ for some $u_i\in E(G_i^c)$, and $G^c_i\not\cong G^c_j$ for $i\neq j$.

We abbreviate ``Leaf-addling-randomly vertex-coinciding'' as ``LARVC'' in the following argument.

\noindent \textbf{LARVC uniformly-$k^*$ graceful-difference algorithm.}

\textbf{Larvc-Step-1.} Let $T_{i_1},T_{i_2},\dots ,T_{i_M}$ be a permutation of graphs $\alpha_1G_1^c,\alpha_2G_2^c,\dots ,\alpha_mG_m^c$ based on a uniformly-$k^*$ graceful-difference base $\textbf{\textrm{G}}^c_{k^*\textrm{-magic}}$, where $M=\sum ^m_{k=1}\alpha_k\geq 1$, and each graph $T_{i_s}$ is connected and admits an odd-edge graceful-difference total labeling (or coloring) $h^{gr}_{i_s}$.

\textbf{Larvc-Step-2.} Adding $m_{i_j}~(\geq 1)$ leaves to some vertices of each connected graph $T_{i_j}$ for $j\in [1,M]$ produces a connected graph $H_{i_j}$, denoted as $H_{i_j}=\langle m_{i_j}[\ominus_e] T_{i_j}\rangle $, admitting an odd-edge graceful-difference total labeling (or coloring) $f^{gr}_{i_j}$ induced from the odd-edge graceful-difference total labeling (or coloring) of the connected graph $T_{i_j}$, such that
\begin{equation}\label{eqa:555555}
{
\begin{split}
&\big | |f^{gr}_{i_j}(x)-f^{gr}_{i_j}(y)|-f^{gr}_{i_j}(xy)\big |=k^*,xy\in E(H_{i_j});\\
&h^{gr}_{i_s}(V(H_{i_s}))\cap h^{gr}_{i_{t}}(V(H_{i_{t}}))\neq \emptyset,1\leq s,t\leq M
\end{split}}
\end{equation} by the RLA-algorithms for adding leaves continuously. Notice that RLA-algorithm of the odd-edge graceful-difference total coloring tells us: The leaf-added graph $G_A$ admits an odd-edge graceful-difference total coloring $f_{grd}^*$, and $f_{grd}^*(u_0)=0$ for some $u_0\in V(G_A)$, thereby, $f^{gr}_{i_j}(w_0)=0$ for some $w_0\in V(H_{i_j})$.

\textbf{Larvc-Step-3.} We vertex-coincide a vertex $u\in V(H_{i_s})$ with a vertex $v\in V(H_{i_j})$ into one vertex $u\odot v$ if $h^{gr}_{i_s}(u)=h^{gr}_{i_j}(v)$ for $s\neq j$ and $1\leq s,t\leq M$, such that the resultant graphs are connected and has no multiple edges, called \emph{simple vertex-coincided graphs}, and we denote these simple vertex-coincided graphs by the following form
\begin{equation}\label{eqa:k-magic-add-leaf-colorings}
\odot ^M_{j=1}\langle H_{i_j}\rangle=\odot ^M_{j=1}\langle m_{i_j}[\ominus_e] T_{i_j}\rangle =[\odot ^m_{k=1}]\ominus _e\langle \alpha_kG_k^c\rangle
\end{equation}

By the LARVC uniformly-$k^*$ graceful-difference algorithm introduced above, we get a \emph{uniformly-$k^*$ magic-type graphic lattice}
\begin{equation}\label{eqa:uniformly-k-magic-add-leaf-graphic-lattices}
\textbf{\textrm{L}}_{grd}(Z^0[\odot_v]\langle \ominus_e\rangle \textbf{\textrm{G}}^c_{k^*\textrm{-magic}})=\left \{[\odot ^m_{k=1}]\ominus _e\langle \alpha_kG_k^c\rangle :~\alpha_k\in Z^0,~G_k^c\in \textbf{\textrm{G}}^c_{k^*\textrm{-magic}}\right \}
\end{equation} with $\sum ^m_{k=1}\alpha_k\geq 1$.

\begin{rem}\label{rem:333333}
Each graph $T^*\in \textbf{\textrm{L}}_{grd}(Z^0[\odot_v]\langle \ominus_e\rangle \textbf{\textrm{G}}^c_{k^*\textrm{-magic}})$ is a connected bipartite graph and admits a \emph{compound odd-edge graceful-difference total labeling} (or coloring) $F$ holding $\big | |F(x)-F(y)|-F(xy)\big |=k^*$ for $xy\in E(T^*)$, where $F(z)=f^{gr}_{i_j}(z)$ for $z\in V(H_{i_j})\cup E(H_{i_j})\subset V(T^*)\cup E(T^*)$. Each uniformly-$k^*$ magic-type graphic lattice was made by two graph operations: one is the vertex-coinciding operation ``$\odot$'' and, another is the leaf-adding operation ``$\ominus$'', so the lattice $\textbf{\textrm{L}}_{grd}(Z^0[\odot_v]\langle \ominus_e\rangle \textbf{\textrm{G}}^c_{k^*\textrm{-magic}})$ is, also, a \emph{multiple-operation lattice}.

We do the vertex-coinciding operation to each graph $T^*\in \textbf{\textrm{L}}_{grd}(Z^0[\odot_v]\langle \ominus_e\rangle \textbf{\textrm{G}}^c_{k^*\textrm{-magic}})$ by vertex-coinciding those vertices of $T^*$ colored the the same color, and avoiding that case of multiple-edges, the resultant graph is denoted as $T^*_{\textrm{v-coin}}$, then we get another uniformly-$k^*$ magic-type graphic lattice as follows:
\begin{equation}\label{eqa:uniformly-k-magic-add-leaf-graphic-lattices}
{
\begin{split}
\textbf{\textrm{L}}_{grd}(Z^0[\odot_v]^2\langle \rightarrow\rangle \textbf{\textrm{G}}^c_{k^*\textrm{-magic}})=&\big \{T^*_{\textrm{v-coin}}:~T^*\rightarrow _{\textrm{v-coin}}T^*_{\textrm{v-coin}},\\
&T^*\in \textbf{\textrm{L}}_{grd}(Z^0[\odot_v]\langle \ominus_e\rangle \textbf{\textrm{G}}^c_{k^*\textrm{-magic}})\big \}
\end{split}}
\end{equation} so we call the following one graph set being homomorphism to another graph set
\begin{equation}\label{eqa:555555}
\textbf{\textrm{L}}_{grd}(Z^0[\odot_v]\langle \ominus_e\rangle \textbf{\textrm{G}}^c_{k^*\textrm{-magic}}) \rightarrow _{\textrm{v-coin}}\textbf{\textrm{L}}_{grd}(Z^0[\odot_v]^2\langle \rightarrow\rangle \textbf{\textrm{G}}^c_{k^*\textrm{-magic}})
\end{equation} a \emph{uniformly-$k^*$ magic-type graphic-lattice homomorphism}.\paralled
\end{rem}

\begin{problem}\label{problem:99999}
\textbf{Does} there is a group of graphs $T_1^c,T_2^c,\dots ,T_m^c$, such that $\textbf{\textrm{T}}^c_{k^*\textrm{-magic}}=\{T_1^c,T_2^c,\dots ,T_m^c\}$ forms a uniformly-$k^*$ graceful-difference base, and two uniformly-$k^*$ graceful-difference graphic lattices hold
$$\textbf{\textrm{L}}_{grd}(Z^0[\odot_v]\langle \ominus_e\rangle \textbf{\textrm{T}}^c_{k^*\textrm{-magic}})=\textbf{\textrm{L}}_{grd}(Z^0[\odot_v]\langle \ominus_e\rangle \textbf{\textrm{G}}^c_{k^*\textrm{-magic}})?$$
\end{problem}

\subsubsection{Complexity of uniformly-$k^*$ graceful-difference graphic lattices}

According to the LARVC uniformly-$k^*$ graceful-difference algorithm, we have the following complexity analysis:

\textbf{Case-1.} In Larvc-Step-1 of the LARVC uniformly-$k^*$ graceful-difference algorithm, there are $M!$ permutations $T_{i_1},T_{i_2},\dots ,T_{i_M}$ obtained from graphs $\alpha_1G_1^c,\alpha_2G_2^c,\dots ,\alpha_mG_m^c$ based on a uniformly-$k^*$ graceful-difference base $\textbf{\textrm{G}}^c_{k^*\textrm{-magic}}$, where $M=\sum ^m_{k=1}\alpha_k\geq 1$.

\textbf{Case-2.} In Larvc-Step-2 of the LARVC uniformly-$k^*$ graceful-difference algorithm, adding $m_{i_j}~(\geq 1)$ leaves to some vertices of each connected graph $T_{i_j}$ with $j\in [1,M]$, we will meet:

(i) \textbf{Integer Partition Problem}: $m_{i_j}=a_{i_j,1}+a_{i_j,2}+\cdots +a_{i_j,b_{i_j}}$ with integers $a_{i_j,k}\geq 1$ and $b_{i_j}\geq 2$. This problem is related with an odd integer $m=p_1+p_2+p_3$ for primes $p_1,p_2,p_3$, also, the famous Goldbach's Conjecture. Suppose that we have $P(m_{i_j},b_{i_j})$ different ways.

(ii) Selecting randomly $b_{i_j}$ vertices of $T_{i_j}$ produces ${p(T_{i_j}) \choose b_{i_j}}$ methods for adding $m_{i_j}$ leaves, where vertex number $p(T_{i_j})=|V(T_{i_j})|$. So, we have $b_{i_j}!\cdot {p(T_{i_j}) \choose b_{i_j}}$ different methods in total.

Summing up the above works, then we have $\prod ^{M}_{i_j=1}P(m_{i_j},b_{i_j})(b_{i_j}!){p(T_{i_j}) \choose b_{i_j}}$ different methods for adding leaves to a permutation $T_{i_1},T_{i_2},\dots ,T_{i_M}$.

\textbf{Case-3.} In Larvc-Step-3 of the LARVC uniformly-$k^*$ graceful-difference algorithm, computing the number $n(\odot ^A_{j=1}T_{i_j})$ of graphs in the form $[\odot ^m_{k=1}]\ominus _e\langle \alpha_kG_k^c\rangle$ defined in Eq.(\ref{eqa:k-magic-add-leaf-colorings}) is extremely difficult.

\vskip 0.4cm

By Case-1, Case-2 and Case-3, we can say that for each $M=\sum ^m_{k=1}\alpha_k\geq 1$, there are at least
\begin{equation}\label{eqa:exponential-level-numbers}
n(M,\textbf{\textrm{G}}^c_{k^*\textrm{-magic}})=M!\cdot n(\odot ^A_{j=1}T_{i_j})\cdot \prod ^{A}_{i_j=1}P(m_{i_j},b_{i_j})(b_{i_j}!){p(T_{i_j}) \choose b_{i_j}}
\end{equation}
simple vertex-coincided graphs in the lattice $\textbf{\textrm{L}}_{grd}(Z^0[\odot_v]\langle \ominus_e\rangle \textbf{\textrm{G}}^c_{k^*\textrm{-magic}})$ defined in Eq.(\ref{eqa:uniformly-k-magic-add-leaf-graphic-lattices}).

In a simple case, we vertex-coincide a vertex $u\in V(T_{i_s})$ with a vertex $v\in V(T_{i_{s+1}})$ into one vertex $u\odot v$ as $h^{gr}_{i_s}(u)=h^{gr}_{i_{s+1}}(v)$ for $s\in [1,M-1]$, the resultant graph is called a \emph{string-form graph}, so we have $M!$ string-form graphs in the form $[\odot ^m_{k=1}]\ominus _e\langle \alpha_kG_k^c\rangle$, and for each $M=\sum ^m_{k=1}\alpha_k\geq 1$, there are at least
\begin{equation}\label{eqa:exponential-level-number-11}
n_{\textrm{string}}(M,\textbf{\textrm{G}}^c_{k^*\textrm{-magic}})=\frac{n(M,\textbf{\textrm{G}}^c_{k^*\textrm{-magic}})}{n(\odot ^A_{j=1}T_{i_j})}=M!\cdot \prod ^{A}_{i_j=1}P(m_{i_j},b_{i_j})(b_{i_j}!){p(T_{i_j}) \choose b_{i_j}}
\end{equation}
string-form graphs in the lattice $\textbf{\textrm{L}}_{grd}(Z^0[\odot_v]\langle \ominus_e\rangle \textbf{\textrm{G}}^c_{k^*\textrm{-magic}})$ defined in Eq.(\ref{eqa:uniformly-k-magic-add-leaf-graphic-lattices}).

\begin{rem}\label{rem:55555555555}
In the application of topological authentication, a uniformly-$k^*$ graceful-difference base $\textbf{\textrm{G}}^c_{k^*\textrm{-magic}}=\{G_1^c$, $G_2^c$, $\dots $, $G_m^c\}$ can be considered as a \emph{public-key base}, given a fixed group of non-negative integers $\alpha_1,\alpha_2,\dots ,\alpha_m$ with $\sum ^m_{k=1}\alpha_k\geq 1$, the simple vertex-coincided graphs $[\odot ^m_{k=1}]\ominus _e\langle \alpha_kG_k^c\rangle$ admitting odd-edge graceful-difference total labelings (or colorings) in the lattice $\textbf{\textrm{L}}_{grd}(Z^0[\odot_v]\langle \ominus_e\rangle \textbf{\textrm{G}}^c_{k^*\textrm{-magic}})$ are as \emph{private-keys}.

However, finding a particular private-key from those simple vertex-coincided graphs in the lattice $\textbf{\textrm{L}}_{grd}(Z^0[\odot_v]\langle \ominus_e\rangle \textbf{\textrm{G}}^c_{k^*\textrm{-magic}})$ is not relaxed, since it will meet the Graph Isomorphic Problem, and it will be encountered with exponential level calculations, refer to Eq.(\ref{eqa:exponential-level-numbers}) and Eq.(\ref{eqa:exponential-level-number-11}).

The sentence ``LARVC uniformly-$k^*$ magic-type'' is one of LARVC uniformly-$k^*$ edge-magic, LARVC uniformly-$k^*$ edge-difference, LARVC uniformly-$k^*$ graceful-difference and LARVC uniformly-$k^*$ felicitous-difference, and each LARVC uniformly-$k^*$ magic-type algorithm is exactly like the LARVC uniformly-$k^*$ graceful-difference algorithm.

Since the complexities of uniformly-$k^*$ magic-type graphic lattices for other uniformly-$k^*$ magic-types are like the complexity analysis of uniformly-$k^*$ graceful-difference graphic lattices, we omit them here. \paralled
\end{rem}

\subsubsection{Twin uniformly $k^*$-magic-graphic lattices}

\begin{thm}\label{them:twin-w-type-total-colorings}
If a connected bipartite $(p,q)$-graph $G$ admits an odd-edge graceful-difference total coloring $f^*$, then there exists a bipartite graph $G^*$ admitting an odd-edge graceful-difference total coloring $g^*$, such that $\langle f^*, g^*\rangle $ is a twin set-ordered odd-edge graceful-difference total coloring of the $(p,q)$-graph $G$ and the graph $G^*$.
\end{thm}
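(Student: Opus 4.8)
The plan is to realise $G^{*}$ as an isomorphic copy of $G$ and to take $g^{*}$ to be the \emph{vertex-dual} of $f^{*}$, leaving the edge colours untouched -- this is the same device used in \textbf{Algorithm-2} inside the proof of Theorem~\ref{thmc:5-equivalent-therems} and in the Set-Dual constructions. Since $f^{*}$ is an odd-edge graceful-difference total coloring of the $(p,q)$-graph $G$, we have $f^{*}\colon V(G)\cup E(G)\to[0,2q-1]$ with edge colour set $f^{*}(E(G))=[1,2q-1]^{o}$ (so $f^{*}$ sends the $q$ edges bijectively onto $1,3,\dots,2q-1$), and there is a fixed integer $k\ge 0$ with $\big||f^{*}(u)-f^{*}(v)|-f^{*}(uv)\big|=k$ for every edge $uv\in E(G)$. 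Put $M_{0}=\min f^{*}(V(G))$ and $M_{1}=\max f^{*}(V(G))$, fix an isomorphism $\varphi\colon G\to G^{*}$, and define $g^{*}(\varphi(w))=M_{1}+M_{0}-f^{*}(w)$ for $w\in V(G)$ and $g^{*}(\varphi(u)\varphi(v))=f^{*}(uv)$ for $uv\in E(G)$.

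Next I would verify the three structural clauses of the twin condition in Definition~\ref{defn:group-definition-twin-total-labelingss}(vi). First, edge colours are carried across by $\varphi$ unchanged, so $g^{*}(E(G^{*}))=f^{*}(E(G))=[1,2q-1]^{o}$. Second, the reflection cancels the base point: $|g^{*}(\varphi(u))-g^{*}(\varphi(v))|=|(M_{1}+M_{0}-f^{*}(u))-(M_{1}+M_{0}-f^{*}(v))|=|f^{*}(u)-f^{*}(v)|$, hence $\big||g^{*}(\varphi(u))-g^{*}(\varphi(v))|-g^{*}(\varphi(u)\varphi(v))\big|=k$ for every edge of $G^{*}$; so $c_{4}:=k$ works and, moreover, $g^{*}$ is itself an odd-edge graceful-difference total coloring (it inherits from $f^{*}$ whether or not two vertices share a colour). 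Third, $g^{*}(V(G^{*}))\subseteq[M_{0},M_{1}]\subseteq[0,2q-1]$, so $f^{*}(V(G))\cup g^{*}(V(G^{*}))\subseteq[0,2q-1]\subseteq[0,2q]$, which is the containment clause (and lands one short of the perfect case $[0,2q]$). Since $G^{*}\cong G$ is bipartite with $q$ edges, this shows $\langle f^{*},g^{*}\rangle$ is a twin odd-edge graceful-difference total coloring of $G$ and $G^{*}$ in the sense of the coloring version of Definition~\ref{defn:group-definition-twin-total-labelingss} (cf. Definition~\ref{defn:group-total-colorings-definition}).

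The only delicate point -- and the one I expect to need a sentence of care -- is the word \emph{set-ordered} in the statement. If $f^{*}$ is set-ordered for the bipartition $(X,Y)$ of $G$, i.e. $\max f^{*}(X)<\min f^{*}(Y)$, then reflecting reverses the inequality, $\max g^{*}(\varphi(Y))<\min g^{*}(\varphi(X))$, so choosing the bipartition of $G^{*}$ to be $\bigl(\varphi(Y),\varphi(X)\bigr)$ witnesses the set-ordered restriction $\max g^{*}(X_{T})<\min g^{*}(Y_{T})$; then $\langle f^{*},g^{*}\rangle$ is a twin \emph{set-ordered} odd-edge graceful-difference total coloring verbatim. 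In general an odd-edge graceful-difference total coloring $f^{*}$ (for instance one output by RLA-algorithm-A or the continuous leaf-adding algorithms) need not be set-ordered, and I would handle this by recalling that in passing from labelings to colorings the set-ordered restriction is dropped (Definition~\ref{defn:group-total-colorings-definition} and the associated twin-coloring notion), so for colorings the set-ordered clause of the twin condition is vacuous and the construction above is the whole proof; when $f^{*}$ additionally satisfies $\min f^{*}(V(G))=0$, as algorithm outputs do, the formula simplifies to $g^{*}(\varphi(w))=M_{1}-f^{*}(w)$. Everything beyond this remark is the routine dual computation.
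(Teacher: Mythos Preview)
Your argument is correct and complete, but it takes a different route from the paper. The paper's proof is shorter and more na\"{i}ve: it simply takes $G^{*}\cong G$ and sets $g^{*}(w)=f^{*}(w)+1$ for every vertex $w$ while keeping $g^{*}(e)=f^{*}(e)$ on edges. The translation by $+1$ obviously preserves $|g^{*}(u)-g^{*}(v)|=|f^{*}(u)-f^{*}(v)|$, so the graceful-difference constant is unchanged, the edge colour set is still $[1,2q-1]^{o}$, and the vertex colours land in $[1,2q]$, giving $f^{*}(V(G))\cup g^{*}(V(G^{*}))\subseteq[0,2q]$.

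Your reflection $g^{*}(\varphi(w))=M_{1}+M_{0}-f^{*}(w)$ is equally valid (it also fixes the absolute differences and the edge colours), and you carry out the verification more carefully than the paper does; in particular you spot the set-ordered discrepancy between hypothesis and conclusion and deal with it, while the paper glosses over it entirely. The trade-off is that the paper's $+1$ shift genuinely moves the vertex palette, so $g^{*}(V(G^{*}))=f^{*}(V(G))+1$ is a new set and the union is typically strictly larger --- closer to the spirit of a ``twin'' pair --- whereas your reflection keeps $g^{*}(V(G^{*}))\subseteq[M_{0},M_{1}]$ and may even return the same vertex-colour set when $f^{*}(V(G))$ is symmetric about its midpoint. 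Formally both satisfy the containment clause, so both proofs stand; the paper's device is just the more economical one-line version.
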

\begin{proof} Suppose the connected bipartite $(p,q)$-graph $G$ admits an odd-edge graceful-difference total coloring $f^*: V(G)\cup E(G)\rightarrow [0,2q-1]$, then there exists a bipartite graph $G^*$ admitting an odd-edge graceful-difference total coloring $g^*: V(G^*)\cup E(G^*)\rightarrow [1,2q]$, such that $G\cong G^*$, and $g^*(w)=f^*(w)+1$ for $w\in V(G)=V(G^*)$, as well as $g^*(e)=f^*(e)$ for $w\in E(G)=E(G^*)$. By Definition \ref{defn:group-definition-twin-total-labelingss}, $\langle f^*, g^*\rangle $ is a twin set-ordered odd-edge graceful-difference total coloring of the $(p,q)$-graph $G$ and the graph $G^*$.
\end{proof}

Let $\textbf{\textrm{G}}^{\textrm{twin}}_{n^*\textrm{-magic}}=\{G_1^{\textrm{twin}},G_2^{\textrm{twin}},\dots ,G_m^{\textrm{twin}}\}$ be a uniformly-$n^*$ graceful-difference graphic base, each graph $G_i^{\textrm{twin}}$ is connected and admits an odd-edge graceful-difference total coloring $\alpha^{gr}_i$, and holds
\begin{equation}\label{eqa:twin-uniformly-k-graceful-difference-graphic-bases}
\big | |\alpha^{gr}_i(x)-\alpha^{gr}_i(y)|-\alpha^{gr}_i(xy)\big |=n^*\geq 0,~xy\in E(G_i^{\textrm{twin}}),~i\in [1,m]
\end{equation} and $\alpha^{gr}_i(u_i)=1$ for $u_i\in E(G_i^{\textrm{twin}})$, as well as $G^{\textrm{twin}}_i\not\cong G^{\textrm{twin}}_j$ if $i\neq j$.

From $i=1$ to $i=m$, if $(h^{gr}_i,\alpha^{gr}_i)$ is the twin odd-edge graceful-difference total labeling/total coloring of the uniformly-$k^*$ graceful-difference graphic base $\textbf{\textrm{G}}^c_{k^*\textrm{-magic}}$ and the uniformly-$n^*$ graceful-difference graphic base $\textbf{\textrm{G}}^c_{k^*\textrm{-magic}}$ (refer to Definition \ref{defn:group-definition-twin-total-labelingss}), then two graceful-difference graphic base $\textbf{\textrm{G}}^c_{k^*\textrm{-magic}}$ and $\textbf{\textrm{G}}^{\textrm{twin}}_{n^*\textrm{-magic}}$ form a \emph{twin uniformly-$(k^*,n^*)$ graceful-difference graphic base}.

\vskip 0.4cm

By the above LARVC-algorithm, we have a \emph{uniformly-$n^*$ graceful-difference graphic lattice} based on the uniformly-$n^*$ graceful-difference graphic base $\textbf{\textrm{G}}^{\textrm{twin}}_{n^*\textrm{-magic}}$ as follows
\begin{equation}\label{eqa:twin-uniformly-k-magic-add-leaf-graphic-lattices}
\textbf{\textrm{L}}_{grd}(Z^0[\odot_v]\langle \ominus_e\rangle \textbf{\textrm{G}}^{\textrm{twin}}_{n^*\textrm{-magic}})=\big \{[\odot ^m_{k=1}]\ominus _e\langle \beta_kG_k^{\textrm{twin}}\rangle :~\beta_k\in Z^0,~G_k^{\textrm{twin}}\in \textbf{\textrm{G}}^{\textrm{twin}}_{n^*\textrm{-magic}}\big \}
\end{equation} where $\sum ^m_{k=1}\beta_k\geq 1$, such that each graph $J^*\in \textbf{\textrm{L}}_{grd}(Z^0[\odot_v]\langle \ominus_e\rangle \textbf{\textrm{G}}^{\textrm{twin}}_{n^*\textrm{-magic}})$ is connected and admits an odd-edge graceful-difference total coloring $g$ holding $\big | |g(x)-g(y)|-g(xy)\big |=n^*$ for $xy\in E(J^*)$.

We call two graphic lattices $\textbf{\textrm{L}}_{grd}(Z^0[\odot_v]\langle \ominus_e\rangle \textbf{\textrm{G}}^{\textrm{twin}}_{n^*\textrm{-magic}})$ and $\textbf{\textrm{L}}_{grd}(Z^0[\odot_v]\langle \ominus_e\rangle \textbf{\textrm{G}}^c_{k^*\textrm{-magic}})$ \emph{twin uniformly-$\langle k^*,n^*\rangle $ graceful-difference graphic lattice}. In real application, the uniformly-$k^*$ graceful-difference graphic lattice is as a \emph{public-key lattice} and the uniformly-$n^*$ graceful-difference graphic lattice is as a \emph{private-key lattice}.

\subsection{Realization of uniformly-$n^*$ $W$-magic graphic lattices \cite{Zhang-Zhang-Yao-integer-lattices-2022}}

\subsubsection{Connection between complex graphs and integer lattices}

1. \textbf{Complex graphs and integer lattices.} Let $\textbf{\textrm{F}}^*(n)$ be the set of complex graphs of $n$ vertices (refer to Definition \ref{defnc:complex-graph-definition}). A graph base $\textbf{\textrm{H}}=(H_1,H_2,\dots ,H_n)$ consists of $n$ vertex-disjoint connected complex graphs, each connected complex graph $H_k$ has just $m$ vertices and its own degree sequence $\textbf{\textrm{d}}_k=(d_{k,1},d_{k,2},\dots ,d_{k,m})=(d_{k,i})^m_{i=1}$, where $d_{k,i}\geq d_{k,i+1}$ for $i\in [1,m-1]$. We vertex-coincide a vertex $x_k$ of a connected complex graph $G\in \textbf{\textrm{F}}^*$ with a vertex $w_{k,j}$ of the connected complex graph $H_k$ into a vertex $x_k\odot w_{k,j}$ for $k\in [1,n]$, the resultant graph is denoted as $F\odot ^n_{k=1}H_k$, and we get a degree sequence $\textbf{\textrm{d}}'_k=(d\,'_{k,1},d\,'_{k,2},\dots ,d\,'_{k,m})=(d\,'_{k,j})^m_{j=1}$, where only one $d\,'_{k,j}=d_{k,j}+\textrm{deg}_G(x_k)$, here $\textrm{deg}_G(x_k)$ is the degree of vertex $x_k$ of the connected complex graph $G$. Thereby, the graph $L=F\odot ^n_{k=1}H_k$ has its own degree sequence
\begin{equation}\label{eqa:555555}
\textbf{\textrm{d}}_L=\sum^n_{k=1} \textbf{\textrm{d}}\,'_k=\left (\sum^n_{k=1} d\,'_{k,1},~\sum^n_{k=1} d\,'_{k,2},~\dots , ~\sum^n_{k=1} d\,'_{k,m}\right )
\end{equation} and these degree sequences forms an integer lattice
\begin{equation}\label{eqa:555555}
\textbf{\textrm{L}}(\textbf{\textrm{d}})=\left \{\sum^n_{k=1} \lambda_k\textbf{\textrm{d}}\,'_k:~\lambda_k \in Z,\textbf{\textrm{d}}\,'_k=(d_{k,i})^m_{i=1},~ H_{k}\in \textbf{\textrm{H}},~ F\in \textbf{\textrm{F}}^*(n)\right \}.
\end{equation}

2. \textbf{A connection between integer lattices and complex graphs.} An integer lattice $\textbf{\textrm{L}}(Z\textbf{\textrm{B}})$ is defined in Definition \ref{defn:traditional-lattice00}. By the lattice base $\textbf{\textrm{B}}=(\textbf{\textrm{b}}_1,\textbf{\textrm{b}}_2,\dots ,\textbf{\textrm{b}}_m)$ with $m\leq n$, we have a vector
\begin{equation}\label{eqa:lattice-degree-sequence}
\sum^m_{i=1} x_i\textbf{\textrm{b}}_i=\left (\sum^m_{i=1} b_{i,1}, \sum^m_{i=1} b_{i,2}, \dots ,\sum^m_{i=1} b_{i,m}\right )=(\alpha_1,\alpha_2, \dots ,\alpha_m),
\end{equation}
which corresponds to a complex graph $C_{gra}$, such that the degree sequence of the complex graph $C_{gra}$ is just $\textrm{deg}(C_{gra})=(\alpha_1,\alpha_2, \dots ,\alpha_m)$, we call the set $\textbf{\textrm{A}}_{ccom}(\textbf{\textrm{L}}(Z\textbf{\textrm{B}})\rightarrow C_{gra} )$ of these complex graphs like $C_{gra}$ as \emph{complement complex-graphic lattice} of the integer lattice $\textbf{\textrm{L}}(Z\textbf{\textrm{B}})$.

\begin{problem}\label{problem:99999}
\textbf{Partition} a degree sequence $(\alpha_1,\alpha_2, \dots ,\alpha_m)$ (as a public-key) into $\sum^m_{i=1} x_i\textbf{\textrm{b}}_i$ defined in Eq.(\ref{eqa:lattice-degree-sequence}) (as a private-key). Clearly, the number of ways of partitioning a degree sequence is not unique and difficult to estimate, see the Integer Partition Problem.
\end{problem}

\subsubsection{Caterpillar-graphic lattices, Complementary graphic lattices}

The authors in \cite{yirong-sun-bing-yao-2022-nwnu} present the following ODD-GRACEFUL subdivision-algorithm.

\vskip 0.2cm

\noindent \textbf{ODD-GRACEFUL subdivision-algorithm}

\textbf{Input:} A caterpillar $H$ having $m=\sum^n_{i=1}|L(u_i)|=\sum^n_{i=1}m_i$ leaves, where $P=u_1u_2\cdots u_n$ is the spine path of the caterpillar $H$.

\textbf{Output:} A set-ordered odd-graceful labeling of the caterpillar $H$.

\textbf{Step 1.} Notice that the caterpillar $H$ has $m=\sum^n_{i=1}|L(u_i)|=\sum^n_{i=1}m_i$ leaves. We take a star tree $K_{1,m}$ with vertex set $V(K_{1,m})=\{u_1,v_{1,j}\mid j\in [1,m]\}$ and edge set $E(K_{1,m})=\{u_1v_{1,1},u_1v_{1,2},\dots ,u_1v_{1,m}\}$. Let $H_1=K_{1,m}$, and $L(H_1)=V(H_1)\setminus \{u_1\}$ is the set of leaves of $K_{1,m}$. We define a labeling $\alpha_1$ for the star tree $H_1$ as: $\alpha_1(u_1)=0$, $\alpha_1(v_{1,j})=2j-1$. So, $H_1$ has its own vertex color set $\alpha_1(V(H_1))=[0,2m-1]$ and edge color set $\alpha_1(E(H_1))=[1,2m-1]^o$. Obviously, this labeling $\alpha_1$ is just a set-ordered odd-graceful labeling of $H_1$.

\textbf{Step 2.} Add a new vertex $u_2$ to the star tree $H_1$, and join the vertex $u_1$ with the vertex $u_2$ by a new edge $u_1u_2$; and partition the leaf set $L(H_1)$ into two subsets $L(u_1)$ and $L(u_2)$, where $L(u_1)=\{v_{1,m},v_{1,m-1},\dots ,v_{1,m-m_1+1}\}$ and $L(u_2)=\{v_{2,m-m_1},v_{2,m-m_1-1},\dots ,v_{2,1}\}$, we have $v_{2,m-m_1+1-k}=v_{1,m-m_1+1-k}$ for $k\in [1,m-m_1]$. Thereby, we get a caterpillar, denoted as $H_2$, having its spine path $P_2=u_1u_2$.

\textbf{Step 3.} Notice that the set-ordered odd-graceful labeling $\alpha_1$ of the star tree $H_1$ holds $\alpha_1(u_1)=0$, $\alpha_1(v_{1,m-j+1})$ $=2(m-j+1)-1$ for $j\in [0,m_1-1]$, $\alpha_1(v_{2,m-m_1+1-k})=2(m-m_1+1-k)-1$ for $k\in [1,m-m_1]$. We define a labeling $\alpha_2$ for the caterpillar $H_2$ in the following way: $\alpha_2(u_1)=0$, $\alpha_2(v_{1,m-j+1})=\alpha_1(v_{1,m-j+1})+2$ for $j\in [0,m_1-1]$; $\alpha_2(u_2)=2(m-m_1)+1$, $\alpha_2(v_{2,m-m_1+1-k})=\alpha_2(u_2)-(2k-1)$ for $k\in [1,m-m_1]$. It is easy to verify that $\alpha_2$ is just a set-ordered odd-graceful labeling of $H_2$.

\textbf{Step 4.} Add a new vertex $u_{k+1}$ to the caterpillar $H_k$, and join the vertex $u_{k}$ of the spine path of the caterpillar $H_k$ with new vertex $u_{k+1}$ by a new edge $u_{k}u_{k+1}$, and partition the leaf set $L(u_{k})$ of $H_k$ into two leaf subsets, then join the leaves of two leaf subsets with $u_{k}$ and $u_{k+1}$ respectively, the resultant graph is just a new caterpillar $H_{k+1}$, and the caterpillar $H_{k+1}$ has its own spine path $P_{k+1}=u_1u_2\cdots u_{k+1}$. Repeat the works in Step 2 and Step 3, until we get a set-ordered odd-graceful labeling of the caterpillar $H$.

\vskip 0.2cm

\begin{figure}[h]
\centering
\includegraphics[width=16.4cm]{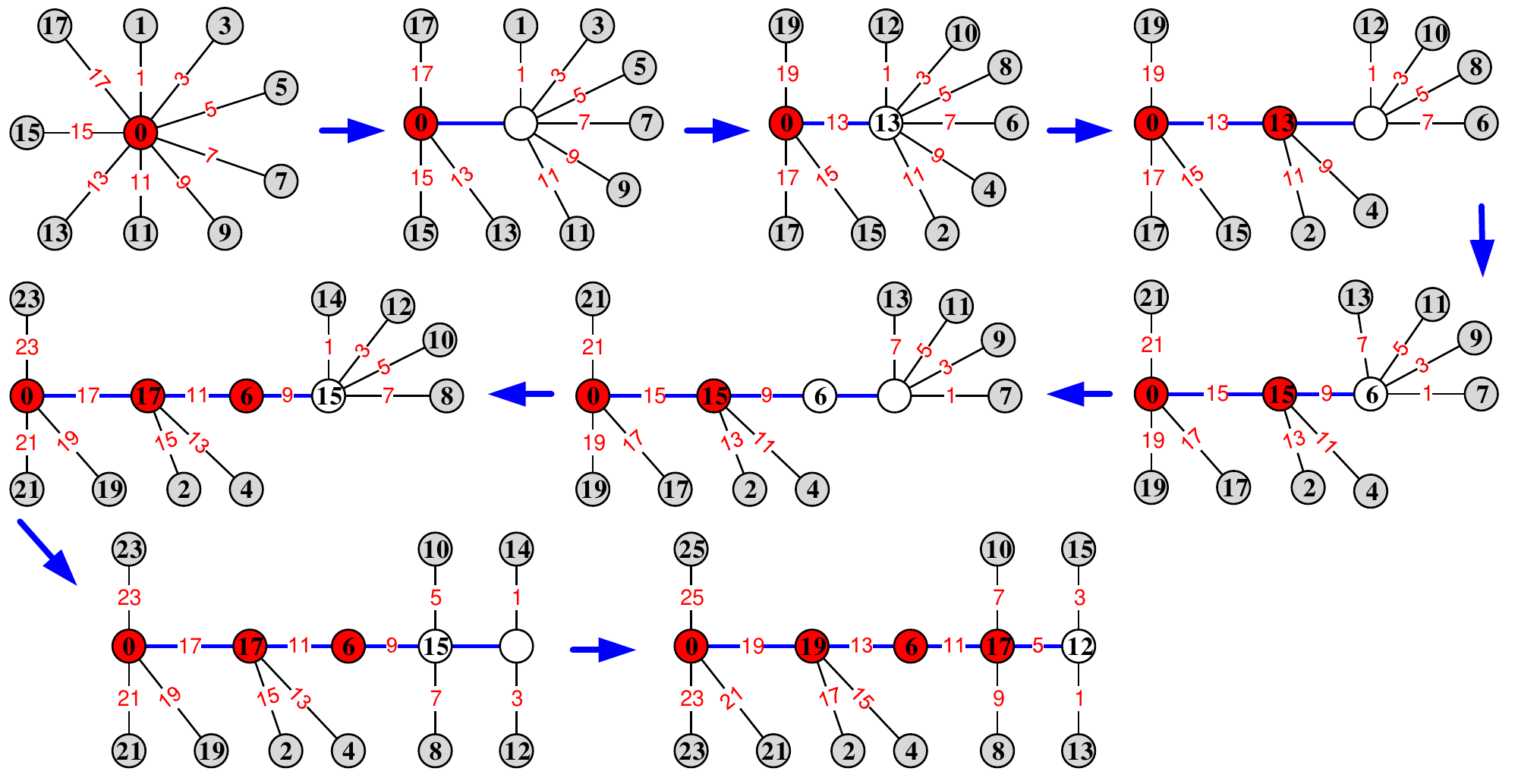}\\
\caption{\label{fig:caterpillar-subdivision-algorithm} {\small An example for illustrating the ODD-GRACEFUL subdivision-algorithm.}}
\end{figure}

See an example for the ODD-GRACEFUL subdivision-algorithm shown in Fig.\ref{fig:caterpillar-subdivision-algorithm}. Notice that a set-ordered odd-graceful labeling of a caterpillar $H$ is just a set-ordered odd-graceful total labeling. By the ODD-GRACEFUL subdivision-algorithm and Theorem \ref{thmc:5-equivalent-therems}, each caterpillar admits one of the labelings and colorings defined in Definition \ref{defn:basic-W-type-labelings} and Definition \ref{defn:group-total-labelings-definition}, and there are algorithms that can be effectively and quickly apply theses labelings and colorings to practice.

\vskip 0.2cm

1. \textbf{Caterpillar-graphic lattices.} A \emph{caterpillar base} $\textbf{\textrm{T}}_{\textrm{cater}}=(T_1,T_2,\dots ,T_m)$ consists of $m$ vertex-disjoint caterpillars $T_1,T_2,\dots ,T_m$. Under a graph operation ``$(\bullet)$'', we call the following set
\begin{equation}\label{eqa:caterpillae-graph-lattice}
\textbf{\textrm{L}}(\textbf{\textrm{G}}(\bullet) \textbf{\textrm{T}}_{\textrm{cater}})=\big \{J(\bullet)^n_{k=1}\beta_kT_k:~\beta_k\in Z^0, T_k\in \textbf{\textrm{T}}_{\textrm{cater}}, J\in \textbf{\textrm{G}}\big \}
\end{equation} as \emph{$(\bullet)$-operation caterpillar-graphic lattice}, where $\sum ^n_{k=1}\beta_k\geq 1$.

\vskip 0.2cm

2. \textbf{A connection between integer lattices and caterpillar-graphic lattices.} A caterpillar $T_i$ has its own spine path $P_{i,n}=x_{i,1}x_{i,2}\cdots x_{i,p}$ with $p\geq 2$, and each vertex $x_{i,k}$ of the spine path $P_{i,p}$ has its own leaf set $L(x_{i,k})=\{y_{i,k,j}:j\in [1,b_{i,k}]\}$ for $k\in [1,p]$. We define the \emph{leaf topological vector} of the caterpillar $T_i$ by $\textbf{\textrm{V}}_{ec}(T_i)=(c_{i,1},c_{i,2},\dots ,c_{i,p})$, where $|c_{i,k}|=|L(x_{i,k})|$ is the number of leaves adjacent with the vertex $x_{i,k}$, also, $c_{i,k}$ is a \emph{leaf-degree} or a \emph{leaf-image-degree}. In Fig.\ref{fig:maomao-degree-sequence}, a caterpillar $A_1$ has its own leaf topological vector $\textbf{\textrm{V}}_{ec}(A_1)=(7,3,0,3,0,6)$, and another caterpillar $A_2$ has its own leaf topological vector $\textbf{\textrm{V}}_{ec}(A_2)=(7,-3,0,-3,0,6)$.

A \emph{complex caterpillar base} $\textbf{\textrm{T}}_{\textrm{cater}}=(T_1,T_2,\dots ,T_m)$ has its own \emph{leaf topological vector base} $\textbf{\textrm{V}}_{ec}(\textbf{\textrm{T}})=(\textbf{\textrm{V}}_{ec}(T_1)$, $\textbf{\textrm{V}}_{ec}(T_2)$, $\dots $, $\textbf{\textrm{V}}_{ec}(T_m))$, immediately, we get an integer lattice
\begin{equation}\label{eqa:topological-vector-lattice}
\textbf{\textrm{L}}(Z(\Sigma )\textbf{\textrm{V}}_{ec}(\textbf{\textrm{T}}))=\left \{\sum ^m_{k=1}\lambda_k\textbf{\textrm{V}}_{ec}(T_k):~\lambda_k\in Z, \textbf{\textrm{V}}_{ec}(T_k)\in \textbf{\textrm{V}}_{ec}(\textbf{\textrm{T}})\right \}.
\end{equation} where $\sum ^m_{k=1}\lambda_k\geq 1$.

\vskip 0.2cm

3. \textbf{Complement caterpillar-graphic lattices.} For two caterpillar bases $\textbf{\textrm{T}}_{\textrm{cater}}=(T_1,T_2$, $\dots $, $T_m)$ and $\textbf{\textrm{T}}^*_{\textrm{cater}}=(T^*_1,T^*_2,\dots ,T^*_m)$,
each caterpillar $T_j$ has its own spine path $P_j=x_{j,1}x_{j,2}\cdots x_{j,n}$ with $n\geq 1$, where each vertex of $P_j$ has its own leaf set $L_{eaf}(x_{j,i})=\{y_{j,i,s}:s\in [1, b_{j,i}]\}$ for $i\in [1,n]$ and $j\in [1,m]$; and each caterpillar $T^*_j$ has its own spine path $P^*_j=x^*_{j,1}x^*_{j,2}\cdots x^*_{j,n}$ with $n\geq 1$, where each vertex of $P^*_j$ has its own leaf set $L_{eaf}(x^*_{j,i})=\{y^*_{j,i,s}:s\in [1, b^*_{j,i}]\}$ for $i\in [1,n]$ and $j\in [1,m]$. If $|L_{eaf}(x_{j,i})|+|L_{eaf}(x^*_{j,i})|=M$ for two caterpillars $T_j$ and $T^*_j$, we call two caterpillar bases $\textbf{\textrm{T}}_{\textrm{cater}}$ and $\textbf{\textrm{T}}^*_{\textrm{cater}}$ as \emph{$M$-leaf complement caterpillar base matching}, denoted as $M\langle \textbf{\textrm{T}}_{\textrm{cater}},\textbf{\textrm{T}}^*_{\textrm{cater}}\rangle$.
By Eq.(\ref{eqa:caterpillae-graph-lattice}), two graph lattices $\textbf{\textrm{L}}(\textbf{\textrm{G}}(\bullet) \textbf{\textrm{T}}_{\textrm{cater}})$ and $\textbf{\textrm{L}}(\textbf{\textrm{G}}(\bullet) \textbf{\textrm{T}}^*_{\textrm{cater}})$ form a \emph{complement caterpillar-graphic lattice matching}.

\begin{problem}\label{problem:99999}
A uniformly $M$-complement sequence $\{(a_{i,1},a_{i,2}, \dots ,a_{i,n})\}^p_{i=1}$ holds $\sum ^p_{i=1}a_{i,j}=M$ for $j\in [1,n]$, $p\geq 2$ and $n\geq 2$. \textbf{Generalize} the complement caterpillar-graphic lattice matching to general graphs.
\end{problem}

\subsubsection{Application examples}

\begin{example}\label{exa:8888888888}
(\textbf{Lobsters}) By Theorem \ref{thmc:5-equivalent-therems} and Theorem \ref{thmc:more-total-colorings}, each caterpillar $T$ admits each set-ordered odd-edge $W$-magic total labeling defined in Definition \ref{defn:basic-W-type-labelings} and Definition \ref{defn:group-total-labelings-definition}. So, we have lobsters obtained by adding leaves to caterpillars, by the methods introduced in \cite{Zhou-Yao-Chen2013} and \cite{Zhou-Yao-Chen-Tao2012}, these lobsters admit odd-graceful labelings and odd-edge $W$-magic total labelings.\qqed
\end{example}

\begin{example}\label{exa:8888888888}
(\textbf{Complement caterpillar-graphic lattice}) As a graph operation ``$(\bullet)$'' guarantees that each graph $G$ in the $(\bullet)$-operation caterpillar-graphic lattice $\textbf{\textrm{L}}(\textbf{\textrm{G}}(\bullet) \textbf{\textrm{T}}_{\textrm{cater}})$ defined in (\ref{eqa:caterpillae-graph-lattice}) is a caterpillar, then there are lobsters obtained by adding leaves to each caterpillar $G\in \textbf{\textrm{L}}(\textbf{\textrm{G}}(\bullet) \textbf{\textrm{T}}_{\textrm{cater}})$, we put these lobsters into a set, which is just a \emph{complement caterpillar-graphic lattice} of the caterpillar-graphic lattice $\textbf{\textrm{L}}(\textbf{\textrm{G}}(\bullet) \textbf{\textrm{T}}_{\textrm{cater}})$.\qqed
\end{example}

\begin{example}\label{exa:8888888888}
(\textbf{Topological authentication}) The ODD-GRACEFUL subdivision-algorithm shows that each caterpillar $T$ admits a set-ordered odd-graceful total labeling $f_1$. By Theorem \ref{thmc:5-equivalent-therems}, the caterpillar $T$ admits a set-ordered odd-edge edge-magic total labeling $f_2$, a set-ordered odd-edge edge-difference total labeling $f_3$, a set-ordered odd-edge felicitous-difference total labeling $f_4$, as well as a set-ordered odd-edge graceful-difference total labeling $f_5$. We define a set-coloring $F$ of a caterpillar $T$ as:

$F(u)=\{f_1(u),f_2(u),f_3(u),f_4(u),f_5(u)\}$ for each vertex $u\in V(T)$,

$F(xy)=\{f_1(xy),f_2(xy),f_3(xy),f_4(xy),f_5(xy)\}$ for each edge $xy\in E(T)$,\\
such that each edge $xy$ holds $f_i(xy)=\theta_i(f_i(x),f_i(y))$ for $i\in [1,5]$. Write the caterpillar admitting the set-coloring $F$ by $T^*$, we get a more complex Topcode-matrix $T_{code}(T^*)$. Furthermore, it makes people to get more complicated number-based string public-keys and number-based string private-keys, and brings more convenience and options for users.\qqed
\end{example}

\begin{thm}\label{thm:666666}
(\textbf{Coloring closure}) Since each tree admits a $W$-magic total coloring with $W$-magic $\in \{$graceful-difference, edge-magic, edge-difference, felicitous-difference$\}$, so a uniformly-$n^*$ $W$-magic graphic lattice is closure to the $W$-magic total coloring.
\end{thm}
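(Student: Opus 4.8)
The plan is to show that, for each of the four choices of $W$-magic $\in\{$graceful-difference, edge-magic, edge-difference, felicitous-difference$\}$, \emph{every} graph lying in the uniformly-$n^*$ $W$-magic graphic lattice $\textbf{\textrm{L}}_{grd}(Z^0[\odot_v]\langle \ominus_e\rangle \textbf{\textrm{G}}^c_{n^*\textrm{-magic}})$ admits an odd-edge $W$-magic total coloring; this is precisely what ``closure to the $W$-magic total coloring'' asserts, i.e.\ the class of graphs admitting such colorings is stable under the two lattice-generating operations, leaf-adding $\ominus_e$ and vertex-coinciding $\odot_v$. I will fix $W$-magic to be graceful-difference; the three remaining cases are verbatim the same after replacing the defining identity by the corresponding restriction listed in Definition \ref{defn:group-definition-twin-total-labelingss}. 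First I would record the base case: by Theorem \ref{thm:tree-adding-leaves-continuously} (and Theorems \ref{thm:edge-magic-tree-adding-leaves-continuously}, \ref{thm:edge-difference-tree-adding-leaves-continuously}, \ref{thm:felicitous-difference-tree-adding-leaves-continuously} for the other three $W$-magic types) every tree admits an odd-edge $W$-magic total coloring; hence a uniformly-$n^*$ $W$-magic base $\textbf{\textrm{G}}^c_{n^*\textrm{-magic}}=\{G^c_1,\dots ,G^c_m\}$ always exists, each $G^c_i$ carrying an odd-edge $W$-magic total coloring $h_i$ with common magic constant $n^*$ and with $h_i(u_i)=0$ for some vertex $u_i$.

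Second, I would verify that the leaf-adding step of the LARVC algorithm preserves the property. By Theorem \ref{thm:adding-leaves-continuously} (resp.\ Theorems \ref{thm:edge-magic-adding-leaves-continuously}, \ref{thm:edge-difference-adding-leaves-continuously}, \ref{thm:felicitous-difference-adding-leaves-continuously}), if a connected bipartite $(p,q)$-graph $H$ admits an odd-edge $W$-magic total coloring with magic constant $n^*$, then any graph obtained from $H$ by adding randomly finitely many leaves again admits an odd-edge $W$-magic total coloring with the \emph{same} constant $n^*$, with edge color set $[1,2q'-1]^o$ (here $q'$ is the new size) and still with one vertex colored $0$. Consequently each of the $M=\sum_k\alpha_k$ leaf-added blocks $H_{i_j}=\langle m_{i_j}[\ominus_e] T_{i_j}\rangle$ produced in Larvc-Step-2 carries such a coloring $f_{i_j}$, and all of these colorings share the value $n^*$ in their $W$-magic identity.

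Third — the core of the argument — I would check that the vertex-coinciding of Larvc-Step-3 preserves the property. A vertex $u\in V(H_{i_s})$ is glued to a vertex $v\in V(H_{i_j})$ only when $f_{i_s}(u)=f_{i_j}(v)$, so the assignment $F$ defined by $F(z)=f_{i_j}(z)$ for $z\in V(H_{i_j})\cup E(H_{i_j})$ is well defined on $V(T^*)\cup E(T^*)$ of the resulting simple vertex-coincided graph $T^*=[\odot ^m_{k=1}]\ominus _e\langle \alpha_kG_k^c\rangle$: the only possible conflict is at a coincided vertex, where the two prescribed values agree by the identification rule, and no edge color is ever altered. Since each edge $xy$ of $T^*$ lies entirely inside a single block $H_{i_j}$, the restriction $\big| |F(x)-F(y)|-F(xy)\big|=\big| |f_{i_j}(x)-f_{i_j}(y)|-f_{i_j}(xy)\big|=n^*$ (and likewise the edge-magic, edge-difference, felicitous-difference identities in the other cases) carries over unchanged, while the edge color set stays inside $[1,2\,|E(T^*)|-1]^o$. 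Thus $F$ is the compound odd-edge $W$-magic total coloring of $T^*$ announced in the remark following the LARVC algorithm. As this works for every admissible tuple $\alpha_1,\dots,\alpha_m\in Z^0$ with $\sum_k\alpha_k\ge 1$ (zero coefficients merely delete the corresponding summand), every member of the lattice admits an odd-edge $W$-magic total coloring, which is exactly the asserted closure; together with the base case this also shows that one may take the base to consist of trees.

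I do not expect a genuine obstacle: the content is already supplied by the leaf-adding theorems and by the description of Larvc-Step-3. The one point needing care is the well-definedness of the compound coloring $F$ after possibly many identifications across distinct blocks — one must observe that equally-colored vertices are glued to equally-colored vertices, so the colors agree at every coincided vertex, and that the simplicity requirement (no multiple edges) imposed in Larvc-Step-3 only restricts \emph{which} identifications are permitted and never touches the coloring. A secondary, purely bookkeeping point is to state the conclusion uniformly over the four $W$-magic types and over both the labeling and the coloring versions; this is immediate because all transformations involved fix edge colors and glue only vertices of equal color.
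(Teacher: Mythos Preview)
The paper states this theorem without proof; it is offered as an immediate consequence of the LARVC construction and of Theorems~\ref{thm:tree-adding-leaves-continuously}, \ref{thm:edge-magic-tree-adding-leaves-continuously}, \ref{thm:edge-difference-tree-adding-leaves-continuously}, \ref{thm:felicitous-difference-tree-adding-leaves-continuously}, together with the remark after Eq.~(\ref{eqa:uniformly-k-magic-add-leaf-graphic-lattices}) that every $T^*$ in the lattice carries the compound coloring $F$. Your proposal is exactly a careful unpacking of that implied argument --- base graphs carry the coloring, the $\ominus_e$ step preserves it by the RLA-algorithms~E--H, and the $\odot_v$ step preserves it because identifications are only made at equally-colored vertices --- so your approach coincides with what the paper intends.

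One caution: your blanket claim that leaf-adding preserves ``the \emph{same} constant $n^*$'' is only literally true for the graceful-difference and felicitous-difference cases (RLA-algorithms~E and~H keep $\lambda_0$ resp.\ $\xi_0$ fixed); in RLA-algorithms~F and~G the edge-magic and edge-difference constants shift by $2m$. The paper's Remark~4 glosses over this by asserting that the other LARVC algorithms are ``exactly like'' the graceful-difference one, and the closure statement itself only asks that the lattice elements admit \emph{some} $W$-magic total coloring, so the conclusion survives; but you should soften ``with the same constant $n^*$'' to ``with a $W$-magic constant determined by $n^*$ and the number of added leaves'' when treating the edge-magic and edge-difference cases.
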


\section{Graphic lattices based on $W$-magic total colorings}

We call some graphic lattices as \emph{linear-graphic lattices} if they were made by tree-bases and each element in them is a tree, otherwise \emph{non-linear graphic lattices}.

\subsection{Linear-graphic lattices}

\textbf{CONSTRUCTION algorithm-I.}

\textbf{Step-11.} Let $J_{i_1},J_{i_2},\dots ,J_{i_A}$ be a permutation of trees $a_1T_1^c,a_2T_2^c,\dots ,a_mT_m^c$ based on a tree-base $\textbf{\textrm{T}}^c$, where $A=\sum ^m_{k=1}a_k\geq 1$, and each $T_i^c$ admits an odd-edge $W$-magic total labeling (or coloring) $f_i$.

\textbf{Step-12.} Adding $n_{i_j}$ leaves to each tree $J_{i_j}$ for $j\in [1,A]$ produces a tree $H_{i_j}$, denoted as $H_{i_j}=\langle n_{i_j}[\ominus_e] J_{i_j}\rangle $, admitting an odd-edge $W$-magic total labeling (or coloring) $f_{i_j}$ induced from the odd-edge $W$-magic total labeling (or coloring) of the tree $J_{i_j}$, such that $f_{i_s}(V(H_{i_s}))\cap f_{i_{s+1}}(V(H_{i_{s+1}}))\neq \emptyset $ for $s\in [1,A-1]$.

\textbf{Step-13.} Vertex-coincide a vertex $x\in V(H_{i_s})$ with a vertex $y\in V(H_{i_{s+1}})$ into one vertex $x\odot y$, where $f_{i_s}(x)=f_{i_{s+1}}(y)$ for $s\in [1,A-1]$, the resultant tree is denoted as
\begin{equation}\label{eqa:w-magic-labeling-coloring}
\odot ^A_{j=1}H_{i_j}=\odot ^A_{j=1}\langle n_{i_j}[\ominus_e] J_{i_j}\rangle =\big [\odot ^m_{k=1}\ominus ^A_{j=1}\big ]a_kT_k^c
\end{equation} since each tree $H_{i_j}=\langle n_{i_j}[\ominus_e] J_{i_j}\rangle $ is the result of adding $n_{i_j}$ leaves to each tree $J_{i_j}$ for $j\in [1,A]$.

\vskip 0.4cm

By the CONSTRUCTION algorithm-I above, we get a \emph{linear-graphic lattice}
\begin{equation}\label{eqa:w-magic-linear-graph-lattices}
\textbf{\textrm{L}}_{W}(Z^0[\odot_v\ominus_e]\textbf{\textrm{T}}^c)=\left \{\big [\odot ^m_{k=1}\ominus ^A_{j=1}\big ]a_kT_k^c:~a_k\in Z^0,~T_k^c\in \textbf{\textrm{T}}^c\right \}
\end{equation} with $A=\sum ^m_{k=1}a_k\geq 1$.

There are the following facts:

(I-1) Each tree $T\in \textbf{\textrm{L}}_{W}(Z^0[\odot_v\ominus_e]\textbf{\textrm{T}}^c)$ admits a coloring $\lambda$ with each edge $uv\in E(T)$ holding $\lambda(uv)=f_{i_j}(uv)$ if this edge $uv$ is an edge of some tree $H_{i_j}$ defined in Step-12 of the CONSTRUCTION algorithm-I, that is, $\lambda(uv)$ is an odd integer. Because of $f_{i_j}(uv)$ satisfies one of the following forms:

(I-1-i) $f_{i_j}(uv)+|f_{i_j}(u)-f_{i_j}(v)|=a_{i_j}$;

(I-1-ii) $\big ||f_{i_j}(u)-f_{i_j}(v)|-f_{i_j}(uv)\big |=b_{i_j}$;

(I-1-iii) $\big |f_{i_j}(u)+f_{i_j}(v)-f_{i_j}(uv)\big |=c_{i_j}$; and

(I-1-vi) $f_{i_j}(u)+f_{i_j}(uv)+f_{i_j}(v)=d_{i_j}$, where $a_{i_j}$, $b_{i_j}$, $c_{i_j}$ and $d_{i_j}$ are non-negative integers.

Then the coloring $\lambda$ is called an \emph{odd-edge compound multiple-magic total coloring} of the tree $T$.

(I-2) Each tree $T\in \textbf{\textrm{L}}_{W}(Z^0[\odot_v\ominus_e]\textbf{\textrm{T}}^c)$ defined in Eq.(\ref{eqa:w-magic-linear-graph-lattices}) corresponds another graph $T^*$ admitting a \emph{twin odd-edge compound multiple-magic total coloring} $\mu$ such that $\mu(V(T^*))\cup \lambda(V(G))\subseteq [0,M]$, and we call the graph $T^*$ a \emph{twin odd-edge graph} of the tree $T$. All twin odd-edge graphs of trees of $\textbf{\textrm{L}}_{W}(Z^0[\odot_v\ominus_e]\textbf{\textrm{T}}^c)$ form a set $\textbf{\textrm{L}}^{tree}_{twin}$, and we call it \emph{twin odd-edge linear-graphic lattice} of the linear-graphic lattice $\textbf{\textrm{L}}_{W}(Z^0[\odot_v\ominus_e]\textbf{\textrm{T}}^c)$.

(13) Each tree $H\in \textbf{\textrm{L}}_{W}(Z^0[\odot_v\ominus_e]\textbf{\textrm{T}}^c)$ defined in Eq.(\ref{eqa:w-magic-linear-graph-lattices}) corresponds another graph $H^*$ forming a uniformly $M$-complement complex graphic matching $M_{\textrm{comp}}\langle H,H^*\rangle $ (refer to Problem \ref{problem:complement-complex-graphic-matching}). All such graphs $H^*$ form a \emph{$M$-uniform matching odd-edge graphic lattice}
\begin{equation}\label{eqa:uniform-matching-graph-lattices}
\textbf{\textrm{L}}(M_{\textrm{comp}})=\left \{M_{\textrm{comp}}\langle H,H^*\rangle:~H\in \textbf{\textrm{L}}_{W}(Z^0[\odot_v\ominus_e]\textbf{\textrm{T}}^c)\right \}
\end{equation} with $A=\sum ^m_{k=1}a_k\geq 1$.

\subsection{Complexity of linear-graphic lattices}

The Topcode-matrix $T_{code}(G)$ can distribute us $3q(G)!$ number-based strings, where $q(G)=|E(G)|$. Clearly, rewriting the Topcode-matrix $T_{code}(G)$ from one of these $3q(G)!$ number-based strings is quite difficult, and reconstructing the tree $G$ from $T_{code}(G)$ is impossible since
\begin{equation}\label{eqa:555555}
G=[\odot ^m_{k=1}\ominus ^A_{j=1}]a_kT_k^c=\odot ^A_{j=1}\langle n_{i_j}[\ominus_e] J_{i_j}\rangle =\odot ^A_{j=1}H_{i_j}
\end{equation} according to Eq.(\ref{eqa:w-magic-labeling-coloring}).

The above works are related with two unsolved problems: One is the \textbf{Subgraph Isomorphic Problem}, and another one is the \textbf{Integer Partition Problem}: $n_{i_j}=m_{i_j,1}+m_{i_j,2}+\cdots +m_{i_j,p_{i_j}}$ with with integers $m_{i_j,k}\geq 1$ and $p_{i_j}\geq 2$, and there are $(p_{i_j}!)$ methods to adding these $n_{i_j}$ leaves to a group of $p_{i_j}$ vertices selected from ${p(J_{i_j}) \choose p_{i_j}}$ groups of vertices of tree $J_{i_j}$ in Step-12 of the CONSTRUCTION algorithm-I, where $p(J_{i_j})$ is the number of vertices of tree $J_{i_j}$. Suppose that there are $S(n_{i_j})$ ways of partitioning integer $n_{i_j}$, then we have $\prod ^{A}_{i_j=1}S(n_{i_j})(p_{i_j}!){p(J_{i_j}) \choose p_{i_j}}$ methods to add leaves.

There are $A!$ permutations $J_{i_1},J_{i_2},\dots ,J_{i_A}$ of trees $a_1T_1^c,a_2T_2^c,\dots ,a_mT_m^c$ based on a tree-base $\textbf{\textrm{T}}^c$, such that each permutation produces at least one tree $G\in \textbf{\textrm{L}}_{W}(Z^0[\odot_v\ominus_e]\textbf{\textrm{T}}^c)$, since $\odot ^A_{j=1}H_{i_j}$ defined in Eq.(\ref{eqa:w-magic-labeling-coloring}) may make two or more trees of $\textbf{\textrm{L}}_{W}(Z^0[\odot_v\ominus_e]\textbf{\textrm{T}}^c)$. The notation $n(\odot ^A_{j=1}H_{i_j})$ is the number of different trees made by $\odot ^A_{j=1}H_{i_j}$. So, we claim that there are at least
\begin{equation}\label{eqa:555555}
N(A,\textbf{\textrm{T}}^c)=A!\cdot n(\odot ^A_{j=1}H_{i_j})\cdot \prod ^{A}_{i_j=1}S(n_{i_j})(p_{i_j}!){p(J_{i_j}) \choose p_{i_j}}
\end{equation}
trees of the linear-graphic lattice $\textbf{\textrm{L}}_{W}(Z^0[\odot_v\ominus_e]\textbf{\textrm{T}}^c)$ for each $A=\sum ^m_{k=1}a_k\geq 1$.

\subsection{Non-linear graphic lattices}

Let $\textbf{\textrm{H}}^c=\{H_1^c,H_2^c,\dots ,H_m^c\}$ be a \emph{non-tree base}, where there is at least one colored graph $H_i^c$ to be a non-tree graph, and each $H_i^c$ is connected and admits an odd-edge $W$-magic total labeling.

\vskip 0.4cm

\noindent \textbf{CONSTRUCTION algorithm-II.}

\textbf{Step-21.} Let $T_{i_1},T_{i_2},\dots ,T_{i_B}$ be a permutation of graphs $b_1H_1^c,b_2H_2^c,\dots ,b_mH_m^c$ based on a non-tree base $\textbf{\textrm{H}}^c$, where $B=\sum ^m_{k=1}b_k\geq 1$, and each graph $H_i^c$ is connected and admits an odd-edge $W$-magic total labeling (or coloring) $g_i$. Here, there is at least one connected graph $H_t^c$ to be not a tree with $b_t\neq 0$.

\textbf{Step-22.} Adding $m_{i_j}~(\geq 1)$ leaves to each connected graph $T_{i_j}$ for $j\in [1,B]$ produces a connected graph $G_{i_j}$, denoted as $G_{i_j}=\langle m_{i_j}[\ominus_e] T_{i_j}\rangle $, admitting an odd-edge $W$-magic total labeling (or coloring) $g_{i_j}$ induced from the odd-edge $W$-magic total labeling (or coloring) of the connected graph $T_{i_j}$, such that $g_{i_s}(V(G_{i_s}))\cap g_{i_{s+1}}(V(G_{i_{s+1}}))\neq \emptyset $ for $s\in [1,B-1]$.

\textbf{Step-23.} Vertex-coincide a vertex $w\in V(G_{i_s})$ with a vertex $z\in V(G_{i_{s+1}})$ into one vertex $w\odot z$, where $g_{i_s}(w)=g_{i_{s+1}}(z)$ for $s\in [1,B-1]$, the resultant graph is denoted as
\begin{equation}\label{eqa:w-magic-labeling-coloring-algorithm}
\odot ^B_{j=1}G_{i_j}=\odot ^B_{j=1}\langle m_{i_j}[\ominus_e] T_{i_j}\rangle =\big [\odot ^m_{k=1}\ominus ^B_{j=1}\big ]b_kH_k^c
\end{equation}

By the CONSTRUCTION algorithm-II above, we get a \emph{graphic lattice}
\begin{equation}\label{eqa:w-magic-graphic-lattices}
\textbf{\textrm{L}}_{W}(Z^0[\odot_v\ominus_e]\textbf{\textrm{H}}^c)=\left \{\big [\odot ^m_{k=1}\ominus ^B_{j=1}\big ]b_kH_k^c:~b_k\in Z^0,~H_k^c\in \textbf{\textrm{H}}^c\right \}
\end{equation} with $B=\sum ^m_{k=1}b_k\geq 1$, and there is at least one connected graph $H_t^c$ to be not a tree with $b_t\neq 0$.

Thereby, we have the following facts:

(II-1) Each connected graph $G\in \textbf{\textrm{L}}_{W}(Z^0[\odot_v\ominus_e]\textbf{\textrm{H}}^c)$ admits a coloring $\theta$ with each edge $uv\in E(G)$ holding $\theta(uv)=g_{i_j}(uv)$ if the edge $uv$ is an edge of some connected graph $T_{i_j}$ defined in Step-22 of the CONSTRUCTION algorithm-II, also, $\theta(uv)$ is an odd integer. Notice that the edge color $g_{i_j}(uv)$ satisfies one of the following forms:

(II-1-i) $g_{i_j}(uv)+|g_{i_j}(u)-g_{i_j}(v)|=\alpha_{i_j}$;

(II-1-ii) $\big ||g_{i_j}(u)-g_{i_j}(v)|-g_{i_j}(uv)\big |=\beta_{i_j}$;

(II-1-iii) $\big |g_{i_j}(u)+g_{i_j}(v)-g_{i_j}(uv)\big |=\gamma_{i_j}$; and

(II-1-vi) $g_{i_j}(u)+g_{i_j}(uv)+g_{i_j}(v)=\delta_{i_j}$, where $\alpha_{i_j}$, $\beta_{i_j}$, $\gamma_{i_j}$ and $\delta_{i_j}$ are non-negative integers.

So we call the coloring $\theta$ an \emph{odd-edge compound multiple-magic total coloring} of $G$.

(II-2) Each connected graph $G\in \textbf{\textrm{L}}_{W}(Z^0[\odot_v\ominus_e]\textbf{\textrm{H}}^c)$ corresponds a graph $G^*$ admitting a \emph{twin odd-edge compound multiple-magic total coloring} $\varphi$ such that $\varphi(V(G^*))\cup \theta(V(G))\subseteq [0,M]$, and we call the graph $G^*$ a \emph{twin graph} of the connected graph $G$. Such twin graphs $G^*$ form a set $\textbf{\textrm{L}}_{twin}$, called the \emph{twin-graphic lattice} of the graphic lattice $\textbf{\textrm{L}}_{W}(Z^0[\odot_v\ominus_e]\textbf{\textrm{H}}^c)$.

\section{Concluding remarks}

Four new colorings, called odd-edge graceful-difference total coloring, odd-edge edge-difference total coloring, odd-edge edge-magic total coloring, and odd-edge felicitous-difference total coloring, are introduced for producing twin-type $W$-magic graphic lattices. The randomly growing graphs admitting four new colorings can be constructed by the so-called RANDOMLY-LEAF-ADDING algorithms and techniques of adding leaves to continuously.

The uniformly $W$-magic total colorings help us to build up caterpillar-graphic lattices and complementary graphic lattices. One important work is to show a connection between complex graphs and integer lattices, which establishes indirectly some connections between our graphic lattices and some integer lattices of traditional lattices.

Some research topics in this article can be further developed. For example, suppose that a tree $H$ has its own vertex set $V(H)=\{x_1,x_2,\dots ,x_n\}$, we add a leaf set $L_{\textrm{eaf}}(x_i)=\{y_{i, j}:j\in [1, M_i]\}$ to each vertex $x_i\in V(H)$ for $i\in [1,n]$, the resultant tree is denoted as $H^*=H\ominus L_{\textrm{eaf}}$, where $L_{\textrm{eaf}}=\bigcup ^n_{i=1}L_{\textrm{eaf}}(x_i)$. Notice that the tree $H^*$ has its own vertex set $V(H^*)=V(H)\cup L_{\textrm{eaf}}$ and its own edge set $E(H^*)=E(H)\cup E(L_{\textrm{eaf}})$, where $E(L_{\textrm{eaf}})=\bigcup^n_{i=1} E(L_{\textrm{eaf}}(x_i))$ with $E(L_{\textrm{eaf}}(x_i))=\{x_iy_{i,j}:j\in [1, M_i]\})$.

There is a group of trees $T_1,T_2,\dots ,T_A$, in which each tree $T_k$ with $k\in [1,A]$ has its own leaf sets $L_{\textrm{eaf}}(x_{k,i})=\{u_{k,i, j}:j\in [1, m_{k,i}]\}$ for $x_{k,i}\in V(T_k)$ with $i\in [1,n]$, such that removing all leaves from the tree $T_k$ produces a tree $T^*_k=T_k-\bigcup^n_{i=1} L_{\textrm{eaf}}(x_{k,i})=H$, so
\begin{equation}\label{eqa:555555}
V(H)=\{x_1,x_2,\dots ,x_n\}=V(T^*_k)=\{x_{k,1},x_{k,2},\dots ,x_{k,n}\}
\end{equation} We call $H$ the \emph{leaf-core} of each tree $T_k$ for $k\in [1,A]$.

Notice that it is allowed for $L_{\textrm{eaf}}(x_i)=\emptyset$ for some vertex $x_i\in V(H)$, or $L_{\textrm{eaf}}(x_{k,i})=\emptyset$ for some vertex $x_{k,i}\in V(T_k)$.

\begin{asparaenum}[$\bullet$ ]
\item For two trees $T_k$ and $T_r$, if there exists a positive integer $B_{k,r}$, such that $|L_{\textrm{eaf}}(x_{k,i})|+|L_{\textrm{eaf}}(x_{r,i})|=B_{k,r}$ for $i\in [1,n]$, we call two trees $T_k$ and $T_r$ to be \emph{uniform $B_{k,r}$-leaf complementary trees}.
\item For two trees $T_s$ and $T_j$, if there exists a positive integer $C_{s,j}$, such that $|L_{\textrm{eaf}}(x_{s,i})|+|L_{\textrm{eaf}}(x\,'_{j,i})|=C_{s,j}$ for $i\in [1,n]$, we call two trees $T_k$ and $T_r$ to be \emph{$C_{s,j}$-leaf complementary trees}, where $x\,'_{j,1},x\,'_{j,2},\dots ,x\,'_{j,n}$ is a permutation of vertices of the tree $T^*_j=T_j-\bigcup^n_{i=1} L_{\textrm{eaf}}(x_{j,i})$.
\item Since
\begin{equation}\label{eqa:555555}
|L_{\textrm{eaf}}(x_i)|=M_i=\sum^A_{k=1} m_{k,i}=\sum^A_{k=1}|L_{\textrm{eaf}}(x_{k,i})|
\end{equation} we call the tree $H^*$ \emph{universal tree} of the group of trees $T_1,T_2,\dots ,T_A$.
\end{asparaenum}
From graph operation of view, the universal tree $H^*$ is the result of coinciding the leaf-cores of trees $T_1,T_2,\dots ,T_A$ into one.

Since each tree admits an odd-edge $W$-magic total coloring for each $W$-magic $\in \{$edge-magic, edge-difference, felicitous-difference, graceful-difference$\}$, let $f$ be an odd-edge $W$-magic total coloring of the universal tree $H^*$, then each tree $T_k$ admits a total coloring $f_k$ induced by $f$ such that $f_k(w)\in f(V(H^*)\cup E(H^*))$ for $w\in V(T_k)\cup E(T_k)$, that is, $f_k(V(T_k)\cup E(T_k))\subset f(V(H^*)\cup E(H^*))$.

Thereby, the above tree $H^*$ is the \emph{topological authentication} of the trees $T_1,T_2,\dots ,T_A$ if some tree $T_k$ is considered as a public-key, and others are private-keys in applications of blockchain, financial networks, digital currency.

\section{Acknowledgment}

This work was supported by the National Natural Science Foundation of China (No. 61163054, No. 61363060, No. 61662066), Northwest China Financial Research Center project of Lanzhou University of Finance and Economics (JYYZ201905), and the Research project of higher education teaching reform in Lanzhou University of Finance and Economics (LJZ202012).


{\footnotesize

}


\begin{thebibliography}{9}

\setlength{\parskip}{0pt}
\bibitem{Bondy-2008} J. A. Bondy, U. S. R. Murty. Graph Theory. Springer London, 2008. DOI: 10.1007/978-1-84628-970-5. J. Adrian Bondy and U. S. R. Murty, Graph Theory with Application. The MaCmillan Press Ltd., London, 1976.
\bibitem{Gallian2021} Joseph A. Gallian. A Dynamic Survey of Graph Labeling. The electronic journal of combinatorics, \# DS6, Twenty-fourth edition, December 9, 2021. (576 pages, 3028 reference papers, over 200 graph labelings)
\bibitem{Tian-Li-Peng-Yang-2021-102212}Yanzhao Tian, Lixiang Li, Haipeng Peng and Yixian Yang. Achieving flatness: Graph labeling can generate graphical honeywords. Computers and Security, \textbf{104} (2021) 102212. DOI: 10.1016/j.cose.2021.102212.


\bibitem{Peikert-Lattice-Cryptography-Internet2014}Peikert, C. Lattice Cryptography for the Internet. In: Mosca, M. (eds) Post-Quantum Cryptography. PQCrypto 2014. Lecture Notes in Computer Science, vol 8772. Springer, Cham. DOI:10.1007/978-3-319-11659-4-12.
\bibitem{Chris-Peikert-decade} Chris Peikert. A Decade of Lattice Cryptography. Found. Trends Theor. Comput. Sci. 10(4) (2016): 283-424. (90 pages)
\bibitem{Tim-Vadim-Thomas-2012-978-3-642-33027} Tim G\"{u}neysu, Vadim Lyubashevsky and Thomas P\"{o}ppelmann. Practical Lattice-Based Cryptography: A Signature Scheme for Embedded Systems. International Workshop on Cryptographic Hardware and Embedded Systems, CHES 2012: Cryptographic Hardware and Embedded Systems, CHES 2012, pp 530-547. DOI: 10.1007/978-3-642-33027-8-31.


\bibitem{Yao-Su-Ma-Wang-Yang-arXiv-2202-03993v1}Bing Yao, Jing Su, Fei Ma, Hongyu Wang, Chao Yang. Topological Authentication Technique In Topologically Asymmetric Cryptosystem. arXiv:2202.03993v1 [cs.CR] 8 Feb 2022.
\bibitem{Wang-Yao-Su-Wanjia-Zhang-2021-IMCEC} Hongyu Wang, Bing Yao, Jing Su, Wanjia Zhang. Number-Based Strings/Passwords From Imaginary Graphs Of Topological Coding For Encryption. submitted to IMCEC 2021.

\bibitem{Bing-Yao-Hongyu-Wang-arXiv-2020-homomorphisms}Bing Yao, Hongyu Wang. Graph Homomorphisms Based On Particular Total Colorings of Graphs and Graphic Lattices. arXiv: 2005.02279v1 [math.CO] 5 May 2020.
\bibitem{Bing-Yao-2020arXiv}Bing Yao. Graphic Lattices and Matrix Lattices Of Topological Coding. arXiv: 2005.03937v1 [cs.IT] 8 May 2020.
\bibitem{Yao-Wang-Liu-ice-flower-2020arXiv}Bing Yao, Hongyu Wang, Xia Liu, Xiaomin Wang, Fei Ma, Jing Su, Hui Sun. Ice-Flower Systems And Star-graphic Lattices. arXiv:2005.02823v1 [Math.CO] 6 May 2020.
\bibitem{Yao-Wang-Su-Sun-ITOEC2020}Bing Yao, Hongyu Wang, Jing Su, Hui Sun. Graphic Lattices For Constructing High-Quality Networks. 2020 IEEE 5th Information Technology and Mechatronics Engineering Conference (ITOEC 2020) 1726-1730.
\bibitem{yao-sun-su-wang-matching-groups-zhao-2020} Bing Yao, Hui Sun, Jing Su, Hongyu Wang, Meimei Zhao. Various Matchings of Graphic Groups For Graphic Lattices In Topological Coding. 2020 IEEE International Conference On Information Technology, Big Data And Artificial Intelligence (ICIBA 2020): 332-337.
\bibitem{Yao-Zhao-Zhang-Mu-Sun-Zhang-Yang-Ma-Su-Wang-Wang-Sun-arXiv2019}Bing Yao, Meimei Zhao, Xiaohui Zhang, Yarong Mu, Yirong Sun, Mingjun Zhang, Sihua Yang, Fei Ma, Jing Su, Xiaomin Wang, Hongyu Wang, Hui Sun. Topological Coding and Topological Matrices Toward Network Overall Security. arXiv:1909.01587v2 [cs.IT] 15 Sep 2019.
\bibitem{Yao-Zhang-Sun-Mu-Sun-Wang-Wang-Ma-Su-Yang-Yang-Zhang-2018arXiv}Bing Yao, Xiaohui Zhang, Hui Sun, Yarong Mu, Yirong Sun, Xiaomin Wang, Hongyu Wang, Fei Ma, Jing Su, Chao Yang, Sihua Yang, Mingjun Zhang. Text-based Passwords Generated From Topological Graphic Passwords. arXiv: 1809. 04727v1 [cs.IT] 13 Sep 2018.
\bibitem{Yao-Sun-Zhang-Mu-Sun-Wang-Su-Zhang-Yang-Yang-2018arXiv}Bing Yao, Hui Sun, Xiaohui Zhang, Yarong Mu, Yirong Sun, Hongyu Wang, Jing Su, Mingjun Zhang, Sihua Yang, Chao Yang. Topological Graphic Passwords And Their Matchings Towards Cryptography. arXiv: 1808. 03324v1 [cs.CR] 26 Jul 2018.

\bibitem{Yao-Sun-Zhao-Li-Yan-ITNEC-2017} Bing Yao, Hui Sun, Meimei Zhao, Jingwen Li, Guanghui Yan. On Coloring/Labelling Graphical Groups For Creating New Graphical Passwords. (ITNEC 2017) 2017 IEEE 2nd Information Technology, Networking, Electronic and Automation Control Conference. 2017: 1371-1375.

\bibitem{Bing-Yao-Cheng-Yao-Zhao2009}Bing Yao, Hui Cheng, Ming Yao and Meimei Zhao. A Note on Strongly Graceful Trees. Ars Combinatoria \textbf{92} (2009), 155-169.


\bibitem{yirong-sun-bing-yao-2022-nwnu}Yirong Sun, Bing Yao. Exploring the practice and application of topology coding. Journal of Northwest Normal University (Chinese), 2022.



\bibitem{Wang-Xu-Yao-2016} Hongyu Wang, Jin Xu, Bing Yao. Exploring New Cryptographical Construction Of Complex Network Data. IEEE First International Conference on Data Science in Cyberspace. IEEE Computer Society, (2016):155-160.
\bibitem{Wang-Xu-Yao-2017-Twin}Hongyu Wang, Jin Xu, Bing Yao. Twin Odd-Graceful Trees Towards Information Security. Procedia Computer Science \textbf{107} (2017) 15-20. DOI: 10.1016/j.procs.2017.03.050

\bibitem{Wang-Xu-Yao-2017-Matching}Hongyu Wang, Jin Xu, Bing Yao. On Magic Matching Sets Having Many Magic Parameters In Information Networks. 2017 IEEE 3rd Information Technology and Mechatronics Engineering Conference (IEEE ITOEC 2017): 77-80.


\bibitem{Shuhong-Wu-Accurate-2007}Shuhong Wu. The Accurate Formulas of $A(n,k)$ and $P(n,k)$. Journal Of Mathematical Research And Exposition, \textbf{27}, NO.2 (2007) 437-444.

\bibitem{WU-Qi-qi-2001}Qiqi Wu. On the law of left shoulder and law of oblique line for constructing a large table of $P(n,k)$ quickly (in Chinese). Sinica (Chin. Ser.), 2001, \textbf{44} (5): 891-897.


\bibitem{Zhang-Yang-Yao-Frontiers-Computer-2021}Mingjun Zhang, Sihua Yang, Bing Yao. Exploring Relationship Between Traditional Lattices and Graph Lattices of Topological Coding. Journal of Frontiers of Computer Science and Technology (Chinese). 2021, 15(11) 1-13. doi: 10.3778/j.issn.1673-9418.2010072.

\bibitem{Zhang-Zhang-Yao-integer-lattices-2022} Mingjun Zhang, Xiaohui Zhang and Bing Yao. Exploring The Topological Coding Realization of Integer Lattices (in Chinese). submitted 2022.



\bibitem{Zhou-Yao-Chen2013}Xiangqian Zhou, Bing Yao, Xiang'en Chen. Every Lobster Is Odd-elegant. Information Processing Letters \textbf{113} (2013): 30-33.
\bibitem{Zhou-Yao-Chen-Tao2012}Xiangqian Zhou, Bing Yao, Xiang'en Chen and Haixia Tao. A proof to the odd-gracefulness of all lobsters. Ars Combinatoria \textbf{103} (2012): 13-18.




\end{thebibliography}
\end{document}